\numberwithin{equation}{section}
\newtheorem{theorem}{Theorem}[section]
\newtheorem{lemma}[theorem]{Lemma}
\newcommand\Item[1][]{%
  \ifx\relax#1\relax  \item \else \item[#1] \fi
  \abovedisplayskip=0pt\abovedisplayshortskip=0pt~\vspace*{-\baselineskip}}
\theoremstyle{definition}
\newtheorem{defn}[theorem]{Definition}
\theoremstyle{definition}
\newtheorem{remark}[theorem]{Remark}
\title[]{The Game of Graph Nim on graphs with four edges}
\date{}
\author{Sayar Karmakar, Moumanti Podder, Souvik Roy, Soumyarup Sadhukhan}
\address{Sayar Karmakar, University of Florida, 230 Newell Drive, Gainesville, Florida 32605, USA.}
\address{Moumanti Podder, Indian Institute of Science Education and Research (IISER) Pune, Dr.\ Homi Bhabha Road, Pashan, Pune 411008, Maharashtra, India.}
\address{Souvik Roy, Indian Statistical Institute, 203 Barrackpore Trunk Road, Kolkata 700108, West Bengal, India.}
\address{Soumyarup Sadhukhan, Indian Institute of Technology, Kalyanpur, Kanpur, Uttar Pradesh 208016, India.}
\email{sayarkarmakar@ufl.edu}
\email{moumanti@iiserpune.ac.in}
\email{souvik.2004@gmail.com}
\email{soumyarup.sadhukhan@gmail.com}
\begin{document}
\bibliographystyle{plainnat}

\begin{abstract}
This work is concerned with the study of the \emph{Game of Graph Nim} -- a class of two-player combinatorial games -- on graphs with $4$ edges. To each edge of such a graph is assigned a positive-integer-valued edge-weight, and during each round of the game, the player whose turn it is to make a move selects a vertex, and removes a non-negative integer edge-weight from each of the edges incident on that vertex, such that
\begin{enumerate*}
\item the remaining edge-weight on each of these edges is a non-negative integer,
\item and the total edge-weight removed during a round is strictly positive.
\end{enumerate*}
The game continues for as long as the sum of the edge-weights remaining on all edges of the graph is strictly positive, and the player who plays the last round wins. An initial configuration of edge-weights is considered \emph{winning} if the player who plays the first round wins the game, whereas it is defined as \emph{losing} if the player who plays the second round wins. In this paper, we characterize, \emph{almost entirely}, all winning and losing configurations for this game on all graphs with precisely $4$ edges each. Only one such graph defies our attempt to \emph{fully} characterize the winning and losing configurations of edge-weights on its edges -- we are still able to provide a \emph{significant} set of partial results pertaining to this graph.  
\end{abstract}

\subjclass[2020]{05C57, 91A43, 91A46, 91A05, 05C22}

\keywords{Game of Graph Nim, Game of Nim, two-player combinatorial games, combinatorial games on graphs, edge-weighted graphs, winning and losing positions}

\maketitle

\section{Introduction and a formal description of the Game of Graph Nim}\label{sec:intro}
The \emph{Game of Nim} is one of the earliest examples of \emph{two-player combinatorial games}, purported to have originated in ancient China and mentioned in European references that date back to as early as the 16th century. While several variants of this game are in existence, we describe here one of the most commonly studied versions, as follows:
\begin{defn}[The Game of Nim]\label{defn:nim}
The game begins with $k$ piles or heaps of chips, with the $i$-th pile containing $x_{i}$ chips for $i \in \{1,2,\ldots,k\}$, where $x_{1}, x_{2}, \ldots, x_{k}$ are positive integers. Two players take turns to make moves, where a \emph{move} involves choosing a pile (that is not already empty) and subsequently removing at least one chip from that pile. The game continues until all the piles have been rendered empty, and the player to remove the very last chip is declared the winner.
\end{defn}
The set $\{x_{1},x_{2},\ldots,x_{k}\}$ is referred to as the \emph{initial configuration} or \emph{initial position} of the game, and it is referred to as a \emph{winning position} if the player who plays the first round is destined to win the game, whereas it is referred to as a \emph{losing position} if the player who plays the first round is destined to lose the game. This is a \emph{perfect information game}, as the piles of chips are revealed in their entirety to the two players \emph{before} the game begins.

This game has been studied extensively, and its winning and losing positions have been characterized fully (see, for instance, the seminal work done in \cite{bouton1901nim}, as well as \cite{conway2000numbers} and \cite{ferguson2020course} for a more detailed literature on these games). In this paper, in some of our proofs, we have made use of the necessary and sufficient condition for an initial configuration to be winning for the Game of Nim defined in \ref{defn:nim} (we refer the reader to Theorem~\ref{thm:Nim} for a description of the same).

The game we study in this paper is inspired by the Game of Nim, and played on a graph. As far as we are aware, this game, henceforth referred to as the \emph{Game of Graph Nim}, was introduced and studied in \cite{williams2017combinatorial}, and our work in this paper serves as both a significant extension to the results obtained in \cite{williams2017combinatorial} and to show that the analysis required herein is far more complicated and \emph{ad hoc} (in the sense that it depends heavily on the underlying graph on which the game is being played) compared to the Game of Nim in \ref{defn:nim}. In what follows, we denote by $G=(V,E)$ an undirected graph whose set of vertices is given by $V$ and whose set of edges is given by $E$. For $u, v \in V$, we let $\{u,v\}$ denote the edge between these two vertices if they are adjacent (i.e.\ if $\{u,v\} \in E$). Henceforth, we let $\mathbb{N}$ denote the set of all positive integers and $\mathbb{N}_{0}$ the set of all non-negative integers.
\begin{defn}[The Game of Graph Nim]\label{defn:graph_nim}
Consider a graph $G=(V,E)$, with an edge-weight $w_{0}(e)$, where $w_{0}(e) \in \mathbb{N}_{0}$, assigned to each $e \in E$. Two players take turns (each such turn is, henceforth, referred to as a \emph{round}) to make moves, where a \emph{move} involves 
\begin{enumerate}
\item selecting a vertex $u \in V$,
\item removing a non-negative integer weight from each edge $\{u,v\} \in E$ (i.e.\ each edge in $E$ that is incident on the vertex $u$), so that the total weight removed is a strictly positive integer, and the remaining edge-weight of each $\{u,v\} \in E$ is a non-negative integer.
\end{enumerate}
The game continues for as long as the edge-weight of each edge in $E$ has not been reduced to $0$. The player to play the last round is declared the winner.
\end{defn}
Note that this game can be thought of as being played on the complete graph $K_{|V|}$ with $|V|$ many vertices, since for each pair of vertices $u,v \in V$ with no edge present between $u$ and $v$ (i.e.\ $\{u,v\} \notin E$), it is as if the initial edge-weight $w_{0}(\{u,v\})$ equals $0$. Henceforth, we let the players be referred to as $P_{1}$ and $P_{2}$, with $P_{1}$ assumed to play the first round.

Given the underlying graph $G=(V,E)$, we call $\left(w_{0}(e): e \in E\right)$ a \emph{winning initial weight configuration} (or, simply, a \emph{winning configuration}) if the player who plays the first round of the game on $G$, starting with these initial edge-weights, is destined to win. We call $\left(w_{0}(e): e \in E\right)$ a \emph{losing initial weight configuration} (or, simply, a \emph{losing configuration}) if the player who plays the first round is destined to lose the game. 

In this paper, our objective is to consider all graphs comprising precisely $4$ edges, i.e.\ $|E|=4$, and characterize the winning and losing configurations on each of them. It suffices to consider all \emph{unlabeled} distinct graphs comprising $4$ edges each, and barring one, we are able to provide a \emph{complete} characterization of the winning and losing configurations on each of them. For the graph for which we are not able to obtain a full understanding of the winning and losing configurations, we provide partial results that are significant and that, we believe, are adequate in demonstrating to the reader the challenges faced when analysing the Game of Graph Nim on certain underlying graphs.

\subsection{Organization of the rest of the paper}
Our paper is organized as follows. In \S\ref{sec:prelim_useful_results}, we explain how the Game of Graph Nim can be seen as a generalization (that can be played on graphs) of the Game of Nim. This is followed by \S\ref{subsec:lit_review}, where we include a brief survey of the existing literature pertinent to the topic this paper is concerned with. In \S\ref{sec:graphs_main_results}, we describe, in detail, the graphs on which we study the Game of Graph Nim in this paper, and the main results we establish pertaining to these underlying graphs. We draw the reader's attention to \S\ref{subsubsec:G_{4}_main_results} and \S\ref{subsubsec:H_{1}_main_results}, as these two subsections are dedicated to the statements of our main results concerning the two most challenging graphs we study in this paper, namely $G_{4}$ and $H_{1}$ of Figure~\ref{Consolidating_all_graphs}. The proofs of our main results concerning the graphs $F_{2}$, $G_{2}$ and $G_{3}$ shown in Figure~\ref{Consolidating_all_graphs} have been provided in \S\ref{sec:proof_4_vertices}, the proofs of our main results concerning the graph $G_{4}$ have been provided in \S\ref{subsec:proofs_G_{4}}, and finally, in \S\ref{sec:proof_6_vertices}, we outline the detailed proofs of \emph{most} of our findings regarding the graph $H_{1}$. This is followed by the Appendix, \S\ref{sec:appendix}, various parts of which are dedicated to laying down the details omitted from the proofs provided in \S\ref{sec:proof_4_vertices}, \S\ref{subsec:proofs_G_{4}} and \S\ref{sec:proof_6_vertices}. For instance, part of the proof of Theorem~\ref{thm:F_{2}} can be found in \S\ref{subsec:appendix_4_vertices}, some steps of the proof of Theorem~\ref{thm:main_G_{4}} have been elaborated upon in \S\ref{subsec:appendix_G_{4}_losing_1} and \S\ref{subsec:appendix_G_{4}_losing_2}, and several of the claims made in Theorem~\ref{thm:H_{1}_losing}, concerning losing initial weight configurations on $H_{1}$, have been proved in \S\ref{subsec:H_{1}_losing_1_proof}, \S\ref{subsec:H_{1}_losing_3_proof}, \S\ref{subsec:H_{1}_losing_4_1,2_proof}, \S\ref{subsec:H_{1}_losing_4_3,4_proof}, \S\ref{subsec:H_{1}_losing_4_5,6_proof} and \S\ref{subsec:H_{1}_losing_4_7_proof}.

\section{The Game of Graph Nim generalizes the Game of Nim}\label{sec:prelim_useful_results}
We begin \S\ref{sec:prelim_useful_results} by introducing a notation that will be used throughout this paper: henceforth, given any $x\in\mathbb{N}$, we write $x=(a_{r}a_{r-1}\ldots a_{1}a_{0})_{2}$ to mean a base-$2$ representation of $x$, i.e.\ $a_{r}, a_{r-1}, \ldots, a_{1}, a_{0}$ are values in $\{0,1\}$ such that $x=\sum_{i=0}^{r}a_{i}2^{i}$. Notice that this includes the possibility that there exists some $r'<r$ such that $a_{r'}=1$ and $a_{i}=0$ for each $i\in\{r'+1,\ldots,r\}$ -- this simply means that we have placed additional $0$s to the left of the `minimal' base-$2$ representation of $x$ (i.e.\ the base-$2$ representation of $x$ that consists of the minimum number of digits), and this is done in order to render $x$ comparable with the base-$2$ representations of other positive integers.

We now state a well-known result (see, for instance, \cite{brualdi1977introductory}) that gives a complete description of the winning and losing configurations for the Game of Nim defined in Definition~\ref{defn:nim}:
\begin{theorem}\label{thm:Nim}
Consider a Game of Nim that begins with $k$ piles of chips, with the $i$-th pile containing $p_{i}$ chips, where $p_{1}, p_{2}, \ldots, p_{k}$ are positive integers. Let $p_{i}=\left(b_{i,s}b_{i,s-1}\ldots b_{i,1}b_{i,0}\right)_{2}$ for each $i \in \{1,2,\ldots,k\}$, where $s$ is chosen such that $b_{i,s}=1$ for at least one $i \in \{1,2,\ldots,k\}$. This configuration is losing if and only if it is \emph{balanced}, i.e.\ the sum $\sum_{i=1}^{k}b_{i,t}$ is even for each $t \in \{0,1,\ldots,s\}$.
\end{theorem}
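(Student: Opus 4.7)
The plan is to establish Bouton's theorem by a straightforward induction on the total chip count $\sum_{i=1}^{k} p_{i}$, using the nim-sum $X := p_{1} \oplus p_{2} \oplus \cdots \oplus p_{k}$, where $\oplus$ denotes bitwise XOR. The key preliminary observation is that the condition $X = 0$ is precisely the balanced condition of the theorem, since the $t$-th bit of $X$ equals $\sum_{i=1}^{k} b_{i,t} \pmod{2}$. Thus the goal reduces to showing that the configuration $(p_{1}, \ldots, p_{k})$ is losing if and only if $X = 0$.

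The induction rests on three observations, which I would establish in order. First, the terminal configuration with all $p_{i} = 0$ satisfies $X = 0$ and is losing for the player whose turn it would be, since that player has no legal move. Second, \emph{every move from a balanced position leads to an unbalanced position}: if $X = 0$ and a move changes pile $i$ from $p_{i}$ to some $p_{i}' \in \{0, 1, \ldots, p_{i}-1\}$, then the new nim-sum equals $X \oplus p_{i} \oplus p_{i}' = p_{i} \oplus p_{i}'$, which is nonzero because $p_{i} \neq p_{i}'$. Third, \emph{from any unbalanced position there exists a move to a balanced position}: if $X \neq 0$, let $s$ be the index of the highest set bit of $X$, pick any pile $i$ with $b_{i,s} = 1$ (at least one such pile must exist, else the $s$-th bit of $X$ would vanish), and set $p_{i}' := p_{i} \oplus X$; then $p_{i}'$ agrees with $p_{i}$ on all bits of index exceeding $s$, while its $s$-th bit is $0$ and that of $p_{i}$ is $1$, so $p_{i}' < p_{i}$, making the replacement of $p_{i}$ by $p_{i}'$ a legal move, and the new nim-sum is $X \oplus p_{i} \oplus p_{i}' = X \oplus X = 0$.

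Once these three facts are in place, the induction is immediate. In the balanced case ($X = 0$), every legal move of the first player yields a position with strictly smaller total that is unbalanced and hence, by the inductive hypothesis, winning for the player about to move, namely the opponent; thus the starting configuration is losing. In the unbalanced case, the first player uses the move from the third observation to hand the opponent a balanced position of strictly smaller total, which by the inductive hypothesis is losing for the opponent, making the starting configuration winning.

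The only step that requires any genuine care is the inequality $p_{i} \oplus X < p_{i}$ in the third observation, which depends crucially on choosing $s$ to be the \emph{highest} set bit of $X$ rather than merely some set bit. Everything else amounts to routine bookkeeping on the XOR operation, and the terminal base case of the induction is trivial, so I do not anticipate any serious obstacles in writing out the full argument.
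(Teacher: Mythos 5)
The paper does not actually prove Theorem~\ref{thm:Nim}; it is stated as a classical result and attributed to the literature (Bouton's theorem, cited via \cite{brualdi1977introductory} and \cite{bouton1901nim}), so there is no in-paper argument to compare against. Your proposal is the standard and correct proof: the identification of ``balanced'' with vanishing nim-sum, the three observations (the terminal all-zero position is balanced and losing; any move from a balanced position unbalances it since the new nim-sum is $p_{i}\oplus p_{i}'\neq 0$; from an unbalanced position one can move to a balanced one by targeting a pile whose bit at the highest set position of $X$ equals $1$ and replacing $p_{i}$ by $p_{i}\oplus X<p_{i}$), and the induction on the total number of chips are all sound, and you correctly flag the one delicate point, namely that $p_{i}\oplus X<p_{i}$ requires choosing the \emph{highest} set bit of $X$. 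One minor remark: the theorem is stated for strictly positive $p_{i}$, whereas your induction necessarily passes through positions with empty piles; this is harmless provided you state the inductive claim for all non-negative configurations, which your base case implicitly does.
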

Henceforth, whenever we talk about a $k$-tuple $(p_{1},p_{2},\ldots,p_{k})$, for any $k \in \mathbb{N}$ and any $p_{1},p_{2},\ldots,p_{k} \in \mathbb{N}_{0}$, being `balanced' or `unbalanced', we interpret $(p_{1},p_{2},\ldots,p_{k})$ as the initial configuration for a Game of Nim involving $k$ piles of chips, with the $i$-th pile containing $p_{i}$ chips for each $i \in \{1,2,\ldots,k\}$.  

Theorem~\ref{thm:Nim} has consequences when it comes to the Game of Graph Nim, as defined in Definition~\ref{defn:graph_nim}, played on a certain class of graphs that we henceforth refer to as \emph{galaxy graphs}. Before we are able to define the notion of galaxy graphs, we must introduce the notion of \emph{star graphs}, which is what we begin Definition~\ref{defn:star_galaxy} with. Furthermore, given a finite collection of finite graphs, $\{G_{1},G_{2},\ldots,G_{n}\}$, with $V_{i}$ denoting the set of vertices in $G_{i}$ for each $i \in \{1,2,\ldots,n\}$, we call this collection \emph{pairwise-vertex-disjoint} if $V_{i}\cap V_{j}=\emptyset$ for all $i, j \in \{1,2,\ldots,n\}$ with $i \neq j$. 
\begin{defn}\label{defn:star_galaxy}
A finite graph $G = (V,E)$ is called a \emph{star graph} if all its edges are incident on a common vertex $v_{0}$, i.e.\ there exists a $v_{0} \in V$ such that $\{v,v_{0}\} \in E$ for each $v \in V \setminus \{v_{0}\}$, and $\{u,v\} \notin E$ for each pair of distinct vertices $u,v \in V \setminus \{v_{0}\}$. We refer to $v_{0}$ as the \emph{centre} of the star graph (the centre is non-unique if and only if the star graph is just a single edge), and the edge $\{v,v_{0}\}$ as a \emph{ray} of the star graph for each $v \in V \setminus \{v_{0}\}$. 

We call a graph $G = (V,E)$ a \emph{galaxy graph} if it consists of a finite collection of pairwise-vertex-disjoint star graphs, i.e.\ $V$ can be partitioned into subsets $V_{1}, V_{2}, \ldots, V_{k}$, such that the induced subgraph of $G$ on $V_{i}$ is a star graph, and there exists no edge between $V_{i}$ and $V_{j}$ for any pair of distinct $i, j \in \{1,2,\ldots,k\}$. In other words, all connected components of a galaxy graph are star graphs.
\end{defn}
Note that, in particular, a graph consisting of a single edge is a star graph, and a graph consisting of a finite collection of pairwise-vertex-disjoint edges is a galaxy graph. Some examples of galaxy graphs have been illustrated in Figure~\ref{galaxy_graph}.
\begin{figure}[h!]
  \centering
    \includegraphics[width=0.5\textwidth]{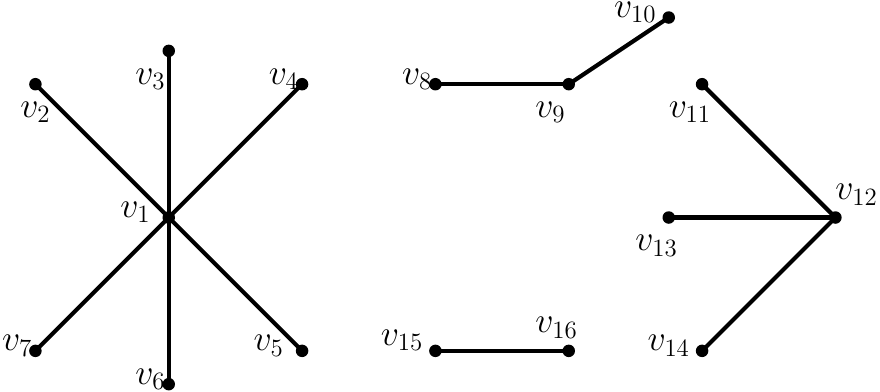}
\caption{A few examples of galaxy graphs}
  \label{galaxy_graph}
\end{figure}
We now state a result, pertaining to the Game of Graph Nim played on galaxy graphs, that follows from Theorem~\ref{thm:Nim}:
\begin{theorem}\label{thm:galaxy}
Consider a galaxy graph $G$ that comprises $k$ pairwise-vertex-disjoint star graphs, the centres of which are $u_{1}, u_{2}, \ldots, u_{k}$. Let $u_{i,1}, \ldots, u_{i,\ell_{i}}$ denote the vertices that are adjacent to $u_{i}$, for each $i \in \{1,2,\ldots,k\}$. The initial weight configuration, $\mathbf{w}_{0} = \left(w_{0}\left(\left\{u_{i},u_{i,t}\right\}\right): t \in \{1,2,\ldots,\ell_{i}\}, i \in \{1,2,\ldots,k\}\right)$, is losing for the Game of Graph Nim played on $G$ if and only if the $k$-tuple $(p_{1},p_{2},\ldots,p_{k})$, where $p_{i}=\sum_{t=1}^{\ell_{i}}w_{0}\left(\left\{u_{i},u_{i,t}\right\}\right)$ for each $i \in \{1,2,\ldots,k\}$, is balanced (as defined in Theorem~\ref{thm:Nim}).
\end{theorem}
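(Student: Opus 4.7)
The plan is to prove Theorem~\ref{thm:galaxy} by induction on the total edge-weight $W = \sum_{i=1}^{k} p_i$, by establishing a direct correspondence between Graph Nim on a galaxy graph $G$ and the classical Game of Nim on piles of sizes $(p_1, p_2, \ldots, p_k)$. Once this correspondence is in place, Theorem~\ref{thm:Nim} immediately yields the claim that a configuration is losing if and only if $(p_1,\ldots,p_k)$ is balanced. The base case $W=0$ is immediate: neither game admits a move, so the player about to move loses, and the all-zeros tuple is vacuously balanced.

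The heart of the argument consists of two structural observations about single Graph Nim moves when $G$ is a galaxy graph. First, any single move alters only one star component: since the stars are pairwise-vertex-disjoint, the chosen vertex $u$ lies in a unique star, say the $i$-th one, and every edge incident on $u$ also lies in that star. Whether $u$ is the center $u_i$ or one of the leaves $u_{i,t}$, the resulting change is a strictly positive reduction of the $i$-th total $p_i$, which is exactly the form of a legal Nim move on $(p_1, \ldots, p_k)$. Second, every legal Nim reduction of $p_i$ by any amount $\delta \in \{1, 2, \ldots, p_i\}$ is realizable as a Graph Nim move: the player selects the center $u_i$ and removes weight from the rays $\{u_i, u_{i,t}\}$ in any integer distribution summing to $\delta$; feasibility is guaranteed because $\delta \le p_i = \sum_{t=1}^{\ell_i} w_0(\{u_i, u_{i,t}\})$.

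With these two observations, the inductive step is routine. If $(p_1, \ldots, p_k)$ is balanced, then Theorem~\ref{thm:Nim} guarantees that every Nim-legal reduction leads to an unbalanced tuple; combined with the first observation, every Graph Nim move from the current configuration yields a configuration with unbalanced totals, which by the induction hypothesis is winning for the player who moves next. Hence the current player loses from a balanced configuration. Conversely, if $(p_1, \ldots, p_k)$ is unbalanced, Theorem~\ref{thm:Nim} supplies a Nim move to a balanced tuple, and the second observation shows that this move can actually be executed on the graph; by induction the resulting position is losing for the opponent, and so the current configuration is winning.

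I anticipate no serious obstacle. The only mildly subtle point is the realizability clause — that any Nim reduction of pile $i$ may be performed as a center move at $u_i$ — and this reduces to the observation that the weights on the rays of the $i$-th star can be drawn down in any non-negative integer decomposition summing to the desired $\delta$. Once this is noted, the rest is essentially an invocation of Theorem~\ref{thm:Nim}, with the pairwise-vertex-disjointness of the stars ensuring that moves in different components do not interact.
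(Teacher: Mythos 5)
Your proposal is correct and takes essentially the same route as the paper: both establish that a Graph Nim move on a galaxy graph corresponds exactly to a Nim move on the pile totals $(p_{1},\ldots,p_{k})$ (every move reduces exactly one star's total by a positive amount, and every such reduction is realizable via the center vertex), and then invoke Theorem~\ref{thm:Nim}. The paper states this equivalence in one paragraph without the explicit induction on total weight; your version merely spells out the two directions of the move correspondence that the paper leaves implicit.
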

\begin{proof}
Let us interpret each component of $G$, i.e.\ each constituent star-graph in $G$, as a heap, and the sum $p_{i}$ of edge-weights assigned to all edges of the $i$-th star graph as the total number of chips the $i$-th heap contains at the beginning of the game. A player selecting a vertex inside the $i$-th star graph and removing non-negative-integer-valued edge-weights from each of the edges incident on that vertex, such that the total edge-weight removed, say $w$, is a strictly positive integer, in the Game of Graph Nim played on $G$, is equivalent to the player selecting the $i$-th heap and removing a total of $w$ chips from it in the corresponding Game of Nim. The proof now follows immediately from Theorem~\ref{thm:Nim}.
\end{proof}
Theorem~\ref{thm:galaxy} also makes it evident that a $k$-pile Game of Nim can be viewed as a Game of Graph Nim played on a galaxy graph consisting of $k$ pairwise-vertex-disjoint star graphs (or, in other words, consisting of $k$ connected components).

\section{Literature review}\label{subsec:lit_review}
The game of Nim has a history spanning centuries, with its origins possibly tracing back to China. It was also documented in European countries during the 16th century. 
Many variations and generalizations of the classic game of Nim have been studied since the foundational work \cite{bouton1901nim}, which introduced the nim-sum and provided a complete mathematical solution. \cite{moore1910generalization} extended this game to $\text{Nim}_k$, where a player may draw from up to $k$ piles out of $n$ piles of objects. Gurvich et al.\ \cite{gurvich2015slow} analyzes a class of nim games where players remove one token each from up to (or exactly) $k$ non-empty piles.  \cite{sprague1935mathematische} and \cite{grundy1939mathematics} independently showed that every impartial game under normal play is equivalent to a Nim heap, leading to the Sprague–Grundy theorem. 

The works most closely related to ours are \cite{calkin2010computing,low2016notes,brown2019graphnim}, which study graph Nim games under the restriction that all edge weights are exactly one. These correspond to the unweighted variant of the graph Nim games considered in our paper. Specifically, \cite{calkin2010computing} analyzes such games on paths and caterpillars, \cite{low2016notes} extends the analysis to a broader class of graphs, and \cite{brown2019graphnim} provides a complete characterization of winning strategies and losing positions for all graphs with four vertices.

\cite{fukuyama2003nim,fukuyamanim2003,erickson2011game} study a variant called Nim on graphs, where a token is moved along adjacent edges and positive integer weights are removed from those edges. Like the game of Nim, the player making the last move wins. \cite{fukuyama2003nim} analyzes strategies for various graphs, including bipartite graphs and multigraphs, while \cite{erickson2011game} considers both unit and arbitrary weights on graphs such as complete graphs, the Petersen graph, hypercubes, and even cycles. 

Other variants include Bounded Nim  (\cite{schwartz1971some}), where moves are limited by an upper bound; CHOMP  (\cite{gale1974curious}), a 2-player grid game; $n$-player Nim extensions (\cite{li1978n}); poisoned chocolate bar games (\cite{robin1989poisoned}); Greedy Nim (\cite{albert2004nim}), where players choose the largest heap; and $k$-bounded Greedy Nim (\cite{lv2018greedy}), which restricts the number of tokens that can be removed from a heap per turn.
Recent works have introduced further variants of Nim.  \cite{duchene2016building} proposed a two-stage version where players first build stacks by placing tokens, then play Nim on the resulting configuration.  \cite{XU20181} introduced Bounded Greedy Nim, combining constraints from both Bounded and Greedy Nim, and provided a complete solution.  \cite{van2022nim} studied Nim under imperfect information, computing Nash equilibria for several setups. 

\section{The graphs studied, and the main results of this paper}\label{sec:graphs_main_results}
We begin by listing all distinct unlabeled graphs comprising $4$ edges and between $4$ and $8$ vertices (i.e.\ where $|E|=4$ and $4 \leqslant |V| \leqslant 8$). A graph consisting of more than $8$ vertices and precisely $4$ edges is bound to have at least one isolated vertex (i.e.\ a vertex with no neighbour), and hence, the analysis of our game on such a graph reduces to the analysis on the corresponding subgraph with all isolated vertices removed.

Although we have stated above that it suffices to consider all unlabeled distinct graphs, the vertices of each graph in Figures~\ref{Consolidating_all_graphs} and \ref{fig:main_graphs} have been labeled for ease of exposition (when we state our results pertaining to these graphs). We exclude all graphs that contain at least one isolated vertex, since such graphs can be reduced to graphs with smaller number of vertices by eliminating all isolated vertices anyway. Likewise, in the entirety of this paper, we assume that each edge of the graph under consideration receives an initial weight that is a \emph{strictly} positive integer, i.e.\ each initial weight configuration we consider is such that no edge of the graph we are concerned with is left with weight $0$. This is an important assumption that is to be borne in mind while reading and interpreting the main results.

The graphs that we study in this paper have all been illustrated in Figure~\ref{Consolidating_all_graphs}. 
\begin{figure}[h!]
  \centering
    \includegraphics[width=0.7\textwidth]{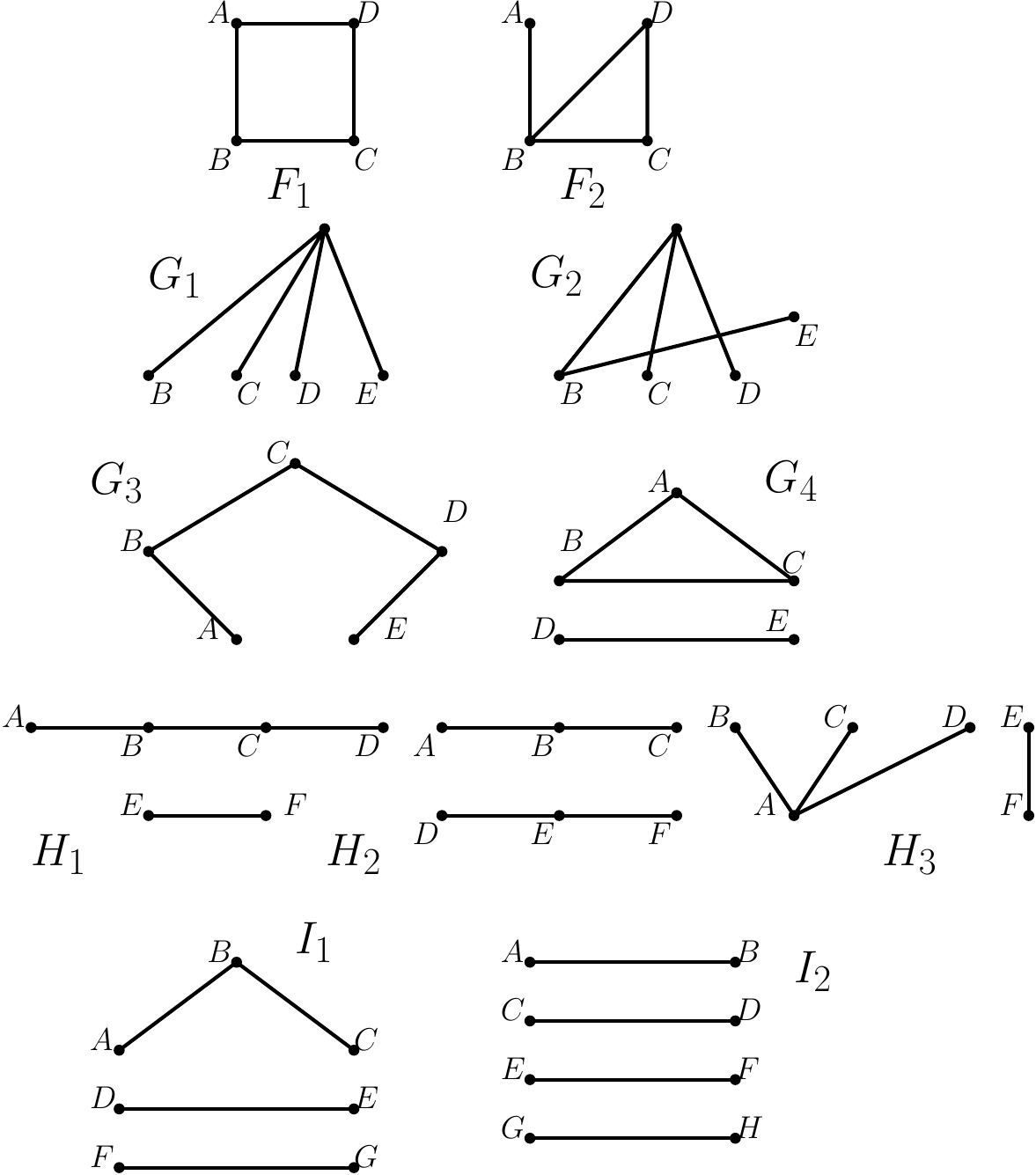}
\caption{All graphs with precisely $4$ edges and at most $8$ vertices}
  \label{Consolidating_all_graphs}
\end{figure}
Of these, the graphs $G_{1}$, $H_{2}$, $H_{3}$, $I_{1}$ and $I_{2}$ are all galaxy graphs, and the task of characterizing the winning and losing initial weight configurations on these graphs follows directly from Theorem~\ref{thm:galaxy}. Moreover, the result characterizing the winning and losing initial weight configurations on $F_{1}$ was proved in \cite{williams2017combinatorial}, and we have included it in our paper for the sake of completeness. 
\begin{theorem}\label{thm:all_galaxy_graphs}
We enumerate here our results concerning all of the galaxy graphs illustrated in Figure~\ref{Consolidating_all_graphs}:
\begin{enumerate}
\item \emph{Every} initial weight configuration on $G_{1}$, with the sum of all edge-weights being strictly positive, is winning.
\item An initial weight configuration $(w_{0}(AB),w_{0}(BC),w_{0}(DE),w_{0}(EF))$ on $H_{2}$ is losing if and only if $w_{0}(AB)+w_{0}(BC)=w_{0}(DE)+w_{0}(EF)$.
\item An initial weight configuration $(w_{0}(AB),w_{0}(AC),w_{0}(AD),w_{0}(EF))$ on $H_{3}$ is losing if and only if $w_{0}(AB)+w_{0}(AC)+w_{0}(AD)=w_{0}(EF)$.
\item An initial weight configuration $(w_{0}(AB),w_{0}(BC),w_{0}(DE),w_{0}(FG))$ on $I_{1}$ is losing if and only if the triple $(w_{0}(AB)+w_{0}(BC),w_{0}(DE),w_{0}(FG))$ is balanced.
\item An initial weight configuration $(w_{0}(AB),w_{0}(CD),w_{0}(EF),w_{0}(GH))$ on $I_{2}$ is losing if and only if the tuple $(w_{0}(AB),w_{0}(CD),w_{0}(EF),w_{0}(GH))$ is balanced. 
\end{enumerate}
\end{theorem}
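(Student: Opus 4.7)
The plan is to apply Theorem~\ref{thm:galaxy} in turn to each of the five galaxy graphs $G_{1}, H_{2}, H_{3}, I_{1}, I_{2}$, after identifying its decomposition into pairwise-vertex-disjoint star components and reading off the component-sums; the Nim-balance condition from Theorem~\ref{thm:Nim} then yields each characterization directly.

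For $G_{1}$ the decomposition is a single star ($K_{1,4}$), so the game reduces to a $1$-pile Nim with pile size equal to the total edge-weight. A singleton $(p_{1})$ is balanced if and only if $p_{1}=0$, which is excluded by the standing strict-positivity assumption on initial edge-weights, and hence every admissible configuration is winning. For $H_{2}$, the two paths of length two are stars centred at $B$ and $E$, giving piles $p_{1} = w_{0}(AB)+w_{0}(BC)$ and $p_{2} = w_{0}(DE)+w_{0}(EF)$. For $H_{3}$, the star at $A$ with rays $AB, AC, AD$ and the isolated edge $EF$ give piles $p_{1} = w_{0}(AB)+w_{0}(AC)+w_{0}(AD)$ and $p_{2} = w_{0}(EF)$. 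In both cases a one-line observation on $2$-pile Nim suffices: since each bit of $p_{1}$ and $p_{2}$ lies in $\{0,1\}$ and their sum must be even bit-by-bit, balance of $(p_{1},p_{2})$ is equivalent to $p_{1}=p_{2}$.

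The cases $I_{1}$ and $I_{2}$ require no additional ingredient. The graph $I_{1}$ decomposes as the star $A$--$B$--$C$ (centred at $B$) together with the isolated edges $DE$ and $FG$, yielding the $3$-pile game with piles $w_{0}(AB)+w_{0}(BC),\, w_{0}(DE),\, w_{0}(FG)$; and $I_{2}$ is already a pairwise-vertex-disjoint union of four edges, so its component-sums are just the four edge-weights themselves. In each case the stated balance condition is exactly the output of Theorem~\ref{thm:galaxy}.

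There is essentially no substantive obstacle here: the only \emph{non-trivial} step is the passage from ``$2$-tuple $(p_{1},p_{2})$ balanced'' to ``$p_{1}=p_{2}$'', which I would record as a short inline remark. The remaining content is the combinatorial bookkeeping of identifying each graph's star decomposition and its associated component-sums, after which Theorems~\ref{thm:galaxy} and~\ref{thm:Nim} do all the work; no new strategy or inductive argument is needed.
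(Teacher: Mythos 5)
Your proposal is correct and follows exactly the route the paper intends: the paper gives no separate proof of this theorem, stating only that the characterizations ``follow directly from Theorem~\ref{thm:galaxy},'' and your star decompositions, component-sums, and the observation that a $2$-tuple is balanced if and only if its two entries are equal are precisely the bookkeeping that justification requires.
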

\begin{theorem}[Theorem 2 of \cite{williams2017combinatorial}]\label{thm:F_{1}}
An initial weight configuration $\left(w_{0}(AB),w_{0}(BC),w_{0}(CD),w_{0}(DA)\right)$ on the graph $F_{1}$, illustrated in Figure~\ref{Consolidating_all_graphs}, is losing if and only if $w_{0}(AB)=w_{0}(CD)$ and $w_{0}(BC)=w_{0}(DA)$.
\end{theorem}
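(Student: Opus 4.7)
The plan is to prove both directions via explicit strategies, exploiting the fact that $F_{1}$ is the $4$-cycle $ABCDA$, so that the edges come in two \emph{opposite pairs}, $\{AB,CD\}$ and $\{BC,DA\}$, and the vertices come in two \emph{opposite pairs}, $\{A,C\}$ and $\{B,D\}$, with the property that any two opposite vertices are incident on disjoint pairs of opposite edges. I will call a configuration \emph{balanced} if $w(AB)=w(CD)$ and $w(BC)=w(DA)$.

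For the ``if'' direction, I will exhibit a mirroring strategy for $P_{2}$. Suppose the current configuration is balanced. If $P_{1}$ plays at a vertex $v$, removing $x$ from one edge incident on $v$ and $y$ from the other (with $x+y>0$), then $P_{2}$ responds at the opposite vertex of the $4$-cycle, removing exactly $x$ from the edge opposite to the first edge and $y$ from the edge opposite to the second. Because the configuration was balanced, the opposite edges had at least as much weight as required, so the move is legal; and after $P_{2}$'s move the configuration is balanced again. Since each round strictly decreases the total edge-weight, the game must end, and it ends on a move by $P_{2}$, who therefore wins.

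For the ``only if'' direction, I will show that from any unbalanced configuration, $P_{1}$ has a legal move to a balanced configuration. Write $w(AB)=a_{1}$, $w(CD)=a_{2}$, $w(BC)=b_{1}$, $w(DA)=b_{2}$. Depending on the signs of $a_{1}-a_{2}$ and $b_{1}-b_{2}$, the appropriate vertex to play at is determined:
\begin{itemize}
\item if $a_{1}\geqslant a_{2}$ and $b_{2}\geqslant b_{1}$, play at $A$, removing $a_{1}-a_{2}$ from $AB$ and $b_{2}-b_{1}$ from $DA$;
\item if $a_{2}\geqslant a_{1}$ and $b_{1}\geqslant b_{2}$, play at $C$, removing $a_{2}-a_{1}$ from $CD$ and $b_{1}-b_{2}$ from $BC$;
\item if $a_{1}\geqslant a_{2}$ and $b_{1}\geqslant b_{2}$, play at $B$, removing $a_{1}-a_{2}$ from $AB$ and $b_{1}-b_{2}$ from $BC$;
\item if $a_{2}\geqslant a_{1}$ and $b_{2}\geqslant b_{1}$, play at $D$, removing $a_{2}-a_{1}$ from $CD$ and $b_{2}-b_{1}$ from $DA$.
\end{itemize}
These four cases exhaust all possibilities, and in each the resulting configuration is balanced; moreover, the total amount removed is strictly positive precisely because the configuration was unbalanced to begin with. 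Combining this with the mirroring strategy of the previous paragraph (applied now to $P_{2}$ in the role of ``first player'' on the resulting balanced position), $P_{1}$ wins.

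I expect no real obstacle: the only thing to be careful with is the legality of the prescribed moves (ensuring the removed amounts are non-negative, lie within the current edge-weights, and sum to something strictly positive), which is immediate from the case analysis above and from the definition of balance. Together, these two strategies yield the characterization stated in Theorem~\ref{thm:F_{1}}.
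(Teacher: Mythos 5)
Your proof is correct and complete, but note that the paper does not actually prove Theorem~\ref{thm:F_{1}} at all: it is imported verbatim from \cite{williams2017combinatorial} ``for the sake of completeness,'' so there is no in-paper argument to compare against. Your argument is the natural self-contained one, and both halves check out. The mirroring strategy for the ``if'' direction is sound because each vertex of the $4$-cycle is incident on exactly one edge from each opposite pair $\{AB,CD\}$ and $\{BC,DA\}$, and the opposite vertex is incident on precisely the two complementary edges; balance guarantees the mirrored amounts $x$ and $y$ are available, and the total weight strictly decreases each round, so $P_{2}$ makes the last move. The ``only if'' direction correctly covers all four sign patterns of $\left(w_{0}(AB)-w_{0}(CD),\,w_{0}(BC)-w_{0}(DA)\right)$, each prescribing a vertex whose two incident edges are exactly the ones that need trimming, with strict positivity of the removed total following from unbalancedness. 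It is worth observing that your symmetry-based argument is cleaner than what the same outline would require on the superficially similar graph $F_{2}$: there the paper must resort to induction on $w_{0}(CD)$ (see \S\ref{sec:proof_4_vertices}), because $F_{2}$ lacks the opposite-vertex/opposite-edge pairing that makes the single-move mirroring response available on $F_{1}$. Your approach buys a proof with no induction and no case analysis on the opponent's move beyond reading off which vertex was played; the price is that it is specific to the $4$-cycle's symmetry and does not generalize to the other graphs treated in the paper.
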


Having stated Theorem~\ref{thm:all_galaxy_graphs} and Theorem~\ref{thm:F_{1}}, we now come to the statements of the main results of this paper, most of which necessitate far more involved proofs. Before we state these, we include here Figure~\ref{fig:main_graphs}, which illustrate, for the reader's convenience, only those graphs (namely, $F_{2}$, $G_{2}$, $G_{3}$, $G_{4}$ and $H_{1}$) with $4$ edges, and up to $8$ vertices, that are neither galaxy graphs nor those already explored in \cite{williams2017combinatorial}.
\begin{figure}[h!]
  \centering
    \includegraphics[width=0.7\textwidth]{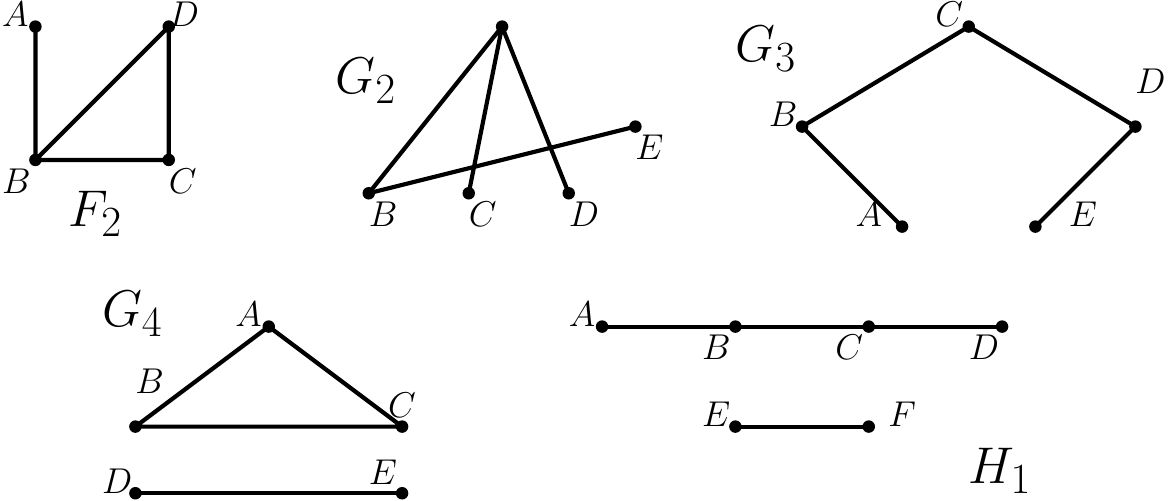}
\caption{The main non-galaxy graphs studied in this paper, with precisely $4$ edges and at most $8$ vertices}
  \label{fig:main_graphs}
\end{figure}
We continue by first stating our results pertaining to the graphs $F_{2}$, $G_{2}$ and $G_{3}$, as follows:
\begin{theorem}\label{thm:F_{2}}
An initial weight configuration $\left(w_{0}(AB),w_{0}(BC),w_{0}(CD),w_{0}(DB)\right)$ on the graph $F_{2}$, illustrated in Figure~\ref{fig:main_graphs}, is losing if and only if $w_{0}(BC)=w_{0}(DB)$ and $w_{0}(CD)=w_{0}(AB)+w_{0}(BC)$.
\end{theorem}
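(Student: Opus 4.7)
The plan is to identify the set of losing (P-)positions with the explicit set
\[
\mathcal{L} = \{(a,b,c,d) \in \mathbb{N}_{0}^{4} : b = d \text{ and } c = a+b\},
\]
where we abbreviate $a = w_{0}(AB)$, $b = w_{0}(BC)$, $c = w_{0}(CD)$, $d = w_{0}(DB)$. Since the terminal configuration $(0,0,0,0)$ lies in $\mathcal{L}$ and every legal move strictly decreases $a+b+c+d$, a standard backward induction on this total reduces the theorem to the two closure properties: (a) every legal move from a position in $\mathcal{L}$ lands outside $\mathcal{L}$, and (b) every position outside $\mathcal{L}$ admits a legal move into $\mathcal{L}$.

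For (a), I would dispose of each of the four vertex-choices in turn. A move at $A$ strictly decreases $a$ while preserving $b, c, d$, and so breaks $c = a + b$. A move at $B$ that reaches $\mathcal{L}$ would have to satisfy $b' = d'$ and $c = a' + b'$; combined with $b = d$, $c = a+b$, and the monotonicity constraints $a' \le a$, $b' \le b$, $d' \le b$, these equalities force $a' = a$, $b' = b$, $d' = b$, contradicting the positive-total-removal requirement. The cases of moves at $C$ and $D$ are analogous: the identities $d = b$ and $c = a+b$ force any contemplated restoration to be trivial.

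For (b), I would exhibit an explicit move into $\mathcal{L}$ by a short case analysis on $(a,b,c,d) \notin \mathcal{L}$. If $b \ne d$, say $b > d$ (the case $b < d$ follows from the automorphism of $F_{2}$ that swaps $C$ and $D$): when $c \ge a+d$, play at $C$, reducing $b$ to $d$ and $c$ to $a+d$, landing at $(a,d,a+d,d) \in \mathcal{L}$; when $c < a+d$, play at $B$ with the new values $b' = d' = \max(0, c-a)$ and $a' = c - b'$, landing at $(a',b',c,b') \in \mathcal{L}$. If $b = d$ but $c \ne a+b$: when $c > a+b$, play at $C$, reducing only $c$ down to $a+b$; when $c < a+b$, play at $B$ with the same formula $b' = d' = \max(0, c-a)$ and $a' = c - b'$. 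Direct substitution shows that the resulting position lies in $\mathcal{L}$ in each branch.

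The main obstacle is verifying legality of the moves at $B$ in the branches $c < a+d$ (when $b > d$) and $c < a+b$ (when $b = d$): each reduction must lie in $[0, \text{current weight}]$ and the total reduction must be strictly positive. The containment constraints $b' \le \min(b,c,d)$ and $a' \in [0,a]$ reduce, by direct substitution, to the case hypotheses $c - a < d$ and $c - a < b$ respectively. The positivity of the total reduction $(a-a')+(b-b')+(d-d') = a - c + b + d - b'$ is then settled by splitting on whether $c \le a$ or $c > a$: in the former sub-branch the total equals $a - c + b + d$, positive because either $b \ge 1$ (forced by $b > d \ge 0$) or $a > c$ (forced by $c < a+b$ when $b = 0$); in the latter it equals $2(a-c) + b + d$, positive because $2(c-a) < b + d$ in each relevant case.
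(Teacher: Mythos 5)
Your proof is correct, but it is organized quite differently from the paper's. The paper proves the losing direction by induction on $w_{0}(CD)$, and the inductive step is a two-ply argument: for every possible first move of $P_{1}$ from a configuration satisfying \eqref{F_{2}_losing_eq}, it explicitly constructs a response for $P_{2}$ that lands back in the losing set with a smaller $CD$-weight. You instead run the standard P-position verification on all of $\mathbb{N}_{0}^{4}$ with induction on the total weight, splitting the work into the two one-ply closure properties: no move from $\mathcal{L}$ stays in $\mathcal{L}$ (which, as you observe, follows almost for free from the rigidity of the constraints $c=a+b$, $b=d$ under coordinatewise decrease at a single vertex), and every position outside $\mathcal{L}$ has a move into $\mathcal{L}$. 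Your property (b) essentially reproduces the paper's ``winning direction'' moves (equalize at $C$ when $c$ is large enough, otherwise play at $B$), but your formulation of the $B$-move via $b'=d'=\max(0,c-a)$, $a'=c-b'$ is actually more robust than the paper's: the paper's prescribed target $w_{2}(AB)=k-j$ (resp.\ $w_{1}(AB)=k-i$) silently presumes $k\geqslant j$ (resp.\ $k\geqslant i$) and would call for a negative remaining weight when $c<d$, a corner your $\max$ handles cleanly. What your route buys is a shorter losing-direction argument, uniform treatment of configurations with zero edge-weights (which the paper's induction hypothesis, phrased with $i\in\mathbb{N}$, does not cleanly cover even though its own second-round responses can produce such configurations), and no need for a separately proved base case; what the paper's route buys is a proof written entirely in the language of explicit two-round play, consistent with the style used for the harder graphs later in the paper.
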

\begin{theorem}\label{thm:G_{2},G_{3}}
\emph{Every} initial weight configuration on each of the graphs $G_{2}$ and $G_{3}$, illustrated in Figure~\ref{fig:main_graphs}, is winning.
\end{theorem}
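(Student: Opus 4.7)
My plan for Theorem~\ref{thm:G_{2},G_{3}} is to exhibit, for every admissible initial weight configuration on each of $G_{2}$ and $G_{3}$, an explicit first move whose resulting position is a \emph{balanced galaxy configuration}; by Theorem~\ref{thm:galaxy}, such a position is losing, so this certifies every initial configuration as winning. Among the four-edge, five-vertex graphs with no isolated vertex (namely $K_{1,4}$, the path $P_{5}$, the fork tree $Y$ with a single vertex of degree $3$, and the triangle-plus-isolated-edge graph), the first is $G_{1}$ and the last is $G_{4}$, so $G_{2}$ and $G_{3}$ correspond to $P_{5}$ and $Y$ in some order. The common strategy is to zero out a carefully chosen bridge edge so that the remaining non-zero edges form a disjoint union of stars of equal total weight.

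For $P_{5}$, label the consecutive edges $e_{1}, e_{2}, e_{3}, e_{4}$ with weights $w_{1}, w_{2}, w_{3}, w_{4}$; the usable bridges are the two inner edges $e_{2}$ and $e_{3}$. I would split into four cases whose union is exhaustive:
\begin{itemize}
\item if $w_{1} + w_{2} \le w_{4}$, play at the endpoint of $e_{3}$ shared with $e_{4}$, zeroing $e_{3}$ and reducing $e_{4}$ to $w_{1} + w_{2}$;
\item if $w_{3} + w_{4} \le w_{1}$, use the symmetric move;
\item if $w_{1} \le w_{4} \le w_{1} + w_{2}$, play at the middle vertex of the path, zeroing $e_{3}$ and reducing $e_{2}$ to $w_{4} - w_{1}$;
\item if $w_{4} \le w_{1} \le w_{3} + w_{4}$, use the symmetric move.
\end{itemize}
In each case the non-zero edges form a galaxy with two components (an isolated edge and a two-ray star) having equal pile sums, hence a balanced configuration in the sense of Theorem~\ref{thm:galaxy}. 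Exhaustiveness follows by noting that when neither extreme case applies we have $w_{4} < w_{1} + w_{2}$ and $w_{1} < w_{3} + w_{4}$, so exactly one of the two middle sub-cases applies depending on whether $w_{1} \le w_{4}$ or $w_{4} \le w_{1}$.

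For $Y$, let $C$ be the degree-$3$ vertex, adjacent to $B, D, E$, and let $A$ be the pendant leaf adjacent to $B$; write $w_{1}, w_{2}, w_{3}, w_{4}$ for the weights on $AB, BC, CD, CE$. The distinguished bridge is $BC$, whose removal leaves the isolated edge $AB$ together with the star at $C$ with rays $CD, CE$. Two cases suffice: if $w_{1} \le w_{3} + w_{4}$, the first player plays at $C$, zeroes $BC$, and reduces $CD, CE$ so that their sum equals $w_{1}$; otherwise ($w_{1} > w_{3} + w_{4}$), the first player plays at $B$, zeroes $BC$, and reduces $AB$ to $w_{3} + w_{4}$. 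Either way, the non-zero edges form a two-component galaxy with equal pile sums, and is balanced by Theorem~\ref{thm:galaxy}.

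The main obstacle is verifying exhaustiveness of the four-case analysis for $P_{5}$ and confirming each prescribed move is admissible (residual weights non-negative integers, total weight removed a strictly positive integer). Admissibility is essentially automatic, since in every prescribed move an edge of strictly positive weight is zeroed out, contributing strictly positively to the total removal, and the requested values for the remaining edges lie in their admissible ranges precisely under the corresponding case hypothesis (for instance, choosing $w_{3}', w_{4}' \ge 0$ with $w_{3}' \le w_{3}$, $w_{4}' \le w_{4}$ and $w_{3}' + w_{4}' = w_{1}$ is feasible iff $w_{1} \le w_{3} + w_{4}$, which is exactly the hypothesis in Case~1 of the fork). The exhaustiveness verification for $P_{5}$ is the elementary observation recorded above.
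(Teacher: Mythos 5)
Your proposal is correct and takes essentially the same approach as the paper: for both the fork ($G_{2}$) and the path ($G_{3}$), the paper's first move likewise zeroes an inner/bridge edge and equalizes the two resulting star components to produce a balanced two-pile galaxy that is losing by Theorem~\ref{thm:galaxy}, with the same case split (your cases 1--2 and 3--4 for $P_{5}$ correspond exactly to the paper's cases for $G_{3}$, and your two cases for $Y$ to the paper's two cases for $G_{2}$).
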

The two graphs that prove to be the most challenging when it comes to the task of characterizing the winning and losing initial weight configurations on them, are $G_{4}$ and $H_{1}$. We dedicate \S\ref{subsubsec:G_{4}_main_results} to the statements of all of our results pertaining to the graph $G_{4}$, and we dedicate \S\ref{subsubsec:H_{1}_main_results} to the statements of all of our results pertaining to the graph $H_{1}$. While we have been able to provide a complete characterization of the winning and losing initial weight configurations on $G_{4}$, our findings regarding $H_{1}$ provide a partial, though significant, description of the winning and losing initial weight configurations on $H_{1}$.

\subsection{A complete characterization of the winning and losing configurations on $G_{4}$}\label{subsubsec:G_{4}_main_results}
We begin \S\ref{subsubsec:G_{4}_main_results} with some discussions on a couple of notions, as these are going to aid us in stating our results pertaining to the graph $G_{4}$ in a more concise manner than otherwise possible. We recall for the reader that by a \emph{multiset}, we mean an unordered collection of elements with repetitions allowed (in other words, a multiset would be reduced to a set if precisely one copy of each distinct element is retained). With a slight abuse of notation, which is nonetheless quite common in the literature, we express a multiset in the same manner as a set: by listing its (not necessarily distinct) elements within curly brackets. For example, $\{a, a, a, b, b, c\}$ represents a multiset in which the element $a$ appears three times, the element $b$ appears twice, and the element $c$ appears once. Given any element belonging to a multiset, the number of copies of this element that appear in that multiset is referred to as its \emph{repetition number}. Thus, the repetition number of $a$ in the multiset $\{a, a, a, b, b, c\}$ equals $3$, that of $b$ equals $2$ and that of $c$ equals $1$. We emphasize here that the ordering of the elements in a multiset is irrelevant: for instance, continuing with the same example as in the previous sentence, $\{a, a, a, b, b, c\}$ and $\{c, a, b, a, b, a\}$ represent the same multiset. The \emph{set obtained from a multiset} is simply the (unordered) collection of all distinct elements appearing in that multiset -- thus, the set obtained from the multiset $\{a,a,a,b,b,c\}$ is, simply, $\{a,b,c\}$. Two multisets are defined to be equal if and only if 
\begin{enumerate}
\item the sets obtained from these two multisets are the same,
\item for each distinct element, its repetition number in one multiset equals its repetition number in the other.
\end{enumerate}
Thus, the multisets $\{1,2,2,2,3,4,4,5,5,5,5\}$ is the same as the multiset $\{2,5,2,5,2,5,1,5,4,3,4\}$, since the set obtained from each of these multisets is $\{1,2,3,4,5\}$, and in either of these multisets, the repetition number of the elements $1$, $2$, $3$, $4$ and $5$ equal $1$, $3$, $1$, $2$ and $4$ respectively. On the other hand, the multiset $\{1,2,2,2,3,4,4,5,5,5,5\}$ does \emph{not} equal the multiset $\{1,2,2,3,3,4,4,5,5,\}$, even though the set obtained from each of these multisets is, once again, $\{1,2,3,4,5\}$, since the repetition number of each of the elements $2$, $3$ and $5$ in the latter equals $2$.
 
Next, we introduce the following definition:
\begin{defn}\label{defn:special}
Given $k, m, i \in \mathbb{N}$ and $\ell \in \mathbb{N}_{0}$ such that
\begin{equation}\label{M_G_{4}}
i \in \{1,2,\ldots,m+1\} \quad \text{and} \quad \frac{m(m+1)}{2} \leqslant k \leqslant \frac{m(m+3)}{2},
\end{equation}
we say that a multiset $S$, consisting of $3$ elements in total, is \emph{$(k,\ell,m,i)$-special} if $S$, as a multiset, equals $\{k+1+(m+1)\ell,k+i+(m+1)\ell,k+m+2-i+(m+1)\ell\}$. We call a multiset $S$ simply \emph{special} if $S$ is $(k,\ell,m,i)$-special for \emph{some} $k, m, i \in \mathbb{N}$ and $\ell \in \mathbb{N}_{0}$ satisfying \eqref{M_G_{4}}.
\end{defn}
Some examples may help elucidate this definition further. If $m$ is even, and $i\in\{2,\ldots,m/2,m/2+2,\ldots,m+1\}$, then the elements $k+1+(m+1)\ell$, $k+i+(m+1)\ell$ and $k+m+2-i+(m+1)\ell$ are all distinct, and the multiset $S$ containing these elements becomes simply a set; on the other hand, for $m$ even, we have $k+1+(m+1)\ell=k+i+(m+1)\ell$ if $i=1$ and we have $k+i+(m+1)\ell=k+m+2-i+(m+1)\ell$ if $i=m/2+1$, so that the multiset $S$ either becomes $\{k+1+(m+1)\ell,k+1+(m+1)\ell,k+m+1+(m+1)\ell\}$, or it becomes $\{k+1+(m+1)\ell,k+m+1+(m+1)\ell,k+m+1+(m+1)\ell\}$. If $m$ is odd, and $i>1$, once again the multiset $S$ becomes simply a set, whereas if $i=1$, the multiset $S$ becomes $\{k+1+(m+1)\ell,k+1+(m+1)\ell,k+m+1+(m+1)\ell\}$.

It is worthwhile to note here that, since $(m+1)(m+2)/2-1=m(m+3)/2$ for each $m \in \mathbb{N}$, for each $k \in \mathbb{N}$, there exists a \emph{unique} $m \in \mathbb{N}$ such that \eqref{M_G_{4}} holds, and the sets $\{k \in \mathbb{N}: m(m+1) \leqslant 2k \leqslant m(m+3)\}$, for all $m \in \mathbb{N}$, yields a \emph{partition} of the set $\mathbb{N}$. 

Our result, characterizing \emph{fully} the winning and losing initial weight configurations on $G_{4}$, is as follows:
\begin{theorem}\label{thm:main_G_{4}}
An initial weight configuration $(w_{0}(AB),w_{0}(BC),w_{0}(CA),w_{0}(DE))$ on $G_{4}$ is losing if and only if precisely one of the following two conditions holds:
\begin{enumerate}[label=(A\arabic*), ref=A\arabic*]
\item \label{G_{4}_losing_cond_1} \sloppy letting $w_{0}(DE) = k$, there exist $m, i \in \mathbb{N}$ and $\ell \in \mathbb{N}_{0}$ such that \eqref{M_G_{4}} is satisfied and $\{w_{0}(AB),w_{0}(BC),w_{0}(CA)\}$, as a multiset, is $(k,\ell,m,i)$-special;
\item \label{G_{4}_losing_cond_2} $w_{0}(DE)=w_{0}(AB)+w_{0}(BC)+w_{0}(CA)$, the edge-weights $w_{0}(AB)$, $w_{0}(BC)$ and $w_{0}(CA)$ are not all equal, and the multiset $\{w_{0}(AB),w_{0}(BC),w_{0}(CA)\}$ is \emph{not} special.
\end{enumerate}
\end{theorem}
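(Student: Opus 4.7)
Let me denote a configuration of $G_{4}$ as $(a,b,c,k)$ with $a=w_{0}(AB)$, $b=w_{0}(BC)$, $c=w_{0}(CA)$ and $k=w_{0}(DE)$. Since $G_{4}$ is the vertex-disjoint union of the triangle on $\{A,B,C\}$ and the single edge $DE$, moves on $D$ or $E$ affect only $k$ (exactly as in a Nim pile of size $k$), whereas moves on $A$, $B$ or $C$ affect only two of the edge-weights $a,b,c$ (and strictly decrease their sum). The plan is to denote by $L$ the set of configurations satisfying exactly one of (\ref{G_{4}_losing_cond_1}) or (\ref{G_{4}_losing_cond_2}), and to prove by strong induction on $a+b+c+k$ that $L$ coincides with the set of losing configurations. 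A preliminary observation is that (\ref{G_{4}_losing_cond_1}) and (\ref{G_{4}_losing_cond_2}) are automatically mutually exclusive: any $(k,\ell,m,i)$-special multiset has every element at least $k+1$, forcing $a+b+c\geqslant 3k+3>k$, which contradicts the equality $k=a+b+c$ demanded by (\ref{G_{4}_losing_cond_2}).

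The inductive step has two halves. For the \emph{closure} half, one must verify that from any $(a,b,c,k)\in L$, every legal move leaves $L$. Here I would split cases by the type of move. For a move on $D$ or $E$, which replaces $k$ by some $k'<k$ but leaves $\{a,b,c\}$ fixed, the uniqueness of the index $m$ satisfying \eqref{M_G_{4}} for a given $k$ (and the parity/arithmetic constraints baked into the definition of special) show that $\{a,b,c\}$ cannot simultaneously be $(k,\ell,m,i)$-special and $(k',\ell',m',i')$-special for $k'<k$; a similar check rules out landing in (\ref{G_{4}_losing_cond_2}) because the sum $a+b+c$ is unchanged. For a move on $A$, $B$ or $C$, which preserves $k$ but strictly drops some pair among $a,b,c$, one uses the explicit form of the special triple to verify directly that no single-vertex move on the triangle can take one special multiset into another for the same $k$, nor into the (A2)-regime.

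The \emph{absorbing} half requires showing that every configuration outside $L$ (with strictly positive total weight) admits a move into $L$. I would dichotomize by the sign of $a+b+c-k$. When $a+b+c>k$ and the triangle weights are not all equal, the natural attempt is a triangle move reducing $a+b+c$ to exactly $k$ while keeping the resulting triple non-constant and non-special, which lands in (\ref{G_{4}_losing_cond_2}); when this attempt is obstructed by the triangle geometry (for example, if every reduction of the correct size collapses the triple to all-equal or to a special multiset), one instead seeks a triangle move or a move on $DE$ producing a $(k'',\ell,m,i)$-special triple, relying on the fact that the ranges $\{k'':m(m+1)/2\leqslant k''\leqslant m(m+3)/2\}$ for $m\in\mathbb{N}$ partition $\mathbb{N}$ and on the freedom in the shift parameter $\ell$. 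When $a+b+c\leqslant k$, the only productive route is a move on $DE$: reducing $k$ to $k'=a+b+c$ lands in (\ref{G_{4}_losing_cond_2}) provided the triangle is non-constant and non-special for $k'$, and otherwise one chooses $k'$ (necessarily smaller than $\min(a,b,c)$) so that $\{a,b,c\}$ becomes $(k',\ell,m,i)$-special, which by the partition property pins down unique candidate values of $k'$ to test.

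The main obstacle, unsurprisingly, is the absorbing half, and in particular the delicate case where a straightforward move to (\ref{G_{4}_losing_cond_2}) is blocked by the triangle being forced into an excluded (all-equal or special) shape. The payoff of the rather intricate definition of ``special'' is precisely that it provides a second family of target losing positions dense enough for these corner cases to be rescuable; the bulk of the proof will therefore consist in showing that the arithmetic degrees of freedom $(\ell,m,i)$, together with the partition of $\mathbb{N}$ by the index $m$, always suffice to produce a reachable target. I anticipate the verification of this sufficiency to involve several arithmetic subcases distinguished by the multiplicities of $a,b,c$ and by the residue of $k$ modulo $m+1$, and this is where the bookkeeping will be heaviest.
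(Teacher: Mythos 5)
Your framework is the standard P-position verification (show the candidate set is closed under ``every move exits'' and ``every non-member has a move in''), which is a legitimate reorganization of what the paper does in its five steps \eqref{lem:G_{4}_winning_1}--\eqref{thm:G_{4}_winning_2}. However, as written it has a concrete, fatal gap: your set $L$ consists only of configurations satisfying \eqref{G_{4}_losing_cond_1} or \eqref{G_{4}_losing_cond_2}, and this is \emph{not} the full set of losing positions of the game tree, because play can push edge-weights to zero and land on subgraphs whose losing positions are governed by Theorem~\ref{triangle_losing} and Theorem~\ref{thm:galaxy} rather than by \eqref{G_{4}_losing_cond_1}/\eqref{G_{4}_losing_cond_2}. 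The cleanest counterexample is $(a,b,c,k)=(2,2,2,1)$: it lies outside $L$ (no $(1,\ell,1,i)$-special multiset is constant, and $1\neq 6$), so it is winning and your absorbing half must produce a move into $L$. But every $DE$-move leads to $(2,2,2,0)$, which satisfies neither condition ($k=0$ is excluded from \eqref{G_{4}_losing_cond_1} and $0\neq 6$ for \eqref{G_{4}_losing_cond_2}), and every triangle move leaves a triple with all entries at most $2$ and sum at least $2$, which can be neither $\{2+2\ell,2+2\ell,3+2\ell\}$ nor of sum $1$. The unique winning move is to $(2,2,2,0)$, whose losing-ness is certified by Theorem~\ref{triangle_losing}, not by membership in $L$. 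The same failure recurs throughout: the winning move in the middle range $\min\{a,b,c\}\leqslant k\leqslant a+b+c-\min\{a,b,c\}$ annihilates a triangle edge and balances the resulting galaxy, and the terminal position is likewise outside $L$. To repair the induction you must enlarge $L$ to include the balanced galaxy positions and the all-equal-triangle-with-$k=0$ positions (equivalently, run the induction only over all-positive configurations and discharge the boundary cases by the subgraph theorems, as the paper does).

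Beyond this, the proposal defers essentially all of the actual content. The paper's losing directions are a triple induction on $(m,k,\ell)$ for \eqref{G_{4}_losing_cond_1} and an induction on $w_{0}(DE)$ for \eqref{G_{4}_losing_cond_2}, each requiring an explicit second-player response to every class of first move; and the corner cases you flag as ``where the bookkeeping will be heaviest'' are resolved in the paper by Lemma~\ref{lem:nec_suff}, the characterization of special multisets by the inequality $2w_{0}(AB)>\left(w_{0}(BC)+w_{0}(CA)-2w_{0}(AB)\right)\left(w_{0}(BC)+w_{0}(CA)-2w_{0}(AB)+1\right)$, together with a monotonicity argument showing that shrinking the minimum entry while fixing the other two can never create specialness. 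Your plan does not anticipate any such tool, and without it the step ``reduce the sum to $k$ while keeping the triple non-special'' is unsubstantiated. So the proposal is a plausible outline of the right kind of argument, but it is not yet a proof, and its stated inductive invariant is false as formulated.
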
 

\subsection{A partial characterization of the winning and losing configurations on $H_{1}$}\label{subsubsec:H_{1}_main_results}
The most challenging of all the graphs we have dealt with in this paper is $H_{1}$, and we are able to obtain only a partial, albeit \emph{significant}, characterization of the winning and losing initial weight configurations on $H_{1}$. The nature of our findings itself speaks for the difficulty that one faces as one attempts to generalize the patterns we have observed so far, and we include a brief discussion following Theorem~\ref{thm:H_{1}_losing} to further reinforce our belief that a \emph{full} characterization of the winning and losing configurations on $H_{1}$ would be a commendable, and possibly quite arduously accomplished, feat. Since our findings encompass \emph{several} different possible scenarios, we have included, for the convenience of the reader, a flowchart that summarizes these findings, in Figure~\ref{fig:flowchart_H_{1}} at the end of \S\ref{subsubsec:H_{1}_main_results}.

We begin by enumerating a few results pertaining to the winning initial weight configurations on $H_{1}$:
\begin{lemma}\label{lem:H_{1}_winning_1}
An initial weight configuration on $H_{1}$ is winning whenever it satisfies at least one of 
\begin{align}
{}&w_{0}(AB)\leqslant w_{0}(EF)\leqslant w_{0}(AB)+w_{0}(BC),\label{eq:H_{1}_winning_1}\\
{}&w_{0}(CD)\leqslant w_{0}(EF)\leqslant w_{0}(BC)+w_{0}(CD).\label{eq:H_{1}_winning_2}
\end{align}
\end{lemma}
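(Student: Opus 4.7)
The plan is to show that, under either of the hypothesized conditions, $P_{1}$ has an explicit legal move that reduces one of the end-edges of the path $A - B - C - D$ in $H_{1}$ to weight zero, leaving a configuration whose non-zero edges form a galaxy graph; an appeal to Theorem~\ref{thm:galaxy} will then identify the resulting position as losing for $P_{2}$.

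Setting $a = w_{0}(AB)$, $b = w_{0}(BC)$, $c = w_{0}(CD)$, $e = w_{0}(EF)$, first consider the case \eqref{eq:H_{1}_winning_1}, i.e.\ $a \leqslant e \leqslant a + b$. I would have $P_{1}$ play at vertex $C$, removing all $c$ units of weight from edge $CD$ and removing $b - (e - a)$ units from edge $BC$. The legality of this move is a one-line check: both quantities removed are nonnegative by the hypothesis ($e \leqslant a + b$ gives $b \geqslant e - a$), and the total weight removed equals $a + b + c - e \geqslant c \geqslant 1$, where the last inequality uses the paper's standing assumption that the initial weight $c$ is strictly positive. The resulting configuration is $(a,\, e - a,\, 0,\, e)$; because $CD$ now carries no weight, the effective graph consists only of $AB$, $BC$, and $EF$, which form a galaxy graph --- a star at $B$ with rays $AB$ and $BC$, together with the isolated edge $EF$. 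Theorem~\ref{thm:galaxy} then tells me that the position is losing iff the $2$-tuple $\bigl(a + (e - a),\, e\bigr) = (e, e)$ is balanced, and two equal positive integers are trivially balanced in the sense of Theorem~\ref{thm:Nim}.

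The second case \eqref{eq:H_{1}_winning_2}, namely $c \leqslant e \leqslant b + c$, is handled by the mirror-image move: $P_{1}$ plays at vertex $B$, zeroing out $AB$ and removing $b - (e - c)$ from $BC$, leaving the configuration $(0,\, e - c,\, c,\, e)$. The argument is identical --- with $AB$ removed from active play, the star at $C$ with rays $BC$ and $CD$ together with $EF$ is again a galaxy graph, and the two heap-sums are both equal to $e$, so Theorem~\ref{thm:galaxy} once more delivers a losing position for $P_{2}$.

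I do not foresee a genuine obstacle in this argument: the only conceptual step is to realize that annihilating the correct end-edge of the path $A - B - C - D$ collapses $H_{1}$'s non-galaxy structure into a galaxy graph, after which Theorem~\ref{thm:galaxy} does all the remaining work. The intervals \eqref{eq:H_{1}_winning_1} and \eqref{eq:H_{1}_winning_2} are precisely what is needed to make the prescribed moves legal \emph{and} simultaneously force the two resulting Nim-heap sizes to coincide.
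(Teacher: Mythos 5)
Your proposal is correct and is essentially the proof given in the paper: the paper also has $P_{1}$ play at $C$, remove the entire edge $CD$ and remove weight $w_{0}(AB)+w_{0}(BC)-w_{0}(EF)$ from $BC$, reducing to a galaxy graph with two heap-sums both equal to $w_{0}(EF)$, and then invokes Theorem~\ref{thm:galaxy} (the paper handles the second case by the $AB\leftrightarrow CD$ symmetry rather than writing out the mirror move, but that is the same argument). Your legality check of the total weight removed is slightly more explicit than the paper's, which is fine.
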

A less obvious criterion that guarantees that a weight configuration on $H_{1}$ is winning is captured by our next result. Before we state it, we recall, for the reader, that any base-$2$ representation of $x \in \mathbb{N}$ is written as $x=(a_{r}a_{r-1}\ldots a_{1}a_{0})_{2}$, so that $x=\sum_{i=0}^{r}a_{i}2^{i}$, and wherever needed, we allow for $a_{r}$ to be equal to $0$ (for instance, $7=(111)_{2}$, but if needed, we may also write $7=(0111)_{2}$).
\begin{theorem}\label{thm:H_{1}_winning_1}
Let $w_{0}(EF)=(a_{s}a_{s-1}\ldots a_{1}a_{0})_{2}$, $w_{0}(AB)=(b_{s}b_{s-1}\ldots b_{1}b_{0})_{2}$ and $w_{0}(CD)=(c_{s}c_{s-1}\ldots c_{1}c_{0})_{2}$, where $s$ is chosen such that at least one of $a_{s}$, $b_{s}$ and $c_{s}$ equals $1$. We define $I$ to be the maximum element of the set $S$, where $S=\{i \in \{0,1,\ldots,s\}:a_{i}+b_{i}+c_{i} \text{ is odd}\}$, provided $S$ is non-empty. The initial weight configuration $(w_{0}(AB),w_{0}(BC),w_{0}(CD),w_{0}(EF))$ is winning whenever 
\begin{enumerate}[label=(B\arabic*), ref=B\arabic*]
\item $w_{0}(BC)\geqslant 1$ and $S$ is empty, 
\item \label{A2} or $w_{0}(BC) \geqslant 0$ and $S$ is non-empty and at least one of $b_{I}$ and $c_{I}$ equals $1$.
\end{enumerate}
\end{theorem}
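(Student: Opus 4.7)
The plan is to exhibit, in each case, an explicit first move for $P_{1}$ that leaves $P_{2}$ facing a losing configuration, in the spirit of the classical Bouton strategy behind Theorem~\ref{thm:Nim}. The key preliminary observation is the following \emph{reduction}: any configuration on $H_{1}$ with $w(BC)=0$ and with $(w(AB),w(CD),w(EF))$ balanced in the sense of Theorem~\ref{thm:Nim} is losing. Indeed, once the weight of $BC$ has been reduced to $0$ it can never be increased, so any subsequent move at vertex $B$ (resp.\ $C$) is forced to remove $0$ from $BC$ and a strictly positive amount from $AB$ (resp.\ $CD$), while moves at $A$, $D$, $E$, $F$ each affect a single edge. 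Move for move, the residual game on $H_{1}$ is then identical to the three-pile Game of Nim on piles $AB$, $CD$ and $EF$; since a balanced three-pile Nim is losing for the player to move by Theorem~\ref{thm:Nim}, so is the configuration in question.

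For case (B1), the assumption $S=\emptyset$ is equivalent to $(w_{0}(AB),w_{0}(CD),w_{0}(EF))$ being balanced, and $w_{0}(BC)\geqslant 1$ by hypothesis. The player $P_{1}$ plays at vertex $B$, removing the entire weight $w_{0}(BC)$ from edge $BC$ and nothing from $AB$: this is a legal move since the total removed, $w_{0}(BC)$, is a strictly positive integer. The resulting configuration satisfies the hypotheses of the reduction, so $P_{2}$ now faces a losing position.

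For case (B2), assume without loss of generality that $b_{I}=1$; the case $b_{I}=0$, $c_{I}=1$ is handled symmetrically by playing at vertex $C$ and reducing $CD$ in place of $AB$. Set $w':=w_{0}(CD)\oplus w_{0}(EF)$. A short bitwise check shows $w'<w_{0}(AB)$: the maximality of $I$ forces $a_{j}\oplus b_{j}\oplus c_{j}=0$ for $j>I$, so the $j$-th bit of $w'$ equals $a_{j}\oplus c_{j}=b_{j}$ for such $j$, while at bit $I$ the bit of $w'$ is $a_{I}\oplus c_{I}=0$ whereas $b_{I}=1$; comparing bits from the top down, $w'$ and $w_{0}(AB)$ first differ at position $I$, where $w'$ has a $0$ and $w_{0}(AB)$ has a $1$. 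The player $P_{1}$ now plays at vertex $B$, removing $w_{0}(AB)-w'$ from $AB$ and the whole of $w_{0}(BC)$ from $BC$; since $w_{0}(AB)-w'>0$, the total removed is strictly positive and the move is legal. The post-move configuration $(w',0,w_{0}(CD),w_{0}(EF))$ satisfies $w'\oplus w_{0}(CD)\oplus w_{0}(EF)=0$ by construction, and the reduction once more places $P_{2}$ in a losing position.

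The only mildly technical point is the strict inequality $w'<w_{0}(AB)$ (and its $CD$ analogue), which is the standard Nim arithmetic and is really where the hypothesis ``$b_{I}=1$ or $c_{I}=1$'' is used; it fails precisely when neither holds, in which case the Nim-balancing move would have to come from the $EF$ pile, but reducing $EF$ cannot be combined with zeroing $BC$ in a single vertex move. This is why the complementary case (where only $a_{I}=1$) is genuinely harder and is not covered by the present theorem. The conceptual heart of the argument is therefore the observation that sacrificing the middle edge $BC$ on the opening move collapses $H_{1}$ to a three-pile Nim, at which point Theorem~\ref{thm:Nim} takes over.
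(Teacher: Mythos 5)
Your proposal is correct and follows essentially the same route as the paper: in both cases $P_{1}$ zeroes out $BC$ at vertex $B$ (simultaneously trimming $AB$ down to $w_{0}(CD)\oplus w_{0}(EF)$ in case (B2), which is exactly the paper's $\sum_{i}e_{i}2^{i}$), reducing the game to a balanced three-pile position on the galaxy graph $\{AB,CD,EF\}$ that is losing by Theorem~\ref{thm:galaxy}. Your strict-inequality check $w'<w_{0}(AB)$ and the observation that the hypothesis $b_{I}=1$ or $c_{I}=1$ is precisely what makes the balancing move compatible with removing $BC$ match the paper's argument.
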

Before we proceed with our next results, we set down the following notation which will be used repeatedly in the sequel: given $k \in \mathbb{N}$, we let $f(k)$ denote the unique non-negative integer that satisfies the inequalities $2^{f(k)}\leqslant k < 2^{f(k)+1}$. 
\begin{lemma}\label{prop:H_{1}_winning_1}
Any initial weight configuration $(w_{0}(AB),w_{0}(BC),w_{0}(CD),w_{0}(EF))$ on $H_{1}$ is winning as long as $w_{0}(EF)=k$, $w_{0}(AB)=2^{f(k)+1}m_{1}+\ell_{1}$ and $w_{0}(CD)=2^{f(k)+1}m_{2}+\ell_{2}$, for any $k \in \mathbb{N}$, any $m_{1}, m_{2} \in \mathbb{N}_{0}$, and any $\ell_{1}, \ell_{2} \in \{0,1,\ldots,2^{f(k)+1}-1\}$, such that 
\begin{enumerate*}
\item either $m_{1}\neq m_{2}$ 
\item or $\min\{\ell_{1},\ell_{2}\} \geqslant k$ 
\item or $k \in \{\ell_{1},\ell_{2}\}$ and either $\min\{\ell_{1},\ell_{2}\}>0$ or $w_{0}(BC)=0$. 
\end{enumerate*}
\end{lemma}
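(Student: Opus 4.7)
The plan is to exhibit, in each of the three cases listed in the hypothesis, an explicit winning move for $P_{1}$. Case~(i) will follow as a direct consequence of the sufficient criterion supplied by Theorem~\ref{thm:H_{1}_winning_1}, whereas cases~(ii) and (iii) will require $P_{1}$ to execute a specific move whose resulting weight tuple is losing for $P_{2}$, which we validate against the partial characterization of losing positions on $H_{1}$ given in Theorem~\ref{thm:H_{1}_losing}.

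For case~(i), I would argue that the hypothesis $m_{1} \neq m_{2}$ already places us inside the regime covered by condition~(\ref{A2}) of Theorem~\ref{thm:H_{1}_winning_1}. Since $w_{0}(EF) = k < 2^{f(k)+1}$, all bits of $w_{0}(EF)$ at positions $\geq f(k)+1$ are zero, whereas the bits of $w_{0}(AB)$ and $w_{0}(CD)$ at these positions coincide with the bits of $m_{1}$ and $m_{2}$, respectively. Letting $I$ denote the highest position at which $m_{1}$ and $m_{2}$ disagree, we have $I \geq f(k)+1$, so that, in the notation of Theorem~\ref{thm:H_{1}_winning_1}, $e_{I} = 0$ while exactly one of $b_{I}, c_{I}$ equals $1$; hence $I \in S$, and $I = \max S$ because all bits strictly above $I$ vanish in each of $w_{0}(EF), w_{0}(AB), w_{0}(CD)$. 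Condition~(\ref{A2}) is then satisfied, and the configuration is winning.

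For cases~(ii) and (iii), case~(i) allows me to assume $m_{1} = m_{2} =: m$. In case~(ii), the inequalities $\ell_{1} \geq k$ and $\ell_{2} \geq k$ render the following move legal: $P_{1}$ plays at vertex $B$, reducing $w_{0}(AB)$ by exactly $k$—from $2^{f(k)+1}m + \ell_{1}$ to $2^{f(k)+1}m + \ell_{1} - k$—while leaving $w_{0}(BC)$ untouched. The plan is to verify that the post-move tuple $(2^{f(k)+1}m + \ell_{1} - k,\, b,\, 2^{f(k)+1}m + \ell_{2},\, k)$ matches one of the losing sub-families of Theorem~\ref{thm:H_{1}_losing}. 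For case~(iii), I assume without loss of generality that $\ell_{1} = k$; then, when $\min\{\ell_{1}, \ell_{2}\} > 0$, $P_{1}$ plays at $B$ to reduce $w_{0}(AB)$ from $2^{f(k)+1}m + k$ down to $2^{f(k)+1}m$, and I again invoke Theorem~\ref{thm:H_{1}_losing} to check that the resulting tuple $(2^{f(k)+1}m,\, b,\, 2^{f(k)+1}m + \ell_{2},\, k)$ is losing. In the complementary subcase $w_{0}(BC) = 0$, the graph $H_{1}$ becomes an effective galaxy graph consisting of the three edges $AB, CD, EF$, and Theorem~\ref{thm:galaxy} (together with Theorem~\ref{thm:Nim}) identifies the requisite winning move, once the Nim sum $w_{0}(AB) \oplus w_{0}(CD) \oplus w_{0}(EF)$ is checked to be non-zero under the remaining hypotheses of case~(iii).

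The principal obstacle will be the pattern-matching step within cases~(ii) and (iii): verifying that each exhibited post-move configuration satisfies the intricate bit-level conditions defining the losing sub-families of Theorem~\ref{thm:H_{1}_losing}. This verification naturally splits into multiple sub-cases determined by the binary expansions of $\ell_{1}, \ell_{2}, k$, the agreement or disagreement of bit patterns at various positions, and the magnitude and parity of $w_{0}(BC)$. Particularly delicate are the boundary instances in which $\ell_{1}$ or $\ell_{2}$ attains an extremal value (such as $0$ or $2^{f(k)+1}-1$), or in which $\ell_{1} = k$ coincides with $\ell_{2} = 0$; in these cases one must simultaneously confirm both the legality of $P_{1}$'s chosen move and the correct structural form of the resulting tuple for the application of Theorem~\ref{thm:H_{1}_losing}.
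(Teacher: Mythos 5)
Your case~(i) is essentially the paper's argument and is fine (modulo the minor slip that the bits of $m_{1}$ and $m_{2}$ above their highest disagreement need only \emph{agree}, not vanish; the conclusion $I=\max S$ still holds). Cases~(ii) and~(iii), however, contain genuine errors: the moves you propose do not land in losing positions. In case~(ii), reducing $AB$ by $k$ leaves $\left(2^{f(k)+1}m+\ell_{1}-k,\ w_{0}(BC),\ 2^{f(k)+1}m+\ell_{2},\ k\right)$, which matches none of the families of Theorem~\ref{thm:H_{1}_losing} (those require $w_{0}(BC)$ and the residues of $AB$, $CD$ modulo $2^{f(k)+1}$ to satisfy exact relations such as $\ell_{2}=k-r-s$ with $s=w_{0}(BC)$ and $r\in\{0,1,2,3,4\}$, which your tuple violates since $\ell_{2}\geqslant k$). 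Indeed, for $k=1$, $\ell_{1}=\ell_{2}=1$ your move produces $(2m,\,b,\,2m+1,\,1)$, which is \emph{winning} for the opponent by Theorem~\ref{thm:H_{1}_winning_1} (the set $S$ there is empty). The paper instead disposes of case~(ii) with no explicit move at all: since $\ell_{1},\ell_{2}\geqslant k$ forces the bit at position $f(k)$ of each of $k$, $\ell_{1}$, $\ell_{2}$ to equal $1$, the index $I$ of Theorem~\ref{thm:H_{1}_winning_1} equals $f(k)$ with $b_{I}=c_{I}=1$, and condition~\eqref{A2} applies directly. In case~(iii) your move (playing at $B$, reducing $AB$ from $2^{f(k)+1}m+k$ to $2^{f(k)+1}m$) leaves $BC$ and $CD$ untouched; the resulting tuple is losing only if $w_{0}(BC)=k-\ell_{2}$ exactly, which is not hypothesized, and when $w_{0}(BC)>k$ it is outright winning by Lemma~\ref{lem:H_{1}_winning_2}. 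The correct move is at the vertex $C$: remove weight $\ell_{2}$ from $CD$ \emph{and} the entire edge $BC$, leaving the balanced galaxy triple $\left(2^{f(k)+1}m+k,\,2^{f(k)+1}m,\,k\right)$, which loses by Theorem~\ref{thm:galaxy}.

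Two further points. First, your plan to certify the post-move positions via Theorem~\ref{thm:H_{1}_losing} is circular in this paper's architecture: the inductive proofs of every losing family in Theorem~\ref{thm:H_{1}_losing} (beginning with \S\ref{subsec:proof_eq_H_{1}_losing_0}) repeatedly invoke Lemma~\ref{prop:H_{1}_winning_1}, so this lemma must be established independently, as the paper does, using only Theorem~\ref{thm:H_{1}_winning_1} and Theorem~\ref{thm:galaxy}. Second, in the residual subcase of~(iii) you would check the Nim-sum $w_{0}(AB)\oplus w_{0}(CD)\oplus w_{0}(EF)$ and find it equal to $\left(2^{f(k)+1}m+k\right)\oplus 2^{f(k)+1}m\oplus k=0$ when $\ell_{2}=0$; this reflects a discrepancy between the lemma's statement (which reads $w_{0}(BC)=0$) and the paper's own proof of that subcase (which assumes $w_{0}(BC)>0$ so that the move at $C$ removes positive total weight), so your instinct to verify the Nim-sum there is sound, but it does not rescue the moves proposed in~(ii) and the first part of~(iii).
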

So far, we have only enumerated results pertaining to winning configurations on $H_{1}$. Our next result is a lengthy one, summarizing the various losing configurations on $H_{1}$ that we have, so far, been able to identify:
\begin{theorem}\label{thm:H_{1}_losing}
In what follows, we set $w_{0}(EF)=k$ for some $k \in \mathbb{N}$, and we let $m \in \mathbb{N}_{0}$ (making sure that none of the initial edge-weights equals $0$). Each of the following is true:
\begin{enumerate}%[label=(C\arabic*), ref=C\arabic*]
\item \label{B1} Any initial weight configuration on $H_{1}$ that is of the form
\begin{equation}
\{w_{0}(AB),w_{0}(CD)\}=\{2^{f(k)+1}m+r,2^{f(k)+1}m+k-r-s\} \quad \text{and} \quad w_{0}(BC)=s,\label{eq:H_{1}_losing_0,1,3}
\end{equation}
for $r \in \{0,1,3\}$ and $s \in \{1,2,\ldots,k-2r\}$, is losing on $H_{1}$. 
\item \label{B2} Any initial weight configuration on $H_{1}$ that is of the form
\begin{equation}
\{w_{0}(AB),w_{0}(CD)\}=\{2^{f(k)+1}m+r,2^{f(k)+1}m+k+r-s\} \quad \text{and} \quad w_{0}(BC)=s,\label{eq:H_{1}_losing_2,4_case_1}
\end{equation}
with $r \in \{2,4\}$, $s \in \{1,\ldots,r-1\}$ and $k \equiv j \bmod 2^{f(r)+1}$ for some $j \in \{s,s+1,\ldots,r-1\}$, is losing on $H_{1}$.
\item \label{B3} \sloppy Any initial weight configuration on $H_{1}$ that is of the form
\begin{equation}
\{w_{0}(AB),w_{0}(CD)\}=\{2^{f(k)+1}m+r,2^{f(k)+1}m+k-r-s\} \quad \text{and} \quad w_{0}(BC)=s,\label{eq:H_{1}_losing_2,4_case_2}
\end{equation}
with $r \in \{2,4\}$, $s \in \{1,\ldots,r-1\}$ and $k \geqslant 3r$ with $k \equiv j \bmod 2^{f(r)+1}$ for some $j \in \left\{0,1,\ldots,2^{f(r)+1}-1\right\}\setminus\{s,s+1,\ldots,r-1\}$, is losing on $H_{1}$.
\item \label{B4} Any initial weight configuration on $H_{1}$ that is of the form
\begin{equation}
\{w_{0}(AB),w_{0}(CD)\}=\{2^{f(k)+1}m+r,2^{f(k)+1}m+k-r-s\} \quad \text{and} \quad w_{0}(BC)=s,\label{eq:H_{1}_losing_2,4_case_3}
\end{equation}
with $r \in \{2,4\}$, $s \in \{r,r+1,\ldots,k-2r\}$ and $k\geqslant 3r$, is losing on $H_{1}$.
\end{enumerate}
\end{theorem}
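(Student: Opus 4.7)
The plan is to prove each of (1)--(4) by strong induction on the total edge-weight $W = w_0(AB) + w_0(BC) + w_0(CD) + w_0(EF)$, with the induction hypothesis being that every smaller-$W$ configuration fitting one of the four templates (1)--(4) is losing. For the inductive step, I would fix a configuration $\mathcal{C}$ satisfying the hypothesis of one of (1)--(4) and exhaustively enumerate the moves available to $P_{1}$. Since the edge $EF$ is disconnected from the path $A$-$B$-$C$-$D$ in $H_{1}$, these moves partition into: (i) removing a positive amount from $EF$; (ii) removing from $AB$ alone (at vertex $A$, or at $B$ with nothing taken from $BC$); (iii) removing from $CD$ alone; (iv) removing from $BC$ alone; (v) removing from both $AB$ and $BC$ simultaneously at vertex $B$; and (vi) removing from both $CD$ and $BC$ simultaneously at vertex $C$. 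For each such move, the goal is to exhibit a response by $P_{2}$ whose resulting configuration is either terminal or again matches one of the losing templates (1)--(4), so that by the induction hypothesis the position $P_{1}$ then faces is losing.

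To construct these responses I would proceed case-by-case on $r$, splitting part~(1) into the sub-cases $r \in \{0,1,3\}$ and parts~(2)--(4) into the sub-cases dictated by $r \in \{2,4\}$, by the range of $s$, and by the residue class of $k$ modulo $2^{f(r)+1}$. Within each sub-case, the guiding principle is the following: if $P_{1}$ changes the ``high part" $2^{f(k)+1}m$ of $w_{0}(AB)$ or of $w_{0}(CD)$, $P_{2}$ mirrors the change on the other path-edge so as to restore a common value of the high part on both sides; if $P_{1}$ modifies only the ``low parts" or $w_{0}(BC)$ without crossing a multiple-of-$2^{f(k)+1}$ boundary, $P_{2}$ rebalances the identity $r + (k-r-s) + s = k$ by a counter-move on $BC$ together with the opposite low part, or else absorbs the imbalance into $EF$; and if $P_{1}$ removes from $EF$, $P_{2}$ compensates on one of the path-edges so as to restore the template with a smaller value of $k$. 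Whenever the response would push $k$ across a power-of-$2$ threshold, $f(k)$ decreases and the decomposition $2^{f(k)+1}m + \ell$ of the path-edge weights has to be reinterpreted, sometimes moving the configuration from one template among (1)--(4) into another.

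The main obstacle will be precisely this interaction between the value of $k$ and the ``high part" decomposition of the path-edge weights: power-of-$2$ thresholds in $k$ change the very meaning of the templates and force a careful re-examination of $P_{2}$'s response at each threshold crossing. A compounding source of complexity is part~(4), which (as indicated by the appendix references \S\ref{subsec:H_{1}_losing_4_1,2_proof}, \S\ref{subsec:H_{1}_losing_4_3,4_proof}, \S\ref{subsec:H_{1}_losing_4_5,6_proof} and \S\ref{subsec:H_{1}_losing_4_7_proof}) splits further into seven sub-cases determined by how $s$ and $k$ relate to $r$ and $2^{f(r)+1}$, with cross-references among sub-cases (so that a response in one sub-case may land in another) making the bookkeeping delicate. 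Accordingly, I would first work out the cleanest instance, $r = 0$ in part~(1), in full detail, and then treat the remaining values of $r$ and the more intricate parts~(2)--(4) as progressively more involved variants of the same induction scheme, deferring the most technical verifications to the appendix, in line with the organization already announced in the introduction.
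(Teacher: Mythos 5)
There is a genuine gap in your plan: you require every response of $P_{2}$ to land either in a terminal position or back inside one of the four templates \eqref{eq:H_{1}_losing_0,1,3}--\eqref{eq:H_{1}_losing_2,4_case_3}, and this is not achievable. The templates only cover path-edge weights whose ``low parts'' are $r\in\{0,1,2,3,4\}$ paired with $k-r-s$ (or $k+r-s$). When $P_{1}$ drops the high part of, say, $AB$ from $2^{f(k)+1}m$ to $2^{f(k)+1}n$ with $n<m$, leaving an arbitrary residue $\ell_{1}\in\{0,1,\ldots,2^{f(k)+1}-1\}$, your proposed ``mirroring'' move on $CD$ cannot in general produce a template position (the residue $\ell_{1}$ need not equal any admissible $r$, and $P_{2}$ can only touch $CD$ and $BC$ in one move, so she cannot simultaneously repair $EF$). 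The paper does not mirror here at all: it invokes Lemma~\ref{prop:H_{1}_winning_1}, which certifies that \emph{every} configuration with $m_{1}\neq m_{2}$ is winning for the player to move, and that lemma is proved via the base-$2$ balancing criterion of Theorem~\ref{thm:H_{1}_winning_1}, whose winning move leads to a balanced galaxy-graph Nim position (losing by Theorem~\ref{thm:galaxy}) rather than to one of the templates. The same mechanism is needed in many other branches of the case analysis: when $P_{1}$ removes $BC$ entirely, or reduces $EF$ to a value $k'$ in the wrong residue class or with $k'\in\{\ell_{1},\ell_{2}\}$, or removes $EF$ completely (Remark~\ref{rem:EF_edgeweight_0}), the paper escapes through the independently proved winning results (Theorem~\ref{thm:H_{1}_winning_1}, Lemmas~\ref{prop:H_{1}_winning_1}, \ref{lem:H_{1}_winning_2}, \ref{lem:H_{1}_winning_galaxy}) rather than by returning to a template. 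Without some such exterior class of certified losing/winning positions, your induction cannot close, because the four templates are nowhere near closed under the ``move--response'' dynamics.

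A secondary point: your choice of inducting on the total weight $W$ instead of the paper's layered induction on $k=w_{0}(EF)$ (with sub-inductions over $r$, $s$ and the residue of $k$) is legitimate and would, if anything, simplify the bookkeeping of which previously established cases may be cited; the substantive difficulty is not the induction variable but the need for the auxiliary winning lemmas described above.
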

Theorem~\ref{thm:H_{1}_losing} does seem to reveal a pattern -- however, this pattern does not seem to extend any further. For instance, as we explored configurations on $H_{1}$ that are of the form
\begin{equation}
w_{0}(AB)=2^{f(k)+1}m+5,\ w_{0}(BC)=s,\ w_{0}(CD)=2^{f(k)+1}m+t,\ w_{0}(EF)=k,\nonumber
\end{equation}
for $t\geqslant 5$ and $s\geqslant 1$, we found that setting $s=1$, $t=6$ and $k=11$ yields a losing configuration, even though this does not conform to the patterns suggested by Theorem~\ref{thm:H_{1}_losing}. Despite repeated attempts, we have not been able to extend Theorem~\ref{thm:H_{1}_losing} to encompass a broader class of losing configurations on $H_{1}$.

We include here two small results, of which Lemma~\ref{lem:H_{1}_winning_2} is a corollary of our claim, stated in Theorem~\ref{thm:H_{1}_losing}, that any configuration on $H_{1}$ of the form given by \eqref{eq:H_{1}_losing_0,1,3}, with $r=0$, is losing, whereas Lemma~\ref{lem:H_{1}_winning_galaxy} is a corollary of Theorem~\ref{thm:Nim} and Theorem~\ref{thm:galaxy} -- the motivation for stating these lies in being able to directly use their conclusions in various steps of the proof of Theorem~\ref{thm:H_{1}_losing}.
\begin{lemma}\label{lem:H_{1}_winning_2}
An initial weight configuration on $H_{1}$ is winning whenever we have $w_{0}(EF)=k$, $w_{0}(AB)=2^{f(k)+1}m_{1}+\ell_{1}$, $w_{0}(CD)=2^{f(k)+1}m_{2}+\ell_{2}$ and $w_{0}(BC)>k-\min\{\ell_{1},\ell_{2}\}$, for any $k \in \mathbb{N}$, any $m_{1}, m_{2} \in \mathbb{N}_{0}$, and any $\ell_{1}, \ell_{2} \in \{0,1,\ldots,2^{f(k)+1}-1\}$ with $\min\{\ell_{1},\ell_{2}\}<k$. Moreover, any initial weight configuration on $H_{1}$ with $w_{0}(BC)>w_{0}(EF)$ is also winning. 
\end{lemma}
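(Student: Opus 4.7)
The plan is to exhibit, in each case allowed by the hypothesis, an explicit first move for $P_{1}$ whose resulting position falls into the losing family described by \eqref{eq:H_{1}_losing_0,1,3} with $r=0$; by that clause of Theorem~\ref{thm:H_{1}_losing}, every such position is losing, so $P_{1}$ wins. The case $m_{1}\neq m_{2}$ is already covered by condition (i) of Lemma~\ref{prop:H_{1}_winning_1}, so I will assume $m_{1}=m_{2}=m$. Since the automorphism of $H_{1}$ that swaps $A\leftrightarrow D$ and $B\leftrightarrow C$ interchanges the edges $AB$ and $CD$ while fixing $BC$ and $EF$, I may further assume without loss of generality that $\ell_{1}\leqslant\ell_{2}$; then $\min\{\ell_{1},\ell_{2}\}=\ell_{1}<k$ and the hypothesis reads $w_{0}(BC)>k-\ell_{1}$.

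The argument then splits according to whether $\ell_{2}<k$ or $\ell_{2}\geqslant k$. In the first subcase, $P_{1}$ plays at vertex $B$, removing weight $\ell_{1}$ from $AB$ and weight $w_{0}(BC)-(k-\ell_{2})$ from $BC$; the latter is strictly positive because $w_{0}(BC)>k-\ell_{1}\geqslant k-\ell_{2}$, so the move is legal. The residual weights $\bigl(2^{f(k)+1}m,\,k-\ell_{2},\,2^{f(k)+1}m+\ell_{2},\,k\bigr)$ match \eqref{eq:H_{1}_losing_0,1,3} with $r=0$ and $s=k-\ell_{2}\in\{1,\ldots,k\}$. In the second subcase ($\ell_{2}\geqslant k$), $P_{1}$ instead plays at vertex $C$, removing $\ell_{2}$ from $CD$ and $w_{0}(BC)-(k-\ell_{1})$ from $BC$; the resulting weights $\bigl(2^{f(k)+1}m+\ell_{1},\,k-\ell_{1},\,2^{f(k)+1}m,\,k\bigr)$ fit \eqref{eq:H_{1}_losing_0,1,3} with $r=0$ and $s=k-\ell_{1}\in\{1,\ldots,k\}$.

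The one subtle point that I expect to be the main bookkeeping hurdle is the boundary case $m=0$: the residual weight on $AB$ (Case 1) or $CD$ (Case 2) then equals zero, so the target position has a vanishing edge and Theorem~\ref{thm:H_{1}_losing}, stated for strictly positive initial weights, cannot be invoked verbatim. In that situation the zero-weight edge is effectively removed from play and the remaining game is on a galaxy graph: the path $B$--$C$--$D$ together with the isolated edge $EF$ in the first subcase, the path $A$--$B$--$C$ together with $EF$ in the second, collapsing further to two disjoint edges if both $AB$ and $CD$ vanish simultaneously. In each of these reduced positions Theorem~\ref{thm:galaxy} applies, and a direct substitution verifies balancedness (for instance, in Case 1, $w'(BC)+w'(CD)=(k-\ell_{2})+\ell_{2}=k=w'(EF)$), hence the position remains losing. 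This step is mechanical; no deeper obstacle arises.

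For the second statement, given $w_{0}(BC)>w_{0}(EF)=k$, write $w_{0}(AB)=2^{f(k)+1}m_{1}+\ell_{1}$ and $w_{0}(CD)=2^{f(k)+1}m_{2}+\ell_{2}$ with $\ell_{1},\ell_{2}\in\{0,\ldots,2^{f(k)+1}-1\}$. If $m_{1}\neq m_{2}$ or $\min\{\ell_{1},\ell_{2}\}\geqslant k$, then conditions (i) or (ii) of Lemma~\ref{prop:H_{1}_winning_1} already yield the conclusion; in the remaining case $\min\{\ell_{1},\ell_{2}\}<k$, the inequality $w_{0}(BC)>k\geqslant k-\min\{\ell_{1},\ell_{2}\}$ reduces the claim directly to the first half of the present lemma.
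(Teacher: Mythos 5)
Your proof is correct and follows essentially the same route as the paper's: dispose of the off-diagonal case via Lemma~\ref{prop:H_{1}_winning_1}, and otherwise make one move that leaves $P_{2}$ with a configuration of the form \eqref{eq:H_{1}_losing_0,1,3} with $r=0$ (your subcase split is unnecessary, since your second move --- which is the paper's single move --- is legal and works for all $\ell_{2}$), then derive the second assertion from the first. The only substantive differences are refinements: your explicit treatment of the $m=0$ boundary, where the target position acquires a zero-weight edge and must be settled by Theorem~\ref{thm:galaxy} rather than by Theorem~\ref{thm:H_{1}_losing} (whose proof assumes $m\in\mathbb{N}$ there), patches a point the paper glosses over, and your reduction of the second claim directly to the first half plus Lemma~\ref{prop:H_{1}_winning_1} streamlines the paper's detour through Lemma~\ref{lem:H_{1}_winning_1}.
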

\begin{remark}\label{rem:EF_edgeweight_0}
If $w_{0}(EF)=0$, then the corresponding initial weight configuration on $H_{1}$ is winning. This can be argued as follows: assuming, without loss of generality, that $w_{0}(AB)\geqslant w_{0}(CD)$, $P_{1}$ removes, in the first round, weight $w_{0}(CD)-w_{0}(AB)$ from the edge $CD$, and the entire edge $BC$, leaving $P_{2}$ with a galaxy graph consisting of the edges $AB$ and $CD$, with $w_{1}(AB)=w_{1}(CD)$. Consequently, $P_{2}$ loses by Theorem~\ref{thm:galaxy}.
\end{remark}
Because of the last assertion made in Remark~\ref{rem:EF_edgeweight_0}, we need not consider, in the proof of Theorem~\ref{thm:H_{1}_losing}, the scenario where $P_{1}$, in the first round, removes the entire edge $EF$ (since, in such cases, we already know that $P_{2}$ would win).
\begin{lemma}\label{lem:H_{1}_winning_galaxy}
Consider a galaxy graph consisting of three pairwise-vertex-disjoint star graphs, with the sum of the initial edge-weights assigned to the edges of the $i$-th star graph being equal to $w_{i}$, for $i \in \{1,2,3\}$. If $w_{3}=k$ for some $k \in \mathbb{N}$, and $w_{i}=2^{f(k)+1}m+\ell_{i}$ for some $m \in \mathbb{N}_{0}$ and $\ell_{i} \in \left\{0,1,\ldots,2^{f(k)+1}-1\right\}$ for each $i \in \{1,2\}$, then this configuration is winning whenever $\ell_{1}+\ell_{2}<k$.
\end{lemma}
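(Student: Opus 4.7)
The plan is to reduce the question to a statement about nim-sums of binary representations, by combining Theorem~\ref{thm:galaxy} with Theorem~\ref{thm:Nim}. By Theorem~\ref{thm:galaxy}, the Game of Graph Nim on the given galaxy graph is equivalent to a three-pile Game of Nim with pile sizes $w_{1}, w_{2}, w_{3}$. By Theorem~\ref{thm:Nim}, the configuration is losing for $P_{1}$ precisely when the triple $(w_{1},w_{2},w_{3})$ is balanced, which is equivalent to saying that every bit column in the binary representations of $w_{1},w_{2},w_{3}$ sums to an even number, i.e.\ that the bitwise XOR $w_{1}\oplus w_{2}\oplus w_{3}$ equals $0$. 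So our task reduces to showing that under the given hypotheses, $w_{1}\oplus w_{2}\oplus w_{3}\neq 0$, after which the configuration is winning.

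I would first observe that, because $\ell_{i}\in\{0,1,\ldots,2^{f(k)+1}-1\}$, the low $f(k)+1$ bits of $w_{i}=2^{f(k)+1}m+\ell_{i}$ are exactly the bits of $\ell_{i}$, while the bits of $w_{i}$ in positions $f(k)+1$ and higher are exactly the bits of $m$ (and hence are identical across $i=1,2$). Consequently, the XOR cancels in every bit position above $f(k)$, giving $w_{1}\oplus w_{2}=\ell_{1}\oplus\ell_{2}$. Since $w_{3}=k<2^{f(k)+1}$, the number $k$ similarly has no bits in positions above $f(k)$, so
\[
w_{1}\oplus w_{2}\oplus w_{3} \;=\; \ell_{1}\oplus\ell_{2}\oplus k.
\]
It therefore suffices to prove that $\ell_{1}\oplus\ell_{2}\neq k$.

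For the final step I would invoke the elementary identity $a+b=(a\oplus b)+2(a\wedge b)$ (where $\wedge$ denotes bitwise AND), which is just the observation that ordinary binary addition equals bitwise XOR plus twice the carry pattern; in particular $a\oplus b\leqslant a+b$ for all non-negative integers $a,b$. Applying this with $a=\ell_{1}, b=\ell_{2}$ and using the hypothesis $\ell_{1}+\ell_{2}<k$, I would conclude $\ell_{1}\oplus\ell_{2}\leqslant \ell_{1}+\ell_{2}<k$, and in particular $\ell_{1}\oplus\ell_{2}\neq k$. Thus $w_{1}\oplus w_{2}\oplus w_{3}\neq 0$, the triple is unbalanced, and by Theorem~\ref{thm:galaxy} the configuration is winning.

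Honestly, there is no real obstacle here: the lemma is essentially a bookkeeping statement about what happens in the nim-sum when two piles share identical high-order bits. The only care needed is in verifying the bit-range claim (that the high bits of $m$ in $w_{1}$ and $w_{2}$ occupy strictly higher positions than any bit of $\ell_{1},\ell_{2},k$), which follows immediately from the definitions of $f(k)$ and of the ranges of $\ell_{1},\ell_{2}$.
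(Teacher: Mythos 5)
Your proposal is correct and is essentially the paper's own argument in XOR notation: the paper constructs the digits $c_{i}$ with $a_{i}+b_{i}+c_{i}$ even (i.e.\ $c_{i}=a_{i}\oplus b_{i}$), notes that the resulting balanced third-pile value $\sum_{i}c_{i}2^{i}\leqslant\ell_{1}+\ell_{2}<k$, and concludes the triple with third pile $k$ is unbalanced — which is precisely your inequality $\ell_{1}\oplus\ell_{2}\leqslant\ell_{1}+\ell_{2}<k$ combined with the cancellation of the shared high-order bits of $m$. No substantive difference.
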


As mentioned at the start of \S\ref{subsubsec:H_{1}_main_results}, for a quick summary of our main results regarding the winning and losing initial weight configurations on the graph $H_{1}$, we urge the reader to vide Figure~\ref{fig:flowchart_H_{1}}.
\begin{figure}[h!]
  \centering
    \includegraphics[width=0.85\textwidth]{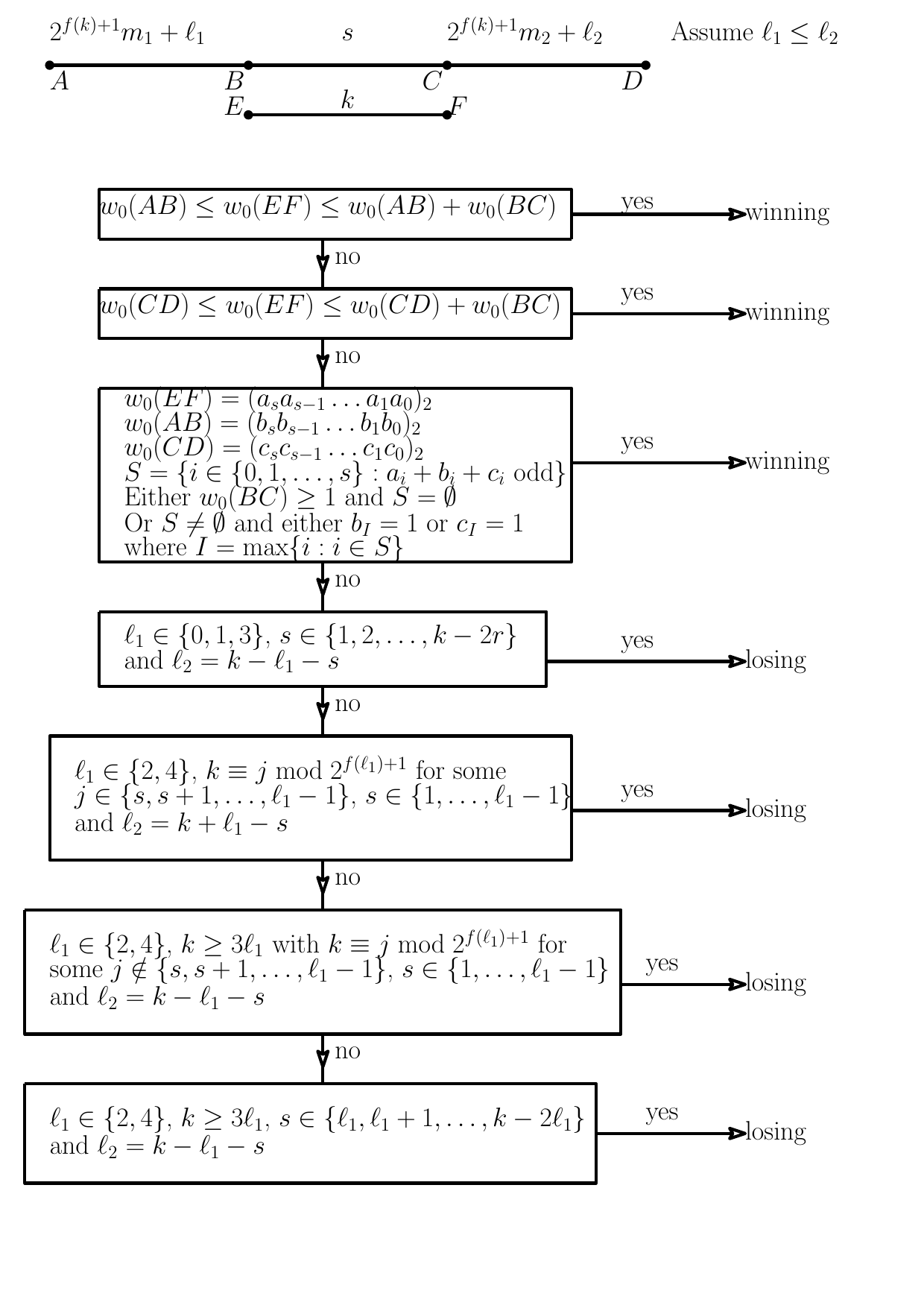}
\caption{A flowchart summarizing our findings regarding $H_{1}$}
  \label{fig:flowchart_H_{1}}
\end{figure}

\section{Proof of our results regarding the graphs $F_{2}$, $G_{2}$ and $G_{3}$}\label{sec:proof_4_vertices}
Before we begin with the proofs of our results, we state here a result from \cite{williams2017combinatorial} that is going to be of use to us in the sequel:
\begin{theorem}[Page 13 of \cite{williams2017combinatorial}]\label{triangle_losing}
When $G$ is a triangle (i.e.\ $|V| = 3$ and $|E| = 3$), an initial weight configuration on $G$ is losing if and only if the edge-weight assigned to each of the three edges in $G$ is the same.
\end{theorem}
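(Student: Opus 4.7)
\medskip
\noindent\textbf{Proof proposal.} The plan is to argue both directions of the biconditional simultaneously by strong induction on the quantity $W \coloneqq w_{0}(AB) + w_{0}(BC) + w_{0}(CA)$. Concretely, the inductive claim is that for every triple of non-negative-integer edge-weights on the triangle, the position is losing for the player about to move if and only if all three edge-weights coincide. The base case $W=0$ is vacuous (the game is already over and the previous mover has won), so the interesting content lies in the inductive step for $W \geqslant 1$. The key geometric feature we will exploit is that any two distinct edges of a triangle share exactly one vertex, so a single move can be directed at precisely the pair of edges one wishes to change.

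For the direction \emph{equal weights $\Rightarrow$ losing}, suppose $w_{0}(AB)=w_{0}(BC)=w_{0}(CA)=x\geqslant 1$. Any move by $P_{1}$ selects a vertex, say $A$, and removes integers $a$ from $AB$ and $c$ from $CA$ with $a+c\geqslant 1$, producing the position $(x-a,\, x,\, x-c)$. The proposed response for $P_{2}$ is to re-equalise to weight $x-\max(a,c)$: if $a\geqslant c$, $P_{2}$ selects $C$ and removes $a$ from $BC$ together with $a-c$ from $CA$; if $c>a$, $P_{2}$ selects $B$ and removes $c-a$ from $AB$ together with $c$ from $BC$. In either case the total removed is $\max(a,c)\geqslant 1$, which is strictly positive as required, and the resulting configuration is $(y,y,y)$ with $y = x-\max(a,c) < x$. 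If $y=0$ the game terminates and $P_{2}$ made the last move and wins; otherwise the inductive hypothesis, applied at strictly smaller total weight, says that $(y,y,y)$ is losing for $P_{1}$, who is now to move. Iterating, $P_{2}$ wins.

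For the direction \emph{not all equal $\Rightarrow$ winning}, let $m \coloneqq \min\{w_{0}(AB), w_{0}(BC), w_{0}(CA)\}$ and let $E_{>}$ denote the set of edges whose weight exceeds $m$; by hypothesis $E_{>}$ is non-empty, and $|E_{>}|\in\{1,2\}$. If $|E_{>}|=1$, say $CA$ is the unique edge with weight above $m$, then $P_{1}$ selects $A$ and removes $0$ from $AB$ and $w_{0}(CA)-m$ from $CA$; the total removed equals $w_{0}(CA)-m\geqslant 1$, and the new position is $(m,m,m)$. If $|E_{>}|=2$, then the two edges in $E_{>}$ share a unique vertex of the triangle (say $C$, if the minimum-weight edge is $AB$), and $P_{1}$ selects $C$ and removes $w_{0}(BC)-m$ from $BC$ and $w_{0}(CA)-m$ from $CA$; again this is a legal move producing $(m,m,m)$. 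In both cases the new total $3m$ is strictly smaller than $W$, so by the inductive hypothesis the position $(m,m,m)$ is losing for the player now to move, namely $P_{2}$; therefore $P_{1}$ wins. (If $m=0$, the equalising move reduces the whole graph to $(0,0,0)$ and $P_{1}$ has simply played the last round.)

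The argument is essentially mechanical once the mirroring strategy for $P_{2}$ is written down, and the only place that deserves some care is verifying that each proposed response is a \emph{legal} move, i.e.\ that the total removed is strictly positive and that no edge-weight is driven negative. The analysis of the two sub-cases $a\geqslant c$ versus $c>a$ in the ``equal'' direction is where this care is exercised, and it is the one step that must not be glossed over; everything else is bookkeeping against the induction.
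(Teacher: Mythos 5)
The paper does not prove this statement: it is imported verbatim from \cite{williams2017combinatorial} (cited as ``Page 13'') and used as a black box in the proofs for $G_{4}$ and elsewhere, so there is no internal proof to compare your argument against. Judged on its own, your proof is correct and is the natural self-contained argument: strong induction on the total weight, with the observation that any two edges of a triangle meet in exactly one vertex (so a single move can target exactly the pair of edges one wants), a mirroring response by $P_{2}$ that re-equalises the three weights, and a one-move equalisation by $P_{1}$ in the unequal case. All the legality checks go through. One small arithmetic slip: in the ``equal $\Rightarrow$ losing'' direction the total weight removed by $P_{2}$'s response is $2\max(a,c)-\min(a,c)$, not $\max(a,c)$ as you state (e.g.\ when $a\geqslant c$ you remove $a$ from $BC$ and $a-c$ from $CA$, totalling $2a-c$); since this quantity is still at least $\max(a,c)\geqslant 1$, the move remains legal and the conclusion is unaffected, but the sentence should be corrected.
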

During the analysis of a Game of Graph Nim played on a given graph $G=(V,E)$, we denote by $w_{i}(e)$, for each $e \in E$, the edge-weight remaining on the edge $e$ \emph{after} the $i$-th round of the game has been completed, for each $i \in \mathbb{N}$.

\begin{proof}[Proof of Theorem~\ref{thm:F_{2}}]
In order to establish Theorem~\ref{thm:F_{2}}, we begin by showing that whenever an initial weight configuration $\left(w_{0}(AB),w_{0}(BC),w_{0}(CD),w_{0}(DB)\right)$, on $F_{2}$, satisfies the criteria:
\begin{equation}
w_{0}(BC)=w_{0}(DB) \text{ and } w_{0}(CD)=w_{0}(AB)+w_{0}(BC),\label{F_{2}_losing_eq}
\end{equation}
it is losing. The proof of this claim happens via induction on $w_{0}(CD)$, the base case for which has been addressed in \S\ref{subsec:appendix_4_vertices} of \S\ref{sec:appendix}.

Suppose we have shown, for some $K \in \mathbb{N}$ with $K \geqslant 2$, that whenever $\left(w_{0}(AB),w_{0}(BC),w_{0}(CD),w_{0}(DB)\right)$ satisfies \eqref{F_{2}_losing_eq} along with the constraint that $w_{0}(CD)=k$ for some $k \leqslant K$, it is a losing configuration. We now consider the configuration
\begin{equation}
w_{0}(BC)=w_{0}(DB)=i, \quad w_{0}(AB)=(K+1-i) \quad \text{and} \quad w_{0}(CD)=(K+1),\label{F_{2}_losing_eq_inductive}
\end{equation}
for \emph{any} $i \in \mathbb{N}$ with $i \leqslant K$. The first round of the Game of Graph Nim played on this initial configuration can unfold in one of the following ways:
\begin{enumerate}
\item Suppose $P_{1}$ selects a vertex such that the edge-weights of the edges belonging to some subset of $\{AB, DB, BC\}$ are modified, while all else remain intact. We then have 
\begin{equation}
w_{1}(BC)=j_{1}, \quad w_{1}(DB)=j_{2} \quad \text{and} \quad w_{1}(AB)=j_{3},\nonumber
\end{equation}
where $\max\{j_{1},j_{2}\} \leqslant i$ and $j_{3} \leqslant (K+1-i)$, with at least one of these three inequalities being strict, or equivalently, either $j_{1}+j_{3} \leqslant K$ or $j_{2}+j_{3} \leqslant K$. Without loss of generality, we assume that $j_{1} \leqslant j_{2}$ -- which, in turn, ensures that we must have $j_{1}+j_{3} \leqslant K$. Then $P_{2}$ chooses the vertex $D$ and removes weight $(j_{2}-j_{1})$ from $DB$ and weight $(K+1)-(j_{1}+j_{3})$ from $CD$ in the second round, leaving $P_{1}$ with
\begin{equation}
w_{2}(BC)=w_{2}(DB)=j_{1}, \quad w_{2}(AB)=j_{3} \quad \text{and} \quad w_{2}(CD)=j_{1}+j_{3},\nonumber
\end{equation}
and as $j_{1}+j_{3} \leqslant K$, we know that $P_{1}$ loses by our induction hypothesis.

\item Suppose $P_{1}$ chooses a vertex such that the edge-weights of $CD$ and $BC$ (analogously, $CD$ and $DB$) are modified, while all else remain intact. We then have
\begin{equation}
w_{1}(BC)=j, \quad w_{1}(DB)=i, \quad w_{1}(AB)=(K+1-i) \quad \text{and} \quad w_{1}(CD)=k,\nonumber
\end{equation}
for some $j \leqslant i$ and some $k \leqslant K$ (it suffices to consider the case where $k \leqslant K$, since the case of $k=(K+1)$ is already contained in the previous scenario). There are a few subcases to consider here:
\begin{enumerate}
\item When $(K+1)-k \geqslant i-j$, then $P_{2}$ selects the vertex $B$ and removes weight $(i-j)$ from $DB$ and $(K+1-i)-(k-j)$ from $AB$ in the second round, leaving $P_{1}$ with
\begin{equation}
w_{2}(BC)=w_{2}(DB)=j, \quad w_{2}(AB)=k-j \quad \text{and} \quad w_{2}(CD)=k,\nonumber
\end{equation}
and as $k \leqslant K$, we know that $P_{1}$ loses by our induction hypothesis.

\item When $(K+1)-k < i-j$, then $P_{2}$ selects the vertex $D$ and removes weight $(i-j)$ from $DB$ and weight $k-[\{(K+1)-i\}+j]$ from $CD$ in the second round, leaving $P_{1}$ with
\begin{equation}
w_{2}(BC)=w_{2}(DB)=j, \quad w_{2}(AB)=(K+1)-i \quad \text{and} \quad w_{2}(CD)=(K+1)-i+j,\nonumber
\end{equation}
and since $(K+1)-i+j < k \leqslant K$, we know that $P_{1}$ loses by our induction hypothesis.
\end{enumerate}
\end{enumerate}

This completes the proof of our claim that the initial configuration in \eqref{F_{2}_losing_eq_inductive} is losing, and thereby completes the inductive proof of our claim that the initial configuration in \eqref{F_{2}_losing_eq} is losing. We are now left to show that any initial configuration on $F_{2}$ that does \emph{not} satisfy \eqref{F_{2}_losing_eq} is winning, which we argue as follows.

Any initial configuration $\left(w_{0}(AB),w_{0}(BC),w_{0}(CD),w_{0}(DB)\right)$ on $F_{2}$ not satisfying \eqref{F_{2}_losing_eq} must either be such that $w_{0}(BC) \neq w_{0}(DB)$ or $w_{0}(CD) \neq w_{0}(BC)+w_{0}(AB)$. We can, thus, consider the following two possibilities:
\begin{enumerate}
\item Suppose $w_{0}(BC) \neq w_{0}(DB)$, and without loss of generality, we assume that $w_{0}(BC) > w_{0}(DB)$. Let us set
\begin{equation}
w_{0}(DB)=i,\quad w_{0}(BC)=j,\quad w_{0}(CD)=k \quad \text{and} \quad w_{0}(AB)=\ell.\nonumber
\end{equation}
If $k \geqslant i+\ell$, then $P_{1}$ selects the vertex $C$ and removes weight $(j-i)$ from $BC$ and weight $k-(i+\ell)$ from $CD$ in the first round, leaving $P_{2}$ with 
\begin{equation}
w_{1}(DB)=w_{1}(BC)=i, \quad w_{1}(AB)=\ell \quad \text{and} \quad w_{1}(CD)=i+\ell,\nonumber
\end{equation}
which is of the form given by \eqref{F_{2}_losing_eq}, so that $P_{2}$ loses. If, on the other hand, $k < i+\ell$, then $P_{1}$ selects the vertex $B$ and removes weight $(j-i)$ from $BC$ and weight $\ell-(k-i)$ from $AB$ in the first round, leaving $P_{2}$ with
\begin{equation}
w_{1}(DB)=w_{1}(BC)=i, \quad w_{1}(AB)=k-i \quad \text{and} \quad w_{1}(CD)=k,\nonumber
\end{equation}
which is, once again, of the form given by \eqref{F_{2}_losing_eq}, so that $P_{2}$ loses. 

\item Suppose, now, that $w_{0}(BC)=w_{0}(DB)$, but $w_{0}(AB)+w_{0}(BC) \neq w_{0}(CD)$. Once again, letting
\begin{equation}
w_{0}(DB)=w_{0}(BC)=i, \quad w_{0}(CD)=k \quad \text{and} \quad w_{0}(AB)=\ell,\nonumber
\end{equation}
we consider two subcases: 
\begin{enumerate*}
\item when $k > i+\ell$, $P_{1}$ removes weight $k-(i+\ell)$ from $CD$ in the first round,
\item whereas when $k < i+\ell$, $P_{1}$ removes weight $\ell-(k-i)$ from $AB$ in the first round,
\end{enumerate*}
so that, in either scenario, $P_{2}$ is left with a configuration of the form given by \eqref{F_{2}_losing_eq}, leading to her defeat.
\end{enumerate}

This completes the proof of our claim that any initial weight configuration on $F_{2}$ that does not satisfy \eqref{F_{2}_losing_eq} is winning. This also concludes the proof of Theorem~\ref{thm:F_{2}}.
\end{proof}

\begin{proof}[Proof of Theorem~\ref{thm:G_{2},G_{3}}]
In order to establish our claim regarding the graph $G_{2}$, we consider the following cases:
\begin{enumerate}
\item If $w_{0}(BE) \geqslant w_{0}(AC)+w_{0}(AD)$, then $P_{1}$ selects the vertex $B$, removes the entire edge $AB$, and removes weight $w_{0}(BE)-\{w_{0}(AC)+w_{0}(AD)\}$ from the edge $BE$, so that $P_{2}$ is left with the galaxy graph consisting of the edges $BE$, $AC$ and $AD$, with $w_{1}(BE)=w_{1}(AC)+w_{1}(AD)=w_{0}(AC)+w_{0}(AD)$. Therefore, $P_{2}$ loses by Theorem~\ref{thm:galaxy}. 

\item If $w_{0}(BE) < w_{0}(AC)+w_{0}(AD)$, then $P_{1}$ selects the vertex $A$, removes the entire edge $AB$, and removes from at least one of $AC$ and $AD$ sufficient weight such that $w_{1}(AC)+w_{1}(AD)=w_{0}(BE)=w_{1}(BE)$ (the choice of weights to be removed from $AC$ and / or $AD$ for this equality to hold need not be unique), so that, once again, $P_{2}$ loses by Theorem~\ref{thm:galaxy}. 
\end{enumerate}
This completes the proof of our claim that \emph{all} initial weight configurations on $G_{2}$ are winning. We now come to the graph $G_{3}$, and we consider the following cases:
\begin{enumerate}
\item Suppose $w_{0}(DE) \geqslant w_{0}(AB)+w_{0}(BC)$ (an analogous situation would be where $w_{0}(AB) \geqslant w_{0}(CD)+w_{0}(DE)$). In this case, $P_{1}$ selects the vertex $D$, removes the entire edge $CD$, and removes weight $w_{0}(DE)-\{w_{0}(AB)+w_{0}(BC)\}$ from the edge $DE$ in the first round, leaving $P_{2}$ with a galaxy graph consisting of the edges $AB$, $BC$ and $DE$, where $w_{1}(DE)=w_{0}(AB)+w_{0}(BC)=w_{1}(AB)+w_{1}(BC)$. Therefore, $P_{2}$ loses by Theorem~\ref{thm:galaxy}.

\item Suppose $w_{0}(AB)+w_{0}(BC)>w_{0}(DE)\geqslant w_{0}(AB)$ (an analogous situation would be where $w_{0}(CD)+w_{0}(DE)>w_{0}(AB)\geqslant w_{0}(DE)$). In this case, $P_{1}$ selects the vertex $C$, removes the entire edge $CD$, and removes weight $w_{0}(AB)+w_{0}(BC)-w_{0}(DE)$ from the edge $BC$. Note, here, that the inequalities $w_{0}(AB)+w_{0}(BC)>w_{0}(DE)\geqslant w_{0}(AB)$ ensures that the weight removed from $BC$ is strictly positive and bounded above by $w_{0}(BC)$. This leaves $P_{2}$ with a galaxy graph consisting of the edges $AB$, $BC$ and $DE$, where $w_{1}(BC)=w_{0}(BC)-\{w_{0}(AB)+w_{0}(BC)-w_{0}(DE)\}=w_{0}(DE)-w_{0}(AB)=w_{1}(DE)-w_{1}(AB)$, and $P_{2}$ loses by Theorem~\ref{thm:galaxy}.

\item The last remaining case to consider is where $w_{0}(AB)+w_{0}(BC)>w_{0}(AB)>w_{0}(DE)$. But here, if $w_{0}(AB)\geqslant w_{0}(CD)+w_{0}(DE)$, then this becomes analogous to the first scenario considered above, and if $w_{0}(AB)<w_{0}(CD)+w_{0}(DE)$, then this becomes analogous to the second scenario considered above. Either way, $P_{1}$ wins.
\end{enumerate}
This completes the proof of our claim that \emph{all} initial weight configurations on $G_{3}$ are winning.
\end{proof}

\section{Proofs of our results pertaining to the graph $G_{4}$}\label{subsec:proofs_G_{4}}
We establish Theorem~\ref{thm:main_G_{4}} via the proofs of several results that have been enumerated below as a list for the reader's convenience:
\begin{enumerate}[label=(D\arabic*), ref=D\arabic*]
\item\label{lem:G_{4}_winning_1}
An initial weight configuration $(w_{0}(AB), w_{0}(BC), w_{0}(CA), w_{0}(DE))$ on $G_{4}$ is winning whenever it satisfies the inequalities $\min\{w_{0}(AB),w_{0}(BC),w_{0}(CA)\} \leqslant w_{0}(DE) \leqslant w_{0}(AB)+w_{0}(BC)+w_{0}(CA)-\min\{w_{0}(AB),w_{0}(BC),w_{0}(CA)\}$.

\item\label{thm:G_{4}_losing_1}
An initial weight configuration $\left(w_{0}(AB),w_{0}(BC),w_{0}(CA),w_{0}(DE)\right)$ on the graph $G_{4}$ is losing whenever the criterion stated in \eqref{G_{4}_losing_cond_1} is satisfied.

\item\label{thm:G_{4}_winning_1}
If $0 < w_{0}(DE) < \min\left\{w_{0}(AB), w_{0}(BC), w_{0}(CA)\right\}$ but the weight configuration does not satisfy the criterion stated in part \eqref{G_{4}_losing_cond_1} of Theorem~\ref{thm:main_G_{4}}, then $P_{1}$ wins.

\item\label{thm:G_{4}_losing_2}
Any initial weight configuration $\left(w_{0}(AB),w_{0}(BC),w_{0}(CA),w_{0}(DE)\right)$ on the graph $G_{4}$, that satisfies the criterion stated in part \eqref{G_{4}_losing_cond_2} of Theorem~\ref{thm:main_G_{4}}, is losing.

\item\label{thm:G_{4}_winning_2}
\sloppy If $w_{0}(DE)>w_{0}(AB)+w_{0}(BC)+w_{0}(CA)-\min\{w_{0}(AB),w_{0}(BC),w_{0}(CA)\}$, but $w_{0}(DE) \neq w_{0}(AB)+w_{0}(BC)+w_{0}(CA)$, and the multiset $\{w_{0}(AB),w_{0}(BC),w_{0}(CA)\}$ is \emph{not} special, the initial weight configuration $\left(w_{0}(AB),w_{0}(BC),w_{0}(CA),w_{0}(DE)\right)$ is winning on $G_{4}$.
\end{enumerate}
It is fairly straightforward to see that, once each of \eqref{lem:G_{4}_winning_1}, \eqref{thm:G_{4}_losing_1}, \eqref{thm:G_{4}_winning_1}, \eqref{thm:G_{4}_losing_2} and \eqref{thm:G_{4}_winning_2} has been proved, the proof of Theorem~\ref{thm:main_G_{4}} would be complete.

\subsection{Proof of Step~\ref{lem:G_{4}_winning_1} for proving Theorem~\ref{thm:main_G_{4}}}
Without loss of generality (since the edges $AB$, $BC$ and $CA$ play symmetric roles in the graph $G_{4}$), let us assume that $w_{0}(AB)\leqslant w_{0}(BC)\leqslant w_{0}(CA)$, so that the inequalities stated in the hypothesis of \eqref{lem:G_{4}_winning_1} boil down to: $w_{0}(AB) \leqslant w_{0}(DE) \leqslant w_{0}(BC)+w_{0}(CA)$. We split the analysis into two cases. 

If $w_{0}(AB)\leqslant w_{0}(DE)\leqslant w_{0}(AB)+w_{0}(CA)$, then $P_{1}$ selects the vertex $C$, removes the entire edge $BC$, and removes weight $w_{0}(AB)+w_{0}(CA)-w_{0}(DE)$ from the edge $CA$. This leaves $P_{2}$ with a galaxy graph comprising two components: the first of which consists of the edges $AB$ and $CA$, the second of which consists of only the edge $DE$, and $w_{1}(CA)=w_{0}(CA)-\{w_{0}(AB)+w_{0}(CA)-w_{0}(DE)\}=w_{0}(DE)-w_{0}(AB)=w_{1}(DE)-w_{1}(AB)$. Consequently, $P_{2}$ loses by Theorem~\ref{thm:galaxy}.

If $w_{0}(AB)+w_{0}(CA) < w_{0}(DE) \leqslant w_{0}(BC)+w_{0}(CA)$, then $P_{1}$ removes weight $w_{0}(BC)+w_{0}(CA)-w_{0}(DE)$ from the edge $BC$, and the entire edge $AB$, in the first round, leaving $P_{2}$ with a galaxy graph comprising two components: one is a star graph with the edges $BC$ and $CA$, the other is the single edge $DE$, such that $w_{1}(BC)+w_{1}(CA)=w_{0}(DE)=w_{1}(DE)$. Once again, $P_{2}$ loses by Theorem~\ref{thm:galaxy}. \qed

\subsection{Proof of Step~\ref{thm:G_{4}_losing_1} for proving Theorem~\ref{thm:main_G_{4}}}
The proof of Step~\eqref{thm:G_{4}_losing_1} is accomplished via induction, which happens with respect to the parameters $m$, $k$ and $\ell$ involved in the statement of \eqref{G_{4}_losing_cond_1}. The base case, corresponding to $m=k=1$ and all $\ell \in \mathbb{N}_{0}$, has been proved in \S\ref{subsec:appendix_G_{4}_losing_1} of \S\ref{sec:appendix}. Suppose, now, that for some $M \in \mathbb{N}$, some $K \in \mathbb{N}$ satisfying the inequalities 
\begin{equation}
\frac{(M+1)(M+2)}{2} \leqslant K \leqslant \frac{(M+1)(M+4)}{2},\label{K_M+1_inequalities}
\end{equation}
and some $L \in \mathbb{N}_{0}$, we have already shown that
\begin{enumerate}[label=(I\arabic*), ref=I\arabic*]
\item \label{G_{4}_ind_hyp_1} each configuration satisfying the hypothesis of \eqref{G_{4}_losing_cond_1}, with $m \leqslant M$ and $m$, $k$ and $i$ satisfying \eqref{M_G_{4}}, is losing;
\item \label{G_{4}_ind_hyp_2} each configuration satisfying the hypothesis of \eqref{G_{4}_losing_cond_1}, with $m=M+1$, all $k \in \mathbb{N}$ such that $(M+1)(M+2)/2 \leqslant k < K$, all $i \in \{1,2,\ldots,M+2\}$, and all $\ell \in \mathbb{N}_{0}$, is losing;
\item \label{G_{4}_ind_hyp_3} the configuration satisfying the hypothesis of \eqref{G_{4}_losing_cond_1}, with $k=K$, $m=M+1$, all $i \in \{1,2,\ldots,m+1\}$, and all $\ell \in \mathbb{N}_{0}$ with $\ell < L$, is losing.
\end{enumerate}
Note that the induction hypothesis stated in \eqref{G_{4}_ind_hyp_3} becomes vacuous (i.e.\ no longer needs to be considered) when $L=0$. In the proof that follows (of showing that the configuration given by \eqref{G_{4}_losing_inductive_eq_1} is losing), we incorporate the possibility of $L=0$. In other words, we do not separately prove that \eqref{G_{4}_losing_inductive_eq_1} is losing for $L=0$ -- rather, we show that the proof for $L=0$ is a subset of the proof for $L \in \mathbb{N}$, obtained by simply \emph{not} considering the case where $P_{1}$ removes weights from at most two of $AB$, $BC$ and $CA$ in the first round such that $K+1+(M+2)L > \min\{w_{1}(AB),w_{1}(BC),w_{1}(CA)\} > K$. 

Likewise, the induction hypothesis in \eqref{G_{4}_ind_hyp_2} becomes vacuous when $K=(M+1)(M+2)/2$ -- this, too, has been taken into account in the proof that follows, and has not been treated separately. Specifically, in this case, if, after the second round, the edge-weight of $DE$ decreases strictly from what it was at the beginning, i.e.\ $w_{2}(DE)=k$ for some $k < K$, we have $k \leqslant (M+1)(M+2)/2-1=M(M+3)/2$. Consequently, $k$ satisfies \eqref{M_G_{4}} for some $m \leqslant M$, and hence, the induction hypothesis in \eqref{G_{4}_ind_hyp_1} can be applied.

We now come to the inductive step of our proof, and focus on the initial weight configuration
\begin{align}
{}&w_{0}(AB)=K+1+(M+2)L,\ w_{0}(BC)=K+i+(M+2)L,\nonumber\\
{}&w_{0}(CA)=K+M+3-i+(M+2)L,\ w_{0}(DE)=K,\label{G_{4}_losing_inductive_eq_1}
\end{align}
where $i \in \{1,2,\ldots,M+2\}$. Here, we have assumed, without any loss of generality, that $w_{0}(AB) \leqslant w_{0}(BC) \leqslant w_{0}(CA)$, which yields $2i \leqslant (M+3)$. We consider the first round of the game played on this configuration:
\begin{enumerate}
\item Suppose $P_{1}$ removes some weight from at most two of the edges $AB$, $BC$ and $CA$ in the first round, such that $\min\{w_{1}(AB),w_{1}(BC),w_{1}(CA)\}=K+1+(M+2)L$. This happens if and only if $P_{1}$ keeps the weight of $AB$ unchanged, while she removes a positive integer weight from at least one of $BC$ and $CA$. Consequently, we can write $w_{1}(BC)=K+j_{1}+(M+2)L$ and $w_{1}(CA)=K+j_{2}+(M+2)L$, for some $1 \leqslant j_{1} \leqslant i$ and $1 \leqslant j_{2} \leqslant M+3-i$ with $j_{1}+j_{2} \leqslant M+2$. Defining $m=j_{1}+j_{2}-2$, we thus have $m \leqslant M$. Moreover, the lower bounds of $j_{1} \geqslant 1$ and $j_{2} \geqslant 1$, along with $j_{1}+j_{2}=(m+2)$, implies that each of $j_{1}$ and $j_{2}$ is bounded above by $(m+1)$. Let $k$ be the unique positive integer such that
\begin{equation}
\frac{m(m+1)}{2} \leqslant k \leqslant \frac{m(m+3)}{2} \quad \text{and} \quad K+(M+2)L \equiv k \bmod (m+1).\label{k_m_inequalities}
\end{equation}
Using the above-mentioned assertion that $m \leqslant M$, the former of the two criteria stated in \eqref{k_m_inequalities}, and \eqref{K_M+1_inequalities}, we conclude that 
\begin{equation}
k \leqslant \frac{m(m+3)}{2} \leqslant \frac{M(M+3)}{2} < \frac{(M+1)(M+2)}{2} \leqslant K.\nonumber
\end{equation}
The second criterion of \eqref{k_m_inequalities} implies the existence of some $\ell \in \mathbb{N}_{0}$ such that $K+(M+2)L=k+(m+1)\ell$. In the second round, $P_{2}$ removes the strictly positive edge-weight $(K-k)$ from the edge $DE$, leaving $P_{1}$ with a configuration where $w_{2}(DE)=k$ and 
\begin{align}
{}&w_{2}(AB)=K+1+(M+2)L=k+1+(m+1)\ell, \nonumber\\
{}&w_{2}(BC)=K+j_{1}+(M+2)L=k+j_{1}+(m+1)\ell,\nonumber\\
{}&w_{2}(CA)=K+j_{2}+(M+2)L=k+j_{2}+(m+1)\ell=k+m+2-j_{1}+(m+1)\ell,\nonumber
\end{align} 
which satisfies the hypothesis of \eqref{G_{4}_losing_cond_1} (with \eqref{M_G_{4}} ensured by the first condition stated in \eqref{k_m_inequalities}). As $m \leqslant M$, hence $P_{1}$ loses by our induction hypothesis in \eqref{G_{4}_ind_hyp_1}. 

\item The second scenario, which needs to be considered only when $L$ is a \emph{positive} integer, is as follows: suppose $P_{1}$ removes some weight from at most two of the edges $AB$, $BC$ and $CA$ in the first round, such that 
\begin{equation}
K+1+(M+2)L > \min\{w_{1}(AB),w_{1}(BC),w_{1}(CA)\} > K.\label{triangle_edgeweight_>K}
\end{equation}
\sloppy Let $r$ be the unique element in $\{1,2,\ldots,M+2\}$ such that $\min\{w_{1}(AB),w_{1}(BC),w_{1}(CA)\}-K \equiv r \bmod (M+2)$, which implies the existence of some $\ell \in \mathbb{N}_{0}$ such that $\min\{w_{1}(AB),w_{1}(BC),w_{1}(CA)\}=K+r+(M+2)\ell$. Note, using the first inequality of \eqref{triangle_edgeweight_>K}, that
\begin{equation}
K+1+(M+2)L > K+r+(M+2)\ell \quad \text{and} \quad r \geqslant 1 \implies \ell \leqslant (L-1).\label{ell_less_L_eq_1}
\end{equation}
This, in turn, shows that 
\begin{align}
K+M+3-r+(M+2)\ell \leqslant{}& K+M+3-r+(M+2)(L-1)=K+1-r+(M+2)L\nonumber\\
{}&< K+1+(M+2)L=\min\{w_{0}(AB),w_{0}(BC),w_{0}(CA)\}.\label{less_than_minimum_triangle_edgeweight}
\end{align}
For ease of exposition, let us denote by $e_{1}$ the edge out of $AB$, $BC$ and $CA$ whose edge-weight equals $\min\{w_{1}(AB),w_{1}(BC),w_{1}(CA)\}$ -- it is evident from the first inequality of \eqref{triangle_edgeweight_>K} (and from the observation that $\min\{w_{0}(AB),w_{0}(BC),w_{0}(CA)\}=K+1+(M+2)L$) that $w_{1}(e_{1})<w_{0}(e_{1})$. Let us denote by $e_{2}$ and $e_{3}$ the remaining two edges out of $AB$, $BC$ and $CA$. Since $P_{1}$ could have modified the edge-weights of at most two edges out of $AB$, $BC$ and $CA$ in the first round, we may assume, without loss of generality, that $w_{0}(e_{3})=w_{1}(e_{3})$. Note that, since $r \geqslant 1$, we have
\begin{align}
K+1+(M+2)\ell \leqslant K+r+(M+2)\ell=\min\{w_{1}(AB),w_{1}(BC),w_{1}(CA)\}\leqslant w_{1}(e_{2}),\nonumber
\end{align}
and from \eqref{less_than_minimum_triangle_edgeweight}, we know that $K+M+3-r+(M+2)\ell < w_{0}(e_{3})=w_{1}(e_{3})$. Hence, in the second round, $P_{2}$ selects the vertex that is in common between $e_{2}$ and $e_{3}$, removes weight $w_{1}(e_{2})-\{K+1+(M+2)\ell\}$ from the edge $e_{2}$, and removes weight $w_{1}(e_{3})-\{K+M+3-r+(M+2)\ell\}$ from the edge $e_{3}$, leaving $P_{1}$ with a configuration where $w_{2}(DE)=K$, and  
\begin{align}
w_{2}(e_{1})=K+r+(M+2)\ell,\ w_{2}(e_{2})=K+1+(M+2)\ell,\ w_{2}(e_{3})=K+M+3-r+(M+2)\ell.\nonumber
\end{align} 
This configuration satisfies the hypothesis of \eqref{G_{4}_losing_cond_1}. As $\ell \leqslant (L-1)$ (as justified in \eqref{ell_less_L_eq_1}), hence $P_{1}$ loses by our induction hypothesis in \eqref{G_{4}_ind_hyp_3} (once again, we emphasize that this entire case need not be considered at all when $L=0$).

\item Suppose $P_{1}$ removes some weight from at most two of the edges $AB$, $BC$ and $CA$ in the first round, such that $\min\{w_{1}(AB),w_{1}(BC),w_{1}(CA)\}\leqslant K$. Let, once again, $e_{1}$ denote the edge, out of $AB$, $BC$ and $CA$, whose edge-weight attains this minimum, and let $e_{2}$ and $e_{3}$ be the remaining two edges, with $w_{1}(e_{3})=w_{0}(e_{3})$ since $P_{1}$ cannot alter the edge-weights of all three of these edges in a single round. In the second round, $P_{2}$ selects the vertex that is in common between $e_{2}$ and $e_{3}$, removes from $e_{3}$ the weight $w_{1}(e_{3})-\{K-w_{1}(e_{1})\}$ (note that $w_{1}(e_{3})=w_{0}(e_{3}) \geqslant K+1$), and removes the entire edge $e_{2}$. This leaves $P_{1}$ with a galaxy graph, with one component consisting of the edges $e_{1}$ and $e_{3}$, the other comprising the edge $DE$, and $w_{2}(e_{1})+w_{2}(e_{3})=K=w_{2}(DE)$. Consequently, $P_{1}$ loses by Theorem~\ref{thm:galaxy}.

\item The final possibility is where $P_{1}$ removes a positive integer weight from $DE$ in the first round, so that $w_{1}(DE)=k < K$. If $k=0$, then $P_{2}$ is left with the edges $AB$, $BC$ and $CA$ of a triangle, with their edge-weights \emph{not} all equal, and she wins by Theorem~\ref{triangle_losing}. Therefore, in what follows, it suffices to assume that $w_{1}(DE)=k$ with $k > 0$.

As mentioned right after \eqref{G_{4}_losing_inductive_eq_1}, we assume, without loss of generality, that $w_{0}(AB)\leqslant w_{0}(BC)\leqslant w_{0}(CA)$, so that $2i \leqslant (M+3)$. Now, let $m$ be the unique positive integer such that $m(m+1) \leqslant 2k \leqslant m(m+3)$, and let $r$ be the unique element in $\{0,1,\ldots,m\}$ such that $K+(M+2)L-k \equiv r \bmod (m+1)$. Thus, there exists some $\ell \in \mathbb{N}_{0}$ such that $K+(M+2)L=k+r+(m+1)\ell$, so that
\begin{align}
{}&w_{1}(AB)=w_{0}(AB)=K+1+(M+2)L=k+r+1+(m+1)\ell,\nonumber\\
{}&w_{1}(BC)=w_{0}(BC)=K+i+(M+2)L=k+r+i+(m+1)\ell,\nonumber\\
{}&w_{1}(CA)=w_{0}(CA)=K+M+3-i+(M+2)L=k+M+3-i+r+(m+1)\ell.\nonumber
\end{align}
There are two subcases to consider. The first is where $2(r+1) > m+i-M$, in which case $P_{2}$ removes weight $(r+i-1)$ from the edge $BC$ and weight $\{2(1+r)+M-i-m\}$ (which is strictly positive) from the edge $CA$ in the second round, leaving $P_{1}$ with
\begin{align}
{}&w_{2}(AB)=k+(r+1)+(m+1)\ell,\ w_{2}(BC)=k+1+(m+1)\ell,\nonumber\\
{}&w_{2}(CA)=k+m+2-(r+1)+(m+1)\ell,\ w_{2}(DE)=k.\nonumber
\end{align}
This configuration satisfies the hypothesis of \eqref{G_{4}_losing_cond_1}, and since $k < K$, $P_{1}$ loses by our induction hypothesis in \eqref{G_{4}_ind_hyp_2} when $(M+1)(M+2)/2 \leqslant k$, and by our induction hypothesis in \eqref{G_{4}_ind_hyp_1} when $k \leqslant M(M+3)/2$ (which, in turn, implies that $m \leqslant M$). Once again, we recall for the reader one of the remarks made right before \eqref{G_{4}_losing_inductive_eq_1}: when $K=(M+1)(M+2)/2$, we have $k < K \implies k \leqslant M(M+3)/2$, which in turn implies that $k$ satisfies the second criterion of \eqref{M_G_{4}} for some $m \leqslant M$, and hence, our induction hypothesis in \eqref{G_{4}_ind_hyp_1} becomes applicable.

The second subcase is where 
\begin{equation}
2(r+1) \leqslant m+i-M,\label{r_m_i_M_case_2}
\end{equation}
which, along with the inequality $2i \leqslant M+3$ (as mentioned in the first sentence of the previous paragraph), yields:
\begin{align}
m+i-M \geqslant 2(r+1)\geqslant 2 \implies m+\frac{M+3}{2}-M \geqslant 2 \implies m-\frac{M}{2}\geqslant\frac{1}{2} \implies m\geqslant\frac{M+1}{2}.\label{r_m_i_M_case_2_consequence}
\end{align}
Starting with \eqref{r_m_i_M_case_2} and applying the inequality $2i \leqslant M+3$, as well as the inequality deduced in \eqref{r_m_i_M_case_2_consequence}, we obtain: 
\begin{align}
(r+i) <{}& \frac{m+i-M}{2}+i-1 = \frac{m-M}{2}+\frac{3i}{2}-1\nonumber\\
\leqslant{}& \frac{m-M}{2}+\frac{3(M+3)}{4}-1=\frac{m}{2}+\frac{M+1}{4}+1\leqslant m+1,\nonumber
\end{align} 
which is what we need for the remaining argument. In the second round, $P_{2}$ removes weight $r$ from the edge $AB$ and weight $(M-m+1+2r)$ (which is strictly positive) from the edge $CA$, leaving $P_{1}$ with
\begin{align}
{}&w_{2}(AB)=k+1+(m+1)\ell,\ w_{2}(BC)=k+(r+i)+(m+1)\ell,\nonumber\\
{}&w_{2}(CA)=k+m+2-(r+i)+(m+1)\ell,\ w_{2}(DE)=k.\nonumber
\end{align}
This configuration satisfies the hypothesis of \eqref{G_{4}_losing_cond_1}, and since $k < K$, once again, $P_{1}$ loses by our induction hypothesis in \eqref{G_{4}_ind_hyp_2} when $(M+1)(M+2)/2 \leqslant k$, and by our induction hypothesis in \eqref{G_{4}_ind_hyp_1} when $k \leqslant M(M+3)/2$ (as before, we need only apply the induction hypothesis of \eqref{G_{4}_ind_hyp_1} when $K=(M+1)(M+2)/2$).
\end{enumerate}

This completes the proof of the fact that the configuration in \eqref{G_{4}_losing_inductive_eq_1} is losing. This also concludes the inductive proof of Step~\eqref{thm:G_{4}_losing_1} in the proof of Theorem~\ref{thm:H_{1}_losing}. \qed

\subsection{Proof of Step~\eqref{thm:G_{4}_winning_1} for proving Theorem~\ref{thm:main_G_{4}}}
We assume, without loss of generality, that $w_{0}(AB) \leqslant w_{0}(BC) \leqslant w_{0}(CA)$. Letting $w_{0}(DE)=k$ for some $k \in \mathbb{N}$, let $m$ be the unique positive integer for which $k$ satisfies the second criterion of \eqref{M_G_{4}}. If $w_{0}(AB)-k\equiv r_{1}\bmod (m+1)$, $w_{0}(BC)-k\equiv r_{2}\bmod(m+1)$ and $w_{0}(CA)-k\equiv r_{3}\bmod(m+1)$, where $r_{1}, r_{2}, r_{3} \in \{1,2,\ldots,m+1\}$, then for some $\ell_{1}$, $\ell_{2}$ and $\ell_{3}$ in $\mathbb{N}_{0}$, we have
\begin{equation}
w_{0}(AB)=k+r_{1}+(m+1)\ell_{1},\ w_{0}(BC)=k+r_{2}+(m+1)\ell_{2},\ w_{0}(CA)=k+r_{3}+(m+1)\ell_{3}.\label{G_{4}_winning_1_eq_1}
\end{equation}

The first case to consider is where $\ell_{1}=\ell_{2}=\ell_{3}=\ell$. In this case, under our assumption that $w_{0}(AB) \leqslant w_{0}(BC) \leqslant w_{0}(CA)$, we must have $r_{1} \leqslant r_{2} \leqslant r_{3}$. If $r_{1}=r_{2}=r_{3}$, then we have $w_{0}(AB)=w_{0}(BC)=w_{0}(CA)$, in which case $P_{1}$ simply removes the edge $DE$ in the first round and wins by Theorem~\ref{triangle_losing}.

If $r_{2}$ and $r_{3}$ are such that $m+2-r_{2} \leqslant r_{3}$, then $P_{1}$ chooses the vertex $A$, and removes from the edges $AB$ and $CA$ the weights $(r_{1}-1)$ and $\{r_{3}-(m+2-r_{2})\}$ respectively (note that the total weight removed is strictly positive since our initial configuration does \emph{not} satisfy the hypothesis of \eqref{G_{4}_losing_cond_1}). This leaves $P_{2}$ with $w_{1}(DE)=k$ and 
\begin{equation}
w_{1}(AB)=k+1+(m+1)\ell,\ w_{1}(BC)=k+r_{2}+(m+1)\ell,\ w_{2}(CA)=k+m+2-r_{2}+(m+1)\ell,\nonumber
\end{equation}
which now satisfies the hypothesis of \eqref{G_{4}_losing_cond_1}, and $P_{2}$ loses by Step~\eqref{thm:G_{4}_losing_1} proved earlier.

Suppose, now, that $m+2-r_{2}>r_{3}$, so that, if we set $m'=r_{2}+r_{3}-2r_{1}$, we have, using $r_{1} \geqslant 1$,
\begin{equation}
m+2-r_{2}>r_{3} \implies r_{2}+r_{3}<m+2 \implies m'=r_{2}+r_{3}-2r_{1}<m+2-2r_{1}\leqslant m.\label{m'<m}
\end{equation}
Let $k'$ be the unique positive integer satisfying
\begin{equation}
\frac{m'(m'+1)}{2}\leqslant k'\leqslant\frac{m'(m'+3)}{2} \quad \text{and} \quad k+r_{1}-1+(m+1)\ell \equiv k' \bmod (m'+1).\label{k'_defn}
\end{equation}
The first of the two criteria stated in \eqref{k'_defn}, along with the inequality $m'<m$ that we have deduced in \eqref{m'<m}, and the fact that $k$ and $m$ satisfy the second criterion of \eqref{M_G_{4}}, yields $k' < k$, which, in turn, combined with the second of the two criteria stated in \eqref{k'_defn}, implies the existence of some $\ell' \in \mathbb{N}_{0}$ such that $k+r_{1}+(m+1)\ell=k'+1+(m'+1)\ell'$. Note that this yields, using the definition of $m'$:
\begin{align}
{}&w_{0}(AB)=k+r_{1}+(m+1)\ell=k'+1+(m'+1)\ell',\nonumber\\
{}&w_{0}(BC)=k+r_{2}+(m+1)\ell=k'+(1+r_{2}-r_{1})+(m'+1)\ell',\nonumber\\
{}&w_{0}(CA)=k+r_{3}+(m+1)\ell=k'+1+r_{3}-r_{1}+(m'+1)\ell'=k'+m'+2-(1+r_{2}-r_{1})+(m'+1)\ell'.\nonumber
\end{align}
Therefore, $P_{1}$ simply removes weight $(k-k')$ (which is strictly positive, as argued above) from the edge $DE$ in the first round, leaving $P_{2}$ with a configuration that satisfies the hypothesis of \eqref{G_{4}_losing_cond_1}, and hence, $P_{2}$ loses by Step~\eqref{thm:G_{4}_losing_1} proved earlier.

Now, let us consider the second case, i.e.\ where $\ell_{1}$, $\ell_{2}$ and $\ell_{3}$ are \emph{not} all equal. This, along with our assumption that $w_{0}(AB)\leqslant w_{0}(BC)\leqslant w_{0}(CA)$, implies that $\ell_{1}<\ell_{3}$. This, in turn, implies (using the inequalities $r_{1}+r_{3}\geqslant 2$ and $\ell_{3}-\ell_{1}\geqslant 1$) that
\begin{align}
{}&\{k+r_{3}+(m+1)\ell_{3}\}-\{k+m+2-r_{1}+(m+1)\ell_{1}\}=r_{3}+r_{1}-(m+2)+(m+1)(\ell_{3}-\ell_{1}) \geqslant 1.\nonumber
\end{align}
$P_{1}$, therefore, removes weight $\{k+r_{3}+(m+1)\ell_{3}\}-\{k+m+2-r_{1}+(m+1)\ell_{1}\}$ from the edge $CA$, and weight $r_{2}-1+(m+1)(\ell_{2}-\ell_{1})$ from the edge $BC$, in the first round, leaving $P_{2}$ with
\begin{align}
{}&w_{1}(AB)=k+r_{1}+(m+1)\ell_{1},\ w_{1}(BC)=k+1+(m+1)\ell_{1},\nonumber\\
{}&w_{1}(CA)=k+m+2-r_{1}+(m+1)\ell_{1},\ w_{1}(DE)=k,\nonumber
\end{align}
which satisfies the hypothesis of \eqref{G_{4}_losing_cond_1}, and hence, $P_{2}$ loses by Step~\eqref{thm:G_{4}_losing_1} proved earlier. This completes the proof of Step~\eqref{thm:G_{4}_winning_1}. \qed

Before we begin the proof of Step~\eqref{thm:G_{4}_losing_2}, we state and prove a crucial lemma: 
\begin{lemma}\label{lem:nec_suff}
Given $w_{0}(AB) \leqslant w_{0}(BC) \leqslant w_{0}(CA)$, with $w_{0}(AB)$, $w_{0}(BC)$ and $w_{0}(CA)$ not all equal, the multiset $\{w_{0}(AB),w_{0}(BC),w_{0}(CA)\}$ is special (see Definition~\ref{defn:special}) if and only if we have
\begin{equation}
2w_{0}(AB)>\left(w_{0}(BC)+w_{0}(CA)-2w_{0}(AB)\right)\left(w_{0}(BC)+w_{0}(CA)-2w_{0}(AB)+1\right).\label{eq:nec_suff}
\end{equation}
\end{lemma}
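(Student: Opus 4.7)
The proof hinges on identifying the natural candidates for $m$ and $i$ in Definition~\ref{defn:special} directly from the edge weights. If $\{w_0(AB), w_0(BC), w_0(CA)\}$ is $(k, \ell, m, i)$-special, then, since $w_0(AB)$ is the minimum, we must have $w_0(AB) = k+1+(m+1)\ell$; after reordering (by replacing $i$ with $m+2-i$ if needed, which is permissible since both are valid choices in $\{1,\ldots,m+1\}$ producing the same multiset), the ordering $w_0(BC) \leqslant w_0(CA)$ forces $w_0(BC) = k + i + (m+1)\ell$ and $w_0(CA) = k + m + 2 - i + (m+1)\ell$ with $i \leqslant (m+2)/2$. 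Subtracting these identities yields $m = w_0(BC) + w_0(CA) - 2w_0(AB)$ and $i = w_0(BC) - w_0(AB) + 1$, which I would take as the definitions of $m$ and $i$ throughout the argument.

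For the forward direction, the identity $m = w_0(BC) + w_0(CA) - 2w_0(AB)$ holds by the calculation above, and the range constraint $k \geqslant m(m+1)/2$ from \eqref{M_G_{4}} yields $2w_0(AB) = 2k + 2 + 2(m+1)\ell \geqslant m(m+1) + 2 > m(m+1)$, which is exactly \eqref{eq:nec_suff}.

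For the converse, I define $m$ and $i$ by the formulas above. The not-all-equal hypothesis together with $w_0(AB) \leqslant w_0(BC) \leqslant w_0(CA)$ forces $m \geqslant 1$, and $w_0(BC) \leqslant w_0(CA)$ gives $i \leqslant (m+2)/2 \leqslant m+1$, placing $i$ in the required range $\{1, \ldots, m+1\}$. Since $m(m+1)$ is always even, the strict inequality $2w_0(AB) > m(m+1)$ upgrades to $2w_0(AB) \geqslant m(m+1) + 2$, equivalently $w_0(AB) - 1 \geqslant m(m+1)/2$. I then invoke the partition property recorded right after Definition~\ref{defn:special}: for fixed $m \in \mathbb{N}$, every integer $\geqslant m(m+1)/2$ admits a unique representation $k + (m+1)\ell$ with $k \in \{m(m+1)/2, \ldots, m(m+3)/2\}$ and $\ell \in \mathbb{N}_0$. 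Applying this to $w_0(AB) - 1$ produces the required $k$ and $\ell$; substituting them into the definitions of $m$ and $i$ recovers $w_0(BC) = k + i + (m+1)\ell$ and $w_0(CA) = k + m + 2 - i + (m+1)\ell$, confirming $(k, \ell, m, i)$-specialness.

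The main obstacle, such as it is, lies in the parity upgrade in the backward direction: without exploiting that $m(m+1)$ is even, \eqref{eq:nec_suff} would deliver only $w_0(AB) - 1 \geqslant (m(m+1)-1)/2$, which is insufficient to place $w_0(AB) - 1$ in the range where the partition-based reconstruction of $(k, \ell)$ succeeds. All remaining steps amount to routine matching of elements across the multiset equality and simple range checks.
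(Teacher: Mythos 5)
Your proof is correct and follows essentially the same route as the paper's: both directions rest on reading off $m = w_{0}(BC)+w_{0}(CA)-2w_{0}(AB)$ and reconstructing $k$ and $\ell$ by dividing $w_{0}(AB)-1$ by $m+1$ with remainder in $\left\{m(m+1)/2,\ldots,m(m+3)/2\right\}$, which is exactly where \eqref{eq:nec_suff} enters (your parity upgrade is the same observation the paper makes via the integrality of $m(m+1)/2$). If anything, your backward direction is slightly cleaner, since defining $i = w_{0}(BC)-w_{0}(AB)+1$ outright and checking $1 \leqslant i \leqslant m+1$ from the ordering sidesteps the paper's detour through the residues of $w_{0}(BC)-k$ and $w_{0}(CA)-k$ modulo $m+1$ and the ensuing contradiction argument that $\ell_{1}=\ell_{2}=\ell$; you also supply the forward direction, which the paper omits as straightforward.
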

\begin{proof}
When \eqref{eq:nec_suff} holds, we set $m=w_{0}(BC)+w_{0}(CA)-2w_{0}(AB)$, and we let $k$ be the unique positive integer that, along with this choice of $m$, satisfies the second criterion of \eqref{M_G_{4}} as well as the relation $w_{0}(AB)-1\equiv k\bmod(m+1)$. Note that the inequality in \eqref{eq:nec_suff} ensures that 
\begin{equation}
w_{0}(AB)>\frac{m(m+1)}{2} \implies w_{0}(AB)\geqslant \frac{m(m+1)}{2}+1 \implies w_{0}(AB)-1\geqslant \frac{m(m+1)}{2},\nonumber
\end{equation}
so that, from the way we have defined $k$ above, we know that there exists $\ell \in \mathbb{N}_{0}$ such that $w_{0}(AB)=k+1+(m+1)\ell$. Next, from our definition of $m$, we have
\begin{align}
w_{0}(BC)+w_{0}(CA)=2w_{0}(AB)+m=2\{k+1+(m+1)\ell\}+m=2k+m+2+2(m+1)\ell.\label{nec_suff_eq_1}
\end{align}
Let $w_{0}(BC)-k\equiv i \bmod (m+1)$ for some $i \in \{1,2,\ldots,m+1\}$. Note that, since $w_{0}(BC)\geqslant w_{0}(AB)=k+1+(m+1)\ell$, we can find $\ell_{1} \in \mathbb{N}_{0}$ such that $w_{0}(BC)=k+i+(m+1)\ell_{1}$. Likewise, setting $w_{0}(CA)-k\equiv j \bmod(m+1)$ for some $j \in \{1,2,\ldots,m+1\}$, we see that there exists some $\ell_{2}\in\mathbb{N}_{0}$ such that $w_{0}(CA)=k+j+(m+1)\ell_{2}$. From \eqref{nec_suff_eq_1}, we then obtain
\begin{align}
{}&\{k+i+(m+1)\ell_{1}\}+\{k+j+(m+1)\ell_{2}\}=2k+m+2+2(m+1)\ell \nonumber\\
{}&\Longleftrightarrow i+j+(m+1)(\ell_{1}+\ell_{2})=m+2+2(m+1)\ell.\nonumber
\end{align}
The first conclusion to draw from this is that $(i+j) \equiv 1 \bmod (m+1)$, and since $2 \leqslant (i+j) \leqslant 2(m+1)$, the only possibility we are left with is that $(i+j)=(m+2)$. This, in turn, yields $(\ell_{1}+\ell_{2})=2\ell$. If possible, let us assume that $\ell_{1}\neq \ell_{2}$, which implies that $\ell_{1}<\ell_{2}$ (since $w_{0}(BC)\leqslant w_{0}(CA)$). This implies that $\ell_{1}\leqslant (\ell-1)$, which yields
\begin{align}
w_{0}(BC)=k+i+(m+1)\ell_{1}\leqslant k+i+(m+1)(\ell-1)=k+i-(m+1)+(m+1)\ell<k+1+(m+1)\ell=w_{0}(AB),\nonumber
\end{align}
leading to a contradiction. Therefore, our assumption that $\ell_{1}\neq\ell_{2}$ must be erroneous, leading to $\ell_{1}=\ell_{2}=\ell$. We have, therefore, deduced the existence of $k, m, i \in \mathbb{N}$ and $\ell \in \mathbb{N}_{0}$ such that \eqref{M_G_{4}} holds and $w_{0}(AB)=k+1+(m+1)\ell$, $w_{0}(BC)=k+i+(m+1)\ell$ and $w_{0}(CA)=k+j+(m+1)\ell=k+m+2-i+(m+1)\ell$. This proves one side of the implication stated in Lemma~\ref{lem:nec_suff}. The proof that the converse is also true is much more straightforward and hence omitted from this paper.
\end{proof}

\subsection{Proof of Step~\eqref{thm:G_{4}_losing_2} for proving Theorem~\ref{thm:main_G_{4}}}
The proof of Step~\eqref{thm:G_{4}_losing_2} happens via induction on the value of $w_{0}(DE)$. The base case for this inductive argument has been addressed in \S\ref{subsec:appendix_G_{4}_losing_2}. Suppose, for some $K \in \mathbb{N}$ with $K \geqslant 4$, we have shown that every initial weight configuration on $G_{4}$ that satisfies \emph{all} of the criteria mentioned in \eqref{G_{4}_losing_cond_2}, along with $w_{0}(DE)\leqslant K$, is losing. We now consider an initial weight configuration $\left(w_{0}(AB),w_{0}(BC),w_{0}(CA),w_{0}(DE)\right)$, with $w_{0}(DE)=(K+1)$, that satisfies all of the criteria stated in \eqref{G_{4}_losing_cond_2}. The first round of the game played on this initial configuration can unfold in one of the following ways:
\begin{enumerate}
\item Suppose $P_{1}$ removes some positive integer weight from at most two of the edges out of $AB$, $BC$ and $CA$. If the resulting multiset $\{w_{1}(AB),w_{1}(BC),w_{1}(CA)\}$ is special, thus guaranteeing the existence of $k$, $m$, $i$ and $\ell$ as in Definition~\ref{defn:special}, then $w_{1}(AB)+w_{1}(BC)+w_{1}(CA)<w_{0}(DE)=(K+1)$ implies that $k<(K+1)$. Thus, $P_{2}$ is allowed to remove weight $(K+1)-k$ from the edge $DE$ in the second round, which she does, leaving $P_{1}$ with a configuration that satisfies the hypothesis of \eqref{G_{4}_losing_cond_1}, so that $P_{1}$ loses by Step~\eqref{thm:G_{4}_losing_1}.

\sloppy Suppose the multiset $\{w_{1}(AB),w_{1}(BC),w_{1}(CA)\}$ is not special. If $w_{1}(AB)=w_{1}(BC)=w_{1}(CA)$, then $P_{2}$ removes the edge $DE$ in the second round, defeating $P_{1}$ by Theorem~\ref{triangle_losing}. If not all of $w_{1}(AB)$, $w_{1}(BC)$ and $w_{1}(CA)$ are equal, then $P_{2}$ removes from $DE$ the weight $(K+1)-\{w_{1}(AB)+w_{1}(BC)+w_{1}(CA)\}$ in the second round, leaving $P_{1}$ with a configuration $(w_{2}(AB),w_{2}(BC),w_{2}(CA),w_{2}(DE))$, with $w_{2}(DE)\leqslant K$, that satisfies all of the criteria stated in \eqref{G_{4}_losing_cond_2}. Consequently, $P_{1}$ loses by our induction hypothesis. 

\item Suppose $P_{1}$ removes a positive integer weight from $DE$ in the first round. If $w_{1}(DE)=0$, then $P_{2}$ wins by Theorem~\ref{triangle_losing}. So, we henceforth assume that $w_{1}(DE)>0$.

Without loss of generality, for the rest of this analysis, let us assume that $w_{0}(AB)\leqslant w_{0}(BC)\leqslant w_{0}(CA)$. Note that if $w_{0}(AB) \leqslant w_{1}(DE) \leqslant w_{0}(BC)+w_{0}(CA)$, then $P_{1}$ loses by Step~\eqref{lem:G_{4}_winning_1}. 

\sloppy When $w_{1}(DE)<w_{0}(AB)$, we have $w_{1}(DE)<\min\{w_{0}(AB),w_{0}(BC),w_{0}(CA)\}$. Since our initial weight configuration satisfies the criteria stated in \eqref{G_{4}_losing_cond_2} (in particular, the multiset $\{w_{0}(AB),w_{0}(BC),w_{0}(CA)\}$ is \emph{not} special), hence the weight configuration $(w_{1}(AB),w_{1}(BC),w_{1}(CA),w_{1}(DE))$, obtained after the first round of the game, does \emph{not} satisfy the hypothesis of \eqref{G_{4}_losing_cond_1}. Therefore, all the criteria stated in Step~\eqref{thm:G_{4}_winning_1} are satisfied by $(w_{1}(AB),w_{1}(BC),w_{1}(CA),w_{1}(DE))$, and hence, $P_{2}$ wins. 

Finally, we consider $w_{1}(DE)>w_{0}(BC)+w_{0}(CA)$. In this case, $P_{2}$ removes weight $w_{0}(DE)-w_{1}(DE)$ from the edge $AB$ in the second round, so that $w_{2}(AB)=w_{0}(AB)-w_{0}(DE)+w_{1}(DE)=w_{1}(DE)-w_{0}(BC)-w_{0}(CA)$, which implies that $w_{2}(AB)+w_{2}(BC)+w_{2}(CA)=w_{2}(DE)$. Since, to begin with, we had $w_{0}(AB)\leqslant w_{0}(BC)\leqslant w_{0}(CA)$, we certainly have $w_{2}(AB)<w_{2}(BC)=w_{0}(BC)\leqslant w_{2}(CA)=w_{0}(CA)$ -- i.e.\ $w_{2}(AB)$, $w_{2}(BC)$ and $w_{2}(CA)$ are \emph{not} all equal. By Lemma~\ref{lem:nec_suff}, the multiset $\{w_{2}(AB),w_{2}(BC),w_{2}(CA)\}$ is special iff
\begin{align}
2w_{2}(AB)>{}&\{w_{2}(BC)+w_{2}(CA)-2w_{2}(AB)\}\{w_{2}(BC)+w_{2}(CA)-2w_{2}(AB)+1\}\nonumber\\
={}&\{w_{0}(BC)+w_{0}(CA)-2w_{2}(AB)\}\{w_{0}(BC)+w_{0}(CA)-2w_{2}(AB)+1\}.\label{nec_suff_ineq}
\end{align}
Considering, for any real $y>0$, the function $f_{y}(x)=(y-x)(y-x+1)-x$ for $x \in [0,y]$, we see that 
\begin{align}
f'_{y}(x)=-2y+2x-2=-2(y+1-x),\nonumber
\end{align}
which is strictly negative for as long as we have $y \geqslant x$. Therefore, $f_{y}(x)$ is strictly decreasing over $x \in [0,y]$, which implies that if for some $x \in [0,y)$, we have $x>(y-x)(y-x+1)$, then for any $x' \in (x,y]$, we would have $x'>(y-x')(y-x'+1)$. Setting $y=w_{0}(BC)+w_{0}(CA)$, $x=2w_{2}(AB)$ and $x'=2w_{0}(AB)$ and applying this observation, we find that \eqref{nec_suff_ineq} holds if and only if \eqref{eq:nec_suff} holds, and if \eqref{eq:nec_suff} holds, then, by Lemma~\ref{lem:nec_suff}, we conclude that the multiset $\{w_{0}(AB),w_{0}(BC),w_{0}(CA)\}$ must be special, contradicting the assumption that our initial weight configuration satisfies all constraints of \eqref{G_{4}_losing_cond_2}. Therefore, the multiset $\{w_{2}(AB),w_{2}(BC),w_{2}(CA)\}$ \emph{cannot} be special, and the configuration $(w_{2}(AB),w_{2}(BC),w_{2}(CA),w_{2}(DE))$ satisfies all the criteria stated in \eqref{G_{4}_losing_cond_2}, along with $w_{2}(DE)\leqslant K$. Hence, $P_{1}$ loses by our induction hypothesis.
\end{enumerate}
This concludes the inductive argument, and thereby, the proof of Step~\eqref{thm:G_{4}_losing_2}. \qed

\subsection{Proof of Step~\eqref{thm:G_{4}_winning_2} for proving Theorem~\ref{thm:main_G_{4}}}
\sloppy We assume, without loss of generality, that $w_{0}(AB)\leqslant w_{0}(BC) \leqslant w_{0}(CA)$, so that the first criterion stated in Step~\eqref{thm:G_{4}_winning_2} boils down to $w_{0}(DE) > w_{0}(BC)+w_{0}(CA)$. 

If $w_{0}(AB)=w_{0}(BC)=w_{0}(CA)$, then $P_{1}$ simply removes $DE$ in the first round, and $P_{2}$ loses by Theorem~\ref{triangle_losing}. Henceforth, therefore, we assume that $w_{0}(AB)$, $w_{0}(BC)$ and $w_{0}(CA)$ are not all equal.

If $w_{0}(DE)>w_{0}(AB)+w_{0}(BC)+w_{0}(CA)$, then $P_{1}$ removes weight $w_{0}(DE)-\{w_{0}(AB)+w_{0}(BC)+w_{0}(CA)\}$ from the edge $DE$ in the first round, so that $w_{1}(DE)=w_{1}(AB)+w_{1}(BC)+w_{1}(CA)$. Since the edge-weights on $AB$, $BC$ and $CA$ remain unchanged, and since $\{w_{0}(AB),w_{0}(BC),w_{0}(CA)\}$ is not special to begin with, neither is $\{w_{1}(AB),w_{1}(BC),w_{1}(CA)\}$. Thus, $P_{2}$ is left with a configuration that satisfies all the constraints stated in \eqref{G_{4}_losing_cond_2}, and hence, by Theorem~\ref{thm:G_{4}_losing_2}, $P_{2}$ loses.

Let us now consider $w_{0}(BC)+w_{0}(CA)<w_{0}(DE)<w_{0}(AB)+w_{0}(BC)+w_{0}(CA)$. In this case, $P_{1}$ removes weight $w_{0}(AB)+w_{0}(BC)+w_{0}(CA)-w_{0}(DE)$ from the edge $AB$ in the first round, so that $P_{2}$ is left with $w_{1}(AB)+w_{1}(BC)+w_{1}(CA)=w_{1}(DE)$. Evidently, $w_{1}(AB)<w_{0}(AB)\leqslant w_{0}(BC)=w_{1}(BC)\leqslant w_{0}(CA)=w_{1}(CA)$, so that $w_{1}(AB)$, $w_{1}(BC)$ and $w_{1}(CA)$ are not all equal. Next, we note that, by Lemma~\ref{lem:nec_suff}, the multiset $\{w_{1}(AB),w_{1}(BC),w_{1}(CA)\}$ would be special if and only if we have
\begin{align}
2w_{1}(AB)>{}&\{w_{1}(BC)+w_{1}(CA)-2w_{1}(AB)\}\{w_{1}(BC)+w_{1}(CA)-2w_{1}(AB)+1\}\nonumber\\
={}&\{w_{0}(BC)+w_{0}(CA)-2w_{1}(AB)\}\{w_{0}(BC)+w_{0}(CA)-2w_{1}(AB)+1\},\label{nec_suff_2}
\end{align}
and, much as we argued towards the end of the proof of Step~\eqref{thm:G_{4}_losing_2}, we conclude that \eqref{nec_suff_2} holds if and only if \eqref{eq:nec_suff} holds, which, in turn, applying Lemma~\ref{lem:nec_suff}, is true if and only if the multiset $\{w_{0}(AB),w_{0}(BC),w_{0}(CA)\}$ is special. However, this contradicts one of the criteria mentioned in the statement of Step~\eqref{thm:G_{4}_winning_2}, which allows us to conclude that, in fact, the multiset $\{w_{1}(AB),w_{1}(BC),w_{1}(CA)\}$ cannot be special. Combining all these observations, we see that the configuration $(w_{1}(AB),w_{1}(BC),w_{1}(CA),w_{1}(DE))$ satisfies all of the criteria mentioned in \eqref{G_{4}_losing_cond_2}, and hence, by Step~\eqref{thm:G_{4}_losing_2} that we have already proved, $P_{2}$ loses. This completes the proof of Step~\eqref{thm:G_{4}_winning_2}.\qed

\section{Proof of our results regarding the graph $H_{1}$}\label{sec:proof_6_vertices}
This section is dedicated to the proofs of all of our results in \S\ref{subsubsec:H_{1}_main_results}. At the very outset of \S\ref{sec:proof_6_vertices}, we inform the reader that not all parts of the proof of Theorem~\ref{thm:H_{1}_losing} have been included in this section. In \S\ref{sec:proof_6_vertices}, we have included 
\begin{enumerate}
\item the proof of our claim that any configuration on $H_{1}$ that is of the form given by \eqref{eq:H_{1}_losing_0,1,3}, with $r=0$, is losing,
\item and the proofs of our claims that any configuration on $H_{1}$ that is of one of the forms given by \eqref{eq:H_{1}_losing_2,4_case_1}, \eqref{eq:H_{1}_losing_2,4_case_2} and \eqref{eq:H_{1}_losing_2,4_case_3}, with $r=2$, is losing.
\end{enumerate}
The claims that configurations on $H_{1}$ of the form given by \eqref{eq:H_{1}_losing_0,1,3}, with $r=1$ or $r=3$, are losing, have been established in \S\ref{subsec:H_{1}_losing_1_proof} (when $r=1$) and \S\ref{subsec:H_{1}_losing_3_proof} (when $r=3$) of the Appendix, \S\ref{sec:appendix}. Likewise, the claims that configurations on $H_{1}$ of the form given by one of \eqref{eq:H_{1}_losing_2,4_case_1}, \eqref{eq:H_{1}_losing_2,4_case_2} and \eqref{eq:H_{1}_losing_2,4_case_3}, with $r=4$, are losing, have been proved in \S\ref{subsec:H_{1}_losing_4_1,2_proof} (when $s=1$), \S\ref{subsec:H_{1}_losing_4_3,4_proof} (when $s=2$), \S\ref{subsec:H_{1}_losing_4_5,6_proof} (when $s=3$) and \S\ref{subsec:H_{1}_losing_4_7_proof} (when $s\geqslant 4$) of \S\ref{sec:appendix}.

\subsection{Proof of Lemma~\ref{lem:H_{1}_winning_1}} Since the edges $AB$ and $CD$ play symmetric roles in the graph $H_{1}$, it suffices for us to prove that an initial weight configuration on $H_{1}$ is winning whenever it satisfies the inequalities in \eqref{eq:H_{1}_winning_1}, and the proof would be analogous if, instead, it satisfies the inequalities in \eqref{eq:H_{1}_winning_2}. Assuming that \eqref{eq:H_{1}_winning_1} holds, $P_{1}$ chooses the vertex $C$, removes weight $w_{0}(AB)+w_{0}(BC)-w_{0}(EF)$ from the edge $BC$, and removes the entire edge $CD$, in the first round, leaving $P_{2}$ with a galaxy graph comprising two components: one of these is the edge $EF$, the other is a star graph consisting of the edges $AB$ and $BC$, and $w_{1}(AB)+w_{1}(BC)=w_{0}(EF)=w_{1}(EF)$. Hence, $P_{2}$ loses by Theorem~\ref{triangle_losing}.

\sloppy \subsection{Proof of Theorem~\ref{thm:H_{1}_winning_1}} If the set $S$ defined in the statement of Theorem~\ref{thm:H_{1}_winning_1} is empty, then $a_{i}+b_{i}+c_{i}$ is even for each $i \in \{0,1,\ldots,s\}$. In this case, $P_{1}$ removes the edge $BC$ in the first round, leaving $P_{2}$ with a galaxy graph comprising three components, namely, the edges $AB$, $CD$ and $EF$, such that the triple $(w_{0}(AB),w_{0}(CD),w_{0}(EF))$ is balanced, and $P_{2}$ loses by Theorem~\ref{thm:galaxy}.

Suppose, now, that $S$ is non-empty, so that the index $I$ is well-defined, and we assume, without loss of generality, that $b_{I}=1$. Let us choose $e_{i}\in \{0,1\}$, for $i \in \{0,1,\ldots,s\}$, such that $e_{i}+a_{i}+c_{i}$ is even -- note, immediately, that $e_{i}=b_{i}$ for all $i \in \{I+1,\ldots,s\}$ (if this set is non-empty), and $e_{I}=0$ (since $I$ itself is an element of $S$). Consequently,
\begin{align}
\sum_{i=0}^{s}e_{i}2^{i}\leqslant\sum_{i=I+1}^{s}b_{i}2^{i}+\sum_{i=0}^{I-1}2^{i}=\sum_{i=I+1}^{s}b_{i}2^{i}+2^{I}-1<\sum_{i=0}^{s}b_{i}2^{i}=w_{0}(AB),\nonumber
\end{align}
so that if $P_{1}$ chooses the vertex $B$, removes weight $w_{0}(AB)-\sum_{i=0}^{s}e_{i}2^{i}$ from the edge $AB$, and removes the entire edge $BC$, in the first round, it leaves $P_{2}$ with a galaxy graph comprising three components, namely, the edges $AB$, $CD$ and $EF$, such that the triple $(w_{0}(AB),w_{0}(CD),w_{0}(EF))$ is balanced (by our choice of the $e_{i}$s). Consequently, $P_{2}$ loses by Theorem~\ref{thm:galaxy}. This completes the proof of Theorem~\ref{thm:H_{1}_winning_1}.

\subsection{Proof of Lemma~\ref{prop:H_{1}_winning_1}}
Consider an initial weight configuration on $H_{1}$ that is of the form described in the statement of Lemma~\ref{prop:H_{1}_winning_1}, with $m_{1} \neq m_{2}$, and we assume, without loss of generality, that $m_{1} > m_{2}$. Writing $f(k)=r$, where $f(k)$ is as defined just before the statement of Proposition~\ref{prop:H_{1}_winning_1}, let us write $k=(a_{r}a_{r-1}\ldots a_{1}a_{0})_{2}$ (so that $a_{r}=1$), $\ell_{1}=(b_{r}b_{r-1}\ldots b_{1}b_{0})_{2}$ and $\ell_{2}=(c_{r}c_{r-1}\ldots c_{1}c_{0})_{2}$. We further write $m_{1}=(\alpha_{s}\alpha_{s-1}\ldots \alpha_{1}\alpha_{0})_{2}$ and $m_{2}=(\beta_{s}\beta_{s-1}\ldots \beta_{1}\beta_{0})_{2}$, such that at least one of $\alpha_{s}$ and $\beta_{s}$ equals $1$, and as $m_{1} > m_{2}$, there exists $J \in \{0,1,\ldots,s\}$ such that $\alpha_{i}=\beta_{i}$ for all $i \in \{J+1,\ldots,s\}$ (if this set is non-empty), $\alpha_{J}=1$ and $\beta_{J}=0$. Armed with these, we see that
\begin{align}
{}&w_{0}(AB)=(\alpha_{s}\alpha_{s-1}\ldots \alpha_{1}\alpha_{0}b_{r}b_{r-1}\ldots b_{1}b_{0})_{2},\nonumber\\
{}&w_{0}(CD)=(\beta_{s}\beta_{s-1}\ldots \beta_{1}\beta_{0}c_{r}c_{r-1}\ldots c_{1}c_{0})_{2},\nonumber\\
{}&w_{0}(EF)=(\underbrace{0\ 0\ 0 \ldots\ 0\ 0\ 0}_{(s+1) \text{ zeros}}a_{r}a_{r-1}\ldots a_{1}a_{0})_{2}.\nonumber
\end{align}
Consequently, the largest index $I$ for which the sum of the $I$-th coordinates of the base-$2$ representations of $w_{0}(AB)$, $w_{0}(CD)$ and $w_{0}(EF)$ is odd is given by $I=r+1+J$, and we have, as mentioned above, $\alpha_{J}=1$, which implies that the $I$-th coordinate in the base-$2$ representation of $w_{0}(AB)$ equals $1$. Thus, $P_{1}$ wins by Theorem~\ref{thm:H_{1}_winning_1}.

Now, let us consider an initial weight configuration on $H_{1}$ that is of the form described in Lemma~\ref{prop:H_{1}_winning_1}, with $m_{1}=m_{2}=m$ and $\min\{\ell_{1},\ell_{2}\}\geqslant k$. Let us, as above, set $r=f(k)$, and let $k=(a_{r}a_{r-1}\ldots a_{1}a_{0})_{2}$ (so that, by definition of $f(k)$, we must have $a_{r}=1$), $\ell_{1}=(b_{r}b_{r-1}\ldots b_{1}b_{0})_{2}$ and $\ell_{2}=(c_{r}c_{r-1}\ldots c_{1}c_{0})_{2}$ (keeping in mind that $0 \leqslant \ell_{1}, \ell_{2} \leqslant 2^{r+1}-1$). Since $\min\{\ell_{1},\ell_{2}\}\geqslant k$, we must have $b_{r}=c_{r}=1$ as well. Let us also set $m=(\alpha_{s}\alpha_{s-1}\ldots \alpha_{1}\alpha_{0})_{2}$, with $\alpha_{s}=1$. We then have
\begin{align}
{}&w_{0}(AB)=(\alpha_{s}\alpha_{s-1}\ldots \alpha_{1}\alpha_{0}b_{r}b_{r-1}\ldots b_{1}b_{0})_{2},\nonumber\\
{}&w_{0}(CD)=(\alpha_{s}\alpha_{s-1}\ldots \alpha_{1}\alpha_{0}c_{r}c_{r-1}\ldots c_{1}c_{0})_{2},\nonumber\\
{}&w_{0}(EF)=(\underbrace{0\ 0\ 0 \ldots\ 0\ 0\ 0}_{(s+1) \text{ zeros}}a_{r}a_{r-1}\ldots a_{1}a_{0})_{2},\nonumber
\end{align}
so that the largest index $I$ for which the sum of the $I$-th coordinates of the base-$2$ representations of $w_{0}(AB)$, $w_{0}(CD)$ and $w_{0}(EF)$ is odd is given by $I=r$ (since $a_{r}=b_{r}=c_{r}=1$, as mentioned above). Since $b_{r}=c_{r}=1$, $P_{1}$ wins by Theorem~\ref{thm:H_{1}_winning_1}. 

Finally, let us consider an initial weight configuration on $H_{1}$ that is of the form described in Lemma~\ref{prop:H_{1}_winning_1}, with $m_{1}=m_{2}=m$, $k \in \{\ell_{1},\ell_{2}\}$ and either $\min\{\ell_{1},\ell_{2}\}>0$ or $w_{0}(BC)>0$. Without loss of generality, let us assume that $k=\ell_{1}$. Then, in the first round, $P_{1}$ chooses the vertex $C$, removes weight $\ell_{2}$ from $CD$, and removes the entire edge $BC$ (our assumption ensures that the total weight removed is strictly positive), so that $P_{2}$ is left with a galaxy graph consisting of the edges $AB$, $CD$ and $EF$, where $w_{1}(AB)=2^{f(k)+1}m+\ell_{1}$, $w_{1}(CD)=2^{f(k)+1}m$ and $w_{1}(EF)=k=\ell_{1}$. As the triple $(2^{f(k)+1}m+\ell_{1},2^{f(k)+1}m,\ell_{1})$ is balanced, $P_{2}$ loses by Theorem~\ref{thm:galaxy}. This completes the proof of Lemma~\ref{prop:H_{1}_winning_1}.

\subsection{Proof that a configuration of the form given by \eqref{eq:H_{1}_losing_0,1,3}, with $r=0$, is losing}\label{subsec:proof_eq_H_{1}_losing_0}
We begin by proving that an initial weight configuration on $H_{1}$, that is of the form given by \eqref{eq:H_{1}_losing_0,1,3}, is losing for $r=0$ and $s=k$. First, we consider the configuration where $w_{0}(AB)=w_{0}(CD)=2m$ and $w_{0}(BC)=w_{0}(EF)=1$, for $m \in \mathbb{N}$:
\begin{enumerate}
\item Suppose $P_{1}$ removes a positive integer weight from $AB$ and a non-negative integer weight from $BC$ during the first round (an analogous situation would arise if, instead, $P_{1}$ removes a positive integer weight from $CD$ and a non-negative integer weight from $BC$). This yields $w_{1}(AB)=2n+\ell_{1}$ for some $n<m$ and some $\ell_{1}\in \{0,1\}$. Note that the resulting configuration is of the form described in Lemma~\ref{prop:H_{1}_winning_1}, with $m_{1}=n$, $m_{2}=m$ and $\ell_{2}=0$, and since $m_{1}\neq m_{2}$, $P_{1}$ loses.

\item Suppose $P_{1}$ removes the edge $EF$ in the first round. Then $P_{2}$ removes the edge $BC$ in the second round, leaving $P_{1}$ with a galaxy graph comprising the edges $AB$ and $CD$, where $w_{2}(AB)=w_{2}(CD)=2m$, and $P_{1}$ loses by Theorem~\ref{thm:galaxy}.

\item Suppose $P_{1}$ removes the edge $BC$ in the first round, without disturbing the edge-weights of $AB$ or $CD$. Then $P_{2}$ removes the edge $EF$ in the second round, leaving $P_{1}$ with a galaxy graph comprising the edges $AB$ and $CD$, where $w_{2}(AB)=w_{2}(CD)=2m$, and $P_{1}$ loses by Theorem~\ref{thm:galaxy}.
\end{enumerate}

Suppose we have shown, for some $K \in \mathbb{N}$, that the configuration given by \eqref{eq:H_{1}_losing_0,1,3}, with $r=0$ and $s=k$, with $k \leqslant K$, is losing. We now consider the first round of the game played on the initial configuration on $H_{1}$ where $w_{0}(AB)=w_{0}(CD)=2^{R+1}m$ and $w_{0}(BC)=w_{0}(EF)=(K+1)$, where $m \in \mathbb{N}$ and we set $R=f(K+1)$:
\begin{enumerate}
\item Suppose $P_{1}$ removes a positive integer weight from $AB$, and a non-negative integer weight from $BC$, during the first round (as before, an analogous situation would be if $P_{1}$ removes a positive integer weight from $CD$, and a non-negative integer weight from $BC$). Once again, this leads to $w_{1}(AB)=2^{R+1}n+\ell_{1}$ for some $n < m$ and some $\ell_{1}\in \{0,1,\ldots2^{R+1}-1\}$, so that the resulting configuration is of the form described in Lemma~\ref{prop:H_{1}_winning_1}, with $m_{1}=n$, $m_{2}=m$ and $\ell_{2}=0$, and since $m_{1}\neq m_{2}$, $P_{1}$ loses.

\item Suppose $P_{1}$ removes a positive integer weight from $EF$ in the first round, so that $w_{1}(EF)=k\leqslant K$. Then, $P_{2}$ removes weight $(K+1-k)$ from the edge $BC$ in the second round, leaving $P_{1}$ with 
\begin{align}
w_{2}(AB)=w_{2}(CD)=2^{R+1}m=2^{f(k)+1}2^{R-f(k)}m,\ w_{2}(BC)=w_{2}(EF)=k,\nonumber
\end{align}
where $k \leqslant K\implies f(k)\leqslant f(K+1)=R$. When $k\geqslant 1$, this is of the form given by \eqref{eq:H_{1}_losing_0,1,3} with $r=0$ and $w_{2}(BC)=w_{2}(EF)=k \leqslant K$, so that $P_{1}$ loses by our induction hypothesis. When $k=0$, $P_{1}$ loses by Theorem~\ref{thm:galaxy}.

\item Suppose $P_{1}$ removes a positive integer weight from $BC$, without disturbing the edge-weights of $AB$ and $CD$, during the first round. Then $P_{2}$ removes weight $(K+1-k)$ from $EF$ in the second round, and, as in the previous case, $P_{1}$ loses by either our induction hypothesis (when $1 \leqslant w_{1}(BC)\leqslant K$), or by Theorem~\ref{thm:galaxy} (when $w_{1}(BC)=0$).
\end{enumerate}
This completes the inductive proof of our claim that any configuration on $H_{1}$ of the form given by \eqref{eq:H_{1}_losing_0,1,3} is losing whenever $r=0$ and $s=k$ (equivalently, $r=0$ and $w_{0}(AB)=w_{0}(CD)$).

Let us assume, now, that for some $K \in \mathbb{N}$, we have proved that any initial configuration of the form given by \eqref{eq:H_{1}_losing_0,1,3}, with $r=0$, is losing whenever $k\leqslant K$. We fix any $L \in \{0,1,\ldots,K-1\}$, and consider the first round of the game played on the initial weight configuration on $H_{1}$ where $w_{0}(AB)=2^{R+1}m$, $w_{0}(CD)=2^{R+1}m+(L+1)$, $w_{0}(BC)=(K-L)$ and $w_{0}(EF)=(K+1)$, for any $m \in \mathbb{N}$, where we have set $R=f(K+1)$:
\begin{enumerate}
\item Suppose $P_{1}$ removes a positive integer weight from $CD$, and a non-negative integer weight from $BC$, in the first round. We consider the following few subcases:
\begin{enumerate}
\item Suppose $w_{1}(CD)=2^{R+1}m+\ell$ for some $\ell \in \{0,1,\ldots,L\}$, and $w_{1}(BC)=t \in \mathbb{N}$ with $t\leqslant (K-L)$. Note that $\ell \leqslant L$ and $t \leqslant (K-L)$ together imply $(\ell+t)\leqslant K$, which, in turn, implies $f(t+\ell)\leqslant f(K+1)=R$. In this case, $P_{2}$ removes weight $(K+1)-(t+\ell)$ from the edge $EF$ in the second round, leaving $P_{1}$ with 
\begin{align}
{}&w_{2}(AB)=2^{f(t+\ell)+1}2^{R-f(t+\ell)}m,\ w_{2}(BC)=t,\nonumber\\
{}&w_{2}(CD)=2^{f(t+\ell)+1}2^{R-f(t+\ell)}m+\ell,\ w_{2}(EF)=(t+\ell),\nonumber
\end{align}
which is of the form given by \eqref{eq:H_{1}_losing_0,1,3} with $r=0$ and $w_{2}(EF)=(t+\ell)\leqslant K$, and hence, $P_{1}$ loses by our induction hypothesis.

\item Suppose $w_{1}(CD)=2^{R+1}m+\ell$ for some $\ell \in \{0,1,\ldots,L\}$, but this time, $w_{1}(BC)=0$. Here, $P_{2}$ removes weight $(K+1-\ell)$ from the edge $EF$ in the second round, leaving $P_{1}$ with a galaxy graph consisting of the edges $AB$, $CD$ and $EF$, where $w_{2}(AB)=2^{R+1}m$, $w_{2}(CD)=2^{R+1}m+\ell$ and $w_{2}(EF)=\ell$, and $P_{1}$ loses by Theorem~\ref{thm:galaxy}.

\item Suppose $w_{1}(CD)=2^{R+1}n+\ell$ for some $n < m$ and $\ell \in \{0,1,\ldots,2^{R+1}-1\}$. The resulting configuration is of the form described in Lemma~\ref{prop:H_{1}_winning_1}, with $m_{1}=n$, $m_{2}=m$, $\ell_{1}=\ell$ and $\ell_{2}=0$, and since $m_{1}\neq m_{2}$, $P_{1}$ loses.
\end{enumerate}

\item Suppose $P_{1}$ removes a positive integer weight from $AB$, and a non-negative integer weight from $BC$, in the first round. Then we would have $w_{1}(AB)=2^{R+1}n+\ell$ for some $n < m$ and $\ell \in \{0,1,\ldots,2^{R+1}-1\}$, and once again, $P_{1}$ would lose by Lemma~\ref{prop:H_{1}_winning_1}.

\item Suppose $P_{1}$ removes a positive integer weight from $EF$ in the first round, so that $w_{1}(EF)=k\leqslant K$. If $k\in \{K-L,K-L+1,\ldots,K\}$, then $P_{2}$ removes weight $(L+1)-\{k-(K-L)\}$ from the edge $CD$ in the second round, leaving $P_{1}$ with \begin{align}
{}&w_{2}(AB)=2^{f(k)+1}2^{R-f(k)}m,\ w_{2}(BC)=(K-L),\nonumber\\
{}&w_{2}(CD)=2^{f(k)+1}2^{R-f(k)}m+k-(K-L),\ w_{2}(EF)=k,\nonumber
\end{align}
which is of the same form as \eqref{eq:H_{1}_losing_0,1,3} with $r=0$ and $w_{2}(EF)=k\leqslant K$, so that $P_{1}$ loses by our induction hypothesis. If $k \in \{1,2,\ldots,K-L-1\}$, then $P_{2}$ removes weight $(L+1)$ from the edge $CD$, and weight $(K-L)-k$ from the edge $BC$, in the second round, leaving $P_{1}$ with 
\begin{align}
{}&w_{2}(AB)=w_{2}(CD)=2^{f(k)+1}2^{R-f(k)}m,\ w_{2}(BC)=w_{2}(EF)=k,\nonumber
\end{align}
which is of the same form as \eqref{eq:H_{1}_losing_0,1,3} with $r=0$ and $k\leqslant K$, so that $P_{1}$ loses by our induction hypothesis (we could have also argued that the configuration obtained above, after the second round, is of the form given by \eqref{eq:H_{1}_losing_0,1,3} with $r=0$ and $w_{2}(AB)=w_{2}(CD)$, and we have already proved above that such a configuration is losing). Finally, if $k=0$, $P_{2}$ wins because of Remark~\ref{rem:EF_edgeweight_0}.

\item Finally, suppose $P_{1}$ removes a positive integer weight from $BC$ in the first round, but leaves the edge-weights of $AB$ and $CD$ undisturbed. If $w_{1}(BC)=t\in \{1,2,\ldots,K-L-1\}$, then $P_{2}$ removes weight $(K-L-t)$ from $EF$ in the second round, so that $w_{2}(EF)=L+t+1 \leqslant L+(K-L-1)+1=K$, which, in turn, implies that $f(L+t+1)\leqslant f(K+1)=R$. Thus, $P_{1}$ is left with 
\begin{align}
{}&w_{2}(AB)=2^{f(L+t+1)+1}2^{R-f(L+t+1)}m,\ w_{2}(BC)=t,\nonumber\\
{}&w_{2}(CD)=2^{f(L+t+1)+1}2^{R-f(L+t+1)}m+(L+1),\ w_{2}(EF)=L+t+1,\nonumber
\end{align}
which is of the same form as \eqref{eq:H_{1}_losing_0,1,3} with $r=0$ and $w_{2}(EF)=(L+t+1)\leqslant K$, so that $P_{1}$ loses by our induction hypothesis. If $w_{1}(BC)=0$, then $P_{2}$ removes weight $K-L$ from the edge $EF$ in the second round, leaving $P_{1}$ with a galaxy graph comprising the edges $AB$, $CD$ and $EF$, where $w_{2}(AB)=2^{R+1}m$, $w_{2}(CD)=2^{R+1}m+(L+1)$ and $w_{2}(EF)=(L+1)$, so that $P_{1}$ loses by Theorem~\ref{thm:galaxy}.
\end{enumerate}
This completes the proof of our claim that any configuration on $H_{1}$ of the form given by \eqref{eq:H_{1}_losing_0,1,3}, with $r=0$, is losing.

\subsection{Proof of Lemma~\ref{lem:H_{1}_winning_2}}\label{subsec:proof_H_{1}_winning_2_lemma}
Consider an initial weight configuration on $H_{1}$ that is of the form given by Lemma~\ref{lem:H_{1}_winning_2}. To begin with, if $m_{1}\neq m_{2}$, we know that $P_{1}$ wins by Lemma~\ref{prop:H_{1}_winning_1}. Assuming, therefore, that $m_{1}=m_{2}=m$, as well as that $\ell_{1} \leqslant \ell_{2}$, we obtain $w_{0}(AB)=2^{f(k)+1}m+\ell_{1}$, $w_{0}(CD)=2^{f(k)+1}m+\ell_{2}$, $w_{0}(EF)=k$ and $w_{0}(BC)>k-\min\{\ell_{1},\ell_{2}\}=k-\ell_{1}$. In the first round, $P_{1}$ removes weight $\ell_{2}$ from $CD$ and weight $w_{0}(BC)-(k-\ell_{1})$ from $BC$, so that $P_{2}$ is left with $w_{1}(AB)=2^{f(k)+1}m+\ell_{1}$, $w_{1}(CD)=2^{f(k)+1}m$, $w_{1}(EF)=k$ and $w_{1}(BC)=(k-\ell_{1})$, which is of the form given by \eqref{eq:H_{1}_losing_0,1,3} with $r=0$. Consequently, $P_{2}$ loses.

Consider, now, any initial weight configuration on $H_{1}$ that satisfies $w_{0}(BC)>w_{0}(EF)$. If $w_{0}(EF)\geqslant w_{0}(AB)$, then we have $w_{0}(AB)\leqslant w_{0}(EF)<w_{0}(AB)+w_{0}(BC)$, and if $w_{0}(EF)\geqslant w_{0}(CD)$, then we have $w_{0}(CD)\leqslant w_{0}(EF)<w_{0}(BC)+w_{0}(CD)$. In either of these cases, $P_{1}$ wins by Lemma~\ref{lem:H_{1}_winning_1}. Therefore, throughout the rest of the proof, let us assume that $w_{0}(EF)<\min\{w_{0}(AB),w_{0}(CD)\}$. 

Writing $w_{0}(EF)=k$ with $k \in \mathbb{N}$ (so that $w_{0}(BC)>k$), $w_{0}(AB)=2^{f(k)+1}m_{1}+\ell_{1}$ and $w_{0}(CD)=2^{f(k)+1}m_{2}+\ell_{2}$, where $m_{1}, m_{2} \in \mathbb{N}_{0}$ and $\ell_{1}, \ell_{2} \in \{0,1,\ldots,2^{f(k)+1}-1\}$, we note the following:
\begin{enumerate}
\item If $m_{1}\neq m_{2}$, then $P_{1}$ wins by Lemma~\ref{prop:H_{1}_winning_1}.
\item If $m_{1}=m_{2}=0$, then $w_{0}(EF)<\min\{w_{0}(AB),w_{0}(CD)\}$ implies that $k<\min\{\ell_{1},\ell_{2}\}$, and once again, $P_{1}$ wins by Lemma~\ref{prop:H_{1}_winning_1}.
\item Finally, if $m_{1}=m_{2}=m\in \mathbb{N}$ and, assuming (without loss of generality) that $\ell_{1}\leqslant \ell_{2}$, if $k>\ell_{1}$, then $P_{1}$ removes weight $\ell_{2}$ from the edge $CD$, and weight $w_{0}(BC)-(k-\ell_{1})$ from the edge $BC$, in the first round, leaving $P_{2}$ with
\begin{align}
w_{1}(AB)=2^{f(k)+1}m+\ell_{1},\ w_{1}(BC)=k-\ell_{1},\ w_{1}(CD)=2^{f(k)+1}m,\ w_{1}(EF)=k,\nonumber
\end{align}
which is of the same form as \eqref{eq:H_{1}_losing_0,1,3} with $r=0$, and by what we have already proved in \S\ref{subsec:proof_eq_H_{1}_losing_0}, we know that $P_{2}$ loses. \qed
\end{enumerate}

We recall for the reader here that the proof of Lemma~\ref{lem:H_{1}_winning_galaxy} has been included in \S\ref{subsec:proof_lem_H_{1}_winning_galaxy} of \S\ref{sec:appendix}.

\subsection{Proof that a configuration that is either of the form given by \eqref{eq:H_{1}_losing_2,4_case_1} or of the form given by \eqref{eq:H_{1}_losing_2,4_case_2}, for $r=2$, is losing}\label{subsec:proof_eq_H_{1}_losing_2_1,2}
Before we begin \S\ref{subsec:proof_eq_H_{1}_losing_2_1,2}, we remind the reader that our claim, that any configuration of the form given by \eqref{eq:H_{1}_losing_0,1,3} with $r=1$ is losing, has already been proved in \S\ref{subsec:H_{1}_losing_1_proof} of \S\ref{sec:appendix}. This is going to be of use to us in the proofs that follow. That configurations that are either of the form given by \eqref{eq:H_{1}_losing_2,4_case_1} or of the form given by \eqref{eq:H_{1}_losing_2,4_case_2}, with $r=2$, are losing, has been proved \emph{together}, via a single inductive argument in \S\ref{subsec:proof_eq_H_{1}_losing_2_1,2}. The base cases for this inductive argument have been addressed in \S\ref{subsec:H_{1}_losing_2_1,2_base_cases} of \S\ref{sec:appendix}.

Suppose, for some $K \in \mathbb{N}$, we have proved that 
\begin{enumerate}
\item any initial weight configuration on $H_{1}$, that is of the form given by \eqref{eq:H_{1}_losing_2,4_case_1} with $r=2$, and having $w_{0}(EF)=k \leqslant K$, is losing, 
\item any initial weight configuration on $H_{1}$, that is of the form given by \eqref{eq:H_{1}_losing_2,4_case_2} with $r=2$, and having $w_{0}(EF)=k \leqslant K$, is losing.
\end{enumerate}

The first scenario to consider is where $K\equiv 0 \bmod 4$, and the initial weight configuration is
\begin{equation}
w_{0}(AB)=2^{R+1}m+2,\ w_{0}(BC)=1,\ w_{0}(CD)=2^{R+1}m+K+2,\ w_{0}(EF)=K+1.\label{eq:H_{1}_losing_2_inductive_1}
\end{equation}
\begin{enumerate}
\item Suppose $P_{1}$ removes a positive integer weight from $CD$ and a non-negative integer weight from $BC$ in the first round. We consider a few subcases of this case:
\begin{enumerate}
\item Suppose $w_{1}(CD)=2^{R+1}m+\ell$ for some $\ell \leqslant K+1$ with $\ell\equiv 2\bmod 4$ (which actually tells us that we must have $\ell\leqslant K-2$), and $w_{1}(BC)=1$. Here, $P_{2}$ removes weight $(K+1)-(\ell-1)$ from the edge $EF$ in the second round, so that $P_{1}$ is left with
\begin{align}
{}&w_{2}(AB)=2^{f(\ell-1)+1}2^{R-f(\ell-1)}m+2,\ w_{2}(BC)=1,\nonumber\\
{}&w_{2}(CD)=2^{f(\ell-1)+1}2^{R-f(\ell-1)}m+\ell,\ w_{2}(EF)=(\ell-1).\nonumber
\end{align}
Since $(\ell-1) \equiv 1 \bmod 4$, this configuration is of the form given by \eqref{eq:H_{1}_losing_2,4_case_1} with $r=2$ and $w_{2}(EF)=(\ell-1)\leqslant (K-3)<K$, so that $P_{1}$ loses by our induction hypothesis. 

\item Suppose $w_{1}(CD)=2^{R+1}m+\ell$ for some $\ell \leqslant K-3$ with $\ell\equiv i \bmod 4$ for some $i \in \{0,1,3\}$, and $w_{1}(BC)=1$. Here, $P_{2}$ removes weight $(K+1)-(\ell+3)$ from the edge $EF$ in the second round, so that $P_{1}$ is left with 
\begin{align}
{}&w_{2}(AB)=2^{f(\ell+3)+1}2^{R-f(\ell+3)}m+2,\ w_{2}(BC)=1,\nonumber\\
{}&w_{2}(CD)=2^{f(\ell+3)+1}2^{R-f(\ell+3)}m+\ell,\ w_{2}(EF)=(\ell+3).\nonumber
\end{align}
Since $(\ell+3) \equiv j \bmod 4$ for some $j \in \{0,2,3\}$, this configuration is of the form given by \eqref{eq:H_{1}_losing_2,4_case_2} with $r=2$ and $w_{2}(EF)=(\ell+3)\leqslant K$, so that $P_{1}$ loses by our induction hypothesis. 

\item Suppose $w_{1}(CD)=2^{R+1}m+\ell$ for $\ell\in\{K-1,K\}$, and $w_{1}(BC)=1$. Then $P_{2}$ removes weight $2-(K-\ell)$ from the edge $AB$ in the second round, leaving $P_{1}$ with
\begin{align}
w_{2}(AB)=2^{R+1}m+(K-\ell),\ w_{2}(BC)=1,\ w_{2}(CD)=2^{R+1}m+\ell,\ w_{2}(EF)=(K+1),\nonumber
\end{align}
which is of the form given by \eqref{eq:H_{1}_losing_0,1,3} with $r=1$ when $\ell=K-1$, and of the form given by \eqref{eq:H_{1}_losing_0,1,3} with $r=0$ when $\ell=K$, and by what we have already proved in \S\ref{subsec:proof_eq_H_{1}_losing_0} and \S\ref{subsec:H_{1}_losing_1_proof}, we conclude that $P_{1}$ loses.

\item Suppose $w_{1}(CD)=2^{R+1}m+\ell$ for some $\ell \leqslant K$, and $w_{1}(BC)=0$. This leaves $P_{2}$ with a galaxy graph consisting of the edges $AB$, $CD$ and $EF$, where $w_{1}(AB)=2^{R+1}m+2$, $w_{1}(CD)=2^{R+1}m+\ell$ and $w_{1}(EF)=(K+1)$. If $\ell \leqslant K-2$, $P_{2}$ wins by Lemma~\ref{lem:H_{1}_winning_galaxy}. 

Suppose, now, that $\ell=K$. Since $K \equiv 0 \bmod 4$, the triple $(2^{R+1}m+1,2^{R+1}m+K,K+1)$ is balanced, which means that the triple $(2^{R+1}m+2,2^{R+1}m+K,K+1)$ is unbalanced, so that $P_{2}$ wins by Theorem~\ref{thm:galaxy}. Likewise, if $\ell=(K-1)$, since $K \equiv 0 \bmod 4$, the triple $(2^{R+1}m+2,2^{R+1}m+K-1,K-3)$ is balanced, which means that the triple $(2^{R+1}m+2,2^{R+1}m+K-1,K+1)$ is unbalanced, and $P_{2}$ wins by Theorem~\ref{thm:galaxy}. 

\item If $w_{1}(CD)=2^{R+1}m+(K+1)$, then the configuration after the first round is of the form stated in Lemma~\ref{prop:H_{1}_winning_1}, with $m_{1}=m_{2}=m$, $\ell_{1}=2$, $\ell_{2}=(K+1)$ and $k=(K+1)$. Since $k=\ell_{1}$ (so that $k \in \{\ell_{1},\ell_{2}\}$) and $\min\{\ell_{1},\ell_{2}\}\neq 0$, hence $P_{2}$ wins by Lemma~\ref{prop:H_{1}_winning_1}.

\item If $w_{1}(CD)=2^{R+1}n+\ell_{1}$ for some $n < m$ and $\ell\in\{0,1,\ldots,2^{R+1}-1\}$, the configuration is of the same form as in Lemma~\ref{prop:H_{1}_winning_1}, with $m_{1}=n$, $m_{2}=m$ and $\ell_{2}=2$, and as $n < m \implies m_{1}\neq m_{2}$, we know that $P_{2}$ wins by Lemma~\ref{prop:H_{1}_winning_1}.
\end{enumerate}

\item Suppose $P_{1}$ removes a positive integer weight from $AB$, and a non-negative integer weight from $BC$, in the first round. We consider the following subcases:
\begin{enumerate}
\item Suppose $w_{1}(AB)=2^{R+1}m+\ell$ for some $\ell \in \{0,1\}$, and $w_{1}(BC)=1$. Then $P_{2}$ removes weight $(2+\ell)$ from the edge $CD$ in the second round, leaving $P_{1}$ with
\begin{align}
w_{2}(AB)=2^{R+1}m+\ell,\ w_{2}(BC)=1,\ w_{2}(CD)=2^{R+1}m+(K-\ell),\ w_{2}(EF)=(K+1),\nonumber
\end{align}
which is of the form given by \eqref{eq:H_{1}_losing_0,1,3} with either $r=0$ (which happens when $\ell=0$) or with $r=1$ (which happens when $\ell=1$), and by what we have already proved in \S\ref{subsec:proof_eq_H_{1}_losing_0} and \S\ref{subsec:H_{1}_losing_1_proof}, we conclude that $P_{1}$ loses.

\item Suppose $w_{1}(AB)=2^{R+1}m+\ell$ for some $\ell \in \{0,1\}$, and $w_{1}(BC)=0$. This leaves $P_{2}$ with a galaxy graph consisting of the edges $AB$, $CD$ and $EF$, where $w_{1}(AB)=2^{R+1}m+\ell$, $w_{1}(CD)=2^{R+1}m+(K+2)$ and $w_{1}(EF)=(K+1)$. Since $K \equiv 0 \bmod 4$, the triple $(2^{R+1}m+1,2^{R+1}m+K,K+1)$ is balanced when $\ell=1$, and the triple $(2^{R+1}m,2^{R+1}m+K+1,K+1)$ is balanced when $\ell=0$. In either case, therefore, the triple $(2^{R+1}m+\ell,2^{R+1}m+(K+2),K+1)$ is not balanced, and therefore, $P_{2}$ wins by Theorem~\ref{thm:galaxy}.

\item If $w_{1}(AB)=2^{R+1}n+\ell_{1}$ for some $n < m$ and $\ell\in\{0,1,\ldots,2^{R+1}-1\}$, the configuration is of the same form as in Lemma~\ref{prop:H_{1}_winning_1}, with $m_{1}=n$, $m_{2}=m$ and $\ell_{2}=K+2$, and as $n < m \implies m_{1}\neq m_{2}$, we know that $P_{2}$ wins by Lemma~\ref{prop:H_{1}_winning_1}.
\end{enumerate}

\item Suppose $P_{1}$ removes a positive integer weight from $EF$ in the first round, so that $w_{1}(EF)=k\leqslant K$ (this implies $f(k)\leqslant f(K+1)=R$). If $k \geqslant 1$ and $k \equiv 1 \bmod 4$, then $P_{2}$ removes weight $(K+2)-(k+1)$ from the edge $CD$ in the second round, leaving $P_{1}$ with
\begin{align}
w_{2}(AB)=2^{f(k)+1}2^{R-f(k)}m+2,\ w_{2}(BC)=1,\ w_{2}(CD)=2^{f(k)+1}2^{R-f(k)}m+(k+1),\ w_{2}(EF)=k,\nonumber
\end{align}
which is of the same form as \eqref{eq:H_{1}_losing_2,4_case_1} with $r=2$ and $s=1$, and as $w_{2}(EF)=k \leqslant K$, we conclude that $P_{1}$ loses by our induction hypothesis. On the other hand, if $k \geqslant 6$ and $k \equiv i \bmod 4$ for some $i \in \{0,2,3\}$, then $P_{2}$ removes weight $(K+2)-(k-3)$ from the edge $CD$ in the second round, leaving $P_{1}$ with
\begin{align}
w_{2}(AB)=2^{f(k)+1}2^{R-f(k)}m+2,\ w_{2}(BC)=1,\ w_{2}(CD)=2^{f(k)+1}2^{R-f(k)}m+(k-3),\ w_{2}(EF)=k,\nonumber
\end{align}
which is of the same form as \eqref{eq:H_{1}_losing_2,4_case_2} with $r=2$ and $s=1$, and as $w_{2}(EF)=k \leqslant K$, we conclude that $P_{1}$ loses by our induction hypothesis. If $k=2$, then the configuration after the first round is of the form given by Lemma~\ref{prop:H_{1}_winning_1}, with $\ell_{1}=2$, $\ell_{2}=(K+2)$ and $k=\ell_{1}$, which implies that $k \in \{\ell_{1},\ell_{2}\}$. Hence, $P_{2}$ wins by Lemma~\ref{prop:H_{1}_winning_1}. If $k=3$, then, letting $K=4n$ for some $n\in\mathbb{N}$ (since $K \equiv 0 \bmod 4$), the configuration obtained after the first round can be written as follows:
\begin{align}
{}&w_{1}(AB)=2^{R+1}m+2=4m_{1}+2, \text{ where } m_{1}=2^{R-1}m,\nonumber\\  
{}&w_{1}(CD)=2^{R+1}m+(K+2)=2^{R+1}m+4n+2=4m_{2}+2, \text{ where } m_{2}=2^{R-1}m+n,\nonumber
\end{align}
along with $w_{1}(BC)=1$ and $w_{1}(EF)=3$. This is of the same form as the configuration described in Lemma~\ref{prop:H_{1}_winning_1}, with $m_{1} \neq m_{2}$, which leads to the conclusion that $P_{2}$ wins. If $k=4$, then $P_{2}$ removes weight $(K+1)$ from the edge $CD$ in the second round, leaving $P_{1}$ with 
\begin{equation}
w_{2}(AB)=2^{R+1}m+2,\ w_{2}(BC)=1,\ w_{2}(CD)=2^{R+1}m+1,\ w_{2}(EF)=4,\nonumber
\end{equation}
which is of the same form as \eqref{eq:H_{1}_losing_0,1,3} with $r=1$, and by what we have already proved in \S\ref{subsec:H_{1}_losing_1_proof}, we know that $P_{1}$ loses. Finally, if $k=0$, $P_{2}$ wins by Remark~\ref{rem:EF_edgeweight_0}.

\item Suppose $P_{1}$ removes the edge $BC$ in the first round, without disturbing the edge-weights of $AB$ and $CD$. This leaves $P_{2}$ with a galaxy graph consisting of the edges $AB$, $CD$ and $EF$, where $w_{1}(AB)=2^{R+1}m+2$, $w_{1}(CD)=2^{R+1}m+(K+2)$ and $w_{1}(EF)=(K+1)$. Since $K \equiv 0 \bmod 4$, the triple $(2^{R+1}m+2,2^{R+1}m+(K+2),K)$ is balanced, and consequently, the triple $(2^{R+1}m+2,2^{R+1}m+(K+2),(K+1))$, is unbalanced. Therefore, $P_{2}$ wins by Theorem~\ref{thm:galaxy}.
\end{enumerate}

This completes the proof of our claim that the configuration in \eqref{eq:H_{1}_losing_2_inductive_1} is losing when $K \equiv 0 \bmod 4$.

We now let $K \equiv i \bmod 4$ for some $i \in \{1,2,3\}$, with $K \geqslant 6$, and consider the configuration
\begin{align}
w_{0}(AB)=2^{R+1}m+2,\ w_{0}(BC)=1,\ w_{0}(CD)=2^{R+1}m+(K-2),\ w_{0}(EF)=(K+1),\label{eq:H_{1}_losing_2_inductive_2}
\end{align}
where we set $R=f(K+1)$. The first round of the game played on this configuration can unfold as follows:
\begin{enumerate}
\item Suppose $P_{1}$ removes a positive integer weight from $CD$ and a non-negative integer weight from $BC$ in the first round. This leads to a few possible subcases:
\begin{enumerate}
\item Suppose $w_{1}(CD)=2^{R+1}m+\ell$ for some $\ell \in \{2,3,\ldots,K-3\}$, and $w_{1}(BC)=1$. If $\ell \equiv j \bmod 4$ for some $j \in \{0,1,3\}$, then $P_{2}$ removes weight $(K+1)-(\ell+3)$ from the edge $EF$ in the second round, leaving $P_{1}$ with
\begin{align}
{}&w_{2}(AB)=2^{f(\ell+3)+1}2^{R-f(\ell+3)}m+2,\ w_{2}(BC)=1,\nonumber\\
{}&w_{2}(CD)=2^{f(\ell+3)+1}2^{R-f(\ell+3)}m+\ell,\ w_{2}(EF)=(\ell+3).\nonumber
\end{align}
This configuration is of the same form as \eqref{eq:H_{1}_losing_2,4_case_2} with $r=2$, $s=1$ and $w_{2}(EF)=(\ell+3)\leqslant K$, so that $P_{1}$ loses by our induction hypothesis. If $\ell\equiv 2 \bmod 4$, then $P_{2}$ removes weight $(K+1)-(\ell-1)$ from the edge $EF$ in the second round, leaving $P_{1}$ with
\begin{align}
{}&w_{2}(AB)=2^{f(\ell-1)+1}2^{R-f(\ell-1)}m+2,\ w_{2}(BC)=1,\nonumber\\
{}&w_{2}(CD)=2^{f(\ell-1)+1}2^{R-f(\ell-1)}m+\ell,\ w_{2}(EF)=(\ell-1).\nonumber
\end{align}
This configuration is of the same form as \eqref{eq:H_{1}_losing_2,4_case_1} with $r=2$, $s=1$ and $w_{2}(EF)=(\ell-1)\leqslant (K-4)<K$, so that $P_{1}$ loses by our induction hypothesis. If $\ell\in\{0,1\}$, then $P_{2}$ removes weight $(K+1)-(\ell+3)$ from the edge $EF$ in the second round, leaving $P_{1}$ with
\begin{equation}
w_{2}(AB)=2^{R+1}m+2,\ w_{2}(BC)=1,\ w_{2}(CD)=2^{R+1}m+\ell,\ w_{2}(EF)=(\ell+3),\nonumber
\end{equation}
which is of the form given by \eqref{eq:H_{1}_losing_0,1,3} with $r=0$ (when $\ell=0$) or $r=1$ (when $\ell=1$), and by what we have already proved in \S\ref{subsec:proof_eq_H_{1}_losing_0} and \S\ref{subsec:H_{1}_losing_1_proof}, we know that $P_{1}$ loses.

\item Suppose $w_{1}(CD)=2^{R+1}m+\ell$ for some $\ell \in \{0,1,\ldots,K-3\}$ and $w_{1}(BC)=0$. This leaves $P_{2}$ with a galaxy graph consisting of the edges $AB$, $CD$ and $EF$, where $w_{1}(AB)=2^{R+1}m+2$, $w_{1}(CD)=2^{R+1}m+\ell$ and $w_{1}(EF)=(K+1)$. Since $\ell\leqslant (K-3) \implies (2+\ell)\leqslant (K-1) < (K+1)$, $P_{2}$ wins by Lemma~\ref{lem:H_{1}_winning_galaxy}. 

\item If $w_{1}(CD)=2^{R+1}n+\ell_{1}$ for some $n < m$ and $\ell\in\{0,1,\ldots,2^{R+1}-1\}$, the configuration is of the same form as in Lemma~\ref{prop:H_{1}_winning_1}, with $m_{1}=n$, $m_{2}=m$ and $\ell_{2}=2$, and as $n < m \implies m_{1}\neq m_{2}$, we know that $P_{2}$ wins by Lemma~\ref{prop:H_{1}_winning_1}.
\end{enumerate}

\item Suppose $P_{1}$ removes a positive integer weight from $AB$, and a non-negative integer weight from $CD$, in the first round. Once again, we consider a few subcases:
\begin{enumerate}
\item Suppose $w_{1}(AB)=2^{R+1}m+\ell$ for some $\ell \in \{0,1\}$, and $w_{1}(BC)=1$. Then $P_{2}$ removes weight $(2-\ell)$ from the edge $EF$ in the second round, so that $P_{1}$ is left with 
\begin{equation}
w_{2}(AB)=2^{R+1}m+\ell,\ w_{2}(BC)=1,\ w_{2}(CD)=2^{R+1}m+(K-2),\ w_{2}(EF)=(K-1+\ell),\nonumber
\end{equation}
which is of the form given by \eqref{eq:H_{1}_losing_0,1,3} with either $r=0$ (when $\ell=0$) or $r=1$ (when $\ell=1$), and by what we have already proved in \S\ref{subsec:proof_eq_H_{1}_losing_0} and \S\ref{subsec:H_{1}_losing_1_proof}, we conclude that $P_{1}$ loses. 

\item Suppose $w_{1}(AB)=2^{R+1}m+\ell$ for some $\ell \in \{0,1\}$, and $w_{1}(BC)=0$. This leaves $P_{2}$ with a galaxy graph consisting of the edges $AB$, $CD$ and $EF$, where $w_{1}(AB)=2^{R+1}m+\ell$, $w_{1}(CD)=2^{R+1}m+(K-2)$ and $w_{2}(EF)=(K+1)$, and $P_{2}$ wins by Lemma~\ref{lem:H_{1}_winning_galaxy}.

\item If $w_{1}(AB)=2^{R+1}n+\ell_{1}$ for some $n < m$ and $\ell\in\{0,1,\ldots,2^{R+1}-1\}$, the configuration is of the same form as in Lemma~\ref{prop:H_{1}_winning_1}, with $m_{1}=n$, $m_{2}=m$ and $\ell_{2}=K-2$, and as $n < m \implies m_{1}\neq m_{2}$, we know that $P_{2}$ wins by Lemma~\ref{prop:H_{1}_winning_1}.
\end{enumerate}

\item Suppose $P_{1}$ removes a positive integer weight from $EF$ in the first round, so that $w_{1}(EF)=k\leqslant K$:
\begin{enumerate}
\item When $k \in \{K-1,K\}$, $P_{2}$ removes weight $(K+1-k)$ from the edge $AB$ in the second round, leaving $P_{1}$ with
\begin{align}
w_{2}(AB)=2^{R+1}m+k+1-K,\ w_{2}(BC)=1,\ w_{2}(CD)=2^{R+1}m+(K-2),\ w_{2}(EF)=k,\nonumber
\end{align}
which is of the form given by \eqref{eq:H_{1}_losing_0,1,3} with $r=0$ when $k=K-1$, and of the form given by \eqref{eq:H_{1}_losing_0,1,3} with $r=1$ when $k=K$, and by what we have already proved in \S\ref{subsec:proof_eq_H_{1}_losing_0} and \S\ref{subsec:H_{1}_losing_1_proof}, we know that $P_{1}$ loses. If $k=(K-2)$, the configuration after the first round is of the form given by Lemma~\ref{prop:H_{1}_winning_1}, with $m_{1}=m_{2}=2^{R-f(K-2)}m$, $\ell_{1}=2$ and $\ell_{2}=(K-2)$, so that $k \in \{\ell_{1},\ell_{2}\}$, and hence, $P_{2}$ wins by Lemma~\ref{prop:H_{1}_winning_1}.

\item Suppose $k\in \{1,2,\ldots,K-3\}$, such that $k \equiv 1 \bmod 4$. Since $K \equiv i \bmod 4$ for some $i \in \{1,2,3\}$, we have $(K-3) \equiv j \bmod 4$ for some $j \in \{0,2,3\}$. Thus, this case does not cover $k=(K-3)$, and we may as well consider all $k \in \{1,2,\ldots,K-4\}$ such that $k \equiv 1 \bmod 4$. Here, $P_{2}$ removes weight $(K-2)-(k+1)$ from the edge $CD$ in the second round, leaving $P_{1}$ with
\begin{align}
{}&w_{2}(AB)=2^{f(k)+1}2^{R-f(k)}m+2,\ w_{2}(BC)=1,\nonumber\\
{}&w_{2}(CD)=2^{f(k)+1}2^{R-f(k)}m+(k+1),\ w_{2}(EF)=k,\nonumber
\end{align} 
which is of the form given by \eqref{eq:H_{1}_losing_2,4_case_1} with $r=2$, $s=1$ and $w_{2}(EF)=k\leqslant K$, so that $P_{1}$ loses by our induction hypothesis.

\item Suppose $k\in\{6,7,\ldots,K-3\}$ such that $k \equiv j \bmod 4$ for some $j \in \{0,2,3\}$. In this case, $P_{2}$ removes weight $(K-2)-(k-3)$ from the edge $CD$ in the second round, leaving $P_{1}$ with
\begin{align}
{}&w_{2}(AB)=2^{f(k)+1}2^{R-f(k)}m+2,\ w_{2}(BC)=1,\nonumber\\
{}&w_{2}(CD)=2^{f(k)+1}2^{R-f(k)}m+(k-3),\ w_{2}(EF)=k,\nonumber
\end{align} 
which is of the form given by \eqref{eq:H_{1}_losing_2,4_case_2} with $r=2$, $s=1$ and $w_{2}(EF)=k\leqslant K$, so that $P_{1}$ loses by our induction hypothesis.

\item Suppose $k \in \{2,3,4\}$. If $k=2$, then the configuration after the first round is of the same form as in Lemma~\ref{prop:H_{1}_winning_1}, with $m_{1}=m_{2}=2^{R-1}m$ (since $2^{f(2)+1}=4$), $\ell_{1}=2$ and $\ell_{2}=(K-2)$, so that $k\in \{\ell_{1},\ell_{2}\}$, and hence, $P_{2}$ wins by Lemma~\ref{prop:H_{1}_winning_1}. If $k\in \{3,4\}$, $P_{2}$ removes weight $(K+1-k)$ from $CD$ during the second round, so that $P_{1}$ is left with 
\begin{align}
w_{2}(AB)=2^{R+1}m+2,\ w_{2}(BC)=1,\ w_{2}(CD)=2^{R+1}m+(k-3),\ w_{2}(EF)=k,\nonumber
\end{align}
which is of the form given by \eqref{eq:H_{1}_losing_0,1,3} with $r=0$ when $k=3$, and of the form given by \eqref{eq:H_{1}_losing_0,1,3} with $r=1$ when $k=4$, and by what we have already proved in \S\ref{subsec:proof_eq_H_{1}_losing_0} and \S\ref{subsec:H_{1}_losing_1_proof}, we conclude that $P_{1}$ loses.
\end{enumerate}

\item Suppose $P_{1}$ removes the edge $BC$ in the first round, without perturbing the edge-weights of $AB$ and $CD$. This leaves $P_{2}$ with a galaxy graph consisting of the edges $AB$, $CD$ and $EF$, where $w_{1}(AB)=2^{R+1}m+2$, $w_{1}(CD)=2^{R+1}m+(K-2)$ and $w_{1}(EF)=(K+1)$, and $P_{2}$ wins by Lemma~\ref{lem:H_{1}_winning_galaxy}. 
\end{enumerate}

This completes the proof of our claim that \eqref{eq:H_{1}_losing_2_inductive_2} is a losing configuration on $H_{1}$. This also brings us to the end of our inductive proof of the claim that each of \eqref{eq:H_{1}_losing_2,4_case_1} and \eqref{eq:H_{1}_losing_2,4_case_2}, with $r=2$, represents a losing configuration on $H_{1}$.

\subsection{Proof that a configuration that is of the form given by \eqref{eq:H_{1}_losing_2,4_case_3}, with $r=2$, is losing}\label{subsec:proof_eq_H_{1}_losing_2_3}
The base case for the inductive argument employed for this proof has been addressed in \S\ref{subsec:H_{1}_losing_2_3_base_case} of \S\ref{sec:appendix}. Suppose, now, that for some $K \in \mathbb{N}$ with $K \geqslant 6$, we have shown that a configuration of the form given by \eqref{eq:H_{1}_losing_2,4_case_3}, with $r=2$, is losing as long as $w_{0}(EF)=k \leqslant K$. We now consider the configuration (setting $s=(K-L-1)$):
\begin{align}
w_{0}(AB)=2^{R+1}m+2,\ w_{0}(BC)=K-L-1,\ w_{0}(CD)=2^{R+1}m+L,\ w_{0}(EF)=K+1,\label{eq:H_{1}_losing_2_3_inductive}
\end{align}
where $R=f(K+1)$ and $L\in\{2,3,\ldots,K-3\}$. The first round of the game played on this configuration unfolds as follows:
\begin{enumerate}
\item Suppose $P_{1}$ removes a positive integer weight from $CD$ and a non-negative integer weight from $BC$ in the first round. First, we consider the possibility that $w_{1}(CD)=2^{R+1}m+\ell$ for some $\ell \in \{0,1,\ldots,L-1\}$ and $w_{1}(BC)=t\in \{0,1,\ldots,K-L-1\}$. This can be divided into a few subcases:
\begin{enumerate}
\item Suppose $\ell \in \{0,1\}$ and $t \in \{1,2,\ldots,K-L-1\}$, or $\ell \in \{2,3,\ldots,L-1\}$ and $t \in \{2,3,\ldots,K-L-1\}$. In each of these cases, $P_{2}$ removes weight $(K+1)-(\ell+t+2)$ from the edge $EF$ in the second round. Note that $\ell\leqslant (L-1)$ and $t \leqslant (K-L-1)$ together imply $(\ell+t+2)\leqslant K$, which, in turn, implies $f(\ell+t+2)\leqslant f(K+1)=R$. This leaves $P_{1}$ with
\begin{align}
{}&w_{2}(AB)=2^{f(\ell+t+2)+1}2^{R-f(\ell+t+2)}m+2,\ w_{2}(BC)=t,\nonumber\\
{}&w_{2}(CD)=2^{f(\ell+t+2)+1}2^{R-f(\ell+t+2)}m+\ell,\ w_{2}(EF)=(\ell+t+2),\nonumber
\end{align}
which is of the form given by \eqref{eq:H_{1}_losing_0,1,3} with $r=0$ when $\ell=0$, of the form given by \eqref{eq:H_{1}_losing_0,1,3} with $r=1$ when $\ell=1$, and of the form given by \eqref{eq:H_{1}_losing_2,4_case_3} with $r=2$ and $w_{2}(EF)=(\ell+t+2)\leqslant K$ when $\ell \in \{2,3,\ldots,L-1\}$ and $t \in \{2,3,\ldots,K-L-1\}$. In the first two cases, $P_{1}$ loses by what we have already proved in \S\ref{subsec:proof_eq_H_{1}_losing_0} and \S\ref{subsec:H_{1}_losing_1_proof}, while in the third case, $P_{1}$ loses by our induction hypothesis.

\item Suppose $\ell \in \{2,3,\ldots,L-1\}$ and $t=1$. If $\ell \equiv 2 \bmod 4$, then $P_{2}$ removes weight $(K+1)-(\ell-1)$ from the edge $EF$ in the second round, leaving $P_{1}$ with
\begin{align}
{}&w_{2}(AB)=2^{f(\ell-1)+1}2^{R-f(\ell-1)}m+2,\ w_{2}(BC)=1,\nonumber\\
{}&w_{2}(CD)=2^{f(\ell-1)+1}2^{R-f(\ell-1)}m+\ell,\ w_{2}(EF)=(\ell-1),\nonumber
\end{align}
which is of the same form as \eqref{eq:H_{1}_losing_2,4_case_1} with $r=2$ and $s=1$, and by what we have already proved in \S\ref{subsec:proof_eq_H_{1}_losing_2_1,2}, we conclude that $P_{1}$ loses. If $\ell \equiv i \bmod 4$ for some $i \in \{0,1,3\}$, then $P_{2}$ removes $(K+1)-(\ell+3)$ from the edge $EF$ in the second round (note that $\ell \leqslant L-1$ and $L \leqslant K-3$ together imply that $\ell+3\leqslant K-1$), leaving $P_{1}$ with
\begin{align}
{}&w_{2}(AB)=2^{f(\ell+3)+1}2^{R-f(\ell+3)}m+2,\ w_{2}(BC)=1,\nonumber\\
{}&w_{2}(CD)=2^{f(\ell+3)+1}2^{R-f(\ell+3)}m+\ell,\ w_{2}(EF)=(\ell+3),\nonumber
\end{align}
which is of the same form as \eqref{eq:H_{1}_losing_2,4_case_2} with $r=2$ and $s=1$, and by what we have already proved in \S\ref{subsec:proof_eq_H_{1}_losing_2_1,2}, we know that $P_{1}$ loses. 

\item If $\ell\in\{0,1,\ldots,L-1\}$ and $t=0$, $P_{2}$, after the first round, is left with a galaxy graph consisting of the edges $AB$, $CD$ and $EF$, where $w_{1}(AB)=2^{R+1}m+2$, $w_{1}(CD)=2^{R+1}m+\ell$ and $w_{1}(EF)=(K+1)$. Since $\ell\leqslant(L-1)$ and $L\leqslant (K-3)$, we conclude, by Lemma~\ref{lem:H_{1}_winning_galaxy}, that $P_{2}$ wins.
\end{enumerate}

The second possibility is that $w_{1}(CD)=2^{R+1}n+\ell_{1}$ for some $n < m$ and some $\ell_{1}\in\{0,1,\ldots,2^{R+1}-1\}$. In this case, the configuration after the first round is of the same form as in Lemma~\ref{prop:H_{1}_winning_1}, with $m_{1}=n$, $m_{2}=m$ and $\ell_{2}=2$, and as $n < m \implies m_{1}\neq m_{2}$, we know that $P_{2}$ wins by Lemma~\ref{prop:H_{1}_winning_1}.

\item Suppose $P_{1}$ removes a positive integer weight from $AB$, and a non-negative integer weight from $BC$, in the first round. The first possibility is where $w_{1}(AB)=2^{R+1}m+\ell$ for some $\ell \in \{0,1\}$, and $w_{1}(BC)=t\in\{0,1,\ldots,K-L-1\}$. We divide this into a few subcases:
\begin{enumerate}
\item If $t\in \{1,2,\ldots,K-L-1\}$, then $P_{2}$ removes weight $(K+1)-(\ell+t+L)$ from the edge $EF$ in the second round. Note that $t \leqslant K-L-1$ and $\ell \leqslant 1$ together imply that $(\ell+t+L) \leqslant K$. This leaves $P_{1}$ with
\begin{align}
{}&w_{2}(AB)=2^{f(\ell+t+L)+1}2^{R-f(\ell+t+L)}m+\ell,\ w_{2}(BC)=t,\nonumber\\
{}&w_{2}(CD)=2^{f(\ell+t+L)+1}2^{R-f(\ell+t+L)}m+L,\ w_{2}(EF)=(\ell+t+L),\nonumber
\end{align}
which is of the same form as \eqref{eq:H_{1}_losing_0,1,3} with either $r=0$ when $\ell=0$ or $r=1$ when $\ell=1$, and by what we have already proved in \S\ref{subsec:proof_eq_H_{1}_losing_0} and \S\ref{subsec:H_{1}_losing_1_proof}, we conclude that $P_{1}$ loses.

\item If $t=0$, then $P_{2}$, after the first round, is left with a galaxy graph consisting of the edges $AB$, $CD$ and $EF$, where $w_{1}(AB)=2^{R+1}m+\ell$, $w_{1}(CD)=2^{R+1}m+L$ and $w_{1}(EF)=(K+1)$. Since $\ell\leqslant 1$ and $L\leqslant(K-3)$, we conclude, by Lemma~\ref{lem:H_{1}_winning_galaxy}, that $P_{2}$ wins.
\end{enumerate}

The second possibility is that $w_{1}(AB)=2^{R+1}n+\ell_{1}$ for some $n < m$ and some $\ell_{1}\in\{0,1,\ldots,2^{R+1}-1\}$. In this case, the configuration after the first round is of the same form as in Lemma~\ref{prop:H_{1}_winning_1}, with $m_{1}=n$, $m_{2}=m$ and $\ell_{2}=L$, and as $n < m \implies m_{1}\neq m_{2}$, we know that $P_{2}$ wins by Lemma~\ref{prop:H_{1}_winning_1}.

\item Suppose $P_{1}$ removes a positive integer weight from $EF$ in the first round. If $w_{1}(EF)=k\in\{K-L+1,\ldots,K\}$, $P_{2}$ removes weight $(K+1-k)$ from $CD$ in the second round, leaving $P_{1}$ with
\begin{align}
{}&w_{2}(AB)=2^{f(k)+1}2^{R-f(k)}m+2,\ w_{2}(BC)=K-L-1,\nonumber\\
{}&w_{2}(CD)=2^{f(k)+1}2^{R-f(k)}m+L+k-K-1,\ w_{2}(EF)=k,\nonumber
\end{align}
which is of the form given by \eqref{eq:H_{1}_losing_0,1,3} with $r=0$ when $k=K-L+1$, of the form given by \eqref{eq:H_{1}_losing_0,1,3} with $r=1$ when $k=K-L+2$, and of the form given by \eqref{eq:H_{1}_losing_2,4_case_3} with $r=2$ and $w_{2}(EF)=k\leqslant K$ when $k\in\{K-L+3,\ldots,K\}$. That $P_{1}$ loses in the first two cases follows from what we have already proved in \S\ref{subsec:proof_eq_H_{1}_losing_0} and \S\ref{subsec:H_{1}_losing_1_proof}, and that she loses in the third case follows from our induction hypothesis. If $k\in\{1,2,\ldots,K-L\}$, then the configuration after the first round is of the same form as in Lemma~\ref{lem:H_{1}_winning_2}, with $\ell_{1}=2$, $\ell_{2}=L$ and $w_{1}(BC)=(K-L-1)>w_{1}(EF)-\min\{\ell_{1},\ell_{2}\}$, so that $P_{2}$ wins by Lemma~\ref{lem:H_{1}_winning_2}. If $k=0$, $P_{2}$ wins by Remark~\ref{rem:EF_edgeweight_0}.

\item Suppose $P_{1}$ removes a positive integer weight from $BC$ in the first round, without disturbing the edge-weights of $AB$ and $CD$, so that $w_{1}(BC)=t \in\{0,1,\ldots,K-L-2\}$.
\begin{enumerate}
\item If $t \in \{2,3,\ldots,K-L-2\}$, $P_{2}$ removes weight $(K+1)-(2+t+L)$ from the edge $EF$ in the second round. Note that $(2+t+L)\leqslant 2+(K-L-2)+L=K$. This leaves $P_{1}$ with
\begin{align}
{}&w_{2}(AB)=2^{f(2+t+L)+1}2^{R-f(2+t+L)}m+2,\ w_{2}(BC)=t,\nonumber\\
{}&w_{2}(CD)=2^{f(2+t+L)+1}2^{R-f(2+t+L)}m+L,\ w_{2}(EF)=(2+t+L),\nonumber
\end{align}
which is of the same form as \eqref{eq:H_{1}_losing_2,4_case_3} with $r=2$ and $w_{2}(BC)=t\geqslant 2$, and as $w_{2}(EF)=(2+t+L)\leqslant K$, we conclude that $P_{1}$ loses by our induction hypothesis.

\item Suppose $t=1$. Here, if $L \equiv 2 \bmod 4$, then $P_{2}$ removes weight $(K+1)-(L-1)$ from the edge $EF$ in the second round, leaving $P_{1}$ with
\begin{align}
{}&w_{2}(AB)=2^{f(L-1)+1}2^{R-f(L-1)}m+2,\ w_{2}(BC)=1,\nonumber\\
{}&w_{2}(CD)=2^{f(L-1)+1}2^{R-f(L-1)}m+L,\ w_{2}(EF)=(L-1),\nonumber
\end{align}
which is of the form given by \eqref{eq:H_{1}_losing_2,4_case_1} with $r=2$ and $w_{1}(BC)=s=1$, and by what we have already proved in \S\ref{subsec:proof_eq_H_{1}_losing_2_1,2}, we conclude that $P_{1}$ loses. If, on the other hand, $L \equiv i \bmod 4$ for some $i \in \{0,1,3\}$, then $P_{2}$ removes weight $(K+1)-(L+3)$ from the edge $EF$ in the second round (recall that $L\leqslant (K-3) \implies (K+1)-(L+3)>0$), leaving $P_{1}$ with
\begin{align}
{}&w_{2}(AB)=2^{f(L+3)+1}2^{R-f(L+3)}m+2,\ w_{2}(BC)=1,\nonumber\\
{}&w_{2}(CD)=2^{f(L+3)+1}2^{R-f(L+3)}m+L,\ w_{2}(EF)=(L+3),\nonumber
\end{align}
which is of the form given by \eqref{eq:H_{1}_losing_2,4_case_2} with $r=2$ and $w_{1}(BC)=s=1$, and by what we have already proved in \S\ref{subsec:proof_eq_H_{1}_losing_2_1,2}, we know that $P_{1}$ loses.

\item Finally, if $t=0$, $P_{2}$ is left with a galaxy graph at the end of the first round, consisting of the edges $AB$, $CD$ and $EF$, where $w_{1}(AB)=2^{R+1}m+2$, $w_{1}(CD)=2^{R+1}m+L$ and $w_{1}(EF)=(K+1)$. Since $L\leqslant(K-3)$, we conclude, by Lemma~\ref{lem:H_{1}_winning_galaxy}, that $P_{2}$ wins.
\end{enumerate}
\end{enumerate}

This concludes the proof of our claim that the configuration in \eqref{eq:H_{1}_losing_2_3_inductive} is losing. This also concludes the inductive proof of our claim that any configuration on $H_{1}$ of the form given by \eqref{eq:H_{1}_losing_2,4_case_3}, with $r=2$, is losing.

\section{Appendix}\label{sec:appendix}
The appendix section of this paper has been dedicated to showcasing all those details whose inclusion in the main body of the paper would make reading and comprehension of the proof techniques cumbersome and prove a hindrance to lucidity. 

\subsection{Details omitted from the proof of Theorem~\ref{thm:F_{2}}}\label{subsec:appendix_4_vertices}
We prove the base case of the inductive argument employed in the proof of Theorem~\ref{thm:F_{2}}, for proving that the configuration in \eqref{F_{2}_losing_eq} is losing. Since the induction happens with respect to the edge-weight $w_{0}(CD)$, the base case consists of $w_{0}(AB)=w_{0}(BC)=w_{0}(DB)=1$ and $w_{0}(CD)=2$. We consider the first round of the game played on this initial configuration:
\begin{enumerate}
\item Suppose $P_{1}$ selects the vertex $B$ in the first round, and removes at least one of the edges $AB$, $BC$ and $DB$. 
\begin{enumerate}
\item  If $P_{1}$ removes only $AB$, then in the second round, $P_{2}$ removes weight $1$ from $CD$ (by choosing either $C$ or $D$), leaving behind a triangle with all three edge-weights equal, so that $P_{1}$ loses by Theorem~\ref{triangle_losing}.
\item If $P_{1}$ removes only $BC$ (analogously, $P_{1}$ removes only $DB$), then $P_{2}$ selects $D$ and removes weight $1$ from each of $DB$ and $CD$ in the second round (analogously, $P_{2}$ selects $C$ and removes weight $1$ from each of $BC$ and $CD$ in the second round), leaving $P_{1}$ with $AB$ and $CD$ where $w_{0}(AB)=w_{0}(CD)=1$, so that $P_{1}$ loses by Theorem~\ref{thm:galaxy}.
\item If $P_{1}$ removes both $AB$ and $BC$ (which is analogous to her removing both $AB$ and $DB$), then $P_{2}$ selects $D$ and removes both $DB$ and $CD$ in the second round (analogously, she selects $C$ and removes both $BC$ and $CD$), thus winning the game immediately.
\item If $P_{1}$ removes both $BC$ and $DB$, then $P_{2}$ removes weight $1$ from $CD$ in the second round, so that $P_{1}$ loses by Theorem~\ref{thm:galaxy}.
\item If $P_{1}$ removes all three of $AB$, $BC$ and $DB$, then $P_{2}$ removes $CD$ in the second round, thus winning the game immediately.
\end{enumerate}

\item Suppose $P_{1}$ selects the vertex $C$ in the first round (which is analogous to her selecting the vertex $D$). If $P_{1}$ removes only $BC$, then $P_{2}$'s response is the same as the corresponding case discussed above.
\begin{enumerate}
\item If $P_{1}$ removes only weight $1$ from $CD$, and none from $BC$, then $P_{2}$ removes the edge $AB$ in the second round, and $P_{1}$ loses by Theorem~\ref{triangle_losing}. 
\item If $P_{1}$ removes the entire edge $CD$, but leaves $BC$ intact, then $P_{2}$ selects the vertex $B$ and removes all of $AB$, $DB$ and $BC$ in the second round, thus winning the game immediately.
\item If $P_{1}$ removes weight $1$ from each of $BC$ and $CD$, then $P_{2}$ removes the edge $DB$ in the second round, and $P_{1}$ loses by Theorem~\ref{thm:galaxy}.
\item If $P_{1}$ removes both $BC$ and $CD$, then $P_{2}$ selects the vertex $B$ and removes both $AB$ and $DB$ in the second round, thus winning the game immediately.
\end{enumerate}

\item Suppose $P_{1}$ selects $A$, and removes $AB$ in the first round. Then $P_{2}$ removes weight $1$ from $CD$ in the second round, and wins by Theorem~\ref{triangle_losing}.
\end{enumerate}

This shows us that no matter the move made by $P_{1}$ in the first round, $P_{2}$ possesses a winning strategy, thus completing the proof of the base case for the inductive argument employed in proving that any configuration on $F_{2}$ that is of the form \eqref{F_{2}_losing_eq} is losing.

\subsection{Details omitted from the proof of Step~\eqref{thm:G_{4}_losing_1} meant for proving Theorem~\ref{thm:main_G_{4}}}\label{subsec:appendix_G_{4}_losing_1}
We prove that a configuration satisfying the hypothesis of \eqref{G_{4}_losing_cond_1} is losing when $m=1$, $k=1$, $i=1$ and $\ell \in \mathbb{N}_{0}$, i.e.\ we show that the following configuration is losing on $G_{4}$:
\begin{equation}
w_{0}(AB)=w_{0}(BC)=2+2\ell, \quad w_{0}(CA)=3+2\ell \quad \text{and} \quad w_{0}(DE)=1.\label{G_{4}_losing_1_K=M=1}
\end{equation}
To begin with, we consider the configuration obtained from \eqref{G_{4}_losing_1_K=M=1} by setting $\ell=0$, i.e.\ where $w_{0}(AB)=w_{0}(BC)=2$, $w_{0}(CA)=3$ and $w_{0}(DE)=1$. In the first round of the game played on this initial weight configuration, one of the following transpires:
\begin{enumerate}
\item Suppose $P_{1}$ removes edge-weights from at most two of the edges among $AB$, $BC$ and $CA$, leaving $P_{2}$ with $\min\{w_{1}(AB),w_{1}(BC),w_{1}(CA)\} \in \{0,1,2\}$. Note that this minimum equals $2$ if and only if $P_{1}$ removes weight $1$ from the edge $CA$ in the first round, and leaves all else undisturbed, so that in this case, it suffices for $P_{2}$ to simply remove the edge $DE$ in the second round, leaving $P_{1}$ with $AB$, $BC$ and $CA$, with $w_{2}(AB)=w_{2}(BC)=w_{2}(CA)=2$. Therefore, $P_{1}$ loses by Theorem~\ref{triangle_losing}.

If $\min\{w_{1}(AB),w_{1}(BC),w_{1}(CA)\}=1$, then $P_{2}$ removes all edges, out of $AB$, $BC$ and $CA$, except for the one whose edge-weight after the first round of the game equals $1$. This leaves $P_{1}$ with two disjoint edges, one of which is $DE$, each having edge-weight $1$, and $P_{1}$ loses by Theorem~\ref{thm:galaxy}.

If $\min\{w_{1}(AB),w_{1}(BC),w_{1}(CA)\}=0$, then at least one of the edges out of $AB$, $BC$ and $CA$ has been completely removed in the first round. Note that this already leaves $P_{2}$ with a galaxy graph whose connected components are two star graphs: one of which consists of one or two edges out of $AB$, $BC$ and $CA$, and the other consists of only the edge $DE$. Since $P_{1}$ can alter the edge-weights of at most two of the edges $AB$, $BC$ and $CA$ in the first round, the former of these two components has at least one ray with edge-weight strictly greater than $1$. $P_{2}$, therefore, can remove the requisite weight from the former component in the second round, so that $P_{1}$ is left with two disjoint edges (one of which is $DE$), each with edge-weight $1$, and she loses by Theorem~\ref{thm:galaxy}.

\item Suppose $P_{1}$ removes the edge $DE$ in the first round. Then $P_{2}$ removes weight $1$ from the edge $CA$ in the second round, leaving $P_{1}$ with $AB$, $BC$ and $CA$ where $w_{2}(AB)=w_{2}(BC)=w_{2}(CA)=2$. Consequently, $P_{1}$ loses by Theorem~\ref{triangle_losing}.
\end{enumerate}
This completes the proof of our claim that the configuration in \eqref{G_{4}_losing_1_K=M=1} is losing for $\ell=0$.

Suppose we have proved that the configuration in \eqref{G_{4}_losing_1_K=M=1} is losing whenever $\ell \leqslant L$, for some $L \in \mathbb{N}_{0}$. We now consider the configuration
\begin{equation}
w_{0}(AB)=w_{0}(BC)=2+2(L+1),\quad w_{0}(CA)=3+2(L+1) \quad \text{and} \quad w_{0}(DE)=1.\label{G_{4}_losing_1_K=M=1_inductive}
\end{equation}
The first round of the game played on this initial weight configuration unfolds in one of the following ways:
\begin{enumerate}
\item Suppose $P_{1}$ removes edge-weights from at most two of the edges $AB$, $BC$ and $CA$ in the first round, such that 
\begin{equation}
\min\{w_{1}(AB),w_{1}(BC),w_{1}(CA)\}=2(i+1) \text{ for some } i \in \mathbb{N}_{0}.\nonumber
\end{equation}
Note that we must have $2(i+1) \leqslant 2+2(L+1)=2(L+2) \implies i \leqslant (L+1)$. If $i=(L+1)$, then $P_{1}$ must have removed weight $1$ from the edge $CA$, and left all else unchanged, in the first round. However, this means that $w_{1}(AB)=w_{1}(BC)=w_{1}(CA)=2(L+2)$, and $P_{2}$ simply removes the edge $DE$ in the second round so as to make $P_{1}$ lose by Theorem~\ref{triangle_losing}.

Suppose $i \leqslant L$. In this case, $3+2i\leqslant 3+2L < 4+2L=2+2(L+1)$. Therefore, $P_{2}$, in the second round, can select an appropriate vertex out of $A$, $B$ and $C$, and reduce the edge-weights of the two edges incident on it such that the edge-weights of $AB$, $BC$ and $CA$, \emph{in some order}, become $(2+2i)$, $(2+2i)$ and $(3+2i)$. As an example, if $P_{1}$ selects the vertex $C$ and removes weights from $BC$ and $CA$ in the first round in such a manner that 
\begin{equation}
w_{1}(CA)=2(i+1)=2+2i \quad \text{and} \quad 2+2i \leqslant w_{1}(BC) \leqslant w_{0}(BC),\nonumber
\end{equation} 
then in the second round, $P_{2}$ selects the vertex $B$, removes edge-weight $w_{1}(BC)-(2+2i)$ from $BC$, and removes edge-weight $w_{1}(AB)-(3+2i)=w_{0}(AB)-(3+2i)=\{2+2(L+1)\}-(3+2i)$ from $AB$, so that $P_{1}$ is left with
\begin{equation}
w_{2}(BC)=w_{2}(CA)=2+2i, \quad w_{2}(AB)=3+2i \quad \text{and} \quad w_{2}(DE)=1,\nonumber
\end{equation}
which is of the same form as \eqref{G_{4}_losing_1_K=M=1}, with $\ell=i \leqslant L$. Consequently, $P_{1}$ loses by our induction hypothesis. 

If, on the other hand, $\min\{w_{1}(AB),w_{1}(BC),w_{1}(CA)\}=0$, then, after the first round, $P_{2}$ is left with a galaxy graph with two components, one of which is the edge $DE$ with edge-weight $1$, while the other has at least one ray whose edge-weight is at least $2+2(L+1)$. It is evident from Theorem~\ref{thm:galaxy} that $P_{2}$ wins.  

\item Suppose $P_{1}$ removes edge-weights from at most two of the edges $AB$, $BC$ and $CA$ in the first round, such that 
\begin{equation}
\min\{w_{1}(AB),w_{1}(BC),w_{1}(CA)\}=2(i+1)+1 \text{ for some } i \in \mathbb{N}_{0}.\nonumber
\end{equation}
Note that we must have $2(i+1)+1 < 2+2(L+1) \implies i \leqslant L$, which, in turn, implies that $2+2i\leqslant 2+2L < 2+2(L+1)$. In this case, $P_{2}$ selects an appropriate vertex out of $AB$, $BC$ and $CA$, and reduces the edge-weights of the two edges incident on it such that the edge-weights of $AB$, $BC$ and $CA$, at the end of the second round, become $(2+2i)$, $(2+2i)$ and $2(i+1)+1=(3+2i)$ \emph{in some order}. For instance, if $P_{1}$ selects the vertex $B$ and removes weights from $AB$ and $BC$ in the first round in such a manner that
\begin{equation}
w_{1}(AB)=2(i+1)+1 \quad \text{and} \quad 2(i+1)+1 \leqslant w_{1}(BC) \leqslant w_{0}(BC),\nonumber
\end{equation}
then $P_{2}$, in the second round, selects the vertex $C$ and removes weight $w_{1}(BC)-(2+2i)$ from the edge $BC$ and weight $w_{1}(CA)-(2+2i)=w_{0}(CA)-(2+2i)=3+2(L+1)-(2+2i)$ from the edge $CA$. This leaves $P_{1}$ with
\begin{equation}
w_{2}(AB)=2(i+1)+1=3+2i, \quad w_{2}(BC)=w_{2}(CA)=2+2i \quad \text{and} \quad w_{2}(DE)=1,\nonumber
\end{equation}
and as $i \leqslant L$, hence $P_{1}$ loses by our induction hypothesis.

On the other hand, if $\min\{w_{1}(AB),w_{1}(BC),w_{1}(CA)\}=1$, then $P_{2}$ removes, by choosing an appropriate vertex out of $A$, $B$ and $C$ in the second round, two of the edges out of $AB$, $BC$ and $CA$, leaving behind only the third that has edge-weight $1$. This leaves $P_{1}$ with two disjoint edges (one of which is $DE$), each with edge-weight $1$, and she loses by Theorem~\ref{triangle_losing}.

\item Finally, if $P_{1}$ removes the edge $DE$ in the first round, then $P_{2}$ removes weight $1$ from $CA$ in the second, making $P_{1}$ lose by Theorem~\ref{triangle_losing}.
\end{enumerate}
This concludes the proof of the fact that the configuration in \eqref{G_{4}_losing_1_K=M=1_inductive} is losing, thus completing the inductive proof of the claim that the configuration in \eqref{G_{4}_losing_1_K=M=1} is losing for all $\ell \in \mathbb{N}_{0}$. This establishes the base case of the inductive argument employed in proving Step~\eqref{thm:G_{4}_losing_1}.

\subsection{Details omitted from the proof of Step~\eqref{thm:G_{4}_losing_2} meant for proving Theorem~\ref{thm:main_G_{4}}}\label{subsec:appendix_G_{4}_losing_2}
The base case for the inductive argument employed for proving Step~\eqref{thm:G_{4}_losing_2} corresponds to $w_{0}(DE)=4$, two of $w_{0}(AB)$, $w_{0}(BC)$ and $w_{0}(CA)$ being equal to $1$ each, and the third being equal to $2$ (this is because we must ensure that $w_{0}(AB)$, $w_{0}(BC)$ and $w_{0}(CA)$ are not all equal). Without loss of generality, let us consider $w_{0}(AB)=w_{0}(BC)=1$, $w_{0}(CA)=2$ and $w_{0}(DE)=4$.

The first round of the game played on this initial configuration can unfold in one of the following ways:
\begin{enumerate}
\item Suppose $P_{1}$ removes, in the first round, at least one of the edges $AB$, $BC$ and $CA$. Then $P_{2}$ is left with a galaxy graph with two components: one of which is the edge $DE$ with $w_{1}(DE)=4$, and the other having the sum of its edge-weights strictly less than $4$. Therefore, the corresponding configuration is unbalanced, and $P_{2}$ wins by Theorem~\ref{thm:galaxy}.  

\item Suppose $P_{1}$ removes weight $1$ from the edge $CA$ in the first round, and leaves all else unchanged. Then $P_{2}$ removes the edge $DE$ in the second round, and wins by Theorem~\ref{triangle_losing}.

\item Suppose $P_{1}$ removes some edge-weight from $DE$ in the first round. If $w_{1}(DE)=3$, then $P_{2}$ removes the edge $AB$ in the second round, leaving $P_{1}$ with a galaxy graph comprising two components: one of which is the edge $DE$ with $w_{2}(DE)=3$, and the other is a star graph consisting of the edges $BC$ and $CA$, with $w_{2}(BC)+w_{2}(CA)=3$. Consequently, $P_{1}$ loses by Theorem~\ref{thm:galaxy}. If $w_{1}(DE)=2$, then $P_{2}$ removes edges $AB$ and $BC$ in the second round, and if $w_{1}(DE)=1$, then $P_{2}$ removes edges $BC$ and $CA$ in the second round. In either of these cases, once again, $P_{1}$ loses by Theorem~\ref{thm:galaxy}. If $w_{1}(DE)=0$, then $P_{2}$ removes weight $1$ from $CA$ in the second round, and $P_{1}$ loses by Theorem~\ref{triangle_losing}.
\end{enumerate}

This completes the proof of the base case for the inductive argument employed in proving Step~\eqref{thm:G_{4}_losing_2}.

\subsection{Proof of Lemma~\ref{lem:H_{1}_winning_galaxy}}\label{subsec:proof_lem_H_{1}_winning_galaxy}
We set $R=f(k)$, and since each of $\ell_{1}$ and $\ell_{2}$ is in $\left\{0,1,\ldots,2^{R+1}-1\right\}$, we can write the base-$2$ representations of $\ell_{1}$ and $\ell_{2}$ as $\ell_{1}=(a_{R}a_{R-1}\ldots a_{1}a_{0})_{2}$ and $\ell_{2}=(b_{R}b_{R-1}\ldots b_{1}b_{0})_{2}$. We now find $c_{i} \in \{0,1\}$, for each $i \in \{0,1,\ldots,R\}$, such that $a_{i}+b_{i}+c_{i}$ is even. Evidently, the triple
\begin{equation}
\left(2^{R+1}m+\ell_{1}, 2^{R+1}m+\ell_{2}, \sum_{i=0}^{R}c_{i}2^{i}\right)\nonumber
\end{equation}
is balanced, and therefore, losing by Theorem~\ref{thm:Nim}. Note that, for each $i \in \{0,1,\ldots,R\}$,
\begin{enumerate}
\item either at most one of $a_{i}$ and $b_{i}$ equals $1$, in which case we have $c_{i}=a_{i}+b_{i}$, 
\item or else $a_{i}=b_{i}=1$, in which case $c_{i}=0<a_{i}+b_{i}$.
\end{enumerate}
Combining these observations, we can write the inequality:
\begin{equation}
\sum_{i=0}^{R}c_{i}2^{i}\leqslant\sum_{i=0}^{R}(a_{i}+b_{i})2^{i}=\ell_{1}+\ell_{2}<k.\nonumber
\end{equation}
Consequently, the triple $(2^{R+1}m+\ell_{1},2^{R+1}m+\ell_{2},k)$ must be unbalanced, and therefore, winning by Theorem~\ref{thm:Nim}. This completes the proof of Lemma~\ref{lem:H_{1}_winning_galaxy}.

\subsection{Proof that any configuration of the form given by \eqref{eq:H_{1}_losing_0,1,3}, with $r=1$, is losing}\label{subsec:H_{1}_losing_1_proof}
The proof that any configuration on $H_{1}$ that is of the form given by \eqref{eq:H_{1}_losing_0,1,3}, with $r=1$, is inductive, and we begin by establishing the base case, obtained by setting $s=1$ and $k=3$ in \eqref{eq:H_{1}_losing_0,1,3}. Consider the first round of the game played on this initial configuration, i.e.\ on $w_{0}(AB)=w_{0}(CD)=4m+1$, $w_{0}(BC)=1$ and $w_{0}(EF)=3$, for any $m \in \mathbb{N}$:
\begin{enumerate}
\item Suppose $P_{1}$ removes a positive integer weight from $AB$ and a non-negative integer weight from $BC$ in the first round (an analogous situation would be where $P_{1}$ removes a positive integer weight from $CD$ and a non-negative integer weight from $BC$). We consider a few subcases:
\begin{enumerate}
\item Suppose $w_{1}(AB)=4m$ and $w_{1}(BC)=1$. Then $P_{2}$ removes weight $1$ from $EF$ in the second round, leaving $P_{1}$ with $w_{2}(AB)=4m$, $w_{2}(BC)=1$, $w_{2}(CD)=4m+1$ and $w_{2}(EF)=2$, which is of the same form as \eqref{eq:H_{1}_losing_0,1,3} with $r=0$, $s=1$ and $k=2$, and by what we have already proved in \S\ref{subsec:proof_eq_H_{1}_losing_0}, we conclude that $P_{1}$ loses.

\item Suppose $w_{1}(AB)=4m$ and $w_{1}(BC)=0$. This leaves $P_{2}$ with a galaxy graph comprising the edges $AB$, $CD$ and $EF$, where $w_{1}(AB)=4m$, $w_{1}(CD)=4m+1$ and $w_{1}(EF)=3$, and it is evident, from Lemma~\ref{lem:H_{1}_winning_galaxy}, that this configuration is winning, allowing $P_{2}$ to defeat $P_{1}$.

\item Suppose $w_{1}(AB)=4n+\ell_{1}$ for some $n < m$ and some $\ell_{1}\in\{0,1,2,3\}$. Then the resulting configuration is of the same form as in Lemma~\ref{prop:H_{1}_winning_1}, with $m_{1}=n$, $m_{2}=m$ and $\ell_{2}=1$, and as $n<m \implies m_{1} \neq m_{2}$, hence $P_{2}$ wins. 
\end{enumerate}

\item Suppose $P_{1}$ removes a positive integer weight from $EF$ in the first round, so that $w_{1}(EF)=t\in \{0,1,2\}$. If $t=2$, $P_{2}$ removes weight $1$ from $CD$ in the second round, leaving $P_{1}$ with $w_{2}(AB)=4m+1$, $w_{2}(BC)=1$, $w_{2}(CD)=4m$ and $w_{2}(EF)=2$, which is of the form given by \eqref{eq:H_{1}_losing_0,1,3} with $r=0$, $s=1$ and $k=2$, so that $P_{1}$ loses by what we have already proved. If $t=1$, then $P_{2}$ removes weight $1$ from each of $BC$ and $CD$ in the second round, leaving $P_{1}$ with a galaxy graph comprising the edges $AB$, $CD$ and $EF$, where $w_{2}(AB)=4m+1$, $w_{2}(CD)=4m$ and $w_{2}(EF)=1$, so that $P_{1}$ loses by Theorem~\ref{thm:galaxy}. Finally, if $t=0$, $P_{2}$ wins by Remark~\ref{rem:EF_edgeweight_0}.

\item If $P_{1}$ removes $BC$ in the first round, without disturbing the edge-weights of $AB$ and $CD$, $P_{2}$ is left with a galaxy graph comprising the edges $AB$, $CD$ and $EF$, with $w_{1}(AB)=w_{1}(CD)=4m+1$ and $w_{1}(EF)=3$, and $P_{2}$ wins by Lemma~\ref{lem:H_{1}_winning_galaxy}. 
\end{enumerate}

Suppose, for some $K \in \mathbb{N}$, we have shown that any configuration of the form given by \eqref{eq:H_{1}_losing_0,1,3}, with $r=1$ and $k \leqslant K$, is losing. We now consider the first round of the game played on the initial weight configuration
\begin{align}
w_{0}(AB)=2^{R+1}m+1,\ w_{0}(BC)=K-L,\ w_{0}(CD)=2^{R+1}m+L,\ w_{0}(EF)=K+1,\label{eq:H_{1}_losing_1_inductive}
\end{align}
where $R=f(K+1)$ and $L \in \{1,2,\ldots,K-1\}$:
\begin{enumerate}
\item Suppose $P_{1}$ removes a positive integer weight from $CD$ and a non-negative integer weight from $BC$ in the first round. We subdivide this case into a few subcases, as follows:
\begin{enumerate}
\item Suppose $w_{1}(CD)=2^{R+1}m+\ell$ for some $\ell \in \{0,1,\ldots,L-1\}$, and $w_{1}(BC)=t \in \{1,2,\ldots,K-L\}$. Note that $t+\ell+1\leqslant(K-L)+(L-1)+1=K$. Here, $P_{2}$ removes weight $(K-t-\ell)$ from $EF$ in the second round, leaving $P_{1}$ with
\begin{align}
{}&w_{2}(AB)=2^{f(t+\ell+1)+1}2^{R-f(t+\ell+1)}m+1,\ w_{2}(BC)=t,\nonumber\\
{}&w_{2}(CD)=2^{f(t+\ell+1)+1}2^{R-f(t+\ell+1)}m+\ell,\ w_{2}(EF)=t+\ell+1.\nonumber
\end{align}
When $\ell\geqslant 1$, this configuration is of the form given by \eqref{eq:H_{1}_losing_0,1,3} with $r=1$ and $w_{2}(EF)=t+\ell+1\leqslant K$, so that $P_{1}$ loses by our induction hypothesis; on the other hand, when $\ell=0$, this configuration is of the form given by \eqref{eq:H_{1}_losing_0,1,3} with $r=0$, and by what we have already proved in \S\ref{subsec:proof_eq_H_{1}_losing_0}, we conclude that $P_{1}$ loses.

\item Suppose $w_{1}(CD)=2^{R+1}m+\ell$ for some $\ell \in \{0,1,\ldots,L-1\}$ and $w_{1}(BC)=0$. This leaves $P_{2}$ with a galaxy graph comprising the edges $AB$, $CD$ and $EF$, where $w_{1}(AB)=2^{R+1}m+1$, $w_{1}(CD)=2^{R+1}m+\ell$ and $w_{1}(EF)=(K+1)$. Since $\ell+1\leqslant L\leqslant(K-1)<(K+1)$, it is immediate, from Lemma~\ref{lem:H_{1}_winning_galaxy}, that $P_{2}$ wins.

\item Suppose $w_{1}(CD)=2^{R+1}n+\ell_{1}$ for some $n < m$ and some $\ell_{1} \in \{0,1,\ldots,2^{R+1}-1\}$. The resulting configuration is of the form given by Lemma~\ref{prop:H_{1}_winning_1}, with $m_{1}=n$, $m_{2}=m$ and $\ell_{2}=1$, and as $n<m \implies m_{1}\neq m_{2}$, hence $P_{2}$ wins.
\end{enumerate}

\item Suppose $P_{1}$ removes a positive integer weight from $AB$ and a non-negative integer weight from $BC$ in the first round. Once again, we subdivide our analysis as follows:
\begin{enumerate}
\item If $w_{1}(AB)=2^{R+1}m$ and $w_{1}(BC)=t\in \{1,2,\ldots,K-L\}$, $P_{2}$ removes $(K+1)-(t+L)$ from $EF$ in the second round. Note that $(t+L)\leqslant(K-L)+L=K$. This leaves $P_{1}$ with 
\begin{align}
{}&w_{2}(AB)=2^{f(t+L)+1}2^{R-f(t+L)}m,\ w_{2}(BC)=t,\nonumber\\
{}&w_{2}(CD)=2^{f(t+L)+1}2^{R-f(t+L)}m+L,\ w_{2}(EF)=t+L,\nonumber
\end{align}
which is of the same form as \eqref{eq:H_{1}_losing_0,1,3} with $r=0$, and by what we have already proved in \S\ref{subsec:proof_eq_H_{1}_losing_0}, we conclude that $P_{1}$ loses.

\item If $w_{1}(AB)=2^{R+1}m$ and $w_{1}(BC)=0$, $P_{2}$ is left with a galaxy graph comprising the edges $AB$, $CD$ and $EF$, where $w_{1}(AB)=2^{R+1}m$, $w_{1}(CD)=2^{R+1}m+L$ and $w_{1}(EF)=(K+1)$, and as $L\leqslant (K-1)<(K+1)$, $P_{2}$ wins by Lemma~\ref{lem:H_{1}_winning_galaxy}.

\item Suppose $w_{1}(AB)=2^{R+1}n+\ell_{1}$ for some $n < m$ and some $\ell_{1} \in \{0,1,\ldots,2^{R+1}-1\}$. The resulting configuration is of the form given by Lemma~\ref{prop:H_{1}_winning_1}, with $m_{1}=n$, $m_{2}=m$ and $\ell_{2}=L$, and as $n<m \implies m_{1}\neq m_{2}$, hence $P_{2}$ wins.
\end{enumerate}

\item Suppose $P_{1}$ removes a positive integer weight from $EF$, so that $w_{1}(EF)=k\leqslant K$. When $k \in \{K-L+1,\ldots,K\}$, $P_{2}$ removes weight $L-\{k-(K-L)-1\}$ from the edge $CD$ in the second round, leaving $P_{1}$ with 
\begin{align}
{}&w_{2}(AB)=2^{f(k)+1}2^{R-f(k)}m+1,\ w_{2}(BC)=(K-L),\nonumber\\
{}&w_{2}(CD)=2^{f(k)+1}2^{R-f(k)}m+k-(K-L)-1,\ w_{2}(EF)=k.\nonumber
\end{align}
When $k \geqslant K-L+2$, this configuration is of the form given by \eqref{eq:H_{1}_losing_0,1,3} with $r=1$, and $P_{1}$ loses by our induction hypothesis since $k \leqslant K$; on the other hand, when $k=K-L+1$, it is of the form given by \eqref{eq:H_{1}_losing_0,1,3} with $r=0$, and by what we have already proved in \S\ref{subsec:proof_eq_H_{1}_losing_0}, we conclude that $P_{1}$ loses. If $k\in \{1,2,\ldots,K-L-1\}$, the configuration after the first round is of the form stated in Lemma~\ref{lem:H_{1}_winning_2}, with $w_{1}(BC)>w_{1}(EF)$, and $P_{2}$ wins by Lemma~\ref{lem:H_{1}_winning_2}. If $k=K-L$, the configuration after the first round is of the form stated in Lemma~\ref{lem:H_{1}_winning_2}, with $\ell_{1}=1$, $\ell_{2}=L$, and $w_{1}(BC)=(K-L)>(K-L)-1=w_{1}(EF)-\min\{\ell_{1},\ell_{2}\}$. Finally, if $k=0$, $P_{2}$ wins by Remark~\ref{rem:EF_edgeweight_0}.

\item Suppose $P_{1}$ removes a positive integer weight from $BC$ in the first round, leaving the edge-weights of $AB$ and $CD$ undisturbed. As long as $w_{1}(BC)=t \in \{1,2,\ldots,K-L-1\}$, $P_{2}$ removes $(K-L-t)$ from the edge $EF$ in the second round, leaving $P_{1}$ with a configuration that is of the same form as \eqref{eq:H_{1}_losing_0,1,3} with $r=1$ and $w_{2}(EF)=L+t+1\leqslant K$, so that $P_{1}$ loses by our induction hypothesis. 

If $w_{1}(BC)=0$, then $P_{2}$ is left with a galaxy graph consisting of the edges $AB$, $CD$ and $EF$, where $w_{1}(AB)=2^{R+1}m+1$, $w_{1}(CD)=2^{R+1}m+L$ and $w_{1}(EF)=(K+1)$. Since $L\leqslant (K-1)$, we conclude, by Lemma~\ref{lem:H_{1}_winning_galaxy}, that $P_{2}$ wins.
\end{enumerate}

This concludes the proof of our claim that the configuration in \eqref{eq:H_{1}_losing_1_inductive} is losing, thus completing the inductive proof of our claim that any configuration on $H_{1}$ of the form given by \eqref{eq:H_{1}_losing_0,1,3} with $r=1$, is losing.

\subsection{Details omitted from our proof that any configuration that is either of the form \eqref{eq:H_{1}_losing_2,4_case_1} or of the form \eqref{eq:H_{1}_losing_2,4_case_2}, with $r=2$, is losing}\label{subsec:H_{1}_losing_2_1,2_base_cases}
We include here the details omitted from the proof provided in \S\ref{subsec:proof_eq_H_{1}_losing_2_1,2}. To begin with, we consider the base case for the inductive argument used to prove that \eqref{eq:H_{1}_losing_2,4_case_1}, for $r=2$, is losing, which is obtained by setting $k=1$ in \eqref{eq:H_{1}_losing_2,4_case_1} (note that since $r=2$, we must have $s=1$). This yields the configuration (for any $m \in \mathbb{N}$):
\begin{align}
w_{0}(AB)=w_{0}(CD)=2m+2=2(m+1),\ w_{0}(BC)=w_{0}(EF)=1,\nonumber
\end{align}
which is of the same form as \eqref{eq:H_{1}_losing_0,1,3} with $r=0$, and by what we have already proved in \S\ref{subsec:proof_eq_H_{1}_losing_0}, we know it must be losing. The base case for the inductive argument used to prove that \eqref{eq:H_{1}_losing_2,4_case_2}, with $r=2$, is losing, is obtained by setting $k=6$ in \eqref{eq:H_{1}_losing_2,4_case_2} (once again, since $r=2$, we must have $s=1$), which yields the configuration (for any $m \in \mathbb{N}$):
\begin{align}
w_{0}(AB)=8m+2,\ w_{0}(BC)=1,\ w_{0}(CD)=8m+3,\ w_{0}(EF)=6.\label{eq:base_case_H_{1}_losing_2_2}
\end{align}
We consider the first round of the game played on this initial configuration:
\begin{enumerate}
\item Suppose $P_{1}$ removes a positive integer weight from $CD$ and a non-negative integer weight from $BC$ in the first round. We consider a few subcases of this case:
\begin{enumerate}
\item Suppose $w_{1}(CD)=8m+2$ and $w_{1}(BC)=1$. Then $P_{2}$ removes weight $5$ from $EF$, so that $P_{1}$ is left with
\begin{equation}
w_{2}(AB)=w_{2}(CD)=8m+2=2(4m+1),\ w_{2}(BC)=w_{2}(EF)=1,\nonumber
\end{equation}
which is of the same form as \eqref{eq:H_{1}_losing_0,1,3} with $r=0$, and by what we have already proved in \S\ref{subsec:proof_eq_H_{1}_losing_0}, we conclude that $P_{1}$ loses. Suppose $w_{1}(CD)=8m+i$ for $i \in\{0,1\}$, and $w_{1}(BC)=1$. Then $P_{2}$ removes weight $(3-i)$ from $EF$ in the second round, so that $P_{1}$ is left with 
\begin{align}
w_{2}(AB)=8m+2,\ w_{2}(BC)=1,\ w_{2}(CD)=8m+i,\ w_{2}(EF)=3+i,\nonumber
\end{align}
which is of the form given by \eqref{eq:H_{1}_losing_0,1,3} with either $r=1$ (when $i=1$) or $r=0$ (when $i=0$), and by what we have already proved in \S\ref{subsec:proof_eq_H_{1}_losing_0} and \S\ref{subsec:H_{1}_losing_1_proof}, we know that $P_{1}$ loses. 

\item Suppose $w_{1}(CD)=8m+i$ for some $i \in \{0,1,2\}$ and $w_{1}(BC)=0$. This leaves $P_{2}$ with a galaxy graph consisting of the edges $AB$, $CD$ and $EF$, where $w_{1}(AB)=8m+2$, $w_{1}(CD)=8m+i$ and $w_{1}(EF)=6$, so that $P_{2}$ wins by Lemma~\ref{lem:H_{1}_winning_galaxy}.

\item \label{item_1} If $w_{1}(CD)=8n+\ell_{1}$ for some $n < m$ and $\ell_{1} \in \{0,1,\ldots,7\}$, then the configuration after the first round is of the form given by Lemma~\ref{prop:H_{1}_winning_1} with $m_{1}=n$, $m_{2}=m$ and $\ell_{2}=2$. Since $n<m \implies m_{1}\neq m_{2}$, hence $P_{2}$ wins.
\end{enumerate}

\item Suppose $P_{1}$ removes a positive integer weight from $AB$ and a non-negative integer weight from $BC$ in the first round. Once again, we consider a few subcases:
\begin{enumerate}
\item Suppose $w_{1}(AB)=8m+i$ for $i \in \{0,1\}$ and $w_{1}(BC)=1$. Then $P_{2}$ removes weight $(2-i)$ from $EF$ in the second round, leaving $P_{1}$ with
\begin{align}
w_{2}(AB)=8m+i,\ w_{2}(BC)=1,\ w_{2}(CD)=8m+3,\ w_{2}(EF)=4+i,\nonumber
\end{align}
which is of the form given by \eqref{eq:H_{1}_losing_0,1,3} with either $r=1$ (when $i=1$) or $r=0$ (when $i=0$), and by what we have already proved in \S\ref{subsec:proof_eq_H_{1}_losing_0} and \S\ref{subsec:H_{1}_losing_1_proof}, we know that $P_{1}$ loses.

\item Suppose $w_{1}(AB)=8m+i$ for $i \in \{0,1\}$ and $w_{1}(BC)=0$. This leaves $P_{2}$ with a galaxy graph consisting of the edges $AB$, $CD$ and $EF$, where $w_{1}(AB)=8m+i$, $w_{1}(CD)=8m+3$ and $w_{1}(EF)=6$, and $P_{2}$ wins by Lemma~\ref{lem:H_{1}_winning_galaxy}. 

\item Finally, suppose $w_{1}(AB)=8n+\ell_{1}$ for some $n < m$ and $\ell_{1} \in \{0,1,\ldots,7\}$, and as argued above in \eqref{item_1}, this leads to $P_{2}$ winning due to Lemma~\ref{prop:H_{1}_winning_1}.
\end{enumerate}

\item Suppose $P_{1}$ removes a positive integer weight from $EF$ in the first round, so that $w_{1}(EF)=k\leqslant 5$. For $k \in \{4,5\}$, $P_{2}$ removes weight $(6-k)$ from the edge $AB$ in the second round, leaving $P_{1}$ with 
\begin{align}
w_{2}(AB)=8m+(k-4),\ w_{2}(BC)=1,\ w_{2}(CD)=8m+3,\ w_{2}(EF)=k,\nonumber
\end{align}
which is of the form given by \eqref{eq:H_{1}_losing_0,1,3} with $r=1$ if $k=5$, and of the form given by \eqref{eq:H_{1}_losing_0,1,3} with $r=0$ if $k=4$, and by what we have already proved in \S\ref{subsec:proof_eq_H_{1}_losing_0} and \S\ref{subsec:H_{1}_losing_1_proof}, we know that $P_{1}$ loses. If $k \in \{2,3\}$, then the configuration after the first round is of the form given by Lemma~\ref{prop:H_{1}_winning_1}, with $k \in \{\ell_{1},\ell_{2}\}$ since $\ell_{1}=2$ and $\ell_{2}=3$, and hence, $P_{2}$ wins. If $k=1$, then $P_{2}$ removes the edge $BC$ in the second round, leaving $P_{1}$ with a galaxy graph consisting of the edges $AB$, $CD$ and $EF$, where $w_{2}(AB)=8m+2$, $w_{2}(CD)=8m+3$ and $w_{2}(EF)=1$, so that $P_{1}$ loses by Theorem~\ref{thm:galaxy}. Finally, if $k=0$, $P_{2}$ wins by Remark~\ref{rem:EF_edgeweight_0}.

\item Suppose $P_{1}$ removes the edge $BC$ in the first round, leaving $P_{2}$ with a galaxy graph consisting of the edges $AB$, $CD$ and $EF$, where $w_{1}(AB)=8m+2$, $w_{1}(CD)=8m+3$ and $w_{1}(EF)=6$, so that $P_{2}$ wins by Lemma~\ref{lem:H_{1}_winning_galaxy}. 
\end{enumerate}

This completes the proof of our claim that the configuration in \eqref{eq:base_case_H_{1}_losing_2_2} is losing on $H_{1}$.

\subsection{Details omitted from the proof of our claim that any configuration that is of the form \eqref{eq:H_{1}_losing_2,4_case_3}, with $r=2$, is losing}\label{subsec:H_{1}_losing_2_3_base_case}
An inductive argument has been employed, in \S\ref{subsec:proof_eq_H_{1}_losing_2_3}, in proving that a configuration of the form given by \eqref{eq:H_{1}_losing_2,4_case_3}, with $r=2$, is losing on $H_{1}$. The base case for this induction corresponds to $k=6$ and $s=2$, resulting in the configuration: $w_{0}(AB)=w_{0}(CD)=8m+2$, $w_{0}(BC)=2$ and $w_{0}(EF)=6$.
\begin{enumerate}
\item Suppose $P_{1}$ removes a positive integer weight from $CD$ and a non-negative integer weight from $BC$ in the first round (an analogous situation arises if $P_{1}$ removes a positive integer weight from $AB$ and a non-negative integer weight from $BC$). We consider a few subcases of this case:
\begin{enumerate}
\item Suppose $w_{1}(CD)=8m+\ell$ for some $\ell \in \{0,1\}$, and $w_{1}(BC)=t\in \{1,2\}$. Then $P_{2}$ removes weight $6-(t+\ell+2)$ from the edge $EF$ in the second round, so that $P_{1}$ is left with
\begin{align}
w_{2}(AB)=8m+\ell,\ w_{2}(BC)=t,\ w_{2}(CD)=8m+2,\ w_{2}(EF)=t+\ell+2,\nonumber
\end{align}
which is of the form \eqref{eq:H_{1}_losing_0,1,3} with either $r=0$ (when $\ell=0$) or $r=1$ (when $\ell=1$), and by what we have already proved in \S\ref{subsec:proof_eq_H_{1}_losing_0} and \S\ref{subsec:H_{1}_losing_1_proof}, we conclude that $P_{1}$ loses.

\item Suppose $w_{1}(CD)=8m+\ell$ for some $\ell \in \{0,1\}$, and $w_{1}(BC)=0$. This leaves $P_{2}$ with a galaxy graph consisting of the edges $AB$, $CD$ and $EF$, where $w_{1}(AB)=8m+\ell$, $w_{1}(CD)=8m+2$ and $w_{1}(EF)=6$, and $P_{2}$ wins by Lemma~\ref{lem:H_{1}_winning_galaxy}. 

\item Suppose $w_{1}(CD)=8n+\ell_{1}$ for some $n < m$ and $\ell_{1} \in \{0,1,\ldots,7\}$. The resulting configuration, after the first round, is of the form given by Lemma~\ref{prop:H_{1}_winning_1} with $m_{1}=n$, $m_{2}=m$ and $\ell_{2}=2$. Since $n<m \implies m_{1}\neq m_{2}$, hence $P_{2}$ wins by Lemma~\ref{prop:H_{1}_winning_1}.
\end{enumerate}

\item Suppose $P_{1}$ removes a positive integer weight from $EF$ in the first round, so that $w_{1}(EF)=k\leqslant 5$. If $k \in \{4,5\}$, then $P_{2}$ removes weight $(6-k)$ from $AB$ in the second round, leaving $P_{1}$ with
\begin{equation}
w_{2}(AB)=8m+(k-4),\ w_{2}(BC)=2,\ w_{2}(CD)=8m+2,\ w_{2}(EF)=k,\nonumber
\end{equation}
which is of the form given by \eqref{eq:H_{1}_losing_0,1,3} with either $r=0$ (when $k=4$) or $r=1$ (when $k=5$). When $k=3$, $P_{2}$ removes weight $2$ from $AB$ and weight $1$ from $BC$, leaving $P_{1}$ with 
\begin{equation}
w_{2}(AB)=8m,\ w_{2}(BC)=1,\ w_{2}(CD)=8m+2,\ w_{2}(EF)=3,\nonumber
\end{equation}
which is of the form given by \eqref{eq:H_{1}_losing_0,1,3} with $r=0$. By what we have already proved in \S\ref{subsec:proof_eq_H_{1}_losing_0} and \S\ref{subsec:H_{1}_losing_1_proof}, we conclude that $P_{1}$ loses in each of these cases. When $k=2$, the resulting configuration is of the form stated in Lemma~\ref{prop:H_{1}_winning_1}, with $m_{1}=m_{2}=2m$ and $\ell_{1}=\ell_{2}=2$, so that $k \in \{\ell_{1},\ell_{2}\}$, and hence, $P_{2}$ wins by Lemma~\ref{prop:H_{1}_winning_1}. If $k=1$, then $w_{1}(BC)>w_{1}(EF)$, so that $P_{2}$ wins by Lemma~\ref{lem:H_{1}_winning_2}. Finally, if $k=0$, $P_{2}$ wins by Remark~\ref{rem:EF_edgeweight_0}.

\item Suppose $P_{1}$ removes a positive integer weight from $BC$ in the first round, without disturbing the edge-weights of $AB$ and $CD$. If $w_{1}(BC)=1$, then $P_{2}$ removes weight $5$ from $EF$ in the second round, leaving $P_{1}$ with $w_{2}(AB)=w_{2}(CD)=8m+2=2(4m+1)$ and $w_{2}(BC)=w_{2}(EF)=1$, which is of the form given by \eqref{eq:H_{1}_losing_0,1,3} with $r=0$, and by what we have already proved in \S\ref{subsec:proof_eq_H_{1}_losing_0}, we know that $P_{1}$ loses. If $w_{1}(BC)=0$, then $P_{2}$ is left with a galaxy graph consisting of the edges $AB$, $CD$ and $EF$, where $w_{1}(AB)=w_{1}(CD)=8m+2$ and $w_{1}(EF)=6$, and $P_{2}$ wins by Lemma~\ref{lem:H_{1}_winning_galaxy}. 
\end{enumerate}

This completes the proof of the base case for the inductive argument employed in proving that a configuration of the form \eqref{eq:H_{1}_losing_2,4_case_3}, with $r=2$, is losing on $H_{1}$.

\subsection{Proof that any configuration of the form given by \eqref{eq:H_{1}_losing_0,1,3} with $r=3$ is losing}\label{subsec:H_{1}_losing_3_proof}
We employ a sequence of inductive arguments in our proof for establishing that a configuration on $H_{1}$ that is of the form given by \eqref{eq:H_{1}_losing_0,1,3} with $r=3$ is losing. First, we show that any configuration that is of the form given by \eqref{eq:H_{1}_losing_0,1,3} with $r=3$ and $k=7$ is losing, i.e.\ for any $m \in \mathbb{N}_{0}$, we focus on the configuration
\begin{align}
w_{0}(AB)=w_{0}(CD)=8m+3,\ w_{0}(BC)=1,\ w_{0}(EF)=7.\label{eq:H_{1}_losing_3_base_case}
\end{align}
and consider the first round of the game played on this configuration:
\begin{enumerate}
\item Suppose $P_{1}$ removes a positive integer weight from $CD$, and a non-negative integer weight from $BC$, in the first round (this is analogous to $P_{1}$ removing a positive integer weight from $AB$ and a non-negative integer weight from $BC$ in the first round). The following subcases are possible:
\begin{enumerate}
\item Suppose $w_{1}(CD)=8m+i$ for some $i \in \{0,1,2\}$ and $w_{1}(BC)=1$. Then $P_{2}$ removes weight $(3-i)$ from the edge $EF$ in the second round, leaving $P_{1}$ with
\begin{equation}
w_{2}(AB)=8m+3,\ w_{2}(BC)=1,\ w_{2}(CD)=8m+i,\ w_{2}(EF)=4+i,\nonumber
\end{equation}
which is either of the form given by \eqref{eq:H_{1}_losing_0,1,3} with $r=0$ (when $i=0$) or $r=1$ (when $i=1$), or of the form given by \eqref{eq:H_{1}_losing_2,4_case_2} with $r=2$, and by what we have already proved in \S\ref{subsec:proof_eq_H_{1}_losing_0}, \S\ref{subsec:H_{1}_losing_1_proof} and \S\ref{subsec:proof_eq_H_{1}_losing_2_1,2}, we conclude that $P_{1}$ loses.

\item Suppose $w_{1}(CD)=8m+i$ for some $i \in\{0,1,2\}$, and $w_{1}(BC)=0$. This leaves $P_{2}$ with a galaxy graph consisting of the edges $AB$, $CD$ and $EF$, where $w_{1}(AB)=8m+3$, $w_{1}(CD)=8m+i$ and $w_{1}(EF)=7$, and $P_{2}$ wins by Lemma~\ref{lem:H_{1}_winning_galaxy}.

\item Finally, suppose $w_{1}(CD)=8n+\ell_{1}$ for some $n < m$ and some $\ell_{1} \in \{0,1,\ldots,7\}$. The resulting configuration, after the first round, is of the form given by Lemma~\ref{prop:H_{1}_winning_1} with $m_{1}=n$, $m_{2}=m$ and $\ell_{2}=3$. Since $n<m \implies m_{1}\neq m_{2}$, hence $P_{2}$ wins by Lemma~\ref{prop:H_{1}_winning_1}. 
\end{enumerate}

\item Suppose $P_{1}$ removes a positive integer weight from $EF$ in the first round, so that $w_{1}(EF)=k \leqslant 6$. If $k\in \{4,5,6\}$, then $P_{2}$ removes weight $(7-k)$ from $CD$ in the second round, leaving $P_{1}$ with
\begin{equation}
w_{2}(AB)=8m+3,\ w_{2}(BC)=1,\ w_{2}(CD)=8m+(k-4),\ w_{2}(EF)=k,\nonumber
\end{equation}
which is either of the form given by \eqref{eq:H_{1}_losing_0,1,3} with $r=0$ (when $k=4$) or $r=1$ (when $k=5$), or of the form given by \eqref{eq:H_{1}_losing_2,4_case_2} with $r=2$ (when $k=6$), and by what we have already proved in \S\ref{subsec:proof_eq_H_{1}_losing_0}, \S\ref{subsec:H_{1}_losing_1_proof} and \S\ref{subsec:proof_eq_H_{1}_losing_2_1,2}, we conclude that $P_{1}$ loses. If $k\in\{1,2,3\}$, the configuration after the first round is of the form stated in Lemma~\ref{prop:H_{1}_winning_1} with $\ell_{1}=\ell_{2}=3$ and $w_{1}(EF)=k\leqslant \min\{\ell_{1},\ell_{2}\}$, and $P_{2}$ wins by Lemma~\ref{prop:H_{1}_winning_1}. When $k=0$, $P_{2}$ wins by Remark~\ref{rem:EF_edgeweight_0}. 

\item Suppose $P_{1}$ removes the edge $BC$ in the first round, but leaves the edge-weights of $AB$ and $CD$ undisturbed. Then, after the first round, $P_{2}$ is left with a galaxy graph, consisting of the edges $AB$, $CD$ and $EF$, with $w_{1}(AB)=w_{1}(CD)=8m+3$ and $w_{1}(EF)=7$, and $P_{2}$ wins by Lemma~\ref{lem:H_{1}_winning_galaxy}. 
\end{enumerate}
This completes the proof of our claim that the configuration in \eqref{eq:H_{1}_losing_3_base_case} is losing.

Suppose, for some $K \in \mathbb{N}$, with $K \geqslant 7$, we have proved that any configuration that is of the form given by \eqref{eq:H_{1}_losing_0,1,3} with $r=3$ and $w_{0}(EF)=k \leqslant K$, is losing on $H_{1}$. We now consider a configuration of the form
\begin{equation}
w_{0}(AB)=2^{R+1}m+3,\ w_{2}(BC)=K-L-2,\ w_{2}(CD)=2^{R+1}m+L,\ w_{2}(EF)=K+1,\label{eq:H_{1}_losing_3_inductive}
\end{equation}
where $R=f(K+1)$ and $L \in \{3,4,\ldots,K-3\}$. The first round of the game played on this configuration may unfold as follows:
\begin{enumerate}
\item Suppose $P_{1}$ removes a positive integer weight from $CD$, and a non-negative integer weight from $BC$, in the first round. This can be subdivided into the following cases:
\begin{enumerate}
\item If $w_{1}(CD)=2^{R+1}m+\ell$ for some $\ell \in \{0,1,\ldots,L-1\}$ and $w_{1}(BC)=t$ for some $t \in \{1,2,\ldots,K-L-2\}$, $P_{2}$ removes weight $(K+1)-(3+t+\ell)$ from $EF$ in the second round. Note that $\ell \leqslant L-1$ and $t \leqslant K-L-2$ together imply $(3+t+\ell)\leqslant K$. This leaves $P_{1}$ with
\begin{align}
{}&w_{2}(AB)=2^{f(3+t+\ell)+1}2^{R-f(3+t+\ell)}m+3,\ w_{2}(BC)=t,\nonumber\\
{}&w_{2}(CD)=2^{f(3+t+\ell)+1}2^{R-f(3+t+\ell)}m+\ell,\ w_{2}(EF)=(3+t+\ell),\nonumber
\end{align}
which is of the same form as \eqref{eq:H_{1}_losing_0,1,3} with $r=3$ and $w_{2}(EF)=(3+t+\ell)\leqslant K$. Consequently, $P_{1}$ loses by our induction hypothesis.

\item Suppose $w_{1}(CD)=2^{R+1}m+\ell$ for some $\ell \in \{0,1,\ldots,L-1\}$, and $w_{1}(BC)=0$. This leaves $P_{2}$ with a galaxy graph comprising the edges $AB$, $CD$ and $EF$, where $w_{1}(AB)=2^{R+1}m+3$, $w_{1}(CD)=2^{R+1}m+\ell$ and $w_{1}(EF)=(K+1)$. Since $L \leqslant (K-3)$ and $\ell\leqslant (L-1)$, $P_{2}$ wins by Lemma~\ref{lem:H_{1}_winning_galaxy}.

\item If $w_{1}(CD)=2^{R+1}n+\ell_{1}$ for some $n < m$ and some $\ell_{1}\in\{0,1,\ldots,2^{R+1}-1\}$, the configuration after the first round is of the form given by Lemma~\ref{prop:H_{1}_winning_1} with $m_{1}=n$, $m_{2}=m$ and $\ell_{2}=3$. Since $n<m \implies m_{1}\neq m_{2}$, hence $P_{2}$ wins by Lemma~\ref{prop:H_{1}_winning_1}.
\end{enumerate}

\item Suppose $P_{1}$ removes a positive integer weight from $AB$, and a non-negative integer weight from $BC$, in the first round. Once again, we consider the possible cases:
\begin{enumerate}
\item Suppose $w_{1}(AB)=2^{R+1}m+\ell$ for some $\ell \in \{0,1,2\}$, and $w_{1}(BC)=t \in\{1,2,\ldots,K-L-2\}$. If \begin{enumerate*}
\item $\ell\in\{0,1\}$, 
\item or if $\ell=2$ and $t\geqslant 2$, 
\item or if $\ell=2$, $t=1$ and $L \equiv i \bmod 4$ for some $i \in \{0,1,3\}$,
\end{enumerate*}
then $P_{2}$ removes weight $(K+1)-(\ell+t+L)$ from $EF$ in the second round. Note that $\ell \leqslant 2$ and $t \leqslant K-L-2$ together imply $(\ell+t+L) \leqslant K$. This leaves $P_{1}$ with
\begin{align}
{}&w_{2}(AB)=2^{f(\ell+t+L)+1}2^{R-f(\ell+t+L)}m+\ell,\ w_{2}(BC)=t,\nonumber\\
{}&w_{2}(CD)=2^{f(\ell+t+L)+1}2^{R-f(\ell+t+L)}m+L,\ w_{2}(EF)=(\ell+t+L),\nonumber
\end{align} 
which is of the form given by \eqref{eq:H_{1}_losing_0,1,3} with either $r=0$ (when $\ell=0$) or $r=1$ (when $\ell=1$), or of the form given by \eqref{eq:H_{1}_losing_2,4_case_3} with $r=2$ when $\ell=2$ and $t \geqslant 2$, or of the form given by \eqref{eq:H_{1}_losing_2,4_case_2} wth $r=2$ when $\ell=2$, $t=1$ and $L \equiv i \bmod 4$ for some $i \in \{0,1,3\}$. By what we have already proved in \S\ref{subsec:proof_eq_H_{1}_losing_0}, \S\ref{subsec:H_{1}_losing_1_proof}, \S\ref{subsec:proof_eq_H_{1}_losing_2_1,2} and \S\ref{subsec:proof_eq_H_{1}_losing_2_3}, we conclude that in each of these cases, $P_{1}$ loses. If $\ell=2$, $t=1$ and $L \equiv 2 \bmod 4$, $P_{2}$ removes weight $(K+1)-(L-1)$ from $EF$ in the second round, so that $P_{1}$ is left with
\begin{align}
{}&w_{2}(AB)=2^{f(L-1)+1}2^{R-f(L-1)}m+2,\ w_{2}(BC)=1,\nonumber\\
{}&w_{2}(CD)=2^{f(L-1)+1}2^{R-f(L-1)}m+L,\ w_{2}(EF)=(L-1),\nonumber
\end{align}
which is of the form given by \eqref{eq:H_{1}_losing_2,4_case_1} with $r=2$, and by what we have already proved in \S\ref{subsec:proof_eq_H_{1}_losing_2_1,2}, we conclude that $P_{1}$ loses.

\item \label{recalled_later_1} Suppose $w_{1}(AB)=2^{R+1}m+\ell$ for some $\ell \in \{0,1,2\}$, and $w_{1}(BC)=0$. This leaves $P_{2}$ with a galaxy graph, consisting of the edges $AB$, $CD$ and $EF$, where $w_{1}(AB)=2^{R+1}m+\ell$, $w_{1}(CD)=2^{R+1}m+L$ and $w_{1}(EF)=(K+1)$. Since $L\leqslant (K-3)$ and $\ell\leqslant 2$, $P_{2}$ wins by Lemma~\ref{lem:H_{1}_winning_galaxy}.

\item If $w_{1}(AB)=2^{R+1}n+\ell_{1}$ for some $n < m$ and some $\ell_{1}\in\{0,1,\ldots,2^{R+1}-1\}$, the configuration after the first round is of the form given by Lemma~\ref{prop:H_{1}_winning_1} with $m_{1}=n$, $m_{2}=m$ and $\ell_{2}=L$. Since $n<m \implies m_{1}\neq m_{2}$, hence $P_{2}$ wins by Lemma~\ref{prop:H_{1}_winning_1}.
\end{enumerate}

\item Suppose $P_{1}$ removes a positive integer weight from $EF$ in the first round, so that $w_{1}(EF)=k\leqslant K$. For $k \in \{K-L+1,K-L+2,\ldots,K\}$, $P_{2}$ removes weight $(K+1-k)$ from the edge $CD$ in the second round, leaving $P_{1}$ with 
\begin{align}
{}&w_{2}(AB)=2^{f(k)+1}2^{R-f(k)}m+3,\ w_{2}(BC)=(K-L-2),\nonumber\\
{}&w_{2}(CD)=2^{f(k)+1}2^{R-f(k)}m+\{k+L-(K+1)\},\ w_{2}(EF)=k,\nonumber
\end{align}
which is of the form given by \eqref{eq:H_{1}_losing_0,1,3} with $r=3$ and $w_{2}(EF)\leqslant K$ if $k \geqslant K-L+4$, of the form given by \eqref{eq:H_{1}_losing_2,4_case_3} with $r=2$ when $k=K-L+3$ and $L<K-3$, of the form given by \eqref{eq:H_{1}_losing_2,4_case_2} with $r=2$ when $k=K-L+3$ and $L=K-3$, of the form given by \eqref{eq:H_{1}_losing_0,1,3} with $r=1$ when $k=K-L+2$, and of the form given by \eqref{eq:H_{1}_losing_0,1,3} with $r=0$ when $k=K-L+1$. In the first of these cases, $P_{1}$ loses by our induction hypothesis, and in the remaining cases, $P_{1}$ loses by what we have already established in \S\ref{subsec:proof_eq_H_{1}_losing_2_3}, \S\ref{subsec:proof_eq_H_{1}_losing_2_1,2}, \S\ref{subsec:H_{1}_losing_1_proof} and \S\ref{subsec:proof_eq_H_{1}_losing_0} respectively. 

For $k \in \{4,5,\ldots,K-L\}$, the configuration at the end of the first round is of the same form as that mentioned in Lemma~\ref{lem:H_{1}_winning_2}, with $\ell_{1}=3$, $\ell_{2}=L$, so that $w_{2}(EF)=k>3=\min\{\ell_{1},\ell_{2}\}$ and $k-\min\{\ell_{1},\ell_{2}\}\leqslant K-L-3 < K-L-2=w_{2}(BC)$. Consequently, $P_{2}$ wins by Lemma~\ref{lem:H_{1}_winning_2}. For $k\in\{1,2,3\}$, the configuration at the end of the first round is of the same form as that mentioned in Lemma~\ref{prop:H_{1}_winning_1}, with $\ell_{1}=3$, $\ell_{2}=L$, so that $\min\{\ell_{1},\ell_{2}\}\geqslant k=w_{1}(EF)$, and $P_{2}$ wins by Lemma~\ref{prop:H_{1}_winning_1}. If $k=0$, $P_{2}$ wins by Remark~\ref{rem:EF_edgeweight_0}.

\item Finally, suppose $P_{1}$ removes a positive integer weight from $BC$ in the first round, without disturbing the edge-weights of $AB$ and $CD$. Then $w_{1}(BC)=t \in \{0,1,\ldots,K-L-3\}$. For $t\geqslant 1$, $P_{2}$ simply removes weight $(K+1)-(3+t+L)$ from the edge $EF$ in the second round, leaving $P_{1}$ with
\begin{align}
{}&w_{2}(AB)=2^{f(3+t+L)+1}2^{R-f(3+t+L)}m+3,\ w_{2}(BC)=t,\nonumber\\
{}&w_{2}(CD)=2^{f(3+t+L)+1}2^{R-f(3+t+L)}m+L,\ w_{2}(EF)=(3+t+L),\nonumber
\end{align}
which is of the same form as \eqref{eq:H_{1}_losing_0,1,3} with $r=3$ and $w_{2}(EF)=(3+t+L)\leqslant K$, so that $P_{1}$ loses by our induction hypothesis. If $t=0$, then, at the end of the first round, $P_{2}$ is left with a galaxy graph consisting of the edges $AB$, $CD$ and $EF$, where $w_{1}(AB)=2^{R+1}m+3$, $w_{1}(CD)=2^{R+1}m+L$ and $w_{1}(EF)=(K+1)$. Since $L\leqslant (K-3)$, it is evident from Lemma~\ref{lem:H_{1}_winning_galaxy} that $P_{2}$ wins.
\end{enumerate}
This completes the proof of our claim that the configuration in \eqref{eq:H_{1}_losing_3_inductive} is losing on $H_{1}$, and also concludes our inductive proof of our claim that any configuration that is of the form given by \eqref{eq:H_{1}_losing_0,1,3}, with $r=3$, is losing.

\subsection{Proof that a configuration that is either of the form given by \eqref{eq:H_{1}_losing_2,4_case_1} or of the form given by \eqref{eq:H_{1}_losing_2,4_case_2} is losing for $r=4$ and $s=1$}\label{subsec:H_{1}_losing_4_1,2_proof}
Our argument is, as above, inductive. The base case corresponding to \eqref{eq:H_{1}_losing_2,4_case_2} with $r=4$ and $s=1$ is obtained by setting $k=12$, which yields the configuration
\begin{equation}
w_{0}(AB)=16m+4,\ w_{0}(BC)=1,\ w_{0}(CD)=16m+7,\ w_{0}(EF)=12.\label{eq:H_{1}_losing_4_1_base_case}
\end{equation}
We consider the first round of the game played on this configuration:
\begin{enumerate}
\item Suppose $P_{1}$ removes a positive integer weight from $CD$, and a non-negative integer weight from $BC$, in the first round. This case can be subdivided into the following scenarios:
\begin{enumerate}
\item Suppose $w_{1}(CD)=16m+\ell$ for some $\ell \in \{0,1,\ldots,6\}$, and $w_{1}(BC)=1$. If $\ell \in \{0,1,2,3\}$, then $P_{2}$ removes weight $(7-\ell)$ from the edge $EF$ in the second round, leaving $P_{1}$ with
\begin{equation}
w_{2}(AB)=16m+4,\ w_{2}(BC)=1,\ w_{2}(CD)=16m+\ell,\ w_{2}(EF)=(5+\ell),\nonumber
\end{equation}
which is of the form given by \eqref{eq:H_{1}_losing_0,1,3} with $r=0$ when $\ell=0$, of the form given by \eqref{eq:H_{1}_losing_0,1,3} with $r=1$ when $\ell=1$, of the form given by \eqref{eq:H_{1}_losing_2,4_case_2} with $r=2$ when $\ell=2$, and of the form given by \eqref{eq:H_{1}_losing_0,1,3} with $r=3$ when $\ell=3$. By what we have already proved in \S\ref{subsec:proof_eq_H_{1}_losing_0}, \S\ref{subsec:H_{1}_losing_1_proof}, \S\ref{subsec:proof_eq_H_{1}_losing_2_1,2} and \S\ref{subsec:H_{1}_losing_3_proof}, we conclude that $P_{1}$ loses in each of these cases. On the other hand, if $\ell \in \{4,5,6\}$, $P_{2}$ removes weight $(15-\ell)$ from the edge $EF$ in the second round, leaving $P_{1}$ with the configuration
\begin{align}
{}&w_{2}(AB)=16m+4=4(4m+1),\ w_{2}(BC)=1,\nonumber\\
{}&w_{2}(CD)=16m+\ell=4(4m+1)+(\ell-4),\ w_{2}(EF)=(\ell-3),\nonumber
\end{align}
which is of the form given by \eqref{eq:H_{1}_losing_0,1,3} with $r=0$, and by what we have already proved in \S\ref{subsec:proof_eq_H_{1}_losing_0}, we conclude that $P_{1}$ loses.

\item If $w_{1}(CD)=16m+\ell$ for some $\ell \in \{0,1,\ldots,6\}$, and $w_{1}(BC)=0$, then $P_{2}$, after the first round, is left with a galaxy graph, comprising the edges $AB$, $CD$ and $EF$, such that $w_{1}(AB)=16m+4$, $w_{1}(CD)=16m+\ell$ and $w_{1}(EF)=12$, and it is evident from Lemma~\ref{lem:H_{1}_winning_galaxy} that $P_{2}$ wins.

\item If $w_{1}(CD)=16n+\ell_{1}$ for some $n < m$ and some $\ell_{1}\in\{0,1,\ldots,15\}$, the configuration, after the first round, is of the form given by Lemma~\ref{prop:H_{1}_winning_1} with $m_{1}=n$, $m_{2}=m$ and $\ell_{2}=4$. Since $n<m \implies m_{1}\neq m_{2}$, hence $P_{2}$ wins by Lemma~\ref{prop:H_{1}_winning_1}.
\end{enumerate}

\item Suppose $P_{1}$ removes a positive integer weight from $AB$, and a non-negative integer weight from $BC$, in the first round. Once again, we consider all possible subcases:
\begin{enumerate}
\item Suppose $w_{1}(AB)=16m+\ell$ for some $\ell \in \{0,1,2,3\}$, and $w_{1}(BC)=1$. Then $P_{2}$ removes weight $(4-\ell)$ from the edge $EF$ in the second round, leaving $P_{1}$ with
\begin{align}
w_{2}(AB)=16m+\ell,\ w_{2}(BC)=1,\ w_{2}(CD)=16m+7,\ w_{2}(EF)=8+\ell,\nonumber
\end{align}
which is of the form given by \eqref{eq:H_{1}_losing_0,1,3} with $r=0$ when $\ell=0$, of the form given by \eqref{eq:H_{1}_losing_0,1,3} with $r=1$ when $\ell=1$, of the form given by \eqref{eq:H_{1}_losing_2,4_case_2} with $r=2$ when $\ell=2$, and of the form given by \eqref{eq:H_{1}_losing_0,1,3} with $r=3$ when $\ell=3$. By what we have already proved in \S\ref{subsec:proof_eq_H_{1}_losing_0}, \S\ref{subsec:H_{1}_losing_1_proof}, \S\ref{subsec:proof_eq_H_{1}_losing_2_1,2} and \S\ref{subsec:H_{1}_losing_3_proof}, we know that $P_{1}$ loses.

\item If $w_{1}(AB)=16m+\ell$ for some $\ell \in \{0,1,2,3\}$, and $w_{1}(BC)=0$, $P_{2}$ is left, at the end of the first round, with a galaxy graph consisting of the edges $AB$, $CD$ and $EF$, where $w_{1}(AB)=16m+\ell$, $w_{1}(CD)=16m+7$ and $w_{1}(EF)=12$, and $P_{2}$ wins by Lemma~\ref{lem:H_{1}_winning_galaxy}.

\item If $w_{1}(AB)=16n+\ell_{1}$ for some $n < m$ and some $\ell_{1}\in\{0,1,\ldots,15\}$, the configuration after the first round is of the form given by Lemma~\ref{prop:H_{1}_winning_1} with $m_{1}=n$, $m_{2}=m$ and $\ell_{2}=7$. Since $n<m \implies m_{1}\neq m_{2}$, hence $P_{2}$ wins by Lemma~\ref{prop:H_{1}_winning_1}.
\end{enumerate}

\item Suppose $P_{1}$ removes a positive integer weight from $EF$ in the first round, so that $w_{1}(EF)=k\leqslant 11$. If $k \in \{8,9,10,11\}$, $P_{2}$ removes weight $(12-k)$ from $AB$ in the second round, so that $P_{1}$ is left with
\begin{align}
w_{2}(AB)=16m+(k-8),\ w_{2}(BC)=1,\ w_{2}(CD)=16m+7,\ w_{2}(EF)=k,\nonumber
\end{align}
which is of the form given by \eqref{eq:H_{1}_losing_0,1,3} with $r=0$ when $k=8$, of the form given by \eqref{eq:H_{1}_losing_0,1,3} with $r=1$ when $k=9$, of the form given by \eqref{eq:H_{1}_losing_2,4_case_2} with $r=2$ when $k=10$, and of the form given by \eqref{eq:H_{1}_losing_0,1,3} with $r=3$ when $k=11$, and by what we have already proved in \S\ref{subsec:proof_eq_H_{1}_losing_0}, \S\ref{subsec:H_{1}_losing_1_proof}, \S\ref{subsec:proof_eq_H_{1}_losing_2_1,2} and \S\ref{subsec:H_{1}_losing_3_proof}, we know that $P_{1}$ loses. If $k \in \{1,2,3,4,7\}$, the configuration at the end of the first round is of the same form as described in Lemma~\ref{prop:H_{1}_winning_1}, with $\ell_{1}=4$ and $\ell_{2}=7$, so that we either have $\min\{\ell_{1},\ell_{2}\}\geqslant k$, or we have $k \in \{\ell_{1},\ell_{2}\}$, and in each of these cases, $P_{2}$ wins by Lemma~\ref{prop:H_{1}_winning_1}. If $k=0$, $P_{2}$ wins by Remark~\ref{rem:EF_edgeweight_0}.

If $k \in \{5,6\}$, $P_{2}$ removes weight $(12-k)$ from the edge $CD$ in the second round, leaving $P_{1}$ with
\begin{equation}
w_{2}(AB)=16m+4,\ w_{2}(BC)=1,\ w_{2}(CD)=16m+(k-5),\ w_{2}(EF)=k,\nonumber
\end{equation}
which is of the form given by \eqref{eq:H_{1}_losing_0,1,3} with either $r=0$ (when $k=5$) or $r=1$ (when $k=6$), and by what we have already proved in \S\ref{subsec:proof_eq_H_{1}_losing_0} and \S\ref{subsec:H_{1}_losing_1_proof}, we know that $P_{1}$ loses.

\item Finally, suppose $P_{1}$ removes the edge $BC$, without disturbing the edge-weights of $AB$ and $CD$, in the first round. This leaves $P_{2}$ with a galaxy graph, consisting of the edges $AB$, $CD$ and $EF$, with $w_{2}(AB)=16m+4$, $w_{2}(CD)=16m+7$ and $w_{2}(EF)=12$, and $P_{2}$ wins by Lemma~\ref{lem:H_{1}_winning_galaxy}.
\end{enumerate}

This completes the proof of our claim that the configuration in \eqref{eq:H_{1}_losing_4_1_base_case} is losing on $H_{1}$.

Next, setting $r=4$, $s=1$ and one of $k=1$, $k=2$ and $k=3$ in \eqref{eq:H_{1}_losing_2,4_case_1}, we obtain, respectively:
\begin{align}
{}&w_{0}(AB)=w_{0}(CD)=2m+4=2(m+2),\ w_{0}(BC)=w_{0}(EF)=1,\nonumber\\
{}&w_{0}(AB)=4m+4=4(m+1),\ w_{0}(BC)=1,\ w_{0}(CD)=4m+5=4(m+1)+1,\ w_{0}(EF)=2,\nonumber\\
{}&w_{0}(AB)=4m+4=4(m+1),\ w_{0}(BC)=1,\ w_{0}(CD)=4m+6=4(m+1)+2,\ w_{0}(EF)=3,\nonumber
\end{align}
each of which is losing since it is of the form given by \eqref{eq:H_{1}_losing_0,1,3} with $r=0$. This establishes the base case for the inductive proof of our claim that a configuration of the form given by \eqref{eq:H_{1}_losing_2,4_case_1} with $r=4$ and $s=1$ is losing.

We now come to the inductive steps of our proof. Suppose, for some $K \in \mathbb{N}$, we have shown that any configuration on $H_{1}$ that is either of the form \eqref{eq:H_{1}_losing_2,4_case_1} or of the form \eqref{eq:H_{1}_losing_2,4_case_2}, with $r=4$ and $s=1$, and satisfying the inequality $k \leqslant K$, is losing. 

\subsubsection{When $K \geqslant 8$ and $K \equiv i \bmod 8$ for some $i \in \{0,1,2\}$}\label{subsubsec:H_{1}_losing_4_0,1,2} For any $m \in \mathbb{N}_{0}$, we focus on the configuration
\begin{equation}
w_{0}(AB)=2^{R+1}m+4,\ w_{0}(BC)=1,\ w_{0}(CD)=2^{R+1}m+(K+4),\ w_{0}(EF)=(K+1),\label{eq:H_{1}_losing_4_2_inductive}
\end{equation}
where $R=f(K+1)$ -- this is of the form given by \eqref{eq:H_{1}_losing_2,4_case_1} with $r=4$ and $s=1$. We consider the first round of the game played on this configuration:
\begin{enumerate}
\item Suppose $P_{1}$ removes a positive integer weight from $CD$, and a non-negative integer weight from $BC$, in the first round. We subdivide this into the following cases:
\begin{enumerate}
\item If $w_{1}(CD)=2^{R+1}m+(K+1)$, then, irrespective of what the value of $w_{1}(BC)$ is, the configuration after the first round is of the form stated in Lemma~\ref{prop:H_{1}_winning_1}, with $\ell_{1}=4$, $\ell_{2}=(K+1)$ and $w_{1}(EF)=(K+1) \in \{\ell_{1},\ell_{2}\}$, so that $P_{2}$ wins by Lemma~\ref{prop:H_{1}_winning_1}. 

\item Suppose $w_{1}(CD)=2^{R+1}m+L$ where $L \in \{7,8,\ldots,K-5\}$, $L \equiv j \bmod 8$ for some $j \in \{0,1,2,3,7\}$, and $w_{1}(BC)=1$. Here, $P_{2}$ removes weight $(K+1)-(L+5)$ from the edge $EF$ in the second round, leaving $P_{1}$ with
\begin{align}
{}&w_{2}(AB)=2^{f(L+5)+1}2^{R-f(L+5)}m+4,\ w_{2}(BC)=1,\nonumber\\
{}&w_{2}(CD)=2^{f(L+5)+1}2^{R-f(L+5)}m+L,\ w_{2}(EF)=L+5,\nonumber
\end{align}
where $(L+5) \geqslant 12$ and $(L+5) \equiv j' \bmod 8$ for some $j' \in \{0,4,5,6,7\}$. Therefore, this configuration is of the form given by \eqref{eq:H_{1}_losing_2,4_case_2} with $r=4$, $s=1$ and $w_{2}(EF)=(L+5)\leqslant K$, so that $P_{1}$ loses by our induction hypothesis. 

\item Suppose $w_{1}(CD)=2^{R+1}m+L$ where $L \in \{1,2,\ldots,K+3\}$, $L \equiv j \bmod 8$ for some $j \in \{4,5,6\}$, and $w_{1}(BC)=1$. Recall that $K \equiv i \bmod 8$, for some $i \in \{0,1,2\}$, for the entirety of \S\ref{subsubsec:H_{1}_losing_4_0,1,2}. The following cases are, consequently, taken care of under this scenario:  
\begin{enumerate}
\item $i=0$ and $L \in \{K-4,K-3,K-2\}$, 
\item $i=1$ and $L \in \{K-4,K-3,K+3\}$,
\item $i=2$ and $L \in \{K-4,K+2,K+3\}$.
\end{enumerate} 
Here, $P_{2}$ removes weight $(K+1)-(L-3)$ from the edge $EF$ in the second round. Note that since $L \leqslant (K+3)$, we have $(L-3) \leqslant K$. The configuration after the second round is
\begin{align}
{}&w_{2}(AB)=2^{f(L-3)+1}2^{R-f(L-3)}m+4,\ w_{2}(BC)=1,\nonumber\\
{}&w_{2}(CD)=2^{f(L-3)+1}2^{R-f(L-3)}m+L,\ w_{2}(EF)=L-3,\nonumber
\end{align}
and as $(L-3) \equiv j'\bmod 8$ for some $j' \in \{1,2,3\}$, this is of the form given by \eqref{eq:H_{1}_losing_2,4_case_1} with $r=4$ and $s=1$. Since $w_{2}(EF)=(L-3)\leqslant K$, $P_{1}$ loses by our induction hypothesis.

\item Suppose $w_{1}(CD)=2^{R+1}m+L$ where $L \in \{K-1,K\}$, and $w_{1}(BC)=1$. Then $P_{2}$ removes weight $(L-K+4)$ from the edge $AB$ in the second round, leaving $P_{1}$ with
\begin{equation}
w_{2}(AB)=2^{R+1}m+(K-L),\ w_{2}(BC)=1,\ w_{2}(CD)=2^{R+1}m+L,\ w_{2}(EF)=(K+1),\nonumber
\end{equation}
which is of the form given by \eqref{eq:H_{1}_losing_0,1,3} with either $r=0$ (when $L=K$) or $r=1$ (when $L=K-1$), and by what we have already proved in \S\ref{subsec:proof_eq_H_{1}_losing_0} and \S\ref{subsec:H_{1}_losing_1_proof}, we know that $P_{1}$ loses. Likewise, when $K \equiv i \bmod 8$ for $i \in \{1,2\}$, $w_{1}(CD)=2^{R+1}m+(K-2)$ and $w_{1}(BC)=1$, $P_{2}$ removes weight $2$ from $AB$ in the second round, leaving $P_{1}$ with 
\begin{equation}
w_{2}(AB)=2^{R+1}m+2,\ w_{2}(BC)=1,\ w_{2}(CD)=2^{R+1}m+(K-2),\ w_{2}(EF)=(K+1),\nonumber
\end{equation}
which is of the form given by \eqref{eq:H_{1}_losing_2,4_case_2} with $r=2$ and $s=1$, and by what we have already proved in \S\ref{subsec:proof_eq_H_{1}_losing_2_1,2}, we conclude that $P_{1}$ loses. When $K \equiv 2 \bmod 8$, $w_{1}(CD)=2^{R+1}m+(K-3)$ and $w_{1}(BC)=1$, $P_{2}$ removes weight $1$ from $AB$, leaving $P_{1}$ with
\begin{align}
{}&w_{2}(AB)=2^{R+1}m+3,\ w_{2}(BC)=1,\ w_{2}(CD)=2^{R+1}m+(K-3),\ w_{2}(EF)=(K+1),\nonumber
\end{align}
which is of the same form as \eqref{eq:H_{1}_losing_0,1,3} with $r=3$, and by what we have already proved in \S\ref{subsec:H_{1}_losing_3_proof}, we know that $P_{1}$ loses. If $K \equiv i \bmod 8$ for $i \in \{0,1\}$, $w_{1}(CD)=2^{R+1}m+(K+2)$, and $w_{1}(BC)=1$,  
\begin{enumerate}
\item $P_{2}$ removes $BC$ and weight $1$ from $AB$ in the second round when $K \equiv 0 \bmod 8$, 
\item $P_{2}$ removes $BC$ and weight $3$ from $AB$ in the second round when $K \equiv 1 \bmod 8$.
\end{enumerate}
Consequently, $P_{1}$ is left with a galaxy graph, consisting of the edges $AB$, $CD$ and $EF$, with
\begin{align}
w_{2}(AB)=2^{R+1}m+3,\ w_{2}(CD)=2^{R+1}m+(8n+2),\ w_{2}(EF)=(8n+1)\nonumber
\end{align}
when $K=8n$ for some $n \in \mathbb{N}$, or 
\begin{align}
w_{2}(AB)=2^{R+1}m+1,\ w_{2}(CD)=2^{R+1}m+(8n+3),\ w_{2}(EF)=(8n+2)\nonumber
\end{align}
when $K=(8n+1)$ for some $n \in \mathbb{N}$. In each case, the triple is balanced, and $P_{1}$ loses by Theorem~\ref{thm:galaxy}. Likewise, when $K \equiv 0 \bmod 8$, $w_{1}(CD)=2^{R+1}m+(K+3)$ and $w_{1}(BC)=1$, $P_{2}$ removes $BC$ and weight $2$ from $AB$ in the second round, leaving $P_{1}$ with a galaxy graph consisting of the edges $AB$, $CD$ and $EF$, where (once again, writing $K=8n$ for some $n \in \mathbb{N}$)
\begin{equation}
w_{2}(AB)=2^{R+1}m+2,\ w_{2}(CD)=2^{R+1}m+(8n+3),\ w_{2}(EF)=(8n+1),\nonumber
\end{equation}
and as this triple is balanced, $P_{1}$ loses by Theorem~\ref{thm:galaxy}.

\item Suppose $w_{1}(CD)=2^{R+1}m+L$ for some $L \in \{0,1,\ldots,6\}$, and $w_{1}(BC)=1$. If $L \in \{0,1,2,3\}$, then $P_{2}$ removes weight $(K+1)-(L+5)$ from $EF$ in the second round, leaving $P_{1}$ with
\begin{align}
w_{2}(AB)=2^{R+1}m+4,\ w_{2}(BC)=1,\ w_{2}(CD)=2^{R+1}m+L,\ w_{2}(EF)=(L+5),\nonumber
\end{align}
which is of the form given by \eqref{eq:H_{1}_losing_0,1,3} with $r=0$ when $L=0$, of the form given by \eqref{eq:H_{1}_losing_0,1,3} with $r=1$ when $L=1$, of the form given by \eqref{eq:H_{1}_losing_2,4_case_2} with $r=2$ when $L=2$, and of the form given by \eqref{eq:H_{1}_losing_0,1,3} with $r=3$ when $L=3$. By what we have already proved in \S\ref{subsec:proof_eq_H_{1}_losing_0}, \S\ref{subsec:H_{1}_losing_1_proof}, \S\ref{subsec:proof_eq_H_{1}_losing_2_1,2} and \S\ref{subsec:H_{1}_losing_3_proof}, we conclude that $P_{1}$ loses. If $L \in \{4,5,6\}$, $P_{2}$ removes weight $(K+1)-(L-3)$ from the edge $EF$ in the second round, leaving $P_{1}$ with
\begin{align}
{}&w_{2}(AB)=2^{R+1}m+4=4\left(2^{R-1}m+1\right),\ w_{2}(BC)=1,\nonumber\\
{}&w_{2}(CD)=2^{R+1}m+L=4\left(2^{R-1}m+1\right)+(L-4),\ w_{2}(EF)=(L-3),\nonumber
\end{align}
which is of the form given by \eqref{eq:H_{1}_losing_0,1,3} with $r=0$, and by what we have already proved in \S\ref{subsec:proof_eq_H_{1}_losing_0}, we know that $P_{1}$ loses.

\item If $w_{1}(CD)=2^{R+1}m+L$ for some $L \in \{0,1,\ldots,K-4\}$ and $w_{1}(BC)=0$, then $P_{2}$, after the first round, is left with a galaxy graph comprising the edges $AB$, $CD$ and $EF$, where $w_{1}(AB)=2^{R+1}m+4$, $w_{1}(CD)=2^{R+1}m+L$ and $w_{1}(EF)=(K+1)$, and $P_{2}$ wins by Lemma~\ref{lem:H_{1}_winning_galaxy}. 

Since $K \equiv i \bmod 8$ for some $i \in \{0,1,2\}$, we write $K=8(n+1)+i$ for some $n\in\mathbb{N}_{0}$. When $w_{1}(CD)=2^{R+1}m+L$ for $L=(K-3)=(8n+5+i)$, and $w_{1}(BC)=0$, $P_{2}$, after the first round, is left with a galaxy graph consisting of the edges $AB$, $CD$ and $EF$, where $w_{1}(AB)=2^{R+1}m+4$, $w_{1}(CD)=2^{R+1}m+(8n+5+i)$ and $w_{1}(EF)=(8n+9+i)$. Since the triple $\left(2^{R+1}m+4,2^{R+1}m+(8n+5+i),(8n+i+1)\right)$ is balanced, the triple $\left(2^{R+1}m+4,2^{R+1}m+(8n+5+i),(8n+9+i)\right)$ is not, and $P_{2}$ wins by Theorem~\ref{thm:galaxy}. Likewise, when $w_{1}(CD)=2^{R+1}m+L$ and $w_{1}(BC)=0$
\begin{enumerate}
\item with $L=(K-2)$ and $K \equiv i \bmod 8$ for $i \in \{0,1\}$, 
\item or with $L=(K-1)$ and $K \equiv 0 \bmod 8$,
\item or with $L=(K+2)$ and $K \equiv 2 \bmod 8$,
\item or with $L=(K+3)$ and $K \equiv i \bmod 8$ for $i \in \{1,2\}$,
\end{enumerate}
the triple $\left(2^{R+1}m+4,2^{R+1}m+L,(L-4)\right)$ is balanced, and hence, the triple $\left(2^{R+1}m+4,2^{R+1}m+L,(K+1)\right)$ not, allowing $P_{2}$ to win by Theorem~\ref{thm:galaxy}.

When $w_{1}(CD)=2^{R+1}m+L$ with $L=K$ and $w_{1}(BC)=0$, $P_{2}$, after the first round, is left with a galaxy graph comprising the edges $AB$, $CD$ and $EF$, with $w_{1}(AB)=2^{R+1}m+4$, $w_{1}(CD)=2^{R+1}m+K$ and $w_{1}(EF)=(K+1)$. Since the triple $\left(2^{R+1}m+1,2^{R+1}m+K,K+1\right)$ is balanced when $K\equiv i\bmod 8$ for $i\in\{0,2\}$, and the triple $\left(2^{R+1}m+3,2^{R+1}m+K,K+1\right)$ is balanced when $K\equiv 1 \bmod 8$, we conclude that $P_{2}$ wins by Theorem~\ref{thm:galaxy}. Very similar arguments work when $w_{1}(CD)=2^{R+1}m+L$ and $w_{1}(BC)=0$ with
\begin{enumerate}
\item $L=(K-2)$ and $K \equiv 2 \bmod 8$,
\item $L=(K-1)$ and $K \equiv i \bmod 8$ for $i \in \{1,2\}$,
\item $L=(K+2)$ and $K \equiv i \bmod 8$ for $i \in \{0,1\}$,
\item $L=(K+3)$ and $K \equiv 0 \bmod 8$.
\end{enumerate}

\item If $w_{1}(CD)=2^{R+1}n+\ell_{1}$ for some $n < m$ and $\ell_{1} \in \{0,1,\ldots,2^{R+1}-1\}$, the configuration, after the first round, is of the form given by Lemma~\ref{prop:H_{1}_winning_1} with $m_{1}=n$, $m_{2}=m$ and $\ell_{2}=4$. Since $n<m \implies m_{1}\neq m_{2}$, hence $P_{2}$ wins by Lemma~\ref{prop:H_{1}_winning_1}.
\end{enumerate}

\item Suppose $P_{1}$ removes a positive integer weight from $AB$, and a non-negative integer weight from $BC$, in the first round. We divide the analysis into the following cases:
\begin{enumerate}
\item Suppose $w_{1}(AB)=2^{R+1}m+\ell$ for some $\ell \in \{0,1,2,3\}$. Writing $(K+1)=(a_{R}a_{R-1}\ldots a_{1}a_{0})_{2}$, we see that $a_{2}=0$ since $K \equiv i \bmod 8$ for some $i \in \{0,1,2\}$. Moreover, writing $(K+4)=(c_{R}c_{R-1}\ldots c_{1}c_{0})_{2}$, we see that $c_{i}=a_{i}$ for all $i \geqslant 3$, and $c_{2}=1$. On the other hand, writing $\ell=(b_{R}b_{R-1}\ldots b_{1}b_{0})_{2}$, we have $b_{i}=0$ for all $i \geqslant 2$. Consequently, $P_{2}$ wins by the second criterion stated in Theorem~\ref{thm:H_{1}_winning_1}.

\item If $w_{1}(AB)=2^{R+1}n+\ell_{1}$ for some $n < m$ and $\ell_{1} \in \{0,1,\ldots,2^{R+1}-1\}$, the configuration, after the first round, is of the form given by Lemma~\ref{prop:H_{1}_winning_1} with $m_{1}=n$, $m_{2}=m$ and $\ell_{2}=(K+4)$. Since $n<m \implies m_{1}\neq m_{2}$, hence $P_{2}$ wins by Lemma~\ref{prop:H_{1}_winning_1}.
\end{enumerate}

\item Suppose $P_{1}$ removes a positive integer weight from $EF$ in the first round, so that $w_{1}(EF)=k \leqslant K$. If $k \in \{12,13,\ldots,K\}$ and $k \equiv j \bmod 8$ for some $j \in \{0,4,5,6,7\}$, then $P_{2}$ removes weight $(K+4)-(k-5)$ from the edge $CD$ in the second round, leaving $P_{1}$ with
\begin{align}
{}&w_{2}(AB)=2^{f(k)+1}2^{R-f(k)}m+4,\ w_{2}(BC)=1,\nonumber\\
{}&w_{2}(CD)=2^{f(k)+1}2^{R-f(k)}m+(k-5),\ w_{2}(EF)=k,\nonumber
\end{align}
which is of the form given by \eqref{eq:H_{1}_losing_2,4_case_2} with $r=4$, $s=1$ and $w_{2}(EF)=k\leqslant K$, so that $P_{1}$ loses by our induction hypothesis. If $k \in \{1,2,\ldots,K\}$ and $k \equiv j \bmod 8$ for some $j \in \{1,2,3\}$, then $P_{2}$ removes weight $(K+4)-(k+3)$ from the edge $CD$ in the second round, leaving $P_{1}$ with
\begin{align}
{}&w_{2}(AB)=2^{f(k)+1}2^{R-f(k)}m+4,\ w_{2}(BC)=1,\nonumber\\
{}&w_{2}(CD)=2^{f(k)+1}2^{R-f(k)}m+(k+3),\ w_{2}(EF)=k,\nonumber
\end{align}
which is of the form given by \eqref{eq:H_{1}_losing_2,4_case_1} with $r=4$, $s=1$ and $w_{2}(EF)=k\leqslant K$, so that $P_{1}$ loses by our induction hypothesis. Note that this last case covers $k \in \{1,2,3,9,10,11\}$. If $k=4$, the configuration after the first round is of the form stated in Lemma~\ref{lem:H_{1}_winning_2} with $\ell_{1}=4$ and $\ell_{2}=(K+4)$, so that $k \in \{\ell_{1},\ell_{2}\}$ and $P_{2}$ wins by Lemma~\ref{lem:H_{1}_winning_2}. If $k \in \{5,6,7,8\}$, $P_{2}$ removes weight $(K+4)-(k-5)$ from the edge $CD$ in the second round, leaving $P_{1}$ with
\begin{align}
{}&w_{2}(AB)=2^{R+1}m+4,\ w_{2}(BC)=1,\ w_{2}(CD)=2^{R+1}m+(k-5),\ w_{2}(EF)=k,\nonumber
\end{align}
which is of the same form as \eqref{eq:H_{1}_losing_0,1,3} with $r=0$ when $k=5$, of the form given by \eqref{eq:H_{1}_losing_0,1,3} with $r=1$ when $k=6$, of the form given by \eqref{eq:H_{1}_losing_2,4_case_2} with $r=2$ when $k=7$, and of the form given by \eqref{eq:H_{1}_losing_0,1,3} with $r=3$ when $k=8$. By what we have already proved in \S\ref{subsec:proof_eq_H_{1}_losing_0}, \S\ref{subsec:H_{1}_losing_1_proof}, \S\ref{subsec:proof_eq_H_{1}_losing_2_1,2} and \S\ref{subsec:H_{1}_losing_3_proof}, we know that $P_{1}$ loses. If $k=0$, $P_{2}$ wins by Remark~\ref{rem:EF_edgeweight_0}.

\item Finally, suppose $P_{1}$ removes the edge $BC$ in the first round, without disturbing the edge-weights of $AB$ and $CD$. This leaves $P_{2}$ with a galaxy graph consisting of the edges $AB$, $CD$ and $EF$, where $w_{2}(AB)=2^{R+1}m+4$, $w_{2}(CD)=2^{R+1}m+(K+4)$ and $w_{2}(EF)=(K+1)$. Writing $K=8n+i$ for some $n\in\mathbb{N}$ and $i\in\{0,1,2\}$, we see that the triple $\left(2^{R+1}m+4,2^{R+1}m+8n+4+i,8n+i\right)$ is balanaced, so that $P_{2}$ wins by Theorem~\ref{thm:galaxy}.
\end{enumerate}

This completes the proof of our claim that the configuration in \eqref{eq:H_{1}_losing_4_2_inductive} is losing on $H_{1}$.

\subsubsection{When $K \geqslant 12$ and $K \equiv i \bmod 8$ for some $i \in \{3,4,5,6,7\}$}\label{subsubsec:H_{1}_losing_4_3,4,5,6,7} For $m \in \mathbb{N}_{0}$, we focus on the configuration
\begin{align}
w_{0}(AB)=2^{R+1}m+4,\ w_{0}(BC)=1,\ w_{0}(CD)=2^{R+1}m+(K-4),\ w_{0}(EF)=(K+1),\label{eq:H_{1}_losing_4_1_inductive}
\end{align}
where $R=f(K+1)$. The first round of the game played on this configuration unfolds as follows:
\begin{enumerate}
\item Suppose $P_{1}$ removes a positive integer weight from $CD$, and a non-negative integer weight from $BC$, in the first round. The following possibilities can arise:
\begin{enumerate}
\item Let $w_{1}(CD)=2^{R+1}m+L$ for some $L \in \{0,1,\ldots,K-5\}$, with $L \equiv j \bmod 8$ for some $j \in \{4,5,6\}$ and $w_{1}(BC)=1$. In this case, $P_{2}$ removes weight $(K+1)-(L-3)$ from the edge $EF$ in the second round, leaving $P_{1}$ with
\begin{align}
{}&w_{2}(AB)=2^{f(L-3)+1}2^{R-f(L-3)}m+4,\ w_{2}(BC)=1,\nonumber\\
{}&w_{2}(CD)=2^{f(L-3)+1}2^{R-f(L-3)}m+L,\ w_{2}(EF)=(L-3),\nonumber
\end{align}
which is of the form given by \eqref{eq:H_{1}_losing_2,4_case_1} with $r=4$, $s=1$ and $w_{2}(EF)=(L-3)\leqslant (K-8)<K$, so that $P_{1}$ loses by our induction hypothesis.

\item Let $w_{1}(CD)=2^{R+1}m+L$ for some $L \in \{7,8,\ldots,K-5\}$, with $L \equiv j \bmod 8$ for some $j \in \{0,1,2,3,7\}$, and $w_{1}(BC)=1$. In this case, $P_{2}$ removes weight $(K+1)-(L+5)$ from the edge $EF$ in the second round, leaving $P_{1}$ with
\begin{align}
{}&w_{2}(AB)=2^{f(L+5)+1}2^{R-f(L+5)}m+4,\ w_{2}(BC)=1,\nonumber\\
{}&w_{2}(CD)=2^{f(L+5)+1}2^{R-f(L+5)}m+L,\ w_{2}(EF)=(L+5),\nonumber
\end{align}
which is of the form given by \eqref{eq:H_{1}_losing_2,4_case_2} with $r=4$, $s=1$ and $w_{2}(EF)=(L+5) \leqslant K$, so that $P_{1}$ loses by our induction hypothesis.

\item Let $w_{1}(CD)=2^{R+1}m+L$ for $L \in \{0,1,2,3\}$, and $w_{1}(BC)=1$. In this case, $P_{2}$ removes weight $(K+1)-(L+5)$ from the edge $EF$ in the second round, leaving $P_{1}$ with
\begin{equation}
w_{2}(AB)=2^{R+1}m+4,\ w_{2}(BC)=1,\ w_{2}(CD)=2^{R+1}m+L,\ w_{2}(EF)=(L+5),\nonumber
\end{equation}
which is of the form given by \eqref{eq:H_{1}_losing_0,1,3} with $r=0$ if $L=0$, of the form given by \eqref{eq:H_{1}_losing_0,1,3} with $r=1$ if $L=1$, of the form given by \eqref{eq:H_{1}_losing_2,4_case_2} with $r=2$ if $L=2$, and of the form \eqref{eq:H_{1}_losing_0,1,3} with $r=3$ when $L=3$. By what we have already proved in \S\ref{subsec:proof_eq_H_{1}_losing_0}, \S\ref{subsec:H_{1}_losing_1_proof}, \S\ref{subsec:proof_eq_H_{1}_losing_2_1,2} and \S\ref{subsec:H_{1}_losing_3_proof}, we know that $P_{1}$ loses in each of these cases. 

\item If $w_{1}(CD)=2^{R+1}m+L$ for some $L \in \{0,1,\ldots,K-5\}$ and $w_{1}(BC)=0$, the configuration at the end of the first round is that on a galaxy graph consisting of the edges $AB$, $CD$ and $EF$, where $w_{1}(AB)=2^{R+1}m+4$, $w_{1}(CD)=2^{R+1}m+L$ and $w_{1}(EF)=(K+1)$, and $P_{2}$ wins by Lemma~\ref{lem:H_{1}_winning_galaxy}.

\item Finally, if $w_{1}(CD)=2^{R+1}n+\ell_{1}$ for some $n < m$ and $\ell_{1} \in \{0,1,\ldots,2^{R+1}-1\}$, the resulting configuration, after the first round, is of the form given by Lemma~\ref{prop:H_{1}_winning_1} with $m_{1}=n$, $m_{2}=m$ and $\ell_{2}=4$. Since $n<m \implies m_{1}\neq m_{2}$, hence $P_{2}$ wins by Lemma~\ref{prop:H_{1}_winning_1}.
\end{enumerate}

\item Suppose $P_{1}$ removes a positive integer weight from $AB$, and a non-negative integer weight from $BC$, in the first round. We consider the possible subcases:
\begin{enumerate}
\item Suppose $w_{1}(AB)=2^{R+1}m+L$ for some $L \in \{0,1,2,3\}$ and $w_{1}(BC)=1$. If $L \in \{0,1,3\}$, or if $L=2$ and $K \equiv i \bmod 8$ for some $i \in \{3,4,5,7\}$, $P_{2}$ removes weight $(4-L)$ from the edge $EF$ in the second round, leaving $P_{1}$ with
\begin{align}
w_{2}(AB)=2^{R+1}m+L,\ w_{2}(BC)=1,\ w_{2}(CD)=2^{R+1}m+(K-4),\ w_{2}(EF)=(K+L-3),\nonumber
\end{align}
which is of the form \eqref{eq:H_{1}_losing_0,1,3} with $r=0$ when $L=0$, of the form \eqref{eq:H_{1}_losing_0,1,3} with $r=1$ when $L=1$, of the form \eqref{eq:H_{1}_losing_2,4_case_2} with $r=2$ when $L=2$, and of the form \eqref{eq:H_{1}_losing_0,1,3} with $r=3$ when $L=3$. By what we have already proved in \S\ref{subsec:proof_eq_H_{1}_losing_0}, \S\ref{subsec:H_{1}_losing_1_proof}, \S\ref{subsec:proof_eq_H_{1}_losing_2_1,2} and \S\ref{subsec:H_{1}_losing_3_proof}, we know that $P_{1}$ loses. If $L=2$ and $K \equiv 6 \bmod 7$, then $P_{2}$ removes weight $6$ from $EF$ in the second round, leaving $P_{1}$ with
\begin{align}
w_{2}(AB)=2^{R+1}m+2,\ w_{2}(BC)=1,\ w_{2}(CD)=2^{R+1}m+(K-4),\ w_{2}(EF)=(K-5),\nonumber
\end{align}
which is of the form \eqref{eq:H_{1}_losing_2,4_case_1} with $r=2$, and by what we have already proved in \S\ref{subsec:proof_eq_H_{1}_losing_2_1,2}, we conclude that $P_{1}$ loses.

\item If $w_{1}(AB)=2^{R+1}m+L$ for some $L \in \{0,1,2,3\}$ and $w_{1}(BC)=0$, $P_{2}$ is left with a galaxy graph consisting of the edges $AB$, $CD$ and $EF$, where $w_{1}(AB)=2^{R+1}m+L$, $w_{1}(CD)=2^{R+1}m+(K-4)$ and $w_{1}(EF)=(K+1)$ and she wins by Lemma~\ref{lem:H_{1}_winning_galaxy}.

\item Finally, if $w_{1}(AB)=2^{R+1}n+\ell_{1}$ for some $n < m$ and $\ell_{1} \in \{0,1,\ldots,2^{R+1}-1\}$, the resulting configuration, after the first round, is of the form given by Lemma~\ref{prop:H_{1}_winning_1} with $m_{1}=n$, $m_{2}=m$ and $\ell_{2}=(K-4)$. Since $n<m \implies m_{1}\neq m_{2}$, hence $P_{2}$ wins by Lemma~\ref{prop:H_{1}_winning_1}.
\end{enumerate}

\item Suppose $P_{1}$ removes a positive integer weight from $EF$ in the first round, so that $w_{1}(EF)=k \leqslant K$. To begin with, note that if $k\in \{0,1,2,3,4,(K-4)\}$, then the configuration after the first round is of the form stated in Lemma~\ref{prop:H_{1}_winning_1}, with $\ell_{1}=4$, $\ell_{2}=(K-4)$ and $w_{2}(EF)=k$ either satisfying $\min\{\ell_{1},\ell_{2}\}\geqslant k$ or satisfying $k \in \{\ell_{1},\ell_{2}\}$. Consequently, $P_{2}$ wins by Lemma~\ref{prop:H_{1}_winning_1}.

If $k \leqslant K-8$ and $k \equiv j \bmod 8$ for some $j \in \{1,2,3\}$, then $P_{2}$ removes weight $(K-4)-(k+3)$ from $CD$ in the second round, leaving $P_{1}$ with
\begin{align}
{}&w_{2}(AB)=2^{f(k)+1}2^{R-f(k)}m+4,\ w_{2}(BC)=1,\nonumber\\
{}&w_{2}(CD)=2^{f(k)+1}2^{R-f(k)}m+(k+3),\ w_{2}(EF)=k,\nonumber
\end{align}
which is of the form \eqref{eq:H_{1}_losing_2,4_case_1} with $r=4$, $s=1$ and $w_{2}(EF)=k\leqslant K$, so that $P_{1}$ loses by our induction hypothesis. Note that this case covers $k \in \{9,10,11\}$. If $k \geqslant 12$ and $k \equiv j \bmod 8$ for some $j \in \{0,4,5,6,7\}$, $P_{2}$ removes weight $(K-4)-(k-5)$ from $CD$ in the second round, leaving $P_{1}$ with
\begin{align}
{}&w_{2}(AB)=2^{f(k)+1}2^{R-f(k)}m+4,\ w_{2}(BC)=1,\nonumber\\
{}&w_{2}(CD)=2^{f(k)+1}2^{R-f(k)}m+(k-5),\ w_{2}(EF)=k,\nonumber
\end{align}
which is of the form \eqref{eq:H_{1}_losing_2,4_case_2} with $r=4$, $s=1$ and $w_{2}(EF)=k\leqslant K$, and hence, $P_{1}$ loses by our induction hypothesis. Note that this case covers the following:
\begin{enumerate}
\item when $K \equiv 3 \bmod 8$ and $k \in \{K-7,K-6,K-5,K-3\}$,
\item when $K \equiv 4 \bmod 8$ and $k \in \{K-7,K-6,K-5,K\}$,
\item when $K \equiv 5 \bmod 8$ and $k \in \{K-7,K-6,K-5,K-1,K\}$,
\item when $K \equiv 6 \bmod 8$ and $k \in \{K-7,K-6,K-2,K-1,K\}$,
\item when $K \equiv 7 \bmod 8$ and $k \in \{K-7,K-3,K-2,K-1,K\}$.
\end{enumerate}

If $k=(K-6)$ and $K \equiv 7 \bmod 8$, we write $K=(8n+7)$ for some $n \in \mathbb{N}$, so that $w_{1}(AB)=2^{R+1}m+4$, $w_{1}(CD)=2^{R+1}m+(8n+3)$ and $w_{1}(EF)=(8n+1)$, and $P_{2}$ wins by Theorem~\ref{thm:H_{1}_winning_1}. If $k=(K-5)$ and $K \equiv 6 \bmod 8$, we write $K=(8n+6)$ for some $n \in \mathbb{N}$, so that $w_{1}(AB)=2^{R+1}m+4$, $w_{1}(CD)=2^{R+1}m+(8n+2)$ and $w_{1}(EF)=(8n+1)$. If $k=(K-5)$ and $K \equiv 7 \bmod 8$, writing $K=(8n+7)$ for some $n \in \mathbb{N}$, we have $w_{1}(AB)=2^{R+1}m+4$, $w_{1}(CD)=2^{R+1}m+(8n+3)$ and $w_{1}(EF)=(8n+2)$. In each of these cases, $P_{2}$ wins by Theorem~\ref{thm:H_{1}_winning_1}. If $k\in \{K-3,K-2,K-1,K\}$, $P_{2}$ removes weight $(K+1-k)$ from the edge $AB$ in the second round, so that $P_{1}$ is left with
\begin{equation}
w_{2}(AB)=2^{R+1}m+k+3-K,\ w_{2}(BC)=1,\ w_{2}(CD)=2^{R+1}m+(K-4),\ w_{2}(EF)=k,\nonumber
\end{equation}
which is of the form \eqref{eq:H_{1}_losing_0,1,3} with either $r=0$ (when $k=K-3$) or $r=1$ (when $k=K-2$) or $r=3$ (when $k=K$), or of the form \eqref{eq:H_{1}_losing_2,4_case_2} with $r=2$ (when $k=K-2$). By what we have already proved in \S\ref{subsec:proof_eq_H_{1}_losing_0}, \S\ref{subsec:H_{1}_losing_1_proof}, \S\ref{subsec:H_{1}_losing_3_proof} and \S\ref{subsec:proof_eq_H_{1}_losing_2_1,2}, we conclude that $P_{1}$ loses. 

If $k \in \{5,6,7,8\}$, $P_{2}$ removes weight $(K+1-k)$ from the edge $CD$ in the second round, leaving $P_{1}$ with
\begin{equation}
w_{2}(AB)=2^{R+1}m+4,\ w_{2}(BC)=1,\ w_{2}(CD)=2^{R+1}m+(k-5),\ w_{2}(EF)=k,\nonumber
\end{equation}
which is of the form \eqref{eq:H_{1}_losing_0,1,3} with either $r=0$ (when $k=5$) or $r=1$ (when $k=6$) or $r=3$ (when $k=8$), or of the form \eqref{eq:H_{1}_losing_2,4_case_2} with $r=2$ (when $k=7$). By what we have already proved in \S\ref{subsec:proof_eq_H_{1}_losing_0}, \S\ref{subsec:H_{1}_losing_1_proof}, \S\ref{subsec:H_{1}_losing_3_proof} and \S\ref{subsec:proof_eq_H_{1}_losing_2_1,2}, we conclude that $P_{1}$ loses. Finally, if $k=0$, $P_{1}$ loses by Remark~\ref{rem:EF_edgeweight_0}.

\item Finally, if $P_{1}$ removes the edge $BC$ in the first round, without disturbing the edge-weights of $AB$ and $CD$, $P_{2}$ is left with a galaxy graph consisting of the edges $AB$, $CD$ and $EF$, with $w_{1}(AB)=2^{R+1}m+4$, $w_{1}(CD)=2^{R+1}m+(K-4)$ and $w_{1}(EF)=(K+1)$, and $P_{2}$ wins by Lemma~\ref{lem:H_{1}_winning_galaxy}.
\end{enumerate}

This completes the proof of our claim that \eqref{eq:H_{1}_losing_4_1_inductive} is losing on $H_{1}$. Together with our proof that the configuration in \eqref{eq:H_{1}_losing_4_2_inductive} is losing on $H_{1}$, this completes our inductive proof of the claim that any configuration on $H_{1}$ that is either of the form given by \eqref{eq:H_{1}_losing_2,4_case_1} or of the form given by \eqref{eq:H_{1}_losing_2,4_case_2} with $r=4$ and $s=1$ is losing.

\subsection{Proof that a configuration on $H_{1}$ satisfying either \eqref{eq:H_{1}_losing_2,4_case_1} or \eqref{eq:H_{1}_losing_2,4_case_2}, with $r=4$ and $s=2$, is losing}\label{subsec:H_{1}_losing_4_3,4_proof}
The base case corresponding to \eqref{eq:H_{1}_losing_2,4_case_2}, with $r=4$ and $s=2$, is obtained by setting $k=12$, yielding \begin{equation}
w_{0}(AB)=16m+4,\ w_{0}(BC)=2,\ w_{0}(CD)=16m+6,\ w_{0}(EF)=12.\label{eq:H_{1}_losing_4_3_base_case}
\end{equation}
\begin{enumerate}
\item Suppose $P_{1}$ removes a positive integer weight from $CD$, and a non-negative integer weight from $BC$, in the first round of the game played on the initial configuration in \eqref{eq:H_{1}_losing_4_3_base_case}:
\begin{enumerate}
\item If $w_{1}(CD)=16m+\ell$ for some $\ell \in \{0,1,2,3\}$ and $w_{1}(BC)=t \in \{1,2\}$, $P_{2}$ removes weight $(8-\ell-t)$ from the edge $EF$ in the second round, leaving $P_{1}$ with
\begin{equation}
w_{2}(AB)=16m+4,\ w_{2}(BC)=t,\ w_{2}(CD)=16m+\ell,\ w_{2}(EF)=\ell+t+4,\nonumber
\end{equation}
which is of the form \eqref{eq:H_{1}_losing_0,1,3} with either $r=0$ (when $\ell=0$) or $r=1$ (when $\ell=1$) or $r=3$ (when $\ell=3$), or of the form \eqref{eq:H_{1}_losing_2,4_case_2} with $r=2$ and $s=1$ (when $\ell=2$ and $t=1$), or of the form \eqref{eq:H_{1}_losing_2,4_case_3} with $r=2$ and $s=2$ (when $\ell=2$ and $t=2$). By what we have already proved in \S\ref{subsec:proof_eq_H_{1}_losing_0}, \S\ref{subsec:H_{1}_losing_1_proof}, \S\ref{subsec:H_{1}_losing_3_proof}, \S\ref{subsec:proof_eq_H_{1}_losing_2_1,2} and \S\ref{subsec:proof_eq_H_{1}_losing_2_3}, we conclude that $P_{1}$ loses.

\item If $w_{1}(CD)=2^{R+1}m+\ell$ for some $\ell \in \{4,5\}$ and $w_{1}(BC)=t \in \{1,2\}$, then $P_{2}$ removes weight $(16-\ell-t)$ from the edge $EF$ in the second round, leaving $P_{1}$ with
\begin{align}
{}&w_{2}(AB)=4(4m+1),\ w_{2}(BC)=t,\ w_{2}(CD)=4(4m+1)+\ell-4,\ w_{2}(EF)=t+\ell-4,\nonumber
\end{align}
which is of the same form as \eqref{eq:H_{1}_losing_0,1,3} with $r=0$, and by what we have already proved in \S\ref{subsec:proof_eq_H_{1}_losing_0}, we know that $P_{1}$ loses.

\item If $w_{1}(CD)=16m+\ell$ for some $\ell \in \{0,1,\ldots,5\}$ and $w_{1}(BC)=0$, $P_{2}$ is left with a galaxy graph consisting of the edges $AB$, $CD$ and $EF$, where $w_{1}(AB)=16m+4$, $w_{1}(CD)=16m+\ell$ and $w_{1}(EF)=12$, and $P_{2}$ wins by Lemma~\ref{lem:H_{1}_winning_galaxy}.

\item Finally, if $w_{1}(CD)=16n+\ell_{1}$ for some $n < m$ and $\ell_{1} \in \{0,1,\ldots,15\}$, the configuration after the first round is of the form given by Lemma~\ref{prop:H_{1}_winning_1} with $m_{1}=n$, $m_{2}=m$ and $\ell_{2}=4$. Since $n<m \implies m_{1}\neq m_{2}$, $P_{2}$ wins by Lemma~\ref{prop:H_{1}_winning_1}.
\end{enumerate}

\item Suppose $P_{1}$ removes a positive integer weight from $AB$ and a non-negative integer weight from $BC$ in the first round. The following subcases are possible:
\begin{enumerate}
\item Suppose $w_{1}(AB)=16m+\ell$ for some $\ell \in \{0,1,2,3\}$ and $w_{1}(BC)=t\in\{1,2\}$. If $\ell \in \{0,1,3\}$, or if $\ell=2$ and $t=2$, $P_{2}$ removes $(6-\ell-t)$ from $EF$ in the second round, leaving $P_{1}$ with
\begin{align}
w_{2}(AB)=16m+\ell,\ w_{2}(BC)=t,\ w_{2}(CD)=16m+6,\ w_{2}(EF)=\ell+t+6,\nonumber
\end{align}
which is of the form \eqref{eq:H_{1}_losing_0,1,3} with either $r=0$ (when $\ell=0$) or $r=1$ (when $\ell=1$) or $r=3$ (when $\ell=3$), or of the form \eqref{eq:H_{1}_losing_2,4_case_3} with $r=2$ and $s=2$ (when $\ell=2$ and $t=2$). If $\ell=2$ and $t=1$, $P_{2}$ removes weight $7$ from the edge $EF$ in the first round, leaving $P_{1}$ with
\begin{align}
w_{2}(AB)=16m+2,\ w_{2}(BC)=1,\ w_{2}(CD)=16m+6,\ w_{2}(EF)=5,\nonumber
\end{align}
which is of the form \eqref{eq:H_{1}_losing_2,4_case_1} with $r=2$. In each of the cases described above, by what we have already proved in \S\ref{subsec:proof_eq_H_{1}_losing_0}, \S\ref{subsec:H_{1}_losing_1_proof}, \S\ref{subsec:H_{1}_losing_3_proof}, \S\ref{subsec:proof_eq_H_{1}_losing_2_1,2} and \S\ref{subsec:proof_eq_H_{1}_losing_2_3}, we conclude that $P_{1}$ loses.

\item If $w_{1}(AB)=16m+\ell$ for some $\ell \in \{0,1,2,3\}$ and $w_{1}(BC)=0$, $P_{2}$, after the first round, is left with a galaxy graph consisting of the edges $AB$, $CD$ and $EF$, where $w_{1}(AB)=16m+\ell$, $w_{1}(CD)=16m+6$ and $w_{1}(EF)=12$, and $P_{2}$ wins by Lemma~\ref{lem:H_{1}_winning_galaxy}.

\item Finally, if $w_{1}(AB)=16n+\ell_{1}$ for some $n < m$ and $\ell_{1} \in \{0,1,\ldots,15\}$, the configuration after the first round is of the form given by Lemma~\ref{prop:H_{1}_winning_1} with $m_{1}=n$, $m_{2}=m$ and $\ell_{2}=6$. Since $n<m \implies m_{1}\neq m_{2}$, hence $P_{2}$ wins by Lemma~\ref{prop:H_{1}_winning_1}.
\end{enumerate}

\item Suppose $P_{1}$ removes a positive integer weight from $EF$ in the first round, so that $w_{1}(EF)=k \leqslant 11$. If $k \in \{8,9,10,11\}$, then $P_{2}$ removes weight $(12-k)$ from $AB$ in the second round, leaving $P_{1}$ with 
\begin{equation}
w_{2}(AB)=16m+(k-8),\ w_{2}(BC)=2,\ w_{2}(CD)=16m+6,\ w_{2}(EF)=k,\nonumber
\end{equation}
which is of the form given by \eqref{eq:H_{1}_losing_0,1,3} with either $r=0$ (when $k=8$) or $r=1$ (when $k=9$) or $r=3$ (when $k=11$), or of the form given by \eqref{eq:H_{1}_losing_2,4_case_3} with $r=2$ and $s=2$ (when $k=10$). If $k=7$, then $P_{1}$ removes weight $4$ from $AB$ and weight $1$ from $BC$ in the first round, leaving $P_{1}$ with
\begin{equation}
w_{2}(AB)=16m,\ w_{2}(BC)=1,\ w_{2}(CD)=16m+6,\ w_{2}(EF)=7,\nonumber
\end{equation}
which is of the form \eqref{eq:H_{1}_losing_0,1,3} with $r=2$. By what we have already proved in \S\ref{subsec:proof_eq_H_{1}_losing_0}, \S\ref{subsec:H_{1}_losing_1_proof}, \S\ref{subsec:H_{1}_losing_3_proof} and \S\ref{subsec:proof_eq_H_{1}_losing_2_3}, we conclude that $P_{1}$ loses. If $k \in \{1,2,3,4,6\}$, the configuration after the first round is of the form stated in Lemma~\ref{prop:H_{1}_winning_1}, with $\ell_{1}=4$, $\ell_{2}=6$, and $w_{1}(EF)=k$ satisfying either $\min\{\ell_{1},\ell_{2}\}\geqslant k$ or $k \in \{\ell_{1},\ell_{2}\}$, so that $P_{2}$ wins by Lemma~\ref{prop:H_{1}_winning_1}. Finally, if $k=5$, then $P_{2}$ removes weight $6$ from the edge $CD$ and weight $1$ from the edge $BC$ in the second round, leaving $P_{1}$ with
\begin{equation}
w_{2}(AB)=16m+4,\ w_{2}(BC)=1,\ w_{2}(CD)=16m,\ w_{2}(EF)=5,\nonumber
\end{equation}
which is of the form \eqref{eq:H_{1}_losing_0,1,3} with $r=0$, and by what we have already proved in \S\ref{subsec:proof_eq_H_{1}_losing_0}, we know that $P_{1}$ loses. If $k=0$, $P_{2}$ wins by Remark~\ref{rem:EF_edgeweight_0}.

\item Suppose $P_{1}$ removes a positive integer weight from $BC$ in the first round, without disturbing the edge-weights of $AB$ and $CD$, so that $w_{1}(BC)=t \in \{0,1\}$. If $t=1$, $P_{2}$ removes weight $9$ from the edge $EF$ in the second round, leaving $P_{1}$ with
\begin{equation}
w_{2}(AB)=4(4m+1),\ w_{2}(BC)=1,\ w_{2}(CD)=4(4m+1)+2,\ w_{2}(EF)=3,\nonumber
\end{equation}
which is of the form \eqref{eq:H_{1}_losing_0,1,3} with $r=0$, and by what we have already proved in \S\ref{subsec:proof_eq_H_{1}_losing_0}, we conclude that $P_{1}$ loses. If $t=0$, $P_{2}$ is left, after the first round, with a galaxy graph consisting of the edges $AB$, $CD$ and $EF$, where $w_{1}(AB)=16m+4$, $w_{1}(CD)=16m+6$ and $w_{1}(EF)=12$, and $P_{2}$ wins by Lemma~\ref{lem:H_{1}_winning_galaxy}.
\end{enumerate}
This concludes the proof of our claim that the configuration in \eqref{eq:H_{1}_losing_4_3_base_case} is losing. 

The base cases corresponding to \eqref{eq:H_{1}_losing_2,4_case_1} with $r=4$ and $s=2$ are obtained by setting $k=2$ and $k=3$, which yield, respectively, the configurations
\begin{align}
{}&w_{0}(AB)=4m+4=4(m+1),\ w_{0}(BC)=2,\ w_{0}(CD)=4m+4=4(m+1),\ w_{0}(EF)=2,\nonumber\\
{}&w_{0}(AB)=4m+4=4(m+1),\ w_{0}(BC)=2,\ w_{0}(CD)=4m+5=4(m+1)+1,\ w_{0}(EF)=3,\nonumber
\end{align}
each of which is of the form \eqref{eq:H_{1}_losing_0,1,3} with $r=0$, and by what we have proved in \S\ref{subsec:proof_eq_H_{1}_losing_0}, we know that $P_{1}$ loses. 

Suppose, for some $K \in \mathbb{N}$, we have proved that any configuration that is either of the form \eqref{eq:H_{1}_losing_2,4_case_1} or of the form \eqref{eq:H_{1}_losing_2,4_case_2} with $r=4$ and $s=2$, and with $w_{0}(EF)=k\leqslant K$, is losing.

\subsubsection{When $K \geqslant 9$ and $K \equiv i \bmod 8$ for some $i \in \{1,2\}$}\label{subsubsec:H_{1}_losing_4_4}
For $m \in \mathbb{N}_{0}$, we consider the configuration
\begin{align}
w_{0}(AB)=2^{R+1}m+4,\ w_{0}(BC)=2,\ w_{0}(CD)=2^{R+1}m+(K+3),\ w_{0}(EF)=(K+1),\label{eq:H_{1}_losing_4_4_inductive}
\end{align}
where $R=f(K+1)$. We consider the first round of the game played on this initial configuration:
\begin{enumerate}
\item Suppose $P_{1}$ removes a positive integer weight from $CD$ and a non-negative integer weight from $BC$ in the first round. This can be divided into the following subcases:
\begin{enumerate}
\item Suppose $w_{1}(CD)=2^{R+1}m+(K+2)$. If $K\equiv 1 \bmod 8$, so that $K=8n+1$ for some $n \in \mathbb{N}$, $P_{2}$ removes weight $3$ from $AB$, and weight $w_{1}(BC)$ from $BC$, in the second round, leaving $P_{1}$ with the galaxy graph consisting of $AB$, $CD$ and $EF$, where $w_{2}(AB)=2^{R+1}m+1$, $w_{2}(CD)=2^{R+1}m+(8n+3)$ and $w_{2}(EF)=(8n+2)$, and $P_{1}$ loses by Theorem~\ref{thm:galaxy}. If $K \equiv 2 \bmod 8$ and $w_{1}(BC)=1$, $P_{2}$ removes weight $2$ from $EF$ in the second round, leaving $P_{1}$ with
\begin{align}
w_{2}(AB)=2^{R+1}m+4,\ w_{2}(BC)=1,\ w_{2}(CD)=2^{R+1}m+K+2,\ w_{2}(EF)=K-1,\nonumber
\end{align}
which is of the form \eqref{eq:H_{1}_losing_2,4_case_1} with $r=4$ and $s=1$, and by what we have proved in \S\ref{subsec:H_{1}_losing_4_1,2_proof}, we know that $P_{1}$ loses. If $w_{1}(BC)=2$, then $P_{2}$ removes weight $1$ from $EF$ in the second round, so that 
\begin{equation}
w_{2}(AB)=2^{R+1}m+4,\ w_{2}(BC)=2,\ w_{2}(CD)=2^{R+1}m+(K+2),\ w_{2}(EF)=K,\nonumber
\end{equation}
which is of the form \eqref{eq:H_{1}_losing_2,4_case_1} with $r=4$ and $s=2$, so that $P_{1}$ loses by our induction hypothesis.

Suppose $w_{1}(CD)=2^{R+1}m+(K+2)$ with $K \equiv 2 \bmod 8$, and $w_{1}(BC)=0$. Writing $K=8n+2$ for some $n \in \mathbb{N}$, $P_{2}$, after the first round, is left with a galaxy graph consisting of the edges $AB$, $CD$ and $EF$, where $w_{1}(AB)=2^{R+1}m+4$, $w_{1}(CD)=2^{R+1}m+(8n+4)$ and $w_{1}(EF)=(8n+3)$, and she wins by Theorem~\ref{thm:galaxy}.

\item Suppose $w_{1}(CD)=2^{R+1}m+(K+1)$. Then, irrespective of what $w_{1}(BC)$ is, the configuration after the first round is of the form mentioned in Lemma~\ref{prop:H_{1}_winning_1}, with $\ell_{1}=4$, $\ell_{2}=(K+1)$ and $w_{1}(EF)=(K+1)$. Since $w_{1}(EF)\in\{\ell_{1},\ell_{2}\}$, $P_{2}$ wins by Lemma~\ref{prop:H_{1}_winning_1}.

\item If $w_{1}(CD)=2^{R+1}m+K$ and $w_{1}(BC)=t \in \{1,2\}$, $P_{2}$ removes weight $4$ from $AB$ and weight $\{w_{1}(BC)-1\}$ from $BC$ in the second round, leaving $P_{1}$ with
\begin{equation}
w_{2}(AB)=2^{R+1}m,\ w_{2}(BC)=1,\ w_{2}(CD)=2^{R+1}m+K,\ w_{2}(EF)=(K+1),\nonumber
\end{equation}
which is of the form given by \eqref{eq:H_{1}_losing_0,1,3} with $r=0$, and by what we have already proved in \S\ref{subsec:proof_eq_H_{1}_losing_0}, we know that $P_{1}$ loses. If $w_{1}(CD)=2^{R+1}m+K$ and $w_{1}(BC)=0$, $P_{2}$, after the first round, is left with a galaxy graph consisting of the edges $AB$, $CD$ and $EF$, with $w_{1}(AB)=2^{R+1}m+4$, $w_{1}(CD)=2^{R+1}m+K$ and $w_{1}(EF)=(K+1)$. Writing $K=8n+i$ for some $n\in\mathbb{N}$ and $i\in\{1,2\}$, we see that the triple $(2^{R+1}m+3,2^{R+1}m+K,K+1)$ is balanced when $i=1$, and the triple $(2^{R+1}m+1,2^{R+1}m+K,K+1)$ is balanced when $i=2$, so that $P_{2}$ wins by Theorem~\ref{thm:galaxy}. 

\item If $w_{1}(CD)=2^{R+1}m+L$ for some $L \in \{4,5,\ldots,K-6\}$, with $L \equiv j \bmod 8$ for some $j \in \{0,1,2,3,6,7\}$, and $w_{1}(BC)=2$, $P_{2}$ removes weight $(K+1)-(L+6)$ from the edge $EF$ in the second round, leaving $P_{1}$ with
\begin{align}
{}&w_{2}(AB)=2^{f(L+6)+1}2^{R-f(L+6)}m+4,\ w_{2}(BC)=2,\nonumber\\
{}&w_{2}(CD)=2^{f(L+6)+1}2^{R-f(L+6)}m+L,\ w_{2}(EF)=(L+6),\nonumber
\end{align}
which is of the form \eqref{eq:H_{1}_losing_2,4_case_2} with $r=4$ and $s=2$, and since $w_{2}(EF)=(L+6)\leqslant K$, $P_{1}$ loses by our induction hypothesis.

\item If $w_{1}(CD)=2^{R+1}m+L$ for some $L \in \{4,5,\ldots,K+2\}$, with $L \equiv j \bmod 8$ for some $j \in \{4,5\}$, and $w_{1}(BC)=2$, $P_{2}$ removes weight $(K+1)-(L-2)$ from the edge $EF$ in the second round, leaving $P_{1}$ with
\begin{align}
{}&w_{2}(AB)=2^{f(L-2)+1}2^{R-f(L-2)}m+4,\ w_{2}(BC)=2,\nonumber\\
{}&w_{2}(CD)=2^{f(L-2)+1}2^{R-f(L-2)}m+L,\ w_{2}(EF)=(L-2),\nonumber
\end{align}
which is of the form \eqref{eq:H_{1}_losing_2,4_case_1} with $r=4$ and $s=2$, and since $w_{2}(EF)=(L-2)\leqslant K$, $P_{1}$ loses by our induction hypothesis. This case covers $L=(K-5)$ since $K \equiv i \bmod 8$ for $i \in \{1,2\}$.

\item If $w_{1}(CD)=2^{R+1}m+L$ for some $L \in \{K-4,K-3,K-2,K-1\}$, and $w_{1}(BC)=2$, $P_{2}$ removes weight $(L+5-K)$ from the edge $AB$ in the second round, leaving $P_{1}$ with
\begin{align}
w_{2}(AB)=2^{R+1}m+(K-L-1),\ w_{2}(BC)=2,\ w_{2}(CD)=2^{R+1}m+L,\ w_{2}(EF)=(K+1),\nonumber
\end{align}
which is of the form given by \eqref{eq:H_{1}_losing_0,1,3} with either $r=0$ (when $L=K-1$) or $r=1$ (when $L=K-2$) or $r=3$ (when $L=K-4$), or of the form \eqref{eq:H_{1}_losing_2,4_case_3} with $r=2$ and $s=2$ (when $L=K-3$). By what we have already proved in \S\ref{subsec:proof_eq_H_{1}_losing_0}, \S\ref{subsec:H_{1}_losing_1_proof}, \S\ref{subsec:H_{1}_losing_3_proof} and \S\ref{subsec:proof_eq_H_{1}_losing_2_3}, we know that $P_{1}$ loses. 

\item If $w_{1}(CD)=2^{R+1}m+L$ for some $L \in \{4,5,\ldots,K-5\}$ with $L \equiv j \bmod 8$ for some $j \in \{0,1,2,3,7\}$, and $w_{1}(BC)=1$, $P_{2}$ removes weight $(K+1)-(L+5)$ from $EF$ in the second round, leaving $P_{1}$ with
\begin{align}
{}&w_{2}(AB)=2^{f(L+5)+1}2^{R-f(L+5)}m+4,\ w_{2}(BC)=1,\nonumber\\
{}&w_{2}(CD)=2^{f(L+5)+1}2^{R-f(L+5)}m+L,\ w_{2}(EF)=(L+5),\nonumber
\end{align}
which is of the form \eqref{eq:H_{1}_losing_2,4_case_2} with $r=4$ and $s=1$, and by what we have already proved in \S\ref{subsec:H_{1}_losing_4_1,2_proof}, we know that $P_{1}$ loses.

\item If $w_{1}(CD)=2^{R+1}m+L$ for some $L \in \{4,5,\ldots,K+2\}$, with $L \equiv j \bmod 8$ for some $j \in \{4,5,6\}$, and $w_{1}(BC)=1$, $P_{2}$ removes weight $(K+1)-(L-3)$ from the edge $EF$ in the second round, leaving $P_{1}$ with
\begin{align}
{}&w_{2}(AB)=2^{f(L-3)+1}2^{R-f(L-3)}m+4,\ w_{2}(BC)=1,\nonumber\\
{}&w_{2}(CD)=2^{f(L-3)+1}2^{R-f(L-3)}m+L,\ w_{2}(EF)=(L-3),\nonumber
\end{align}
which is of the form \eqref{eq:H_{1}_losing_2,4_case_1} with $r=4$ and $s=1$, and by what we have already proved in \S\ref{subsec:H_{1}_losing_4_1,2_proof}, we know that $P_{1}$ loses. This case covers $L=(K-4)$, since $K \equiv i \bmod 8$ for some $i \in \{1,2\}$.

\item If $w_{1}(CD)=2^{R+1}m+L$ for $L \in \{K-3,K-2,K-1\}$, and $w_{1}(BC)=1$, $P_{2}$ removes weight $(L+4-K)$ from the edge $AB$ in the second round, leaving $P_{1}$ with
\begin{align}
w_{2}(AB)=2^{R+1}m+(K-L),\ w_{2}(BC)=1,\ w_{2}(CD)=2^{R+1}m+L,\ w_{2}(EF)=(K+1),\nonumber
\end{align}
which is of the form \eqref{eq:H_{1}_losing_0,1,3} with either $r=3$ (when $L=K-3$) or $r=1$ (when $L=K-1$), or of the form \eqref{eq:H_{1}_losing_2,4_case_2} with $r=2$ and $s=1$ (when $L=K-2$). By what we have already proved in \S\ref{subsec:H_{1}_losing_1_proof}, \S\ref{subsec:H_{1}_losing_3_proof} and \S\ref{subsec:proof_eq_H_{1}_losing_2_1,2}, we conclude that $P_{1}$ loses.

\item If $w_{1}(CD)=2^{R+1}m+L$ for $L \in \{0,1,2,3\}$, and $w_{1}(BC)=t\in\{1,2\}$, $P_{2}$ removes weight $(K+1)-(4+t+L)$ from the edge $EF$ in the second round, leaving $P_{1}$ with
\begin{equation}
w_{2}(AB)=2^{R+1}m+4,\ w_{2}(BC)=t,\ w_{2}(CD)=2^{R+1}m+L,\ w_{2}(EF)=(4+t+L),\nonumber
\end{equation}
which is of the form \eqref{eq:H_{1}_losing_0,1,3} with either $r=3$ (when $L=3$) or $r=1$ (when $L=1$) or $r=0$ (when $L=0$), or of the form \eqref{eq:H_{1}_losing_2,4_case_2} with $r=2$ and $s=1$ (when $L=2$ and $t=1$), or of the form \eqref{eq:H_{1}_losing_2,4_case_3} with $r=2$ and $s=2$ (when $L=2$ and $t=2$). By what we have already proved in \S\ref{subsec:proof_eq_H_{1}_losing_0}, \S\ref{subsec:H_{1}_losing_1_proof}, \S\ref{subsec:H_{1}_losing_3_proof}, \S\ref{subsec:proof_eq_H_{1}_losing_2_1,2} and \S\ref{subsec:proof_eq_H_{1}_losing_2_3}, we conclude that $P_{1}$ loses.

\item If $w_{1}(CD)=2^{R+1}m+L$ for $L \in \{0,1,\ldots,K-1\}$, and $w_{1}(BC)=0$, $P_{2}$, after the first round, is left with a galaxy graph consisting of the edges $AB$, $CD$ and $EF$, where $w_{1}(AB)=2^{R+1}m+4$, $w_{1}(CD)=2^{R+1}m+L$ and $w_{1}(EF)=(K+1)$. When $L\leqslant(K-4)$, $P_{2}$ wins by Lemma~\ref{lem:H_{1}_winning_galaxy}. When $L=(K-3)$, writing $K=8(n+1)+i$ for some $n\in\mathbb{N}_{0}$ and $i\in\{1,2\}$, we see that the triple $\left(2^{R+1}m+4,2^{R+1}m+8n+5+i,8n+1+i\right)$ is balanced, and hence, $P_{2}$ wins by Theorem~\ref{thm:galaxy}. When $L=(K-2)$ and $K \equiv 1 \bmod 8$, writing $K=8(n+1)+1$ for some $n \in \mathbb{N}_{0}$, we see that the triple $\left(2^{R+1}m+4,2^{R+1}m+8n+7,8n+3\right)$ is balanced, whereas if $K\equiv 2\bmod 8$, writing $K=8(n+1)+2$ for some $n\in\mathbb{N}_{0}$, we see that the triple $\left(2^{R+1}m+3,2^{R+1}m+8n+8,8n+11\right)$ is balanced -- consequently, $P_{2}$ wins by Theorem~\ref{thm:galaxy}. When $L=(K-1)$, writing $K=8(n+1)+i$ for some $n\in\mathbb{N}_{0}$ and $i\in\{1,2\}$, we see that the triple $\left(2^{R+1}m+2,2^{R+1}m+8n+7+i,8n+9+i\right)$ is balanced, so that $P_{2}$ wins by Theorem~\ref{thm:galaxy}.

\item Finally, if $w_{1}(CD)=2^{R+1}n+\ell_{1}$ for some $n<m$ and $\ell_{1}\in\{0,1,\ldots,2^{R+1}-1\}$, the configuration after the first round is of the form given by Lemma~\ref{prop:H_{1}_winning_1} with $m_{1}=n$, $m_{2}=m$ and $\ell_{2}=4$. Since $n<m \implies m_{1}\neq m_{2}$, $P_{2}$ wins by Lemma~\ref{prop:H_{1}_winning_1}.
\end{enumerate}

\item Suppose $P_{1}$ removes a positive integer weight from $AB$ and a non-negative integer weight from $BC$ in the first round. We have the following possibilities:
\begin{enumerate}
\item If $w_{1}(AB)=2^{R+1}m+\ell$ for $\ell \in \{0,1,2,3\}$ and $w_{1}(BC)=t\in \{1,2\}$, $P_{2}$ removes weight $(2+\ell+t)$ from $CD$ in the second round, leaving $P_{1}$ with 
\begin{align}
{}&w_{2}(AB)=2^{R+1}m+\ell,\ w_{2}(BC)=t,\nonumber\\
{}&w_{2}(CD)=2^{R+1}m+(K+1)-\ell-t,\ w_{2}(EF)=(K+1),\nonumber
\end{align}
which is of the form \eqref{eq:H_{1}_losing_0,1,3} with either $r=0$ (when $\ell=0$) or $r=1$ (when $\ell=1$) or $r=3$ (when $\ell=3$), or of the form \eqref{eq:H_{1}_losing_2,4_case_2} with $r=2$ and $s=1$ (when $\ell=2$ and $t=1$), or of the form \eqref{eq:H_{1}_losing_2,4_case_3} with $r=2$ and $s=2$ (when $\ell=2$ and $t=2$). 

\item If $w_{1}(AB)=2^{R+1}m+\ell$ for some $\ell \in \{0,1,2,3\}$ and $w_{1}(BC)=0$, $P_{2}$, at the end of the first round, is left with a galaxy graph consisting of the edges $AB$, $CD$ and $EF$, with $w_{1}(AB)=2^{R+1}m+\ell$, $w_{1}(CD)=2^{R+1}m+(K+3)$ and $w_{1}(EF)=(K+1)$. Writing $K=8n+i$ for $i \in \{1,2\}$ and some $n \in \mathbb{N}$, the resulting triple is $\left(2^{R+1}m+\ell,2^{R+1}m+8n+i+3,8n+i+1\right)$, and this is winning by Part~\eqref{A2} of Theorem~\ref{thm:H_{1}_winning_1}. 

\item Finally, if $w_{1}(AB)=2^{R+1}n+\ell_{1}$ for some $n<m$ and $\ell_{1}\in\{0,1,\ldots,2^{R+1}-1\}$, the configuration after the first round is of the form given by Lemma~\ref{prop:H_{1}_winning_1} with $m_{1}=n$, $m_{2}=m$ and $\ell_{2}=K+3$. Since $n<m \implies m_{1}\neq m_{2}$, $P_{2}$ wins by Lemma~\ref{prop:H_{1}_winning_1}.
\end{enumerate}

\item Suppose $P_{1}$ removes a positive integer weight from $EF$ in the first round, so that $w_{1}(EF)=k \leqslant K$. If $k \in \{12,13,\ldots,K\}$ and $k \equiv j \bmod 8$ for some $j \in \{0,1,4,5,6,7\}$, then $P_{2}$ removes weight $(K+9-k)$ from the edge $CD$ in the second round, leaving $P_{1}$ with
\begin{align}
{}&w_{2}(AB)=2^{f(k)+1}2^{R-f(k)}m+4,\ w_{2}(BC)=2,\nonumber\\
{}&w_{2}(CD)=2^{f(k)+1}2^{R-f(k)}m+(k-6),\ w_{2}(EF)=k,\nonumber
\end{align}
which is of the form \eqref{eq:H_{1}_losing_2,4_case_2} with $r=4$, $s=2$ and $k \leqslant K$, so that $P_{1}$ loses by our induction hypothesis. If $k \in \{10,11,\ldots,K\}$ and $k \equiv j \bmod 8$ for some $j \in \{2,3\}$, then $P_{2}$ removes weight $(K+1-k)$ from the edge $CD$ in the second round, leaving $P_{1}$ with
\begin{align}
{}&w_{2}(AB)=2^{f(k)+1}2^{R-f(k)}m+4,\ w_{2}(BC)=2,\nonumber\\
{}&w_{2}(CD)=2^{f(k)+1}2^{R-f(k)}m+(k+2),\ w_{2}(EF)=k,\nonumber
\end{align}
which is of the form \eqref{eq:H_{1}_losing_2,4_case_1} with $r=4$, $s=2$ and $k \leqslant K$, so that $P_{1}$ loses by our induction hypothesis.  

If $k \in \{1,2,3,4\}$, the configuration after the first round is of the form mentioned in Lemma~\ref{prop:H_{1}_winning_1}, with $\ell_{1}=4$, $\ell_{2}=(K+3)$ and $w_{1}(EF)=k \leqslant \min\{\ell_{1},\ell_{2}\}$. Thus, $P_{2}$ wins by Lemma~\ref{prop:H_{1}_winning_1}. If $k=5$, then the configuration after the first round is of the form mentioned in Lemma~\ref{lem:H_{1}_winning_2}, with $\ell_{1}=4$, $\ell_{2}=(K+3)$, $w_{1}(EF)=k=5>\min\{\ell_{1},\ell_{2}\}$ and $w_{1}(BC)=2>k-\min\{\ell_{1},\ell_{2}\}$, so that $P_{2}$ wins by Lemma~\ref{lem:H_{1}_winning_2}. If $k \in \{6,7,8,9\}$, then $P_{2}$ removes weight $(K+9-k)$ from the edge $CD$ in the second round, leaving $P_{1}$ with
\begin{align}
w_{2}(AB)=2^{R+1}m+4,\ w_{2}(BC)=2,\ w_{2}(CD)=2^{R+1}m+(k-6),\ w_{2}(EF)=k,\nonumber
\end{align}
which is of the form \eqref{eq:H_{1}_losing_0,1,3} with either $r=0$ (when $k=6$) or $r=1$ (when $k=7$) or $r=3$ (when $k=9$), or of the form \eqref{eq:H_{1}_losing_2,4_case_3} with $r=2$ and $s=2$ (when $k=8$). By what we have already proved in \S\ref{subsec:proof_eq_H_{1}_losing_0}, \S\ref{subsec:H_{1}_losing_1_proof}, \S\ref{subsec:H_{1}_losing_3_proof} and \S\ref{subsec:proof_eq_H_{1}_losing_2_3}, we know that $P_{1}$ loses. If $k=0$, $P_{2}$ wins by Remark~\ref{rem:EF_edgeweight_0}.

\item Suppose $P_{1}$ removes a positive integer weight from the edge $BC$ in the first round, without disturbing the edge-weights of $AB$ and $CD$, so that $w_{1}(BC)=t \in \{0,1\}$. If $t=1$, $P_{2}$ removes weight $1$ from $EF$ in the second round, leaving $P_{1}$ with
\begin{equation}
w_{2}(AB)=2^{R+1}m+4,\ w_{2}(BC)=1,\ w_{2}(CD)=2^{R+1}m+(K+3),\ w_{2}(EF)=K,\nonumber
\end{equation}
which is of the form \eqref{eq:H_{1}_losing_2,4_case_1} with $r=4$ and $s=1$, and by what we have already proved in \S\ref{subsec:H_{1}_losing_4_1,2_proof}, we conclude that $P_{1}$ loses. If $t=0$, $P_{2}$ is left with a galaxy graph after the first round, consisting of the edges $AB$, $CD$ and $EF$, with $w_{1}(AB)=2^{R+1}m+4$, $w_{1}(CD)=2^{R+1}m+(K+3)$ and $w_{1}(EF)=(K+1)$. Writing $K=8n+i$ for some $n\in\mathbb{N}$ and $i\in\{1,2\}$, we see that the triple $\left(2^{R+1}m+4,2^{R+1}m+8n+3+i,8n-1+i\right)$ is balanced, so that $P_{2}$ wins by Theorem~\ref{thm:galaxy}.
\end{enumerate}
This concludes the proof of our claim that the configuration in \eqref{eq:H_{1}_losing_4_4_inductive} is losing on $H_{1}$. 

\subsubsection{When $K \geqslant 12$ and $K \equiv i \bmod 8$ for some $i \in \{0,3,4,5,6,7\}$} For $m \in \mathbb{N}_{0}$, we consider
\begin{equation}
w_{0}(AB)=2^{R+1}m+4,\ w_{0}(BC)=2,\ w_{0}(CD)=2^{R+1}m+(K-5),\ w_{0}(EF)=(K+1),\label{eq:H_{1}_losing_4_3_inductive}
\end{equation}
where $R=f(K+1)$. The first round of the game played on this initial configuration unfolds as follows:
\begin{enumerate}
\item Suppose $P_{1}$ removes a positive integer weight from $CD$, and a non-negative integer weight from $BC$, in the first round. The following possibilities may arise:
\begin{enumerate}
\item If $w_{1}(CD)=2^{R+1}m+L$ for $L \in \{4,5,\ldots,K-6\}$, with $L \equiv j \bmod 8$ for $j \in \{0,1,2,3,6,7\}$, and $w_{1}(BC)=2$, $P_{2}$ removes weight $(K-L-5)$ from $EF$ in the second round, leaving $P_{1}$ with
\begin{align}
{}&w_{2}(AB)=2^{f(L+6)+1}2^{R-f(L+6)}m+4,\ w_{2}(BC)=2,\nonumber\\
{}&w_{2}(CD)=2^{f(L+6)+1}2^{R-f(L+6)}m+L,\ w_{2}(EF)=(L+6),\nonumber
\end{align}
which is of the form \eqref{eq:H_{1}_losing_2,4_case_2} with $r=4$, $s=2$ and $w_{2}(EF)=(L+6)\leqslant K$, so that $P_{1}$ loses by our induction hypothesis.

\item If $w_{1}(CD)=2^{R+1}m+L$ for $L \in \{4,5,\ldots,K-6\}$ with $L \equiv j \bmod 8$ for some $j \in \{4,5\}$, and $w_{1}(BC)=2$, $P_{2}$ removes weight $(K+3-L)$ from $EF$ in the second round, leaving $P_{1}$ with
\begin{align}
{}&w_{2}(AB)=2^{f(L-2)+1}2^{R-f(L-2)}m+4,\ w_{2}(BC)=2,\nonumber\\
{}&w_{2}(CD)=2^{f(L-2)+1}2^{R-f(L-2)}m+L,\ w_{2}(EF)=(L-2),\nonumber
\end{align}
which is of the form \eqref{eq:H_{1}_losing_2,4_case_1} with $r=4$, $s=2$ and $w_{2}(EF)=(L-2)\leqslant (K-8)<K$, so that $P_{1}$ loses by our induction hypothesis.

\item If $w_{1}(CD)=2^{R+1}m+L$ for $L \in \{4,5,\ldots,K-6\}$, with $L \equiv j \bmod 8$ for some $j \in \{0,1,2,3,7\}$, and $w_{1}(BC)=1$, $P_{2}$ removes weight $(K-4-L)$ from $EF$ in the second round, leaving $P_{1}$ with
\begin{align}
{}&w_{2}(AB)=2^{f(L+5)+1}2^{R-f(L+5)}m+4,\ w_{2}(BC)=1,\nonumber\\
{}&w_{2}(CD)=2^{f(L+5)+1}2^{R-f(L+5)}m+L,\ w_{2}(EF)=(L+5),\nonumber
\end{align}
which is of the form \eqref{eq:H_{1}_losing_2,4_case_2} with $r=4$ and $s=1$, and $P_{1}$ loses by what we have proved in \S\ref{subsec:H_{1}_losing_4_1,2_proof}. 

\item If $w_{1}(CD)=2^{R+1}m+L$ for $L \in \{4,5,\ldots,K-6\}$, with $L \equiv j \bmod 8$ for some $j \in \{4,5,6\}$, and $w_{1}(BC)=1$, $P_{2}$ removes weight $(K+4-L)$ from $EF$ in the second round, leaving $P_{1}$ with
\begin{align}
{}&w_{2}(AB)=2^{f(L-3)+1}2^{R-f(L-3)}m+4,\ w_{2}(BC)=1,\nonumber\\
{}&w_{2}(CD)=2^{f(L-3)+1}2^{R-f(L-3)}m+L,\ w_{2}(EF)=(L-3),\nonumber
\end{align}
which is of the form \eqref{eq:H_{1}_losing_2,4_case_1} with $r=4$ and $s=1$, and $P_{1}$ loses by what we have proved in \S\ref{subsec:H_{1}_losing_4_1,2_proof}.

\item If $w_{1}(CD)=2^{R+1}m+L$ for $L \in \{0,1,2,3\}$ and $w_{1}(BC)=t\in\{1,2\}$, $P_{2}$ removes weight $(K-3-t-L)$ from $EF$ in the second round, leaving $P_{1}$ with
\begin{equation}
w_{2}(AB)=2^{R+1}m+4,\ w_{2}(BC)=t,\ w_{2}(CD)=2^{R+1}m+L,\ w_{2}(EF)=4+t+L,\nonumber
\end{equation}
which is of the form \eqref{eq:H_{1}_losing_0,1,3} with either $r=0$ (when $L=0$) or $r=1$ (when $L=1$) or $r=3$ (when $L=3$), or of the form \eqref{eq:H_{1}_losing_2,4_case_2} with $r=2$ and $s=1$ (when $L=2$ and $t=1$), or of the form \eqref{eq:H_{1}_losing_2,4_case_3} with $r=2$ and $s=2$ (when $L=2$ and $t=2$). By what we have already proved in \S\ref{subsec:proof_eq_H_{1}_losing_0}, \S\ref{subsec:H_{1}_losing_1_proof}, \S\ref{subsec:H_{1}_losing_3_proof}, \S\ref{subsec:proof_eq_H_{1}_losing_2_1,2} and \S\ref{subsec:proof_eq_H_{1}_losing_2_3}, we conclude that $P_{1}$ loses.

\item If $w_{1}(CD)=2^{R+1}m+L$ for some $L \in \{0,1,\ldots,K-6\}$, and $w_{1}(BC)=0$, $P_{2}$, after the first round, is left with a galaxy graph consisting of the edges $AB$, $CD$ and $EF$, where $w_{1}(AB)=2^{R+1}m+4$, $w_{1}(CD)=2^{R+1}m+L$ and $w_{1}(EF)=(K+1)$, and she wins by Lemma~\ref{lem:H_{1}_winning_galaxy}.

\item Finally, if $w_{1}(CD)=2^{R+1}n+\ell_{1}$ for some $n<m$ and $\ell_{1}\in\{0,1,\ldots,2^{R+1}-1\}$, the configuration after the first round is of the form given by Lemma~\ref{prop:H_{1}_winning_1} with $m_{1}=n$, $m_{2}=m$ and $\ell_{2}=4$. Since $n<m \implies m_{1}\neq m_{2}$, $P_{2}$ wins by Lemma~\ref{prop:H_{1}_winning_1}.
\end{enumerate}

\item Suppose $P_{1}$ removes a positive integer weight from $AB$, and a non-negative integer weight from $BC$, in the first round. The following possibilities may arise:
\begin{enumerate}
\item If either $w_{1}(AB)=2^{R+1}m+\ell$ for some $\ell \in \{0,1,3\}$ and $w_{1}(BC)=t\in\{1,2\}$, or $w_{1}(AB)=2^{R+1}m+2$ and $w_{1}(BC)=t=2$, or $w_{1}(AB)=2^{R+1}m+2$ and $w_{1}(BC)=t=1$ and $K \equiv i \bmod 8$ for $i \in \{0,4,5,6\}$, $P_{2}$ removes weight $(6-\ell-t)$ from $EF$ in the second round, leaving $P_{1}$ with
\begin{equation}
w_{2}(AB)=2^{R+1}m+\ell,\ w_{2}(BC)=t,\ w_{2}(CD)=2^{R+1}m+(K-5),\ w_{2}(EF)=K+\ell+t-5,\nonumber
\end{equation}
which is of the form \eqref{eq:H_{1}_losing_0,1,3} with either $r=0$ (when $\ell=0$) or $r=1$ (when $\ell=1$) or $r=3$ (when $\ell=3$), or of the form \eqref{eq:H_{1}_losing_2,4_case_2} with $r=2$ and $s=1$ (when $\ell=2$ and $t=1$, or of the form \eqref{eq:H_{1}_losing_2,4_case_3} with $r=2$ and $s=2$ (when $\ell=2$ and $t=2$). By what we have proved in \S\ref{subsec:proof_eq_H_{1}_losing_0}, \S\ref{subsec:H_{1}_losing_1_proof}, \S\ref{subsec:H_{1}_losing_3_proof}, \S\ref{subsec:proof_eq_H_{1}_losing_2_1,2} and \S\ref{subsec:proof_eq_H_{1}_losing_2_3}, we know that $P_{1}$ loses.

\item If $w_{1}(AB)=2^{R+1}m+2$ and $w_{1}(BC)=t=1$, where $K \equiv i \bmod 8$ for $i \in \{3,7\}$, $P_{2}$ removes weight $7$ from $EF$ in the second round, leaving $P_{1}$ with 
\begin{equation}
w_{2}(AB)=2^{R+1}m+2,\ w_{2}(BC)=1,\ w_{2}(CD)=2^{R+1}m+(K-5),\ w_{2}(EF)=(K-6),\nonumber
\end{equation}
which is of the form \eqref{eq:H_{1}_losing_2,4_case_1} with $r=2$ and $s=1$, and by what we have already proved in \S\ref{subsec:proof_eq_H_{1}_losing_2_1,2}, we know that $P_{1}$ loses.

\item If $w_{1}(AB)=2^{R+1}m+\ell$ for some $\ell \in \{0,1,2,3\}$, and $w_{1}(BC)=0$, $P_{2}$, after the first round, is left with a galaxy graph consisting of the edges $AB$, $CD$ and $EF$, where $w_{1}(AB)=2^{R+1}m+\ell$, $w_{1}(CD)=2^{R+1}m+(K-5)$ and $w_{1}(EF)=(K+1)$, and she wins by Lemma~\ref{lem:H_{1}_winning_galaxy}.

\item Finally, if $w_{1}(AB)=2^{R+1}n+\ell_{1}$ for some $n<m$ and $\ell_{1}\in\{0,1,\ldots,2^{R+1}-1\}$, the configuration after the first round is of the form given by Lemma~\ref{prop:H_{1}_winning_1} with $m_{1}=n$, $m_{2}=m$ and $\ell_{2}=K-5$. Since $n<m \implies m_{1}\neq m_{2}$, $P_{2}$ wins by Lemma~\ref{prop:H_{1}_winning_1}.
\end{enumerate}

\item Suppose $P_{1}$ removes a positive integer weight from $EF$ in the first round, so that $w_{1}(EF)=k \leqslant K$. If $k \in \{12,13,\ldots,K\}$ and $k \equiv j \bmod 8$ for some $j \in \{0,1,4,5,6,7\}$, $P_{2}$ removes weight $(K+1-k)$ from the edge $CD$ in the second round, leaving $P_{1}$ with
\begin{align}
{}&w_{2}(AB)=2^{f(k)+1}2^{R-f(k)}m+4,\ w_{2}(BC)=2,\nonumber\\
{}&w_{2}(CD)=2^{f(k)+1}2^{R-f(k)}m+(k-6),\ w_{2}(EF)=k,\nonumber
\end{align}
which is of the form \eqref{eq:H_{1}_losing_2,4_case_2} with $r=4$, $s=2$ and $w_{2}(EF)=k \leqslant K$, so that $P_{1}$ loses by our induction hypothesis. Note that this case covers
\begin{enumerate*}
\item $k=(K-7)$,
\item $k=(K-6)$ and $K \equiv i \bmod 8$ for $i \in \{3,4,5,6,7\}$.
\end{enumerate*}
If $k=(K-6)$ and $K \equiv 0 \bmod 8$, writing $K=8(n+1)$ for some $n\in\mathbb{N}_{0}$, $P_{2}$ removes weight $3$ from $AB$, and the entire $BC$, in the second round, leaving $P_{1}$ with a galaxy graph consisting of the edges $AB$, $CD$ and $EF$, with $w_{2}(AB)=2^{R+1}m+1$, $w_{2}(CD)=2^{R+1}m+8n+3$ and $w_{2}(EF)=8n+2$, and $P_{1}$ loses by Theorem~\ref{thm:galaxy}.

If $k\in\{10,11,\ldots,K-8\}$ and $k \equiv j \bmod 8$ for some $j \in \{2,3\}$, $P_{2}$ removes weight $(K-k-7)$ from the edge $CD$ in the second round, leaving $P_{1}$ with 
\begin{align}
{}&w_{2}(AB)=2^{f(k)+1}2^{R-f(k)}m+4,\ w_{2}(BC)=2,\nonumber\\
{}&w_{2}(CD)=2^{f(k)+1}2^{R-f(k)}m+(k+2),\ w_{2}(EF)=k,\nonumber
\end{align}
which is of the form \eqref{eq:H_{1}_losing_2,4_case_1} with $r=4$, $s=2$ and $w_{2}(EF)=k \leqslant K$, so that $P_{1}$ loses by our induction hypothesis. 

If $k \in \{1,2,3,4,K-5\}$, then the configuration after the first round is of the form mentioned in Lemma~\ref{prop:H_{1}_winning_1}, with $\ell_{1}=4$, $\ell_{2}=(K-5)$ and $w_{1}(EF)=k$, so that either $\min\{\ell_{1},\ell_{2}\}\geqslant k$ or $k \in \{\ell_{1},\ell_{2}\}$. Consequently, $P_{2}$ wins by Lemma~\ref{prop:H_{1}_winning_1}. If $k\in \{K-3,\ldots,K\}$, then $P_{2}$ removes $(K+1-k)$ from $AB$ in the second round, leaving $P_{1}$ with
\begin{equation}
w_{2}(AB)=2^{R+1}m+k-(K-3),\ w_{2}(BC)=2,\ w_{2}(CD)=2^{R+1}m+(K-5),\ w_{2}(EF)=k,\nonumber
\end{equation}
which is of the form \eqref{eq:H_{1}_losing_0,1,3} with either $r=0$ (when $k=K-3$) or $r=1$ (when $k=K-2$) or $r=3$ (when $k=K$), or of the form \eqref{eq:H_{1}_losing_2,4_case_3} with $r=2$ and $s=2$ (when $k=K-1$). If $k=(K-4)$, $P_{2}$ removes weight $4$ from $AB$ and weight $1$ from $BC$ in the second round, leaving $P_{1}$ with
\begin{equation}
w_{2}(AB)=2^{R+1}m,\ w_{2}(BC)=1,\ w_{2}(CD)=2^{R+1}m+(K-5),\ w_{2}(EF)=(K-4),\nonumber
\end{equation}
which is of the form \eqref{eq:H_{1}_losing_0,1,3} with $r=0$. By what we have already proved in \S\ref{subsec:proof_eq_H_{1}_losing_0}, \S\ref{subsec:H_{1}_losing_1_proof}, \S\ref{subsec:H_{1}_losing_3_proof} and \S\ref{subsec:proof_eq_H_{1}_losing_2_3}, we know that $P_{1}$ loses. If $k\in\{5,6,8\}$, $P_{2}$ removes weight $(K-k)$ from $CD$ and weight $1$ from $BC$ in the second round, leaving $P_{1}$ with
\begin{equation}
w_{2}(AB)=2^{R+1}m+4,\ w_{2}(BC)=1,\ w_{2}(CD)=2^{R+1}m+(k-5),\ w_{2}(EF)=k,\nonumber
\end{equation}
which is of the form \eqref{eq:H_{1}_losing_0,1,3} with either $r=0$ (when $k=5$) or $r=1$ (when $k=6$) or $r=3$ (when $k=8$); if $k=7$, $P_{2}$ removes weight $(K-6)$ from $CD$ in the second round, leaving $P_{1}$ with
\begin{equation}
w_{2}(AB)=2^{R+1}m+4,\ w_{2}(BC)=2,\ w_{2}(CD)=2^{R+1}m+1,\ w_{2}(EF)=7,\nonumber
\end{equation}
which is of the form \eqref{eq:H_{1}_losing_0,1,3} with $r=1$; if $k=9$, $P_{2}$ removes weight $(K-8)$ from $CD$ in the second round, leaving $P_{1}$ with
\begin{equation}
w_{2}(AB)=2^{R+1}m+4,\ w_{2}(BC)=2,\ w_{2}(CD)=2^{R+1}m+3,\ w_{2}(EF)=9,\nonumber
\end{equation}
which is of the form \eqref{eq:H_{1}_losing_0,1,3} with $r=3$. By what we have already proved in \S\ref{subsec:proof_eq_H_{1}_losing_0}, \S\ref{subsec:H_{1}_losing_1_proof} and \S\ref{subsec:H_{1}_losing_3_proof}, we conclude that $P_{1}$ loses. If $k=0$, $P_{2}$ wins by Remark~\ref{rem:EF_edgeweight_0}.

\item Suppose $P_{1}$ removes a positive integer weight from the edge $BC$ in the first round, without disturbing the edge-weights of $AB$ and $CD$. If $w_{1}(BC)=1$, and $K \equiv i \bmod 8$ for some $i \in \{0,4,5,6,7\}$, then $P_{2}$ removes weight $1$ from $EF$ in the second round, leaving $P_{1}$ with
\begin{align}
{}&w_{2}(AB)=2^{R+1}m+4,\ w_{2}(BC)=1,\ w_{2}(CD)=2^{R+1}m+(K-5),\ w_{2}(EF)=K,\nonumber
\end{align}
which is of the form \eqref{eq:H_{1}_losing_2,4_case_2} with $r=4$ and $s=1$, and by what we have already proved in \S\ref{subsec:H_{1}_losing_4_1,2_proof}, we conclude that $P_{1}$ loses. If $w_{1}(BC)=1$ and $K \equiv 1 \bmod 8$, $P_{2}$ removes weight $9$ from $EF$ in the second round, leaving $P_{1}$ with
\begin{align}
{}&w_{2}(AB)=2^{R+1}m+4,\ w_{2}(BC)=1,\ w_{2}(CD)=2^{R+1}m+(K-5),\ w_{2}(EF)=(K-8),\nonumber
\end{align}
which is of the form \eqref{eq:H_{1}_losing_2,4_case_1} with $r=4$ and $s=1$, and by what we have already proved in \S\ref{subsec:H_{1}_losing_4_1,2_proof}, we conclude that $P_{1}$ loses. If $w_{1}(BC)=0$, then $P_{2}$, after the first round, is left with a galaxy graph consisting of the edges $AB$, $CD$ and $EF$, where $w_{1}(AB)=2^{R+1}m+4$, $w_{1}(CD)=2^{R+1}m+(K-5)$ and $w_{1}(EF)=(K+1)$, and she wins by Lemma~\ref{lem:H_{1}_winning_galaxy}.
\end{enumerate}
This completes the proof of our claim that the configuration in \eqref{eq:H_{1}_losing_4_3_inductive} is losing, and thereby, the proof of our claim that aconfiguration that is either of the form \eqref{eq:H_{1}_losing_2,4_case_1} or of the form \eqref{eq:H_{1}_losing_2,4_case_2}, with $r=4$ and $s=2$, is losing. 

\subsection{Proof that configurations satisfying \eqref{eq:H_{1}_losing_2,4_case_1} or \eqref{eq:H_{1}_losing_2,4_case_2}, with $r=4$ and $s=3$, are losing}\label{subsec:H_{1}_losing_4_5,6_proof}
The base case corresponding to \eqref{eq:H_{1}_losing_2,4_case_2} with $r=4$ and $s=3$ is obtained by setting $k=12$, which yields the configuration
\begin{equation}
w_{0}(AB)=16m+4,\ w_{0}(BC)=3,\ w_{0}(CD)=16m+5,\ w_{0}(EF)=12.\label{eq:H_{1}_losing_4_5_base_case}
\end{equation}
The first round of the game played on this configuration unfolds as follows:
\begin{enumerate}
\item Suppose $P_{1}$ removes a positive integer weight from $CD$, and a non-negative integer weight from $BC$, in the first round. This can be further divided into the following possibilities:
\begin{enumerate}
\item If $w_{1}(CD)=16m+\ell$ for $\ell \in \{0,1,2,3\}$ and $w_{1}(BC)=t \in \{1,2,3\}$, $P_{2}$ removes weight $(8-\ell-t)$ from the edge $EF$ in the second round, leaving $P_{1}$ with
\begin{equation}
w_{2}(AB)=16m+4,\ w_{2}(BC)=t,\ w_{2}(CD)=16m+\ell,\ w_{2}(EF)=4+\ell+t,\nonumber
\end{equation}
which is of the form \eqref{eq:H_{1}_losing_0,1,3} with either $r=0$ (when $\ell=0$) or $r=1$ (when $\ell=1$) or $r=3$ (when $\ell=3$), or of the form \eqref{eq:H_{1}_losing_2,4_case_2} with $r=2$ and $s=1$ (when $\ell=2$ and $t=1$), or of the form \eqref{eq:H_{1}_losing_2,4_case_3} with $r=2$ and $s\geqslant 2$ (when $\ell=2$ and $t\geqslant 2$). By what we have already proved in \S\ref{subsec:proof_eq_H_{1}_losing_0}, \S\ref{subsec:H_{1}_losing_1_proof}, \S\ref{subsec:H_{1}_losing_3_proof}, \S\ref{subsec:proof_eq_H_{1}_losing_2_1,2} and \S\ref{subsec:proof_eq_H_{1}_losing_2_3}, we know that $P_{1}$ loses.

\item If $w_{1}(CD)=16m+4$ and $w_{1}(BC)=t \in \{1,2,3\}$, $P_{2}$ removes weight $(12-t)$ from the edge $EF$ in the second round, leaving $P_{1}$ with
\begin{equation}
w_{2}(AB)=4(4m+1),\ w_{2}(BC)=t,\ w_{2}(CD)=4(4m+1),\ w_{2}(EF)=t,\nonumber
\end{equation}
which is of the form \eqref{eq:H_{1}_losing_0,1,3} with $r=0$, so that $P_{1}$ loses by what we have already proved in \S\ref{subsec:proof_eq_H_{1}_losing_0}. 

\item If $w_{1}(CD)=16m+\ell$ for some $\ell \in \{0,1,2,3,4\}$ and $w_{1}(BC)=0$, $P_{2}$, at the end of the first round, is left with a galaxy graph consisting of the edges $AB$, $CD$ and $EF$, where $w_{1}(AB)=16m+4$, $w_{1}(CD)=16m+\ell$ and $w_{1}(EF)=12$, and she wins by Lemma~\ref{lem:H_{1}_winning_galaxy}.

\item Finally, if $w_{1}(CD)=16n+\ell_{1}$ for some $n<m$ and $\ell_{1}\in\{0,1,\ldots,15\}$, the configuration after the first round is of the form given by Lemma~\ref{prop:H_{1}_winning_1} with $m_{1}=n$, $m_{2}=m$ and $\ell_{2}=4$. Since $n<m \implies m_{1}\neq m_{2}$, $P_{2}$ wins by Lemma~\ref{prop:H_{1}_winning_1}.
\end{enumerate} 

\item Suppose $P_{1}$ removes a positive integer weight from $AB$, and a non-negative integer weight from $BC$, in the first round. We again consider the various possibilities separately:
\begin{enumerate}
\item If $w_{1}(AB)=16m+\ell$ for $\ell \in \{0,1,2,3\}$ and $w_{1}(BC)=t\in\{1,2,3\}$, $P_{2}$ removes weight $(7-\ell-t)$ from the edge $EF$ in the second round, leaving $P_{1}$ with
\begin{align}
w_{2}(AB)=16m+\ell,\ w_{2}(BC)=t,\ w_{2}(CD)=16m+5,\ w_{2}(EF)=5+\ell+t,\nonumber
\end{align}
which is of the form \eqref{eq:H_{1}_losing_0,1,3} with either $r=0$ (when $\ell=0$) or $r=1$ (when $\ell=1$) or $r=3$ (when $\ell=3$), or of the form \eqref{eq:H_{1}_losing_2,4_case_2} with $r=2$ and $s=1$ (when $\ell=2$ and $t=1$), or of the form \eqref{eq:H_{1}_losing_2,4_case_3} with $r=2$ and $s\geqslant 2$ (when $\ell=2$ and $t\geqslant 2$). By what we have already proved in \S\ref{subsec:proof_eq_H_{1}_losing_0}, \S\ref{subsec:H_{1}_losing_1_proof}, \S\ref{subsec:H_{1}_losing_3_proof}, \S\ref{subsec:proof_eq_H_{1}_losing_2_1,2} and \S\ref{subsec:proof_eq_H_{1}_losing_2_3}, we conclude that $P_{1}$ loses. 

\item If $w_{1}(AB)=16m+\ell$ for $\ell \in \{0,1,2,3\}$ and $w_{1}(BC)=0$, $P_{2}$, at the end of the first round, is left with a galaxy graph consisting of the edges $AB$, $CD$ and $EF$, where $w_{1}(AB)=16m+\ell$, $w_{1}(CD)=16m+5$ and $w_{2}(EF)=12$, and she wins by Lemma~\ref{lem:H_{1}_winning_galaxy}. 
\end{enumerate}

\item Suppose $P_{1}$ removes a positive integer weight from $EF$ in the first round, so that $w_{1}(EF)=k \leqslant 11$. If $k \in \{8,9,10,11\}$, $P_{2}$ removes weight $(12-k)$ from $AB$ in the second round, leaving $P_{1}$ with 
\begin{equation}
w_{2}(AB)=16m+(k-8),\ w_{2}(BC)=3,\ w_{2}(CD)=16m+5,\ w_{2}(EF)=k,\nonumber
\end{equation}
which is of the form \eqref{eq:H_{1}_losing_0,1,3} with either $r=0$ (when $k=8$) or $r=1$ (when $k=9$) or $r=3$ (when $k=11$), or of the form \eqref{eq:H_{1}_losing_2,4_case_3} with $r=2$ and $s=3$ (when $k=10$). If $k\in\{6,7\}$, $P_{2}$ removes weight $4$ from $AB$ and weight $(8-k)$ from $BC$ in the second round, leaving $P_{1}$ with 
\begin{equation}
w_{2}(AB)=16m,\ w_{2}(BC)=(k-5),\ w_{2}(CD)=16m+5,\ w_{2}(EF)=k,\nonumber
\end{equation}
which is of the form \eqref{eq:H_{1}_losing_0,1,3} with $r=0$. By what we have proved in \S\ref{subsec:proof_eq_H_{1}_losing_0}, \S\ref{subsec:H_{1}_losing_1_proof}, \S\ref{subsec:H_{1}_losing_3_proof} and \S\ref{subsec:proof_eq_H_{1}_losing_2_3}, we conclude that $P_{1}$ loses. If $k \in \{1,2,3,4,5\}$, the configuration after the first round is of the form mentioned in Lemma~\ref{prop:H_{1}_winning_1}, with $\ell_{1}=4$, $\ell_{2}=5$, and $w_{1}(EF)=k$ satisfying either $\min\{\ell_{1},\ell_{2}\}\geqslant k$ or $k \in\{\ell_{1},\ell_{2}\}$, so that $P_{2}$ wins by Lemma~\ref{prop:H_{1}_winning_1}. If $k=0$, $P_{2}$ wins by Remark~\ref{rem:EF_edgeweight_0}.

\item Finally, suppose $P_{1}$ removes a positive integer weight from $BC$ in the first round, without disturbing the edge-weights of $AB$ and $CD$, so that $w_{1}(BC)=t\in\{0,1,2\}$. For $t\in\{1,2\}$, $P_{2}$ removes weight $(11-t)$ from $EF$ in the second round, leaving $P_{1}$ with
\begin{equation}
w_{2}(AB)=4(4m+1),\ w_{2}(BC)=t,\ w_{2}(CD)=4(4m+1)+1,\ w_{2}(EF)=t+1,\nonumber
\end{equation}
which is of the form \eqref{eq:H_{1}_losing_0,1,3} with $r=0$. By what we have proved in \S\ref{subsec:proof_eq_H_{1}_losing_0}, we conclude that $P_{1}$ loses. If $t=0$, $P_{2}$ is left with a galaxy graph at the end of the first round, consisting of the edges $AB$, $CD$ and $EF$, where $w_{1}(AB)=16m+4$, $w_{1}(CD)=16m+5$ and $w_{1}(EF)=12$, and she wins by Lemma~\ref{lem:H_{1}_winning_galaxy}.
\end{enumerate}
This proves our claim that the configuration in \eqref{eq:H_{1}_losing_4_5_base_case} is losing on $H_{1}$. The base case corresponding to \eqref{eq:H_{1}_losing_2,4_case_1} with $r=4$ and $s=3$ is obtained by setting $k=3$, which yields the configuration 
\begin{equation}
w_{0}(AB)=16m+4=4(4m+1),\ w_{0}(BC)=3,\ w_{0}(CD)=16m+4=4(4m+1),\ w_{0}(EF)=3,\nonumber
\end{equation}
which is losing since it is of the form \eqref{eq:H_{1}_losing_0,1,3} with $r=0$.

Suppose, for some $K \in \mathbb{N}$, we have shown that a configuration that is either of the form \eqref{eq:H_{1}_losing_2,4_case_1} or of the form \eqref{eq:H_{1}_losing_2,4_case_2}, with $r=4$ and $s=3$, is losing as long as $w_{0}(EF)=k \leqslant K$.

\subsubsection{When $K \geqslant 10$ and $K \equiv 2 \bmod 8$} For $m \in \mathbb{N}_{0}$, we consider the configuration
\begin{equation}
w_{0}(AB)=2^{R+1}m+4,\ w_{0}(BC)=3,\ w_{0}(CD)=2^{R+1}m+K+2,\ w_{0}(EF)=K+1,\label{eq:H_{1}_losing_4_6_inductive}
\end{equation}
where $R=f(K+1)$. The first round of the game played on this initial configuration unfolds as follows:
\begin{enumerate}
\item Suppose $P_{1}$ removes a positive integer weight from $CD$ and a non-negative integer weight from $BC$ in the first round. The following possibilities may arise:
\begin{enumerate}
\item If $w_{1}(CD)=2^{R+1}m+(K+1)$, the configuration, after the first round, is of the form mentioned in Lemma~\ref{prop:H_{1}_winning_1}, with $\ell_{1}=4$, $\ell_{2}=(K+1)$ and $w_{1}(EF)=k=(K+1)\in \{\ell_{1},\ell_{2}\}$, so that $P_{2}$ wins by Lemma~\ref{prop:H_{1}_winning_1}. 

\item If $w_{1}(CD)=2^{R+1}m+K$ and $w_{1}(BC)=t \in \{1,2,3\}$, $P_{2}$ removes weight $4$ from $AB$, and weight $(t-1)$ from $BC$, in the second round, leaving $P_{1}$ with
\begin{equation}
w_{2}(AB)=2^{R+1}m,\ w_{2}(BC)=1,\ w_{2}(CD)=2^{R+1}m+K,\ w_{2}(EF)=K+1,\nonumber
\end{equation}
which is of the form \eqref{eq:H_{1}_losing_0,1,3} with $r=0$, and $P_{1}$ loses by what we have already proved in \S\ref{subsec:proof_eq_H_{1}_losing_0}. If $w_{1}(CD)=2^{R+1}m+K$ and $w_{1}(BC)=0$, writing $K=8n+2$ for some $n\in\mathbb{N}$, we see that $P_{2}$ is left, at the end of the first round, with a galaxy graph consisting of the edges $AB$, $CD$ and $EF$, where $w_{1}(AB)=2^{R+1}m+4$, $w_{1}(CD)=2^{R+1}m+8n+2$ and $w_{1}(EF)=8n+3$, and since the triple $(2^{R+1}m+1,2^{R+1}m+8n+2,8n+3)$ is balanced, $P_{2}$ wins by Theorem~\ref{thm:galaxy}.

\item Suppose $w_{1}(CD)=2^{R+1}m+(K-1)$. If $w_{1}(BC)=t \in \{2,3\}$, $P_{2}$ removes weight $(t-2)$ from the edge $BC$ and weight $4$ from the edge $AB$ in the second round, leaving $P_{1}$ with
\begin{equation}
w_{2}(AB)=2^{R+1}m,\ w_{2}(BC)=2,\ w_{2}(CD)=2^{R+1}m+(K-1),\ w_{2}(EF)=(K+1),\nonumber
\end{equation}
which is of the form \eqref{eq:H_{1}_losing_0,1,3} with $r=0$. If $w_{1}(BC)=1$, $P_{2}$ removes weight $3$ from $AB$ in the second round, leaving $P_{1}$ with
\begin{equation}
w_{2}(AB)=2^{R+1}m+1,\ w_{2}(BC)=1,\ w_{2}(CD)=2^{R+1}m+(K-1),\ w_{2}(EF)=(K+1),\nonumber
\end{equation}
which is of the form \eqref{eq:H_{1}_losing_0,1,3} with $r=1$. By what we have already proved in \S\ref{subsec:proof_eq_H_{1}_losing_0} and \S\ref{subsec:H_{1}_losing_1_proof}, we know that $P_{1}$ loses. If $w_{1}(BC)=0$, writing $K=8n+2$ for some $n\in\mathbb{N}$, $P_{2}$, at the end of the first round, is left with a galaxy graph consisting of the edges $AB$, $CD$ and $EF$, where $w_{1}(AB)=2^{R+1}m+4$, $w_{1}(CD)=2^{R+1}m+8n+1$ and $w_{1}(EF)=8n+3$, and since the triple $(2^{R+1}m+2,2^{R+1}m+8n+1,8n+3)$ is balanced, $P_{2}$ wins by Theorem~\ref{thm:galaxy}. 

\item Suppose $w_{1}(CD)=2^{R+1}m+(K-2)$. If $w_{1}(BC)=t \in \{1,2,3\}$, $P_{2}$ removes weight $(t+1)$ from the edge $AB$ in the second round, leaving $P_{1}$ with 
\begin{equation}
w_{2}(AB)=2^{R+1}m+(3-t),\ w_{2}(BC)=t,\ w_{2}(CD)=2^{R+1}m+(K-2),\ w_{2}(EF)=(K+1),\nonumber
\end{equation}
which is of the form \eqref{eq:H_{1}_losing_0,1,3} with either $r=0$ (when $t=3$) or $r=1$ (when $t=2$), or of the form \eqref{eq:H_{1}_losing_2,4_case_2} with $r=2$ and $s=1$ (when $t=1$). By what we have already proved in \S\ref{subsec:proof_eq_H_{1}_losing_0}, \S\ref{subsec:H_{1}_losing_1_proof} and \S\ref{subsec:proof_eq_H_{1}_losing_2_1,2}, we conclude that $P_{1}$ loses. If $w_{1}(BC)=0$, writing $K=8n+2$ for some $n\in\mathbb{N}$, $P_{2}$, at the end of the first round, is left with a galaxy graph consisting of the edges $AB$, $CD$ and $EF$, where $w_{1}(AB)=2^{R+1}m+4$, $w_{1}(CD)=2^{R+1}m+8n$ and $w_{1}(EF)=8n+3$, and since the triple $(2^{R+1}m+3,2^{R+1}m+8n,8n+3)$ is balanced, $P_{2}$ wins by Theorem~\ref{thm:galaxy}.

\item Suppose $w_{1}(CD)=2^{R+1}m+(K-3)$. If $w_{1}(BC)=t \in \{1,2,3\}$, then $P_{2}$ removes weight $t$ from the edge $AB$ in the second round, leaving $P_{1}$ with 
\begin{equation}
w_{2}(AB)=2^{R+1}m+(4-t),\ w_{2}(BC)=t,\ w_{2}(CD)=2^{R+1}m+(K-3),\ w_{2}(EF)=(K+1),\nonumber
\end{equation} 
which is of the form \eqref{eq:H_{1}_losing_0,1,3} with either $r=1$ (when $t=3$) or $r=3$ (when $t=1$), or of the form \eqref{eq:H_{1}_losing_2,4_case_3} with $r=2$ and $s=2$ (when $t=2$). By what we have already proved in \S\ref{subsec:H_{1}_losing_1_proof}, \S\ref{subsec:H_{1}_losing_3_proof} and \S\ref{subsec:proof_eq_H_{1}_losing_2_3}, we conclude that $P_{1}$ loses. If $w_{1}(BC)=0$, writing $K=8(n+1)+2$ for some $n\in\mathbb{N}_{0}$, $P_{2}$, at the end of the first round, is left with a galaxy graph consisting of the edges $AB$, $CD$ and $EF$, where $w_{1}(AB)=2^{R+1}m+4$, $w_{1}(CD)=2^{R+1}m+8n+7$ and $w_{1}(EF)=8n+11$, and since the triple $(2^{R+1}m+4,2^{R+1}m+8n+7,8n+3)$ is balanced, $P_{2}$ wins by Theorem~\ref{thm:galaxy}.

\item If $w_{1}(CD)=2^{R+1}m+L$ for some $L \in \{0,1,\ldots,K-4\}$, and $w_{1}(BC)=0$, then, at the end of the first round, $P_{2}$ is left with a galaxy graph consisting of the edges $AB$, $CD$ and $EF$, where $w_{1}(AB)=2^{R+1}m+4$, $w_{1}(CD)=2^{R+1}m+L$ and $w_{1}(EF)=(K+1)$, and she wins by Lemma~\ref{lem:H_{1}_winning_galaxy}.

\item Suppose $w_{1}(CD)=2^{R+1}m+(K-4)$. If $w_{1}(BC)=t \in \{2,3\}$, $P_{2}$ removes weight $(t-1)$ from the edge $AB$ in the second round, leaving $P_{1}$ with
\begin{equation}
w_{2}(AB)=2^{R+1}m+(5-t),\ w_{2}(BC)=t,\ w_{2}(CD)=2^{R+1}m+(K-4),\ w_{2}(EF)=(K+1),\nonumber
\end{equation}
which is of the form \eqref{eq:H_{1}_losing_2,4_case_3} with $r=2$ and $s=3$ (when $t=3$) or of the form \eqref{eq:H_{1}_losing_0,1,3} with $r=3$ (when $t=2$). If $w_{1}(BC)=1$, $P_{2}$ removes $8$ from $EF$ in the second round, leaving $P_{1}$ with
\begin{align}
w_{2}(AB)=2^{R+1}m+4,\ w_{2}(BC)=1,\ w_{2}(CD)=2^{R+1}m+(K-4),\ w_{2}(EF)=(K-7),\nonumber
\end{align}
which is of the form \eqref{eq:H_{1}_losing_2,4_case_1} with $r=4$ and $s=1$. By what we have already proved in \S\ref{subsec:proof_eq_H_{1}_losing_2_3}, \S\ref{subsec:H_{1}_losing_3_proof} and \S\ref{subsec:H_{1}_losing_4_1,2_proof}, we conclude that $P_{1}$ loses in each of the above three cases.

\item Suppose $w_{1}(CD)=2^{R+1}m+(K-5)$. If $w_{1}(BC)=3$, $P_{2}$ removes weight $1$ from the edge $AB$ in the second round, leaving $P_{1}$ with
\begin{equation}
w_{2}(AB)=2^{R+1}m+3,\ w_{2}(BC)=3,\ w_{2}(CD)=2^{R+1}m+(K-5),\ w_{2}(EF)=(K+1),\nonumber
\end{equation}
which is of the form \eqref{eq:H_{1}_losing_0,1,3} with $r=3$. If $w_{1}(BC)=2$, $P_{2}$ removes weight $8$ from the edge $EF$ in the second round, leaving $P_{1}$ with
\begin{equation}
w_{2}(AB)=2^{R+1}m+4,\ w_{2}(BC)=2,\ w_{2}(CD)=2^{R+1}m+(K-5),\ w_{2}(EF)=(K-7),\nonumber
\end{equation}
which is of the form \eqref{eq:H_{1}_losing_2,4_case_1} with $r=4$ and $s=2$. If $w_{1}(BC)=1$, $P_{2}$ removes weight $9$ from the edge $EF$ in the second round, leaving $P_{1}$ with
\begin{equation}
w_{2}(AB)=2^{R+1}m+4,\ w_{2}(BC)=1,\ w_{2}(CD)=2^{R+1}m+(K-5),\ w_{2}(EF)=(K-8),\nonumber
\end{equation}
which is of the form \eqref{eq:H_{1}_losing_2,4_case_1} with $r=4$ and $s=1$. By what we have already proved in \S\ref{subsec:H_{1}_losing_3_proof}, \S\ref{subsec:H_{1}_losing_4_1,2_proof} and \S\ref{subsec:H_{1}_losing_4_3,4_proof}, we conclude that $P_{1}$ loses in each of these cases. 

\item If $w_{1}(CD)=2^{R+1}m+L$ for $L \in \{4,5,\ldots,K-6\}$, with $L\equiv 4 \bmod 8$, and $w_{1}(BC)=3$, $P_{2}$ removes weight $(K+2-L)$ from $EF$ in the second round, leaving $P_{1}$ with
\begin{align}
{}&w_{2}(AB)=2^{f(L-1)+1}2^{R-f(L-1)}m+4,\ w_{2}(BC)=3,\nonumber\\
{}&w_{2}(CD)=2^{f(L-1)+1}2^{R-f(L-1)}m+L,\ w_{2}(EF)=L-1,\nonumber
\end{align}
which is of the form \eqref{eq:H_{1}_losing_2,4_case_1} with $r=4$ and $s=3$, and since $w_{2}(EF)=(L-1)<K$, $P_{1}$ loses by our induction hypothesis. This case covers $w_{1}(CD)=2^{R+1}m+(K-6)$ and $w_{1}(BC)=3$.

\item If $w_{1}(CD)=2^{R+1}m+L$ for $L \in \{4,5,\ldots,K-7\}$, with $L \equiv i \bmod 8$ for $i \in \{0,1,2,3,5,6,7\}$, and $w_{1}(BC)=3$, $P_{2}$ removes weight $(K-6-L)$ from $EF$ in the second round, leaving $P_{1}$ with
\begin{align}
{}&w_{2}(AB)=2^{f(L+7)+1}2^{R-f(L+7)}m+4,\ w_{2}(BC)=3,\nonumber\\
{}&w_{2}(CD)=2^{f(L+7)+1}2^{R-f(L+7)}m+L,\ w_{2}(EF)=L+7,\nonumber
\end{align}
which is of the form \eqref{eq:H_{1}_losing_2,4_case_2} with $r=4$ and $s=3$, and since $w_{2}(EF)=(L+7)\leqslant K$, $P_{1}$ loses by our induction hypothesis. 

\item If $w_{1}(CD)=2^{R+1}m+L$ for $L \in \{4,5,\ldots,K-6\}$, with $L \equiv j \bmod 8$ for $j \in \{4,5\}$, and $w_{1}(BC)=2$, $P_{2}$ removes weight $(K+3-L)$ from $EF$ in the second round, leaving $P_{1}$ with 
\begin{align}
{}&w_{2}(AB)=2^{f(L-2)+1}2^{R-f(L-2)}m+4,\ w_{2}(BC)=2,\nonumber\\
{}&w_{2}(CD)=2^{f(L-2)+1}2^{R-f(L-2)}m+L,\ w_{2}(EF)=(L-2),\nonumber
\end{align}
which is of the form \eqref{eq:H_{1}_losing_2,4_case_1} with $r=4$ and $s=2$, and by what we have already proved in \S\ref{subsec:H_{1}_losing_4_3,4_proof}, we conclude that $P_{1}$ loses. 

\item If $w_{1}(CD)=2^{R+1}m+L$ for $L \in \{4,5,\ldots,K-6\}$, with $L \equiv j \bmod 8$ for $j \in \{0,1,2,3,6,7\}$, and $w_{1}(BC)=2$, $P_{2}$ removes weight $(K-5-L)$ from $EF$ in the second round, leaving $P_{1}$ with
\begin{align}
{}&w_{2}(AB)=2^{f(L+6)+1}2^{R-f(L+6)}m+4,\ w_{2}(BC)=2,\nonumber\\
{}&w_{2}(CD)=2^{f(L+6)+1}2^{R-f(L+6)}m+L,\ w_{2}(EF)=(L+6),\nonumber
\end{align}
which is of the form \eqref{eq:H_{1}_losing_2,4_case_2} with $r=4$ and $s=2$, and by what we have already proved in \S\ref{subsec:H_{1}_losing_4_3,4_proof}, we know that $P_{1}$ loses.

\item If $w_{1}(CD)=2^{R+1}m+L$ for $L \in \{4,5,\ldots,K-6\}$, with $L \equiv j \bmod 8$ for $j \in \{4,5,6\}$, and $w_{1}(BC)=1$, $P_{2}$ removes weight $(K+4-L)$ from $EF$ in the second round, leaving $P_{1}$ with
\begin{align}
{}&w_{2}(AB)=2^{f(L-3)+1}2^{R-f(L-3)}m+4,\ w_{2}(BC)=1,\nonumber\\
{}&w_{2}(CD)=2^{f(L-3)+1}2^{R-f(L-3)}m+L,\ w_{2}(EF)=(L-3),\nonumber
\end{align}
which is of the form \eqref{eq:H_{1}_losing_2,4_case_1} with $r=4$ and $s=1$, and by what we have already proved in \S\ref{subsec:H_{1}_losing_4_1,2_proof}, we know that $P_{1}$ loses.

\item If $w_{1}(CD)=2^{R+1}m+L$ for $L \in \{4,5,\ldots,K-6\}$, with $L \equiv j \bmod 8$ for $j \in \{0,1,2,3,7\}$, and $w_{1}(BC)=1$, $P_{2}$ removes weight $(K-4-L)$ from $EF$ in the second round, leaving $P_{1}$ with 
\begin{align}
{}&w_{2}(AB)=2^{f(L+5)+1}2^{R-f(L+5)}m+4,\ w_{2}(BC)=1,\nonumber\\
{}&w_{2}(CD)=2^{f(L+5)+1}2^{R-f(L+5)}m+L,\ w_{2}(EF)=(L+5),\nonumber
\end{align}
which is of the form \eqref{eq:H_{1}_losing_2,4_case_2} with $r=4$ and $s=1$, and by what we have already proved in \S\ref{subsec:H_{1}_losing_4_1,2_proof}, we know that $P_{1}$ loses.

\item If $w_{1}(CD)=2^{R+1}m+L$ for $L \in \{0,1,2,3\}$ and $w_{1}(BC)=t\in\{1,2,3\}$, $P_{2}$ removes weight $(K+1)-(4+t+L)$ from $EF$ in the second round, leaving $P_{1}$ with
\begin{equation}
w_{2}(AB)=2^{R+1}m+4,\ w_{2}(BC)=t,\ w_{2}(CD)=2^{R+1}m+L,\ w_{2}(EF)=4+t+L,\nonumber
\end{equation}
which is of the form \eqref{eq:H_{1}_losing_0,1,3} with either $r=0$ (when $L=0$) or $r=1$ (when $L=1$) or $r=3$ (when $L=3$), or of the form \eqref{eq:H_{1}_losing_2,4_case_2} with $r=2$ and $s=1$ (when $L=2$ and $t=1$), or of the form \eqref{eq:H_{1}_losing_2,4_case_3} with $r=2$ and $s=t\geqslant 2$ (when $L=2$ and $t \geqslant 2$). By what we have already proved in \S\ref{subsec:proof_eq_H_{1}_losing_0}, \S\ref{subsec:H_{1}_losing_1_proof}, \S\ref{subsec:H_{1}_losing_3_proof}, \S\ref{subsec:proof_eq_H_{1}_losing_2_1,2} and \S\ref{subsec:proof_eq_H_{1}_losing_2_3}, we know that $P_{1}$ loses in each of these cases. 

\item Finally, if $w_{1}(CD)=2^{R+1}n+\ell_{1}$ for some $n<m$ and $\ell_{1}\in\{0,1,\ldots,2^{R+1}-1\}$, the configuration after the first round is of the form given by Lemma~\ref{prop:H_{1}_winning_1} with $m_{1}=n$, $m_{2}=m$ and $\ell_{2}=4$. Since $n<m \implies m_{1}\neq m_{2}$, $P_{2}$ wins by Lemma~\ref{prop:H_{1}_winning_1}.
\end{enumerate}

\item Suppose $P_{1}$ removes a positive integer weight from $AB$, and a non-negative integer weight from $BC$, in the first round. Once again, we consider the various possibilities:
\begin{enumerate}
\item If $w_{1}(AB)=2^{R+1}m+\ell$ for $\ell \in \{0,1,2,3\}$, and $w_{1}(BC)=t\in\{1,2,3\}$, $P_{2}$ removes weight $(\ell+t+1)$ from $CD$ in the second round, leaving $P_{1}$ with
\begin{align}
w_{2}(AB)=2^{R+1}m+\ell,\ w_{2}(BC)=t,\ w_{2}(CD)=2^{R+1}m+K+1-\ell-t,\ w_{2}(EF)=K+1,\nonumber
\end{align}
which is of the form \eqref{eq:H_{1}_losing_0,1,3} with $r=0$ (when $\ell=0$) or $r=1$ (when $\ell=1$) or $r=3$ (when $\ell=3$), of the form \eqref{eq:H_{1}_losing_2,4_case_2} with $r=2$ and $s=1$ (when $\ell=2$ and $t=1$), or of the form \eqref{eq:H_{1}_losing_2,4_case_3} with $r=2$ and $s\geqslant 2$ (when $\ell=2$ and $t \geqslant 2$). By what we have already proved in \S\ref{subsec:proof_eq_H_{1}_losing_0}, \S\ref{subsec:H_{1}_losing_1_proof}, \S\ref{subsec:H_{1}_losing_3_proof}, \S\ref{subsec:proof_eq_H_{1}_losing_2_1,2} and \S\ref{subsec:proof_eq_H_{1}_losing_2_3}, we know that $P_{1}$ loses in each of these cases. 

\item If $w_{1}(AB)=2^{R+1}m+\ell$ for $\ell \in \{0,1,2,3\}$, and $w_{1}(BC)=0$, $P_{2}$, after the first round, is left with a galaxy graph consisting of $AB$, $CD$ and $EF$, where $w_{1}(AB)=2^{R+1}m+\ell$, $w_{1}(CD)=2^{R+1}m+(K+2)$ and $w_{1}(EF)=(K+1)$. Writing $K=8n+2$ for some $n \in \mathbb{N}$, since the triple $\left(2^{R+1}m+\ell,2^{R+1}m+8n+3-\ell,8n+3\right)$ is balanced, $P_{2}$ wins by Theorem~\ref{thm:galaxy}.

\item Finally, if $w_{1}(AB)=2^{R+1}n+\ell_{1}$ for some $n<m$ and $\ell_{1}\in\{0,1,\ldots,2^{R+1}-1\}$, the configuration after the first round is of the form given by Lemma~\ref{prop:H_{1}_winning_1} with $m_{1}=n$, $m_{2}=m$ and $\ell_{2}=K+2$. Since $n<m \implies m_{1}\neq m_{2}$, $P_{2}$ wins by Lemma~\ref{prop:H_{1}_winning_1}.
\end{enumerate}

\item Suppose $P_{1}$ removes a positive integer weight from $EF$ in the first round, so that $w_{1}(EF)=k \leqslant K$. If $k=0$, $P_{2}$ wins by Remark~\ref{rem:EF_edgeweight_0}. If $k\in\{1,2,3,4\}$, the configuration after the first round is of the form mentioned in Lemma~\ref{prop:H_{1}_winning_1}, with $\ell_{1}=4$ and $\ell_{2}=(K+2)$, so that $w_{1}(EF)=k$ satisfies the inequality $k \leqslant \min\{\ell_{1},\ell_{2}\}$. Consequently, $P_{2}$ wins by Lemma~\ref{prop:H_{1}_winning_1}. If $k \in \{5,6,\ldots,K\}$ and $k \equiv 3 \bmod 8$, then $P_{2}$ removes weight $(K+1-k)$ from $CD$ in the second round, leaving $P_{1}$ with
\begin{equation}
w_{2}(AB)=2^{R+1}m+4,\ w_{2}(BC)=3,\ w_{2}(CD)=2^{R+1}m+(k+1),\ w_{2}(EF)=k,\nonumber
\end{equation}
which is of the form \eqref{eq:H_{1}_losing_2,4_case_1} with $r=4$, $s=3$ and $w_{2}(EF)=k\leqslant K$, so that $P_{1}$ loses by our induction hypothesis. If $k \in\{7,8,\ldots,K\}$ and $k \equiv j \bmod 8$ for some $j \in \{0,1,2,4,5,6,7\}$, then $P_{2}$ removes weight $(K+9-k)$ from $CD$ in the second round, leaving $P_{1}$ with
\begin{equation}
w_{2}(AB)=2^{R+1}m+4,\ w_{2}(BC)=3,\ w_{2}(CD)=2^{R+1}m+(k-7),\ w_{2}(EF)=k,\label{intermediate_eq_1}
\end{equation}
which, when $k\geqslant 12$, is of the form \eqref{eq:H_{1}_losing_2,4_case_2} with $r=4$, $s=3$ and $w_{2}(EF)=k\leqslant K$, so that $P_{1}$ loses by our induction hypothesis. If $k \in \{7,8,9,10\}$, the configuration in \eqref{intermediate_eq_1} is of the form \eqref{eq:H_{1}_losing_0,1,3} with either $r=0$ (when $k=7$) or $r=1$ (when $k=8$) or $r=3$ (when $k=10$), or of the form \eqref{eq:H_{1}_losing_2,4_case_3} with $r=2$ and $s=3$ (when $k=9$). If $k\in\{5,6\}$, $P_{2}$ removes weight $(K+2)$ from the edge $CD$ and weight $(7-k)$ from the edge $BC$ in the second round, leaving $P_{1}$ with
\begin{equation}
w_{2}(AB)=2^{R+1}m+4,\ w_{2}(BC)=k-4,\ w_{2}(CD)=2^{R+1}m,\ w_{2}(EF)=k,\nonumber
\end{equation}
which is of the form \eqref{eq:H_{1}_losing_0,1,3} with $r=0$. By what we have already proved in \S\ref{subsec:proof_eq_H_{1}_losing_0}, \S\ref{subsec:H_{1}_losing_1_proof}, \S\ref{subsec:H_{1}_losing_3_proof} and \S\ref{subsec:proof_eq_H_{1}_losing_2_3}, we know that $P_{1}$ loses in each of these cases.

\item Suppose $P_{1}$ removes a positive integer weight from $BC$ in the first round, without disturbing the edge-weights of $AB$ and $CD$, so that $w_{1}(BC)=t\in\{0,1,2\}$. If $t\in\{1,2\}$, $P_{2}$ removes weight $(3-t)$ from $EF$ in the second round, leaving $P_{1}$ with
\begin{equation}
w_{2}(AB)=2^{R+1}m+4,\ w_{2}(BC)=t,\ w_{2}(CD)=2^{R+1}m+(K+2),\ w_{2}(EF)=(K-2+t),\nonumber
\end{equation}
which is of the form \eqref{eq:H_{1}_losing_2,4_case_1} with either $r=4$ and $s=2$ (when $t=2$) or $r=4$ and $s=1$ (when $t=1$). By what we have already proved in \S\ref{subsec:H_{1}_losing_4_1,2_proof} and \S\ref{subsec:H_{1}_losing_4_3,4_proof}, we know that $P_{1}$ loses. If $t=0$, $P_{2}$, at the end of the first round, is left with a galaxy graph consisting of the edges $AB$, $CD$ and $EF$, where $w_{1}(AB)=2^{R+1}m+4$, $w_{1}(CD)=2^{R+1}m+(K+2)$ and $w_{1}(EF)=(K+1)$. Writing $K=8n+2$ for some $n \in \mathbb{N}$, since the triple $(2^{R+1}m+4,2^{R+1}m+8n+4,8n)$ is balanced, $P_{2}$ wins by Theorem~\ref{thm:galaxy}. 
\end{enumerate}

This completes the proof of our claim that the configuration in \eqref{eq:H_{1}_losing_4_6_inductive} is losing on $H_{1}$.

\subsubsection{When $K\geqslant 12$ and $K \equiv i \bmod 8$ for some $i \in \{0,1,3,4,5,6,7\}$} For $m \in \mathbb{N}_{0}$, we focus on
\begin{equation}
w_{0}(AB)=2^{R+1}m+4,\ w_{0}(BC)=3,\ w_{0}(CD)=2^{R+1}m+(K-6),\ w_{0}(EF)=(K+1),\label{eq:H_{1}_losing_4_5_inductive}
\end{equation}
where $R=f(K+1)$. The first round of the game played on this initial configuration unfolds as follows:
\begin{enumerate}
\item Suppose $P_{1}$ removes a positive integer weight from $CD$, and a non-negative integer weight from $BC$, in the first round. The following cases are possible:
\begin{enumerate}
\item If $w_{1}(CD)=2^{R+1}m+L$ for $L \in \{4,5,\ldots,K-7\}$, with $L \equiv j \bmod 8$ for $j \in \{0,1,2,3,5,6,7\}$, and $w_{1}(BC)=3$, $P_{2}$ removes weight $(K-6-L)$ from $EF$ in the second round, leaving $P_{1}$ with 
\begin{align}
{}&w_{2}(AB)=2^{f(L+7)+1}2^{R-f(L+7)}m+4,\ w_{2}(BC)=3,\nonumber\\
{}&w_{2}(CD)=2^{f(L+7)+1}2^{R-f(L+7)}m+L,\ w_{2}(EF)=(L+7),\nonumber
\end{align}
which is of the form \eqref{eq:H_{1}_losing_2,4_case_2} with $r=4$, $s=3$ and $w_{2}(EF)=(L+7) \leqslant K$, so that $P_{1}$ loses by our induction hypothesis. 

\item If $w_{1}(CD)=2^{R+1}m+L$ for $L \in \{4,5,\ldots,K-7\}$, with $L \equiv 4 \bmod 8$, and $w_{1}(BC)=3$, $P_{2}$ removes weight $(K+2-L)$ from $EF$ in the second round, leaving $P_{1}$ with
\begin{align}
{}&w_{2}(AB)=2^{f(L-1)+1}2^{R-f(L-1)}m+4,\ w_{2}(BC)=3,\nonumber\\
{}&w_{2}(CD)=2^{f(L-1)+1}2^{R-f(L-1)}m+L,\ w_{2}(EF)=(L-1),\nonumber
\end{align}
which is of the form \eqref{eq:H_{1}_losing_2,4_case_1} with $r=4$, $s=3$ and $w_{2}(EF)=(L-1)\leqslant (K-8)<K$, so that $P_{1}$ loses by our induction hypothesis.

\item If $w_{1}(CD)=2^{R+1}m+L$ for $L \in \{4,5,\ldots,K-7\}$, with $L \equiv j \bmod 8$ for $j \in \{0,1,2,3,6,7\}$, and $w_{1}(BC)=2$, $P_{2}$ removes weight $(K-5-L)$ from $EF$ in the second round, leaving $P_{1}$ with
\begin{align}
{}&w_{2}(AB)=2^{f(L+6)+1}2^{R-f(L+6)}m+4,\ w_{2}(BC)=2,\nonumber\\
{}&w_{2}(CD)=2^{f(L+6)+1}2^{R-f(L+6)}m+L,\ w_{2}(EF)=(L+6),\nonumber
\end{align}
which is of the form \eqref{eq:H_{1}_losing_2,4_case_2} with $r=4$ and $s=2$, and by what we have already proved in \S\ref{subsec:H_{1}_losing_4_3,4_proof}, we conclude that $P_{1}$ loses.

\item If $w_{1}(CD)=2^{R+1}m+L$ for $L \in \{4,5,\ldots,K-7\}$, with $L \equiv j \bmod 8$ for $j \in \{4,5\}$, and $w_{1}(BC)=2$, $P_{2}$ removes weight $(K+3-L)$ from $EF$ in the second round, leaving $P_{1}$ with
\begin{align}
{}&w_{2}(AB)=2^{f(L-2)+1}2^{R-f(L-2)}m+4,\ w_{2}(BC)=2,\nonumber\\
{}&w_{2}(CD)=2^{f(L-2)+1}2^{R-f(L-2)}m+L,\ w_{2}(EF)=(L-2),\nonumber
\end{align}
which is of the form \eqref{eq:H_{1}_losing_2,4_case_1} with $r=4$ and $s=2$, and by what we have already proved in \S\ref{subsec:H_{1}_losing_4_3,4_proof}, we conclude that $P_{1}$ loses. 

\item If $w_{1}(CD)=2^{R+1}m+L$ for $L \in \{4,5,\ldots,K-7\}$, with $L \equiv j \bmod 8$ for $j \in \{0,1,2,3,7\}$, and $w_{1}(BC)=1$, $P_{2}$ removes weight $(K-4-L)$ from $EF$ in the second round, leaving $P_{1}$ with
\begin{align}
{}&w_{2}(AB)=2^{f(L+5)+1}2^{R-f(L+5)}m+4,\ w_{2}(BC)=1,\nonumber\\
{}&w_{2}(CD)=2^{f(L+5)+1}2^{R-f(L+5)}m+L,\ w_{2}(EF)=(L+5),\nonumber
\end{align}
which is of the form \eqref{eq:H_{1}_losing_2,4_case_2} with $r=4$ and $s=1$, and by what we have already proved in \S\ref{subsec:H_{1}_losing_4_1,2_proof}, we conclude that $P_{1}$ loses.

\item If $w_{1}(CD)=2^{R+1}m+L$ for $L \in \{4,5,\ldots,K-7\}$, with $L \equiv j \bmod 8$ for $j \in \{4,5,6\}$, and $w_{1}(BC)=1$, $P_{2}$ removes weight $(K+4-L)$ from $EF$ in the second round, leaving $P_{1}$ with
\begin{align}
{}&w_{2}(AB)=2^{f(L-3)+1}2^{R-f(L-3)}m+4,\ w_{2}(BC)=1,\nonumber\\
{}&w_{2}(CD)=2^{f(L-3)+1}2^{R-f(L-3)}m+L,\ w_{2}(EF)=(L-3),\nonumber
\end{align}
which is of the form \eqref{eq:H_{1}_losing_2,4_case_1} with $r=4$ and $s=1$, and by what we have already proved in \S\ref{subsec:H_{1}_losing_4_1,2_proof}, we conclude that $P_{1}$ loses.

\item If $w_{1}(CD)=2^{R+1}m+L$ for $L \in \{0,1,2,3\}$, and $w_{1}(BC)=t\in\{1,2,3\}$, $P_{2}$ removes weight $(K-3-t-L)$ from $EF$ in the second round, leaving $P_{1}$ with
\begin{equation}
w_{2}(AB)=2^{R+1}m+4,\ w_{2}(BC)=t,\ w_{2}(CD)=2^{R+1}m+L,\ w_{2}(EF)=(4+t+L),\nonumber
\end{equation}
which is of the form \eqref{eq:H_{1}_losing_0,1,3} with either $r=0$ (when $L=0$) or $r=1$ (when $L=1$) or $r=3$ (when $L=3$), or of the form \eqref{eq:H_{1}_losing_2,4_case_2} with $r=2$ and $s=1$ (when $L=2$ and $t=1$), or of the form \eqref{eq:H_{1}_losing_2,4_case_3} with $r=2$ and $s\geqslant 2$ (when $L=2$ and $t \geqslant 2$). By what we have already proved in \S\ref{subsec:proof_eq_H_{1}_losing_0}, \S\ref{subsec:H_{1}_losing_1_proof}, \S\ref{subsec:H_{1}_losing_3_proof}, \S\ref{subsec:proof_eq_H_{1}_losing_2_1,2} and \S\ref{subsec:proof_eq_H_{1}_losing_2_3}, we conclude that $P_{1}$ loses in each of these cases.

\item If $w_{1}(CD)=2^{R+1}m+L$ for $L \in \{0,1,\ldots,K-7\}$, and $w_{1}(BC)=0$, $P_{2}$, after the first round, is left with a galaxy graph, consisting of the edges $AB$, $CD$ and $EF$,with $w_{1}(AB)=2^{R+1}m+4$, $w_{1}(CD)=2^{R+1}m+L$ and $w_{1}(EF)=(K+1)$, and $P_{2}$ wins by Lemma~\ref{lem:H_{1}_winning_galaxy}.

\item Finally, if $w_{1}(CD)=2^{R+1}n+\ell_{1}$ for some $n<m$ and $\ell_{1}\in\{0,1,\ldots,2^{R+1}-1\}$, the configuration after the first round is of the form given by Lemma~\ref{prop:H_{1}_winning_1} with $m_{1}=n$, $m_{2}=m$ and $\ell_{2}=4$. Since $n<m \implies m_{1}\neq m_{2}$, $P_{2}$ wins by Lemma~\ref{prop:H_{1}_winning_1}.
\end{enumerate}

\item Suppose $P_{1}$ removes a positive integer weight from $AB$, and a non-negative integer weight from $BC$, in the first round. The following scenarios are possible:
\begin{enumerate}
\item Suppose $w_{1}(AB)=2^{R+1}m+\ell$ for some $\ell \in \{0,1,2,3\}$ and $w_{1}(BC)=t\in\{1,2,3\}$. As long as
\begin{enumerate*}
\item $\ell \in \{0,1,3\}$,
\item or $\ell=2$ and $t\in\{2,3\}$,
\item or $\ell=2$, $t=1$ and $K \equiv i \bmod 8$ for some $i \in \{1,3,5,6,7\}$,
\end{enumerate*}
$P_{2}$ removes weight $(7-\ell-t)$ from $EF$ in the second round, leaving $P_{1}$ with
\begin{align}
w_{2}(AB)=2^{R+1}m+\ell,\ w_{2}(BC)=t,\ w_{2}(CD)=2^{R+1}m+K-6,\ w_{2}(EF)=K+\ell+t-6,\nonumber
\end{align}
which is of the form \eqref{eq:H_{1}_losing_0,1,3} with $r=0$ (when $\ell=0$) or $r=1$ (when $\ell=1$) or $r=3$ (when $\ell=3$), or of the form \eqref{eq:H_{1}_losing_2,4_case_2} with $r=2$ and $s=1$ (when $\ell=2$, $t=1$ and $K \equiv i \bmod 8$ for some $i \in \{1,3,5,6,7\}$), or of the form \eqref{eq:H_{1}_losing_2,4_case_3} with $r=2$ and $s\geqslant 2$ (when $\ell=2$ and $t\in\{2,3\}$). By what we have already proved in \S\ref{subsec:proof_eq_H_{1}_losing_0}, \S\ref{subsec:H_{1}_losing_1_proof}, \S\ref{subsec:H_{1}_losing_3_proof}, \S\ref{subsec:proof_eq_H_{1}_losing_2_1,2} and \S\ref{subsec:proof_eq_H_{1}_losing_2_3}, we conclude that $P_{1}$ loses. 

If $\ell=2$, $t=1$ and $K \equiv i \bmod 8$ for some $i \in\{0,4\}$, $P_{2}$ removes weight $8$ from $EF$ in the second round, leaving $P_{1}$ with
\begin{equation}
w_{2}(AB)=2^{R+1}m+2,\ w_{2}(BC)=1,\ w_{2}(CD)=2^{R+1}m+K-6,\ w_{2}(EF)=K-7,\nonumber
\end{equation}
which is of the form \eqref{eq:H_{1}_losing_2,4_case_1} with $r=2$ and $s=1$, and by what we have already proved in \S\ref{subsec:proof_eq_H_{1}_losing_2_1,2}, we conclude that $P_{1}$ loses. 

\item If $w_{1}(AB)=2^{R+1}m+\ell$ for $\ell\in\{0,1,2,3\}$, and $w_{1}(BC)=0$, $P_{2}$, at the end of the first round, is left with a galaxy graph consisting of the edges $AB$, $CD$ and $EF$, with $w_{1}(AB)=2^{R+1}m+\ell$, $w_{1}(CD)=2^{R+1}m+(K-6)$ and $w_{1}(EF)=(K+1)$, and she wins by Lemma~\ref{lem:H_{1}_winning_galaxy}.

\item Finally, if $w_{1}(AB)=2^{R+1}n+\ell_{1}$ for some $n<m$ and $\ell_{1}\in\{0,1,\ldots,2^{R+1}-1\}$, the configuration after the first round is of the form given by Lemma~\ref{prop:H_{1}_winning_1} with $m_{1}=n$, $m_{2}=m$ and $\ell_{2}=K-6$. Since $n<m \implies m_{1}\neq m_{2}$, $P_{2}$ wins by Lemma~\ref{prop:H_{1}_winning_1}.
\end{enumerate}

\item Suppose $P_{1}$ removes a positive integer weight from the edge $EF$ in the first round, so that $w_{1}(EF)=k\leqslant K$. If $k=0$, $P_{2}$ wins by Remark~\ref{rem:EF_edgeweight_0}. If $k \in \{1,2,3,4,K-6\}$, the configuration after the first round is of the form mentioned in Lemma~\ref{prop:H_{1}_winning_1}, with $\ell_{1}=4$, $\ell_{2}=(K-6)$ and $w_{1}(EF)=k$ satisfying either $\min\{\ell_{1},\ell_{2}\}\geqslant k$ or $k \in \{\ell_{1},\ell_{2}\}$. Therefore, $P_{2}$ wins by Lemma~\ref{prop:H_{1}_winning_1}. If $k \in \{K-3,K-2,K-1,K\}$, $P_{2}$ removes $(K+1-k)$ from $AB$ in the second round, leaving $P_{1}$ with 
\begin{equation}
w_{2}(AB)=2^{R+1}m+3+k-K,\ w_{2}(BC)=3,\ w_{2}(CD)=2^{R+1}m+K-6,\ w_{2}(EF)=k,\nonumber
\end{equation}
which is of the form \eqref{eq:H_{1}_losing_0,1,3} with either $r=0$ (when $k=K-3$) or $r=1$ (when $k=K-2$) or $r=3$ (when $k=K$), or of the form \eqref{eq:H_{1}_losing_2,4_case_3} with $r=2$ and $s=3$ (when $k=K-1$). If $k \in \{K-5,K-4\}$, $P_{2}$ removes weight $4$ from $AB$ and weight $(K-k-3)$ from $BC$ in the second round, leaving $P_{1}$ with
\begin{equation}
w_{2}(AB)=2^{R+1}m,\ w_{2}(BC)=k+6-K,\ w_{2}(CD)=2^{R+1}m+K-6,\ w_{2}(EF)=k,\nonumber
\end{equation}
which is of the form \eqref{eq:H_{1}_losing_0,1,3} with $r=0$. By what we have already proved in \S\ref{subsec:proof_eq_H_{1}_losing_0}, \S\ref{subsec:H_{1}_losing_1_proof}, \S\ref{subsec:H_{1}_losing_3_proof} and \S\ref{subsec:proof_eq_H_{1}_losing_2_3}, we conclude that $P_{1}$ loses in each of these cases.

If $k \in \{7,8,9,10,12,13,\ldots,K-7\}$ with $k \equiv j \bmod 8$ for $j \in \{0,1,2,4,5,6,7\}$, $P_{2}$ removes weight $(K+1-k)$ from $CD$ in the second round, leaving $P_{1}$ with
\begin{equation}
w_{2}(AB)=2^{R+1}m+4,\ w_{2}(BC)=3,\ w_{2}(CD)=2^{R+1}m+(k-7),\ w_{2}(EF)=k,\label{intermediate_eq_2}
\end{equation}
which, for $k\geqslant 12$, is of the form \eqref{eq:H_{1}_losing_2,4_case_2} with $r=4$ and $s=3$, and as $w_{2}(EF)=k \leqslant K$, $P_{1}$ loses by our induction hypothesis. The configuration in \eqref{intermediate_eq_2} is of the form \eqref{eq:H_{1}_losing_0,1,3} with either $r=0$ (when $k=7$) or $r=1$ (when $k=8$) or $r=3$ (when $k=10$), or of the form \eqref{eq:H_{1}_losing_2,4_case_3} with $r=2$ and $s=3$ (when $k=9$). Note that the case of $k=(K-7)$ is taken care of here. If $k \in \{11,12,\ldots,K-8\}$ with $k \equiv 3 \bmod 8$, $P_{2}$ removes weight $(K-7-k)$ from $CD$ in the second round, leaving $P_{1}$ with 
\begin{equation}
w_{2}(AB)=2^{R+1}m+4,\ w_{2}(BC)=3,\ w_{2}(CD)=2^{R+1}m+(k+1),\ w_{2}(EF)=k,\nonumber
\end{equation}
which is of the form \eqref{eq:H_{1}_losing_2,4_case_1} with $r=4$ and $s=3$, and as $w_{2}(EF)=k \leqslant K$, $P_{1}$ loses by our induction hypothesis. If $k\in\{5,6\}$, $P_{2}$ removes weight $(K-6)$ from $CD$, and weight $(7-k)$ from $BC$, in the second round, leaving $P_{1}$ with
\begin{equation}
w_{2}(AB)=2^{R+1}m+4,\ w_{2}(BC)=k-4,\ w_{2}(CD)=2^{R+1}m,\ w_{2}(EF)=k,\nonumber
\end{equation}
which is of the form \eqref{eq:H_{1}_losing_0,1,3} with $r=0$. By what we have already proved in \S\ref{subsec:proof_eq_H_{1}_losing_0}, \S\ref{subsec:H_{1}_losing_1_proof}, \S\ref{subsec:H_{1}_losing_3_proof} and \S\ref{subsec:proof_eq_H_{1}_losing_2_3}, we conclude that $P_{1}$ loses in each of these cases.

\item Finally, suppose $P_{1}$ removes a positive integer weight from $BC$ in the first round, without disturbing the edge-weights of $AB$ and $CD$, so that $w_{1}(BC)=t\in\{0,1,2\}$. If $t=2$ and $K\equiv i \bmod 8$ for $i \in \{0,1,4,5,6,7\}$, $P_{2}$ removes weight $1$ from $EF$ in the second round, so that $P_{1}$ is left with
\begin{equation}
w_{2}(AB)=2^{R+1}m+4,\ w_{2}(BC)=2,\ w_{2}(CD)=2^{R+1}m+(K-6),\ w_{2}(EF)=K,\nonumber
\end{equation}
which is of the form \eqref{eq:H_{1}_losing_2,4_case_2} with $r=4$ and $s=2$. If $t=2$ and $K \equiv 3 \bmod 8$, $P_{2}$ removes weight $9$ from $EF$ in the second round, so that $P_{1}$ is left with
\begin{equation}
w_{2}(AB)=2^{R+1}m+4,\ w_{2}(BC)=2,\ w_{2}(CD)=2^{R+1}m+(K-6),\ w_{2}(EF)=(K-8),\nonumber
\end{equation}
which is of the form \eqref{eq:H_{1}_losing_2,4_case_1} with $r=4$ and $s=1$. By what we have already proved in \S\ref{subsec:H_{1}_losing_4_3,4_proof}, we conclude that $P_{1}$ loses in each of these cases.

If $t=1$ and $K\equiv i \bmod 8$ for $i \in \{0,1,5,6,7\}$, $P_{2}$ removes weight $2$ from $EF$ in the second round, leaving $P_{1}$ with
\begin{equation}
w_{2}(AB)=2^{R+1}m+4,\ w_{2}(BC)=1,\ w_{2}(CD)=2^{R+1}m+(K-6),\ w_{2}(EF)=(K-1),\nonumber
\end{equation}
which is of the form \eqref{eq:H_{1}_losing_2,4_case_2} with $r=4$ and $s=1$. If $t=1$ and $K \equiv i \bmod 8$ for $i \in \{3,4\}$, $P_{2}$ removes weight $10$ from $EF$ in the second round, so that $P_{1}$ is left with
\begin{equation}
w_{2}(AB)=2^{R+1}m+4,\ w_{2}(BC)=2,\ w_{2}(CD)=2^{R+1}m+(K-6),\ w_{2}(EF)=(K-9),\nonumber
\end{equation}
which is of the form \eqref{eq:H_{1}_losing_2,4_case_1} with $r=4$ and $s=1$. By what we have already proved in \S\ref{subsec:H_{1}_losing_4_1,2_proof}, we conclude that $P_{1}$ loses.

If $t=0$, $P_{2}$ is left with a galaxy graph at the end of the first round, consisting of edges $AB$, $CD$ and $EF$, with $w_{1}(AB)=2^{R+1}m+4$, $w_{1}(CD)=2^{R+1}m+(K-6)$ and $w_{1}(EF)=(K+1)$, and she wins by Lemma~\ref{lem:H_{1}_winning_galaxy}. 
\end{enumerate}
This completes the proof of our claim that the configuration in \eqref{eq:H_{1}_losing_4_5_inductive} is losing, thereby proving our claim that any configuration that is either of the form \eqref{eq:H_{1}_losing_2,4_case_1} or of the form \eqref{eq:H_{1}_losing_2,4_case_2}, with $r=4$ and $s=3$, is losing.

\subsection{Proof that any configuration of the form \eqref{eq:H_{1}_losing_2,4_case_3}, with $r=4$ and $s\geqslant 4$, is losing}\label{subsec:H_{1}_losing_4_7_proof}
The base case corresponding to \eqref{eq:H_{1}_losing_2,4_case_3} with $r=4$ is obtained by setting $s=4$ and $k=12$, which yields the configuration
\begin{equation}
w_{0}(AB)=w_{0}(CD)=16m+4,\ w_{0}(BC)=4,\ w_{0}(EF)=12.\label{eq:H_{1}_losing_4_7_base_case}
\end{equation}
We consider the first round of the game played on this initial configuration:
\begin{enumerate}
\item Suppose $P_{1}$ removes a positive integer weight from $CD$, and a non-negative integer weight from $BC$, in the first round (analogously, $P_{1}$ removes a positive integer weight from $AB$, and a non-negative integer weight from $BC$, in the first round).
\begin{enumerate}
\item If $w_{1}(CD)=16m+\ell$ for some $\ell \in \{0,1,2,3\}$, and $w_{1}(BC)=t\in\{1,2,3,4\}$, $P_{2}$ removes weight $(8-\ell-t)$ from the edge $EF$ in the second round, leaving $P_{1}$ with
\begin{equation}
w_{2}(AB)=16m+4,\ w_{2}(BC)=t,\ w_{2}(CD)=16m+\ell,\ w_{2}(EF)=4+\ell+t,\nonumber
\end{equation}
which is of the form \eqref{eq:H_{1}_losing_0,1,3} with either $r=0$ (when $\ell=0$) or $r=1$ (when $\ell=1$) or $r=3$ (when $\ell=3$), or of the form \eqref{eq:H_{1}_losing_2,4_case_2} with $r=2$ and $s=1$ (when $\ell=2$ and $t=1$), or of the form \eqref{eq:H_{1}_losing_2,4_case_3} with $r=2$ and $s\geqslant 2$ (when $\ell=2$ and $t\in\{2,3\}$). By what we have proved in \S\ref{subsec:proof_eq_H_{1}_losing_0}, \S\ref{subsec:H_{1}_losing_1_proof}, \S\ref{subsec:H_{1}_losing_3_proof}, \S\ref{subsec:proof_eq_H_{1}_losing_2_1,2} and \S\ref{subsec:proof_eq_H_{1}_losing_2_3}, we conclude that $P_{1}$ loses in each of these cases.

\item If $w_{1}(CD)=16m+\ell$ for $\ell \in \{0,1,2,3\}$, and $w_{1}(BC)=0$, $P_{2}$, at the end of the first round, is left with a galaxy graph that consists of the edges $AB$, $CD$ and $EF$, with $w_{1}(AB)=16m+4$, $w_{1}(CD)=16m+\ell$ and $w_{1}(EF)=12$, and she wins by Lemma~\ref{lem:H_{1}_winning_galaxy}.
\end{enumerate}

\item Suppose $P_{1}$ removes a positive integer weight from $EF$ in the second round, so that $w_{1}(EF)=k\leqslant 11$. If $k \in \{8,9,10,11\}$, $P_{2}$ removes weight $(12-k)$ from $CD$ in the second round, leaving $P_{1}$ with
\begin{equation}
w_{2}(AB)=16m+4,\ w_{2}(BC)=4,\ w_{2}(CD)=16m+(k-8),\ w_{2}(EF)=k,\nonumber
\end{equation}
which is of the form \eqref{eq:H_{1}_losing_0,1,3} with either $r=0$ (when $k=8$) or $r=1$ (when $k=9$) or $r=3$ (when $k=11$), or of the form \eqref{eq:H_{1}_losing_2,4_case_3} with $r=2$ and $s=4$ (when $k=10$). If $k \in \{5,6,7\}$, $P_{2}$ removes weight $4$ from $CD$ and weight $(8-k)$ from $BC$ in the second round, so that $P_{1}$ is left with
\begin{equation}
w_{2}(AB)=16m+4,\ w_{2}(BC)=(k-4),\ w_{2}(CD)=16m,\ w_{2}(EF)=k,\nonumber
\end{equation}
which is of the form \eqref{eq:H_{1}_losing_0,1,3} with $r=0$. By what we have proved already in \S\ref{subsec:proof_eq_H_{1}_losing_0}, \S\ref{subsec:H_{1}_losing_1_proof}, \S\ref{subsec:H_{1}_losing_3_proof} and \S\ref{subsec:proof_eq_H_{1}_losing_2_3}, we conclude that $P_{1}$ loses in each of these cases. If $k \in \{1,2,3,4\}$, the configuration at the end of the first round is of the form stated in Lemma~\ref{prop:H_{1}_winning_1}, with $\ell_{1}=\ell_{2}=4$ and $w_{1}(EF)=k$ satisfying $\min\{\ell_{1},\ell_{2}\}\geqslant k$, so that $P_{2}$ wins by Lemma~\ref{prop:H_{1}_winning_1}. If $k=0$, $P_{2}$ wins by Remark~\ref{rem:EF_edgeweight_0}.

\item Suppose $P_{1}$ removes a positive integer weight from $BC$ in the first round, without disturbing the edge-weights of $AB$ and $CD$, so that $w_{1}(BC)=t\in\{0,1,2,3\}$. For $t\in\{1,2,3\}$, $P_{2}$ removes weight $(12-t)$ from the edge $EF$ in the second round, so that $P_{1}$ is left with
\begin{equation}
w_{2}(AB)=w_{2}(CD)=16m+4=4(4m+1),\ w_{2}(BC)=w_{2}(EF)=t,\nonumber
\end{equation}
which is of the form \eqref{eq:H_{1}_losing_0,1,3} with $r=0$. By what we have proved in \S\ref{subsec:proof_eq_H_{1}_losing_0}, we conclude that $P_{1}$ loses. For $t=0$, $P_{2}$ removes $EF$ in the second round, and $P_{1}$ loses by Theorem~\ref{thm:galaxy}.
\end{enumerate}

This completes the proof of our claim that the configuration in \eqref{eq:H_{1}_losing_4_7_base_case} is losing on $H_{1}$. 

Suppose we have proved, for some $K \in \mathbb{N}$ with $K \geqslant 12$, that any configuration on $H_{1}$ that is of the form \eqref{eq:H_{1}_losing_2,4_case_3} with $r=4$ and $k\leqslant K$, is losing. We consider the first round of the game played on the configuration
\begin{equation}
w_{0}(AB)=2^{R+1}m+4,\ w_{0}(BC)=K-L-3,\ w_{0}(CD)=2^{R+1}m+L,\ w_{0}(EF)=K+1,\label{eq:H_{1}_losing_4_7_inductive}
\end{equation}
where $R=f(K+1)$, $L\in\{4,5,\ldots,K-7\}$ and $m\in\mathbb{N}_{0}$:
\begin{enumerate}
\item Suppose $P_{1}$ removes a positive integer weight from $CD$, and a non-negative integer weight from $BC$, in the first round. The following cases may arise:
\begin{enumerate}
\item If $w_{1}(CD)=2^{R+1}m+\ell$ for $\ell\in\{4,5,\ldots,L-1\}$ and $w_{1}(BC)=t\in\{4,5,\ldots,K-L-3\}$, $P_{2}$ removes weight $(K-3-\ell-t)$ from $EF$ in the second round, leaving $P_{1}$ with
\begin{align}
{}&w_{2}(AB)=2^{f(\ell+t+4)+1}2^{R-f(\ell+t+4)}m+4,\ w_{2}(BC)=t,\nonumber\\
{}&w_{2}(CD)=2^{f(\ell+t+4)+1}2^{R-f(\ell+t+4)}m+\ell,\ w_{2}(EF)=\ell+t+4,\nonumber
\end{align}
which is of the form \eqref{eq:H_{1}_losing_2,4_case_3} with $r=4$, $s\geqslant 4$ and $w_{2}(EF)=\ell+t+4\leqslant K$, so that $P_{1}$ loses by our induction hypothesis.

\item If $w_{1}(CD)=2^{R+1}m+\ell$ for $\ell \in \{4,5,\ldots,L-1\}$ with $\ell \equiv 4 \bmod 8$, and $w_{1}(BC)=3$, $P_{2}$ removes weight $(K+2-\ell)$ from $EF$ in the second round, leaving $P_{1}$ with
\begin{align}
{}&w_{2}(AB)=2^{f(\ell-1)+1}2^{R-f(\ell-1)}m+4,\ w_{2}(BC)=3,\nonumber\\
{}&w_{2}(CD)=2^{f(\ell-1)+1}2^{R-f(\ell-1)}m+\ell,\ w_{2}(EF)=\ell-1,\nonumber
\end{align}
which is of the form \eqref{eq:H_{1}_losing_2,4_case_1} with $r=4$ and $s=3$. If $\ell\equiv i \bmod 8$ for $i \in \{0,1,2,3,5,6,7\}$, $P_{2}$ removes weight $(K-6-\ell)$ from $EF$ in the second round, leaving $P_{1}$ with
\begin{align}
{}&w_{2}(AB)=2^{f(\ell+7)+1}2^{R-f(\ell+7)}m+4,\ w_{2}(BC)=3,\nonumber\\
{}&w_{2}(CD)=2^{f(\ell+7)+1}2^{R-f(\ell+7)}m+\ell,\ w_{2}(EF)=\ell+7,\nonumber
\end{align}
which is of the form \eqref{eq:H_{1}_losing_2,4_case_2} with $r=4$ and $s=3$. By what we have already proved in \S\ref{subsec:H_{1}_losing_4_5,6_proof}, we know that $P_{2}$ loses in each of these cases.

\item Suppose $w_{1}(CD)=2^{R+1}m+\ell$ for $\ell \in \{4,5,\ldots,L-1\}$ and $w_{1}(BC)=2$. If $\ell \equiv i\bmod 8$ for $i \in \{4,5\}$, $P_{2}$ removes weight $(K+3-\ell)$ from $EF$ in the second round, leaving $P_{1}$ with
\begin{align}
{}&w_{2}(AB)=2^{f(\ell-2)+1}2^{R-f(\ell-2)}m+4,\ w_{2}(BC)=2,\nonumber\\
{}&w_{2}(CD)=2^{f(\ell-2)+1}2^{R-f(\ell-2)}m+\ell,\ w_{2}(EF)=\ell-2,\nonumber
\end{align}
which is of the form \eqref{eq:H_{1}_losing_2,4_case_1} with $r=4$ and $s=2$. If $\ell \equiv i \bmod 8$ for $i \in \{0,1,2,3,6,7\}$, $P_{2}$ removes weight $(K-5-\ell)$ from $EF$ in the second round, leaving $P_{1}$ with
\begin{align}
{}&w_{2}(AB)=2^{f(\ell+6)+1}2^{R-f(\ell+6)}m+4,\ w_{2}(BC)=2,\nonumber\\
{}&w_{2}(CD)=2^{f(\ell+6)+1}2^{R-f(\ell+6)}m+\ell,\ w_{2}(EF)=\ell+6,\nonumber
\end{align}
which is of the form \eqref{eq:H_{1}_losing_2,4_case_2} with $r=4$ and $s=2$. By what we have already proved in \S\ref{subsec:H_{1}_losing_4_3,4_proof}, we know that $P_{2}$ loses in each of these cases.

\item Suppose $w_{1}(CD)=2^{R+1}m+\ell$ for some $\ell \in \{4,5,\ldots,L-1\}$ and $w_{1}(BC)=1$. If $\ell \equiv i\bmod 8$ for $i \in \{4,5,6\}$, $P_{2}$ removes weight $(K+4-\ell)$ from $EF$ in the second round, leaving $P_{1}$ with
\begin{align}
{}&w_{2}(AB)=2^{f(\ell-3)+1}2^{R-f(\ell-3)}m+4,\ w_{2}(BC)=1,\nonumber\\
{}&w_{2}(CD)=2^{f(\ell-3)+1}2^{R-f(\ell-3)}m+\ell,\ w_{2}(EF)=\ell-3,\nonumber
\end{align}
which is of the form \eqref{eq:H_{1}_losing_2,4_case_1} with $r=4$ and $s=1$. If $\ell \equiv i \bmod 8$ for $i \in \{0,1,2,3,7\}$, $P_{2}$ removes weight $(K-4-\ell)$ from $EF$ in the second round, leaving $P_{1}$ with
\begin{align}
{}&w_{2}(AB)=2^{f(\ell+5)+1}2^{R-f(\ell+5)}m+4,\ w_{2}(BC)=1,\nonumber\\
{}&w_{2}(CD)=2^{f(\ell+5)+1}2^{R-f(\ell+5)}m+\ell,\ w_{2}(EF)=\ell+5,\nonumber
\end{align}
which is of the form \eqref{eq:H_{1}_losing_2,4_case_2} with $r=4$ and $s=1$. By what we have already proved in \S\ref{subsec:H_{1}_losing_4_1,2_proof}, we know that $P_{2}$ loses in each of these cases.

\item If $w_{1}(CD)=2^{R+1}m+\ell$ for $\ell \in \{0,1,2,3\}$ and $w_{1}(BC)=t\in\{1,2,\ldots,K-L-3\}$, $P_{2}$ removes weight $(K-3-t-\ell)$ from $EF$ in the second round, leaving $P_{1}$ with
\begin{equation}
w_{2}(AB)=2^{R+1}m+4,\ w_{2}(BC)=t,\ w_{2}(CD)=2^{R+1}m+\ell,\ w_{2}(EF)=4+t+\ell,\nonumber
\end{equation}
which is of the form \eqref{eq:H_{1}_losing_0,1,3} with either $r=0$ (when $\ell=0$) or $r=1$ (when $\ell=1$) or $r=3$ (when $\ell=3$), or of the form \eqref{eq:H_{1}_losing_2,4_case_2} with $r=2$ and $s=1$ (when $\ell=2$ and $t=1$), or of the form \eqref{eq:H_{1}_losing_2,4_case_3} with $r=2$ and $s\geqslant 2$ (when $\ell=2$ and $t\in \{2,3,\ldots,K-L-3\}$). By what we have already proved in \S\ref{subsec:proof_eq_H_{1}_losing_0}, \S\ref{subsec:H_{1}_losing_1_proof}, \S\ref{subsec:H_{1}_losing_3_proof}, \S\ref{subsec:proof_eq_H_{1}_losing_2_1,2} and \S\ref{subsec:proof_eq_H_{1}_losing_2_3}, we know that $P_{2}$ loses in each of these cases.

\item If $w_{1}(CD)=2^{R+1}m+\ell$ for $\ell \in \{0,1,\ldots,L-1\}$ and $w_{1}(BC)=0$, $P_{2}$, after the first round, is left with a galaxy graph consisting of the edges $AB$, $CD$ and $EF$, with $w_{1}(AB)=2^{R+1}m+4$, $w_{1}(CD)=2^{R+1}m+\ell$ and $w_{1}(EF)=(K+1)$, and she wins by Lemma~\ref{lem:H_{1}_winning_galaxy}.

\item Finally, if $w_{1}(CD)=2^{R+1}n+\ell_{1}$ for some $n<m$ and $\ell_{1}\in\{0,1,\ldots,2^{R+1}-1\}$, the configuration after the first round is of the form given by Lemma~\ref{prop:H_{1}_winning_1} with $m_{1}=n$, $m_{2}=m$ and $\ell_{2}=4$. Since $n<m \implies m_{1}\neq m_{2}$, $P_{2}$ wins by Lemma~\ref{prop:H_{1}_winning_1}.
\end{enumerate}

\item Suppose $P_{1}$ removes a positive integer weight from $AB$, and a non-negative integer weight from $BC$, in the first round. The following cases may arise:
\begin{enumerate}
\item Suppose $w_{1}(AB)=2^{R+1}m+\ell$ for some $\ell \in \{0,1,2,3\}$ and $w_{1}(BC)=t\in\{1,2,\ldots,K-L-3\}$. As long as 
\begin{enumerate*}
\item either $\ell \in \{0,1,3\}$,
\item or $\ell=2$ and $t \in \{2,3,\ldots,K-L-3\}$,
\item or $\ell=2$, $t=1$ and $L\equiv j \bmod 4$ for some $j \in \{0,1,3\}$,
\end{enumerate*}
$P_{2}$ removes weight $(K+1)-(L+t+\ell)$ from $EF$ in the second round, leaving $P_{1}$ with
\begin{align}
{}&w_{2}(AB)=2^{f(L+t+\ell)+1}2^{R-f(L+t+\ell)}m+\ell,\ w_{2}(BC)=t,\nonumber\\
{}&w_{2}(CD)=2^{f(L+t+\ell)+1}2^{R-f(L+t+\ell)}m+L,\ w_{2}(EF)=L+t+\ell,\nonumber
\end{align}
which is of the form \eqref{eq:H_{1}_losing_0,1,3} with either $r=0$ (when $\ell=0$) or $r=1$ (when $\ell=1$) or $r=3$ (when $\ell=3$), or of the form \eqref{eq:H_{1}_losing_2,4_case_2} with $r=2$ and $s=1$ (when $\ell=2$, $t=1$ and $L \equiv j \bmod 4$ for some $j \in \{0,1,3\}$), or of the form \eqref{eq:H_{1}_losing_2,4_case_3} with $r=2$ and $s\geqslant 2$ (when $\ell=2$ and $t\in\{2,3,\ldots,K-L-3\}$). By what we have already proved in \S\ref{subsec:proof_eq_H_{1}_losing_0}, \S\ref{subsec:H_{1}_losing_1_proof}, \S\ref{subsec:H_{1}_losing_3_proof}, \S\ref{subsec:proof_eq_H_{1}_losing_2_1,2} and \S\ref{subsec:proof_eq_H_{1}_losing_2_3}, we know that $P_{1}$ loses. If $\ell=2$, $t=1$ and $L\equiv 2 \bmod 4$, $P_{2}$ removes weight $(K+2-L)$ from $EF$ in the second round, leaving $P_{1}$ with
\begin{align}
{}&w_{2}(AB)=2^{f(L-1)+1}2^{R-f(L-1)}m+2,\ w_{2}(BC)=1,\nonumber\\
{}&w_{2}(CD)=2^{f(L-1)+1}2^{R-f(L-1)}m+L,\ w_{2}(EF)=(L-1),\nonumber
\end{align}
which is of the form \eqref{eq:H_{1}_losing_2,4_case_1} with $r=2$ and $s=1$, and by what we have proved in \S\ref{subsec:proof_eq_H_{1}_losing_2_1,2}, we know that $P_{1}$ loses.

\item If $w_{1}(AB)=2^{R+1}m+\ell$ for $\ell \in \{0,1,2,3\}$, and $w_{1}(BC)=0$, $P_{2}$, at the end of the first round, is left with a galaxy graph consisting of the edges $AB$, $CD$ and $EF$, where $w_{1}(AB)=2^{R+1}m+\ell$, $w_{1}(CD)=2^{R+1}m+L$ and $w_{1}(EF)=(K+1)$, and she wins by Lemma~\ref{lem:H_{1}_winning_galaxy}.

\item Finally, if $w_{1}(AB)=2^{R+1}n+\ell_{1}$ for some $n<m$ and $\ell_{1}\in\{0,1,\ldots,2^{R+1}-1\}$, the configuration after the first round is of the form given by Lemma~\ref{prop:H_{1}_winning_1} with $m_{1}=n$, $m_{2}=m$ and $\ell_{2}=L$. Since $n<m \implies m_{1}\neq m_{2}$, $P_{2}$ wins by Lemma~\ref{prop:H_{1}_winning_1}.
\end{enumerate}

\item Suppose $P_{1}$ removes a positive integer weight from $EF$ in the first round, so that $w_{1}(EF)=k\leqslant K$. For $k \in \{K-L+1,\ldots,K\}$, $P_{2}$ removes $(K+1-k)$ from $CD$ in the second round, leaving $P_{1}$ with
\begin{align}
{}&w_{2}(AB)=2^{f(k)+1}2^{R-f(k)}m+4,\ w_{2}(BC)=(K-L-3),\nonumber\\
{}&w_{2}(CD)=2^{f(k)+1}2^{R-f(k)}m+(k+L-K-1),\ w_{2}(EF)=k,\nonumber
\end{align}
which is of the form \eqref{eq:H_{1}_losing_2,4_case_3} with $r=4$ and $s\geqslant 4$ when $k\in\{K-L+5,\ldots,K\}$, of the form \eqref{eq:H_{1}_losing_0,1,3} with either $r=3$ (when $k=(K-L+4)$) or $r=1$ (when $k=(K-L+2)$) or $r=0$ (when $k=(K-L+1)$), or of the form \eqref{eq:H_{1}_losing_2,4_case_3} with $r=2$ and $s\geqslant 4$ when $k=(K-L+3)$. In the first case, since $w_{2}(EF)=k\leqslant K$, $P_{1}$ loses by our induction hypothesis; in each of the latter cases, $P_{1}$ loses by what we have already proved in \S\ref{subsec:proof_eq_H_{1}_losing_0}, \S\ref{subsec:H_{1}_losing_1_proof}, \S\ref{subsec:H_{1}_losing_3_proof} and \S\ref{subsec:proof_eq_H_{1}_losing_2_3}.

For $k \in \{5,6,\ldots,K-L\}$, $P_{2}$ removes weight $L$ from $CD$ and weight $(K-L+1-k)$ from $BC$ in the second round, leaving $P_{1}$ with 
\begin{align}
w_{2}(AB)=2^{f(k)+1}2^{R-f(k)}m+4,\ w_{2}(BC)=(k-4),\ w_{2}(CD)=2^{f(k)+1}2^{R-f(k)}m,\ w_{2}(EF)=k,\nonumber
\end{align}
which is of the form \eqref{eq:H_{1}_losing_0,1,3} with $r=0$, and by what we have already proved in \S\ref{subsec:proof_eq_H_{1}_losing_0}, we conclude that $P_{1}$ loses. For $k \in \{1,2,3,4\}$, the configuration after the first round is of the form mentioned in Lemma~\ref{prop:H_{1}_winning_1}, with $\ell_{1}=4$, $\ell_{2}=L$ and $w_{1}(EF)=k$ satisfying $\min\{\ell_{1},\ell_{2}\}\geqslant k$, so that $P_{2}$ wins by Lemma~\ref{prop:H_{1}_winning_1}. If $k=0$, $P_{2}$ wins by Remark~\ref{rem:EF_edgeweight_0}.

\item Suppose $P_{1}$ removes a positive integer weight from $BC$ in the first round, without disturbing the edge-weights of $AB$ and $CD$, so that $w_{1}(BC)=t\in\{0,1,\ldots,K-L-4\}$. If $t\in\{4,5,\ldots,K-L-4\}$, $P_{2}$ removes weight $(K-L-3-t)$ from $EF$ in the second round, leaving $P_{1}$ with
\begin{align}
{}&w_{2}(AB)=2^{f(L+t+4)+1}2^{R-f(L+t+4)}m+4,\ w_{2}(BC)=t,\nonumber\\
{}&w_{2}(CD)=2^{f(L+t+4)+1}2^{R-f(L+t+4)}m+L,\ w_{2}(EF)=(L+t+4),\nonumber
\end{align}
which is of the form \eqref{eq:H_{1}_losing_2,4_case_3} with $r=4$ and $s\geqslant 4$, and as $w_{2}(EF)=(L+t+4)\leqslant K$, $P_{1}$ loses by our induction hypothesis. 

If $t=3$ and $L\equiv 4 \bmod 8$, $P_{2}$ removes $(K+2-L)$ from $EF$ in the second round, leaving $P_{1}$ with
\begin{align}
{}&w_{2}(AB)=2^{f(L-1)+1}2^{R-f(L-1)}m+4,\ w_{2}(BC)=3,\nonumber\\
{}&w_{2}(CD)=2^{f(L-1)+1}2^{R-f(L-1)}m+L,\ w_{2}(EF)=(L-1),\nonumber
\end{align}
which is of the form \eqref{eq:H_{1}_losing_2,4_case_1} with $r=4$ and $s=3$. If $t=3$ and $L \equiv j \bmod 8$ for $j \in \{0,1,2,3,5,6,7\}$, $P_{2}$ removes $(K-6-L)$ from $EF$ in the second round, leaving $P_{1}$ with
\begin{align}
{}&w_{2}(AB)=2^{f(L+7)+1}2^{R-f(L+7)}m+4,\ w_{2}(BC)=3,\nonumber\\
{}&w_{2}(CD)=2^{f(L+7)+1}2^{R-f(L+7)}m+L,\ w_{2}(EF)=(L+7),\nonumber
\end{align}
which is of the form \eqref{eq:H_{1}_losing_2,4_case_2} with $r=4$ and $s=3$. By what we have already proved in \S\ref{subsec:H_{1}_losing_4_5,6_proof}, we conclude that $P_{1}$ loses in each of these cases.

If $t=2$ and $L \equiv j \bmod 8$ for $j \in \{4,5\}$, $P_{2}$ removes weight $(K+3-L)$ from $EF$ in the second round, leaving $P_{1}$ with
\begin{align}
{}&w_{2}(AB)=2^{f(L-2)+1}2^{R-f(L-2)}m+4,\ w_{2}(BC)=2,\nonumber\\
{}&w_{2}(CD)=2^{f(L-2)+1}2^{R-f(L-2)}m+L,\ w_{2}(EF)=(L-2),\nonumber
\end{align}
which is of the form \eqref{eq:H_{1}_losing_2,4_case_1} with $r=4$ and $s=2$. If $t=2$ and $L \equiv j \bmod 8$ for $j \in \{0,1,2,3,6,7\}$, $P_{2}$ removes weight $(K-5-L)$ from $EF$ in the second round, leaving $P_{1}$ with
\begin{align}
{}&w_{2}(AB)=2^{f(L+6)+1}2^{R-f(L+6)}m+4,\ w_{2}(BC)=2,\nonumber\\
{}&w_{2}(CD)=2^{f(L+6)+1}2^{R-f(L+6)}m+L,\ w_{2}(EF)=(L+6),\nonumber
\end{align}
which is of the form \eqref{eq:H_{1}_losing_2,4_case_2} with $r=4$ and $s=2$. By what we have already proved in \S\ref{subsec:H_{1}_losing_4_3,4_proof}, we conclude that $P_{1}$ loses in each of these cases.

If $t=1$ and $L \equiv j \bmod 8$ for $j \in \{4,5,6\}$, $P_{2}$ removes weight $(K+4-L)$ from $EF$ in the second round, leaving $P_{1}$ with
\begin{align}
{}&w_{2}(AB)=2^{f(L-3)+1}2^{R-f(L-3)}m+4,\ w_{2}(BC)=1,\nonumber\\
{}&w_{2}(CD)=2^{f(L-3)+1}2^{R-f(L-3)}m+L,\ w_{2}(EF)=(L-3),\nonumber
\end{align}
which is of the form \eqref{eq:H_{1}_losing_2,4_case_1} with $r=4$ and $s=1$. If $t=1$ and $L \equiv j \bmod 8$ for $j \in \{0,1,2,3,7\}$, $P_{2}$ removes weight $(K-4-L)$ from $EF$ in the second round, leaving $P_{1}$ with
\begin{align}
{}&w_{2}(AB)=2^{f(L+5)+1}2^{R-f(L+5)}m+4,\ w_{2}(BC)=1,\nonumber\\
{}&w_{2}(CD)=2^{f(L+5)+1}2^{R-f(L+5)}m+L,\ w_{2}(EF)=(L+5),\nonumber
\end{align}
which is of the form \eqref{eq:H_{1}_losing_2,4_case_2} with $r=4$ and $s=1$. By what we have already proved in \S\ref{subsec:H_{1}_losing_4_1,2_proof}, we conclude that $P_{1}$ loses in each of these cases.

If $t=0$, $P_{2}$, at the end of the first round, is left with a galaxy graph consisting of the edges $AB$, $CD$ and $EF$, where $w_{1}(AB)=2^{R+1}m+4$, $w_{1}(CD)=2^{R+1}m+L$ and $w_{1}(EF)=(K+1)$, and she wins by Lemma~\ref{lem:H_{1}_winning_galaxy}.
\end{enumerate}
This completes the proof of our claim that the configuration in \eqref{eq:H_{1}_losing_4_7_inductive} is losing, and thereby also completes our inductive proof of the claim that any configuration on $H_{1}$ that is of the form \eqref{eq:H_{1}_losing_2,4_case_3} with $r=4$, is losing.

\bibliography{Graph_nim_bibliography}

\begin{thebibliography}{24}
\providecommand{\natexlab}[1]{#1}
\providecommand{\url}[1]{\texttt{#1}}
\expandafter\ifx\csname urlstyle\endcsname\relax
  \providecommand{\doi}[1]{doi: #1}\else
  \providecommand{\doi}{doi: \begingroup \urlstyle{rm}\Url}\fi

\bibitem[Albert and Nowakowski(2004)]{albert2004nim}
Michael~H Albert and Richard~J Nowakowski.
\newblock Nim restrictions.
\newblock \emph{Integers}, 4:\penalty0 G1, 2004.

\bibitem[Bouton(1901)]{bouton1901nim}
Charles~L Bouton.
\newblock Nim, a game with a complete mathematical theory.
\newblock \emph{Annals of mathematics}, 3\penalty0 (1/4):\penalty0 35--39,
  1901.

\bibitem[Brown and Williams(2019)]{brown2019graphnim}
David~E. Brown and Trevor~K. Williams.
\newblock Graph nim.
\newblock \emph{Congressus Numerantium}, 233:\penalty0 151--164, 2019.

\bibitem[Brualdi(1977)]{brualdi1977introductory}
Richard~A Brualdi.
\newblock \emph{Introductory combinatorics}.
\newblock Pearson Education India, 1977.

\bibitem[Calkin et~al.(2010)Calkin, James, Janoski, Leggett, Richards,
  Sitaraman, and Thomas]{calkin2010computing}
Neil~J Calkin, Kevin James, Janine~E Janoski, Sarah Leggett, Bryce Richards,
  Nathan Sitaraman, and Stephanie~M Thomas.
\newblock Computing strategies for graphical nim.
\newblock In \emph{Proceedings of the Forty-First Southeastern International
  Conference on Combinatorics, Graph Theory and Computing}, volume 202, pages
  171--185, 2010.

\bibitem[Conway(2000)]{conway2000numbers}
John~H Conway.
\newblock \emph{On numbers and games}.
\newblock AK Peters/CRC Press, 2000.

\bibitem[Duch{\^e}ne et~al.(2016)Duch{\^e}ne, Dufour, Heubach, and
  Larsson]{duchene2016building}
Eric Duch{\^e}ne, Matthieu Dufour, Silvia Heubach, and Urban Larsson.
\newblock Building nim.
\newblock \emph{International Journal of Game Theory}, 45\penalty0
  (4):\penalty0 859--873, 2016.

\bibitem[Erickson(2011)]{erickson2011game}
Lindsay~Anne Erickson.
\newblock \emph{The game of Nim on graphs}.
\newblock North Dakota State University, 2011.

\bibitem[Ferguson(2020)]{ferguson2020course}
Thomas~S Ferguson.
\newblock \emph{A course in game theory}.
\newblock World Scientific, 2020.

\bibitem[Fukuyama(2003 A)]{fukuyama2003nim}
Masahiko Fukuyama.
\newblock A nim game played on graphs.
\newblock \emph{Theoretical Computer Science}, 304\penalty0 (1-3):\penalty0
  387--399, 2003 A.

\bibitem[Fukuyama(2003 B)]{fukuyamanim2003}
Masahiko Fukuyama.
\newblock A nim game played on graphs ii.
\newblock \emph{Theoretical computer science}, 304\penalty0 (1-3):\penalty0
  401--419, 2003 B.

\bibitem[Gale(1974)]{gale1974curious}
David Gale.
\newblock A curious nim-type game.
\newblock \emph{The American Mathematical Monthly}, 81\penalty0 (8):\penalty0
  876--879, 1974.

\bibitem[Grundy(1939)]{grundy1939mathematics}
Patrick~M Grundy.
\newblock Mathematics and games.
\newblock \emph{Eureka}, 2:\penalty0 6--8, 1939.

\bibitem[Gurvich and Ho(2015)]{gurvich2015slow}
Vladimir Gurvich and Nhan~Bao Ho.
\newblock Slow $ k $-nim.
\newblock \emph{arXiv preprint arXiv:1508.05777}, 2015.

\bibitem[Li(1978)]{li1978n}
S-YR Li.
\newblock N-person nim and n-person moore's games.
\newblock \emph{International Journal of Game Theory}, 7\penalty0 (1):\penalty0
  31--36, 1978.

\bibitem[Low and Chan(2016)]{low2016notes}
Richard~M Low and Wai~Hong Chan.
\newblock Notes on the combinatorial game: graph nim.
\newblock \emph{Electron. J. Graph Theory Appl.}, 4\penalty0 (2):\penalty0
  190--205, 2016.

\bibitem[Lv et~al.(2018)Lv, Xu, and Zhu]{lv2018greedy}
Xinzhong Lv, Rongxing Xu, and Xuding Zhu.
\newblock Greedy nim k game.
\newblock \emph{Journal of Combinatorial Optimization}, 35\penalty0
  (4):\penalty0 1241--1249, 2018.

\bibitem[Moore(1910)]{moore1910generalization}
Ellakim~H Moore.
\newblock A generalization of the game called nim.
\newblock \emph{Annals of Mathematics}, 11\penalty0 (3):\penalty0 93--94, 1910.

\bibitem[Robin(1989)]{robin1989poisoned}
AC~Robin.
\newblock A poisoned chocolate problem, problem corner.
\newblock \emph{The Mathematical Gazette}, 73\penalty0 (466):\penalty0
  341--343, 1989.

\bibitem[Schwartz(1971)]{schwartz1971some}
Benjamin~L Schwartz.
\newblock Some extensions of nim.
\newblock \emph{Mathematics Magazine}, 44\penalty0 (5):\penalty0 252--257,
  1971.

\bibitem[Sprague(1935)]{sprague1935mathematische}
Richard Sprague.
\newblock {\"U}ber mathematische kampfspiele.
\newblock \emph{Tohoku Mathematical Journal, First Series}, 41:\penalty0
  438--444, 1935.

\bibitem[Van~den Bergh et~al.(2022)Van~den Bergh, Kosters, and
  Spieksma]{van2022nim}
Mark Van~den Bergh, Walter Kosters, and Flora Spieksma.
\newblock Nim variants.
\newblock \emph{ICGA Journal}, 44\penalty0 (1):\penalty0 2--17, 2022.

\bibitem[Williams(2017)]{williams2017combinatorial}
Trevor~K Williams.
\newblock \emph{Combinatorial Games on Graphs}.
\newblock Utah State University, 2017.

\bibitem[Xu and Zhu(2018)]{XU20181}
Rongxing Xu and Xuding Zhu.
\newblock Bounded greedy nim.
\newblock \emph{Theoretical Computer Science}, 746:\penalty0 1--5, 2018.
\newblock ISSN 0304-3975.
\newblock \doi{https://doi.org/10.1016/j.tcs.2018.06.015}.
\newblock URL
  \url{https://www.sciencedirect.com/science/article/pii/S0304397518304316}.

\end{thebibliography}
\end{document}